\newcommand{\graded}{standard graded}
\theoremstyle{definition}
\newtheorem*{ack}{Acknowledgements}
\DeclareMathOperator{\Ad}{Ad}
\DeclareMathOperator{\Aut}{Aut}
\renewcommand{\MR}[1]{}
\title[Isomorphism problem for Nevanlinna-Pick spaces]{On the isomorphism problem for multiplier algebras of Nevanlinna-Pick spaces}
\author{Michael Hartz}
\address{Department of Pure Mathematics, University of Waterloo, Waterloo, ON N2L 3G1, Canada}
\email{mphartz@uwaterloo.ca}
\thanks{The author is partially supported by an Ontario Trillium Scholarship.}
\subjclass[2010]{Primary 47L30; Secondary 46E22, 47A13}
\keywords{Non-selfadjoint operator algebras, Reproducing kernel Hilbert spaces, Multiplier algebra,
Nevanlinna-Pick kernels, Isomorphism problem}
\begin{document}

\begin{abstract}
  We continue the investigation of the isomorphism problem for multiplier algebras of reproducing kernel
  Hilbert spaces with the complete Nevanlinna-Pick property. In contrast to previous work in this area,
  we do not study these spaces by identifying them with restrictions of a universal space, namely the Drury-Arveson space.
  Instead, we work directly with the Hilbert spaces and their reproducing kernels. In particular,
  we show that two multiplier algebras of Nevanlinna-Pick spaces on the same set are equal if and only if the Hilbert
  spaces are equal. Most of the article is devoted to the study of a special class of
  complete Nevanlinna-Pick spaces on homogeneous varieties. We provide a complete
  answer to the question of when two multiplier algebras of spaces of this type
  are algebraically or isometrically isomorphic. This generalizes results of Davidson, Ramsey, Shalit,
  and the author.
\end{abstract}

\maketitle

\section{Introduction}
\label{sec:intro}

We continue the study of the isomorphism problem for multiplier algebras
of complete Nevanlinna-Pick spaces.
This problem was studied in \cite{DHS15,DRS11,DRS15,Hartz12,KMS13} by making use of
a theorem of Agler and McCarthy \cite{AM00} to
identify a given complete Nevanlinna-Pick space with a restriction
of a special universal space, namely the Drury-Arveson space, to an analytic variety.
Roughly speaking, the results then typically state that
two algebras are isomorphic if and only if
the underlying varieties are geometrically equivalent in a suitable sense.
For an up-to-date account on these results, the reader is referred to the recent survey article \cite{SS14}.

While this approach has been successful in dealing with the (completely) isometric isomorphism
problem (see \cite{DRS15} and \cite{SS14}), the algebraic (or even completely bounded)
isomorphism problem seems to be more difficult. Essentially the only instance for which
the algebraic isomorphism problem has been completely resolved is the case of restrictions
of Drury-Arveson space on a finite dimensional ball to homogeneous varieties \cite{DRS11,Hartz12}. The existence
of algebraic isomorphisms is also quite well understood for multiplier algebras associated to certain one-dimensional varieties under the
assumption of sufficient regularity on the boundary \cite{APV03,ARS08,KMS13}. For more general varieties,
however, the situation is far less clear.
Moreover, several results in \cite{DRS15} only apply to varieties which are contained
in a finite dimensional ball. From the point of view of the study
of multiplier algebras of complete Nevanlinna-Pick spaces, this condition is rather restrictive.
There are many natural examples of complete Nevanlinna-Pick spaces on the unit disc or, more generally,
on a finite dimensional unit ball, which
cannot be realized as the restriction of Drury-Arveson space on a finite dimensional ball. Indeed,
the classical Dirichlet space, which consists of analytic functions on the unit disc, is such an example (see
also Proposition \ref{prop:finite_embedding}).

In this note, we take a different point of view and study the complete Nevanlinna-Pick spaces and their
reproducing kernels directly.
In particular, we consider a class of
spaces on homogeneous varieties in a ball in $\bC^d$.
This more direct approach has the disadvantage that we can no longer make use of the well developed theory
of the Drury-Arveson space. In particular, the tools coming from the non-commutative
theory of free semigroup algebras \cite{DP98a,DP98,DP99} are not available any more.

Nevertheless, the direct approach has certain benefits. Firstly, by studying the spaces
directly, we are able to stay within the realm of reproducing kernel Hilbert spaces on subsets
of $\bC^d$ for finite $d$. We thus avoid the issues surrounding the Drury-Arveson
space $H^2_\infty$ on an infinite dimensional ball, such as the extremely complicated nature
of the maximal ideal space of $\Mult(H^2_\infty)$ (cf. \cite{DHS15a}). Secondly, many spaces of interest
are graded in a natural way. Indeed, we consider a class of complete Nevanlinna-Pick spaces of
analytic functions on the open unit ball $\bB_d$ in $\bC^d$ which contain the polynomials as a dense subspace
and in which homogeneous polynomials of different degree are orthogonal.
When identifying such a space with a restriction of the Drury-Arveson space, the grading becomes
less visible, since it is usually not compatible with the natural grading on the Drury-Arveson space.
By working with the spaces directly, we are able to exploit their graded nature.
Finally, when working with two spaces on the same set,
one can also ask if their multiplier algebras are equal, rather than just isomorphic.

In addition to this introduction, this article has ten sections. In Section \ref{sec:prelim},
we gather some preliminaries regarding complete Nevanlinna-Pick spaces.

In Section \ref{sec:mult_to_kernel}, we observe that it is possible to recover the reproducing kernel
of a complete Nevanlinna-Pick space from its multiplier algebra. As a consequence,
we obtain that two complete Nevanlinna-Pick spaces whose multiplier algebras are equal
have the same reproducing kernels,
up to normalization.

In Section \ref{sec:composition_operators}, we apply the results of Section 2 to composition operators on multiplier algebras.
In particular, we characterize those complete Nevanlinna-Pick spaces of analytic functions
on $\bB_d$ whose multiplier algebras are isometrically invariant under conformal automorphisms
of $\bB_d$.

In Section \ref{sec:alg_cons}, we study the notion of algebraic consistency, which, roughly speaking, assures
that the functions in a complete Nevanlinna-Pick space are defined on the largest possible domain of definition. It turns
out that this notion is closely related to the notion of a variety from \cite{DRS15}.

In Section \ref{sec:graded_spaces}, we consider a general notion of grading on a complete Nevanlinna-Pick space.
The main result in this section asserts
that multiplier norm and Hilbert space norm coincide for homogeneous elements.

In Section \ref{sec:unit_inv}, we set the stage for the remainder of this article by introducing
a family of unitarily invariant complete Nevanlinna-Pick spaces
on $\bB_d$.
The aim is then to investigate the isomorphism problem for multiplier algebras of restrictions
of such spaces to homogeneous varieties. This is done by following the route of
\cite{DRS11}.

In Section \ref{sec:maximal_ideal_space}, we study the maximal ideal spaces of the multiplier algebras of spaces
introduced in Section \ref{sec:unit_inv}. In particular, we introduce a regularity condition on the maximal
ideal space, which we call \emph{tameness}. It is shown that a large collection of spaces, which includes
the spaces $\cH_s$ from \cite{DHS15} and their counterparts on $\bB_d$, is indeed tame.

In Section \ref{sec:hol_maps_on_varieties},
we recall several results from \cite{DRS11} about holomorphic maps on homogeneous varieties,
thereby providing simpler proofs in some instances. We also point out that a crucial argument
from \cite{DRS11} can be used to show that the group of unitaries is a maximal subgroup
of the group of conformal automorphisms of $\bB_d$.

In Section \ref{sec:existence_graded}, we show that the arguments from \cite{DRS11} can be adapted to our setting to
show that if two of our multiplier algebras are isomorphic, then they are isomorphic
via an isomorphism which preserves the grading.

Finally, Section \ref{sec:iso_results} contains the main results about isometric and algebraic isomorphism
of the multiplier algebras. We finish by reformulating some of the results in terms of
restrictions of Drury-Arveson space, thereby providing a connection to examples in \cite{DHS15}.

\section{Preliminaries}
\label{sec:prelim}

We begin by recalling several notions from the theory of reproducing kernel Hilbert spaces and Nevanlinna-Pick
interpolation. Most of the material outlined in this section can be found in the book \cite{AM02}.
Let $\cH$ be a reproducing kernel Hilbert space on a set $X$ with reproducing kernel $K$. Although it is not
essential, we will always assume for convenience that our reproducing kernel Hilbert spaces are separable.
Let $\Mult(\cH)$ denote the multiplier algebra of $\cH$. Suppose that $n$ is a positive natural number and that
we are given points $z_1,\ldots,z_n \in X$ and $\lambda_1,\ldots,\lambda_n \in \bC$.
If there exists a multiplier $\varphi \in \Mult(\cH)$ of norm at most $1$ with
\begin{equation*}
  \varphi(z_i) = \lambda_i \quad (i= 1,\ldots,n),
\end{equation*}
then the matrix
\begin{equation*}
  \Big((1 - \lambda_i \ol{\lambda_j}) K(z_i,z_j) \Big)_{i,j=1}^n
\end{equation*}
is positive. If the converse of this statement holds,
we say that $\cH$ satisfies the \emph{$n$-point Nevanlinna-Pick property}.
We say that $\cH$ is a \emph{Nevanlinna-Pick space} if it
satisfies the $n$-point Nevanlinna-Pick property for every $n \in \bN$.
Finally, if the analogous result for matrix-valued
interpolation holds, $\cH$ is said to be a \emph{complete Nevanlinna-Pick space}. While the title of this note simply refers to
Nevanlinna-Pick spaces for the sake of brevity, we will in fact mostly be concerned with complete Nevanlinna-Pick spaces.
The prototypical
example of a complete Nevanlinna-Pick space is the Hardy space $H^2(\bD)$ on the unit disc. For a detailed account on this topic,
we refer the reader to Chapter 5 of the book \cite{AM02}.

Most of the reproducing kernel Hilbert spaces we will consider will be \emph{irreducible}
in the following sense: For any two points $x,y \in X$, the reproducing kernel $K$ satisfies
$K(x,y) \neq 0$, and if $x \neq y$, then
$K(\cdot,x)$ and $K(\cdot,y)$ are linearly independent in $\cH$ (see, for example, Section 7.1 in \cite{AM02}).
We say that $K$ is \emph{normalized} at the point $x_0 \in X$ if $K(x,x_0) = 1$ for all $x \in X$.
If $K$ is normalized at some point in $X$, we say that $\cH$ is \emph{normalized}.

Irreducible complete Nevanlinna-Pick spaces are characterized by a theorem of McCullough and Quiggin (see, for
example, Section 7.1 in \cite{AM02}).
We require the following version of Agler and McCarthy, which is \cite[Theorem 7.31]{AM02}.
Recall that a function $F: X \times X \to \bC$ is said to be \emph{positive definite} if for any
finite collection $\{x_1,\ldots,x_n\}$ of points in $X$, the matrix
\begin{equation*}
  \Big(F(x_i,x_j)\Big)_{i,j=1}^n
\end{equation*}
is positive semidefinite.

\begin{thm}[McCullough-Quiggin, Agler-McCarthy]
  \label{thm:CNP_char}
  Let $\cH$ be an irreducible reproducing kernel Hilbert space on a set $X$ with reproducing kernel $K$
  which is normalized at point in $X$.
  Then $\cH$ is a complete Nevanlinna-Pick space if and only if the
  Hermitian kernel $F = 1 - 1/K$ is positive definite.
\end{thm}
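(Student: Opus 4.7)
The plan is to prove both implications by passing through a factorization of $K$ as a pullback of a universal kernel. Normalize so that $K(\cdot, x_0) \equiv 1$. The conceptual target is the equivalence between positive definiteness of $F = 1 - 1/K$ and the existence of a map $b \colon X \to \mathcal{K}$ into some auxiliary Hilbert space satisfying $b(x_0) = 0$, $\|b(x)\|_\mathcal{K} < 1$ for all $x$, and
\[
K(x,y) = \frac{1}{1 - \langle b(x), b(y) \rangle_\mathcal{K}}.
\]
This equivalence is the Kolmogorov factorization of $F$ combined with the observation that irreducibility and normalization force $K(x,x) > 1$ for $x \neq x_0$ (via positivity of the $2 \times 2$ matrix with rows $(1,1)$ and $(1, K(x,x))$ together with linear independence of the kernel vectors), hence $0 \leq F(x,x) < 1$.

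For the ``if'' direction, start from the factorization above and realize $\cH$ as the restriction along $b$ of a reproducing kernel Hilbert space on the unit ball of $\mathcal{K}$ whose kernel is the Drury--Arveson-type kernel $k(z,w) = 1/(1 - \langle z, w \rangle_\mathcal{K})$. It then suffices to show that this universal kernel is complete Nevanlinna--Pick, since matrix-valued Pick data on $X$ pull back to Pick data on $b(X)$, and any solution in the larger multiplier algebra restricts to a multiplier of $\cH$ of no larger norm. The CNP property of the universal kernel is obtained by a standard lurking-isometry or transfer-function argument: from a positive Pick matrix one builds an isometric colligation whose associated transfer function is a contractive multiplier solving the given interpolation problem.

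For the ``only if'' direction, assume $\cH$ is CNP. The strategy is to fix a finite set $\{x_1, \ldots, x_n\} \subset X$ and construct a matrix-valued interpolation problem whose forced solvability collapses to the inequality $\bigl( F(x_i, x_j) \bigr)_{i,j} \geq 0$. Concretely, one sets up row-valued multiplier data with $\Phi(x_0) = 0$ and $\Phi(x_i) = v_i$ satisfying $\|v_i\|^2 = F(x_i, x_i)$, arranged so that the Pick matrix $\bigl( (1 - \langle v_i, v_j \rangle) K(x_i, x_j) \bigr)_{i,j}$ sits at the boundary of solvability; a compactness and extremality argument then forces the off-diagonal inner products $\langle v_i, v_j \rangle$ to be exactly $F(x_i, x_j)$, and positivity of the resulting Pick matrix is equivalent to positivity of $F$ on the chosen points.

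The main obstacle will be this ``only if'' direction: identifying extremal Pick data whose Pick matrix genuinely captures $F$ requires a careful compactness argument together with full exploitation of the normalization $K(\cdot, x_0) \equiv 1$ and irreducibility. By contrast, the ``if'' direction condenses, via the pullback construction, to the single clean fact that the Drury--Arveson-style kernel has the complete Nevanlinna--Pick property, handled by standard transfer-function techniques.
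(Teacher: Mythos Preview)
The paper does not prove this theorem; it is quoted as a known result of McCullough--Quiggin and Agler--McCarthy, with a pointer to \cite[Theorem 7.31]{AM02}, and no proof is given in the paper itself. So there is no ``paper's own proof'' to compare your proposal against.

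As for your sketch on its own merits: the ``if'' direction is essentially the standard route (Kolmogorov-factor $F$, realize $K$ as a pullback of the Drury--Arveson kernel, then invoke the complete Nevanlinna--Pick property of that kernel via a lurking-isometry/transfer-function argument), and this is fine. Your ``only if'' direction, however, is not yet a proof. You postulate vectors $v_i$ with $\|v_i\|^2 = F(x_i,x_i)$ and assert that ``a compactness and extremality argument then forces $\langle v_i,v_j\rangle = F(x_i,x_j)$,'' but you have not said what functional is being extremized, over which compact set, or why an extremizer must have those specific inner products. The actual arguments in the literature (McCullough, Quiggin, and the treatment in Agler--McCarthy) do not proceed by an ad hoc extremal construction of this kind; they analyze the structure of Pick matrices more directly. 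If you want to salvage your approach, you would need to specify the extremal problem precisely and give a genuine uniqueness argument for the off-diagonal entries---as written, this step is a gap rather than a sketch.
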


If $\cH$ is a reproducing kernel Hilbert space on a set $X$ with kernel $K$, and if $Y \subset X$,
then $\cH \big|_Y$ denotes the reproducing kernel Hilbert space on $Y$ with kernel $K \big|_{Y \times Y}$.
Equivalently,
\begin{equation*}
  \cH \big|_Y = \{ f \big|_Y: f \in \cH\},
\end{equation*}
and the norm on $\cH \big|_Y$ is defined such that the map
\begin{equation*}
  \cH \to \cH \big|_Y, \quad f \mapsto f \big|_Y,
\end{equation*}
is a co-isometry (see \cite[Section 5]{Aronszajn50}). The definitions immediately imply that
restriction map
\begin{equation*}
  \Mult(\cH) \to \Mult(\cH \big|_Y), \quad \varphi \mapsto \varphi \big|_Y,
\end{equation*}
is a contraction. It is not hard to see that if $\cH$ is a Nevanlinna-Pick space, then this map is
a quotient map, and in particular surjective.

Our main source of examples is a class of spaces on the open unit ball $\bB_d$ in $\bC^d$. Occasionally, we will
allow $d = \infty$, in which case $\bC^d$ is understood to be $\ell_2$. To be precise,
by a \emph{unitarily invariant space on $\bB_d$}, we mean a reproducing kernel
Hilbert space $\cH$ on $\bB_d$ with reproducing kernel $K$ which is normalized at $0$, analytic in the first component,
and satisfies
\begin{equation*}
  K(Uz, Uw) = K(z,w)
\end{equation*}
for all $z,w \in \bB_d$ and all unitary maps $U$ on $\bC^d$.
Spaces of this type appear throughout the literature, see for example \cite[Section 4]{GHX04} or \cite[Section 4]{GRS02}.
The following characterization of unitarily invariant spaces is well known.
Since we do not have a convenient reference
for the proof, it is provided below.

\begin{lem}
  \label{lem:unitarily_invariant_power_series}
  Let $d \in \bN \cup \{\infty\}$ and let $K: \bB_d \times \bB_d \to \bC$ be a function. The following are equivalent:
    \begin{enumerate}[label=\normalfont{(\roman*)}]
      \item $K$ is a positive definite kernel which is normalized at $0$, analytic in the first component,
        and satisfies $K(z,w) = K(U z, U w)$ for all $z,w \in \bB_d$ and all unitary maps $U$ on $\bC^d$.
      \item There is a sequence $(a_n)_n$ of non-negative real numbers with $a_0 = 1$ such that
        \begin{equation*}
          K(z,w) = \sum_{n=0}^\infty a_n \langle z,w \rangle^n
        \end{equation*}
        for all $z,w \in \bB_d$.
    \end{enumerate}
\end{lem}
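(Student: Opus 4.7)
The direction (ii) $\Rightarrow$ (i) is routine: each kernel $(z,w) \mapsto \langle z,w\rangle^n$ is positive definite (being the $n$-th Hadamard power of the positive definite kernel $\langle z,w\rangle$, or equivalently the reproducing kernel of the $n$-th symmetric tensor power of $\bC^d$), so nonnegative combinations remain positive definite. Analyticity in $z$, normalization at $0$, and $U(d)$-invariance are visible from the series.

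For (i) $\Rightarrow$ (ii), the plan is to expand $K$ as a joint power series and then exploit invariance. Positive definiteness gives the Hermitian property $K(z,w) = \ol{K(w,z)}$, so analyticity in the first variable promotes to antiholomorphicity in the second; viewing $K$ as a function of $(z, \ol w)$, separate holomorphy and Hartogs' theorem yield a joint expansion
\begin{equation*}
  K(z,w) = \sum_{\alpha,\beta} c_{\alpha,\beta}\, z^\alpha \ol{w}^\beta
\end{equation*}
on $\bB_d \times \bB_d$. Applying the diagonal unitaries $\operatorname{diag}(e^{i\theta_1},\dots, e^{i\theta_d})$ and matching Fourier modes in the $\theta_j$ kills every term with $\alpha \neq \beta$, yielding the bi-homogeneous decomposition $K = \sum_n K_n$ with $K_n(z,w) = \sum_{|\alpha|=n} c_{\alpha,\alpha}\, z^\alpha \ol{w}^\alpha$, itself $U(d)$-invariant.

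The core step is to identify each $K_n$ with a scalar multiple of $\langle z,w\rangle^n$. Setting $w = e_1 := (1,0,\dots,0)$ collapses the sum to $K_n(z, e_1) = c_{(n,0,\dots,0)} z_1^n = a_n \langle z, e_1\rangle^n$, with $a_n := c_{(n,0,\dots,0)}$. For arbitrary $w \in \bB_d \setminus \{0\}$, choose $U \in U(d)$ with $U e_1 = w/\|w\|$ and use bi-homogeneity together with $U(d)$-invariance to propagate this to $K_n(z,w) = a_n \langle z, w\rangle^n$; summing over $n$ then gives the desired form. Nonnegativity of $a_n$ comes from the feature-space representation of $K$: the map $w \mapsto K(\cdot,w) \in \cH$ is antiholomorphic, so expands as $K(\cdot,w) = \sum_\beta \ol{w}^\beta g_\beta$ in $\cH$, from which $c_{\alpha,\alpha} = \|g_\alpha\|^2 \geq 0$. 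Finally, $a_0 = K(z,0) = 1$ is exactly the normalization at $0$.

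The only mildly subtle point is the passage from separate analyticity in each variable to the joint sesqui-analytic expansion on $\bB_d \times \bB_d$; after that, every remaining step is straightforward invariance bookkeeping. The case $d = \infty$ is handled by restricting to finite-dimensional coordinate subspaces, applying the finite-dimensional result, and observing that the coefficients $a_n = K_n(e_1, e_1)$ are intrinsic and hence agree across slices.
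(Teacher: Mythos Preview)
Your argument is correct, but the route differs from the paper's. You expand $K(z,\ol w)$ as a joint power series via Hartogs' theorem on the complete Reinhardt domain $\bB_d \times \bB_d$, then use the diagonal torus to force $\alpha=\beta$, and finally use the full unitary group on each bi-homogeneous piece to identify $K_n$ with $a_n\langle z,w\rangle^n$. The paper instead shows directly that $K(z_1,w_1)=K(z_2,w_2)$ whenever $\langle z_1,w_1\rangle=\langle z_2,w_2\rangle$, by first proving $K(\lambda z,w)=K(z,\ol\lambda w)$ for $\lambda\in\ol\bD$ via one-variable analytic continuation from $\bT$, then reducing to $w_1=w_2=w$ and using rotations in $(\bC w)^\perp$ to see that $K(z,w)$ depends only on the component of $z$ along $w$. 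This produces $K(z,w)=f(\langle z,w\rangle)$ for an analytic $f$ with $f(0)=1$, and nonnegativity of the Taylor coefficients follows from positive definiteness on a one-dimensional slice.

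The trade-offs: the paper's argument is more elementary (only one-variable complex analysis, no Hartogs or Reinhardt-domain theory) and treats $d=\infty$ uniformly with no separate reduction. Your approach is the expected ``expand and match'' route and makes the coefficient structure transparent, at the cost of invoking several-variable machinery and handling $d=\infty$ by restriction. One small technical wrinkle in your write-up: you evaluate $K_n$ at $e_1$ and $w/\|w\|$, which lie on the sphere rather than in $\bB_d$; since for finite $d$ each $K_n$ is a polynomial this is harmless, but you should say so (or work with $t e_1$ for $t<1$ and use homogeneity).
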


\begin{proof}

  (ii) $\Rightarrow$ (i) By the Schur product theorem, the map $(z,w) \mapsto \langle z,w \rangle^n$ is positive
  definite for all $n \in \bN$, hence $K$ is positive definite. Clearly, $K$ is normalized at $0$ and
  invariant under unitary maps of $\bC^d$. Moreover, for fixed $w \in \bB_d$, the series in (ii) converges
  uniformly in $z$ on $\bB_d$, hence $K$ is analytic in the first variable.

  (i) $\Rightarrow$ (ii)
  Let $z_1,w_1,z_2,w_2 \in \bB_d$ satisfy $\langle z_1,w_1 \rangle = \langle z_2,w_2 \rangle$. We will show
  that $K(z_1,w_1) = K(z_2,w_2)$. This will complete the proof, since then, there
  exists a function $f: \bD \to \bC$ such that $K(z,w) = f(\langle z,w \rangle)$
  for all $z,w \in \bB_d$. Since $K$ is analytic in the first component and is normalized at the origin, $f$ is
  necessarily analytic and satisfies $f(0) = 1$. Positive definiteness of $K$ finally implies that
  the Taylor coefficients of $f$ at $0$ are non-negative, see the proof of \cite[Theorem 7.33]{AM02} and also Corollary \ref{cor:powerseries_kernel} below.

  In order to show that $K(z_1,w_1) = K(z_2,w_2)$, first note that for $z,w \in \bB_d$, the identity
  \begin{equation*}
    K(\lambda z, w) = K(z, \ol{\lambda} w)
  \end{equation*}
  holds for all $\lambda \in \bT$, as multiplication by a complex scalar of modulus $1$ is a unitary map on $\bC^d$.
  Since $K(z,\ol{\lambda} w) = \ol{K(\ol{\lambda} w,z)}$, we see that both sides of the above equation define
  analytic maps in $\lambda$ in an open neighbourhood of $\ol{\bD}$, hence the above identity holds for all $\lambda \in \ol{\bD}$.
  In particular, we see that
  \begin{equation*}
    K(r z, w) = K(z, r w)
  \end{equation*}
  for $z,w \in \bB_d$ and $r \in [0,1]$.
  Consequently, we may without loss of generality
  assume that $||w_1|| = ||w_2||$. Then there exists a unitary map on $\bC^d$ which maps $w_1$ onto $w_2$. Since
  $K$ is invariant under unitary maps by assumption, and so is the scalar product $\langle \cdot,\cdot \rangle$, we 
  may in fact suppose that $w_1 = w_2$. Let $w$ denote this vector. Since $K$ is normalized at $0$,
  the claim is obvious if $w = 0$, so assume that $w \neq 0$.

  From the assumption $\langle z_1,w \rangle = \langle z_2 ,w \rangle$, we deduce that there exist vectors $v,r_1,r_2 \in \bC^d$
  such that $v \in \bC w$ and $r_1,r_2 \in (\bC w)^\bot$ and such that
  \begin{equation*}
    z_i = v + r_i \quad (i=1,2).
  \end{equation*}
  For $\lambda \in \bT$, let $U_\lambda$ denote the unitary map on $\bC^d$ which fixes $\bC w$ and acts
  as multiplication by $\lambda$ on $(\bC w)^\bot$. Then for $i=1,2$ and $\lambda \in \bT$, we have
  \begin{equation*}
    K(z_i,w)= K(U_\lambda z_i, U_\lambda w) = K(v + \lambda r_i, w).
  \end{equation*}
  Observe that the right-hand side defines an analytic function in $\lambda$ in an open neighbourhood of $\ol{\bD}$, which is
  therefore constant. In particular,
  \begin{equation*}
    K(z_1,w) = K(v,w) = K(z_2,w),
  \end{equation*}
  which completes the proof.
\end{proof}

If $\cH$ is a unitarily invariant space on $\bB_d$, then it easily follows from the representation
of the kernel in part (ii) of the preceding lemma that convergence in $\cH$ implies
uniform convergence on $r \bB_d$ for $0 < r < 1$. Since the kernel functions $K(\cdot,w)$ for $w \in \bB_d$
are analytic by assumption, and since finite linear combinations of kernel functions are dense in $\cH$,
we therefore see that every function in $\cH$ is analytic on $\bB_d$.

We also require the following straightforward generalization of \cite[Theorem 7.33]{AM02}.
\begin{lem}
  \label{lem:NP_a_b}
  Let $d \in \bN \cup \{\infty\}$ and let $\cH$ be a unitarily invariant space on $\bB_d$ with reproducing kernel
  \begin{equation*}
    K(z,w) = \sum_{n=0}^\infty a_n \langle z,w \rangle^n,
  \end{equation*}
  where $a_0 = 1$. Assume that $a_1 > 0.$ Then the following are equivalent:
  \begin{enumerate}[label=\normalfont{(\roman*)}]
    \item $\cH$ is an irreducible complete Nevanlinna-Pick space.
    \item The sequence $(b_n)_{n=1}^\infty$ defined by
      \begin{equation*}
        \sum_{n=1}^\infty b_n t^n = 1 - \frac{1}{\sum_{n=0}^\infty a_n t^n}
      \end{equation*}
      for $t$ in a neighbourhood of $0$ is a sequence of non-negative real numbers.
  \end{enumerate}
  In particular, if (ii) holds, then $\cH$ is automatically irreducible.
\end{lem}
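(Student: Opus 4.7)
The plan is to reduce both directions to Theorem \ref{thm:CNP_char} applied to the kernel $K$, which is already normalized at $0$. Write $f(t) = \sum_{n \geq 0} a_n t^n$ and $g(t) = \sum_{n \geq 1} b_n t^n$, so that the defining relation reads $g(t) = 1 - 1/f(t)$ in a neighbourhood of the origin.

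For (i) $\Rightarrow$ (ii) I would invoke Theorem \ref{thm:CNP_char} to get that $F := 1 - 1/K$ is positive definite on $\bB_d \times \bB_d$; here irreducibility of $\cH$ is what makes $1/K$ well-defined and analytic in the first variable. Unitary invariance of $K$ transports to $F$, so Lemma \ref{lem:unitarily_invariant_power_series} (or rather the argument within its proof) shows $F(z,w) = g(\langle z, w\rangle)$ where $g$ is analytic at $0$ with $g(0)=0$, matching the power series in (ii). To conclude $b_n \geq 0$, I would run the standard extraction argument from the proof of \cite[Theorem 7.33]{AM02}: fix a unit vector $e \in \bC^d$, an integer $N$, a primitive $N$-th root of unity $\omega$, and test positive definiteness at the points $z_k = r \omega^k e$ with coefficients $c_k = \omega^{-mk}$; orthogonality of characters collapses the sum to $\sum_{n \equiv m \pmod N} b_n r^{2n} \geq 0$, and letting $N > m$ and $r \to 0$ forces $b_m \geq 0$.

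For (ii) $\Rightarrow$ (i), the first step is to check that $\sum b_n t^n$ has radius of convergence at least $1$. By Lemma \ref{lem:unitarily_invariant_power_series} the coefficients $a_n$ are non-negative, so $f(t) \geq a_0 = 1$ for $t \in [0,1)$; hence $g(t) = 1 - 1/f(t) \in [0,1)$ on $[0,1)$, and since $b_n \geq 0$ this bound propagates to give $\sum b_n t^n < \infty$ for all $t \in [0,1)$. Consequently $L(z,w) := \sum b_n \langle z, w\rangle^n$ converges on $\bB_d \times \bB_d$, and the Schur product theorem, combined with $b_n \geq 0$, shows $L$ is positive definite. For $(z,w) \in \bB_d \times \bB_d$ one has $|L(z,w)| \leq \sum b_n |\langle z,w\rangle|^n < 1$, so $1 - L$ is zero-free and $K = 1/(1 - L)$ on all of $\bB_d \times \bB_d$ by analytic continuation.

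It remains to verify that $\cH$ is irreducible, after which Theorem \ref{thm:CNP_char} furnishes the complete Nevanlinna-Pick property. Nonvanishing of $K$ is immediate from the formula $K = 1/(1 - L)$. For linear independence of $K(\cdot, z)$ and $K(\cdot, w)$ when $z \neq w$, observe that $K(\cdot, z) = \lambda K(\cdot, w)$ forces $\lambda = 1$ by evaluating at $0$ (using the normalization), and then the first-order Taylor coefficient in $y$ of $K(y,z) - K(y,w) = \sum a_n(\langle y,z\rangle^n - \langle y,w\rangle^n)$ equals $a_1 \langle y, z - w\rangle$; the hypothesis $a_1 > 0$ then yields $z = w$, a contradiction. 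This also establishes the final sentence of the lemma. The only place where any real work happens is the character-averaging extraction of the sign of $b_n$, and even there the standard template from \cite[Theorem 7.33]{AM02} applies essentially verbatim because $F$ descends to a function of $\langle z, w\rangle$.
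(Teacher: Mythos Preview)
Your argument is correct and follows essentially the same route as the paper: both directions reduce to Theorem~\ref{thm:CNP_char}, and the only substantive step in (ii) $\Rightarrow$ (i) is proving that $K$ never vanishes (equivalently, that $\sum b_n t^n$ converges on all of $\bD$), which both you and the paper handle via the non-negativity of $b_n$.

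The one place where your write-up is too quick is the sentence ``since $b_n \geq 0$ this bound propagates to give $\sum b_n t^n < \infty$ for all $t \in [0,1)$.'' The bound $g(t) = 1 - 1/f(t) < 1$ on $[0,1)$ is a bound on the \emph{function} $g$, not on the power series, and the two are only known to agree inside the radius of convergence $R$. Showing $R \geq 1$ requires Pringsheim's theorem (or its contrapositive): if $R < 1$ then, since $b_n \geq 0$, the point $t = R$ would be a singularity of $g$, yet $f(R) \geq 1$ forces $g$ to be analytic there. This is exactly what the paper does, phrased as ``if $t_0$ is a zero of $f$ of minimal modulus then $|t_0|$ is a pole of $\sum b_n t^n$, hence $f(|t_0|) = 0$, contradicting $a_n \geq 0$.'' Once you insert this one line, your proof and the paper's coincide.
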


\begin{proof}
  Observe that
  \begin{equation*}
    1- \frac{1}{K(z,w)} = \sum_{n=1}^\infty b_n \langle z,w \rangle^n.
  \end{equation*}
  It is known that this kernel is positive if and only if $b_n \ge 0$ for all $n \ge 1$
  (see the proof of \cite[Theorem 7.33]{AM02}, and also Corollary \ref{cor:powerseries_kernel} below).
  Consequently, the implication (i) $\Rightarrow$ (ii) follows from Theorem \ref{thm:CNP_char},
  and
  (ii) $\Rightarrow$ (i) will follow from the same result, once we observe that $\cH$ is irreducible
  in the setting of (ii).

  Since $a_0 = 1$ and $a_1 > 0$, the space $\cH$ contains the constant
  function $1$ and the coordinate functions (see \cite[Proposition 4.1]{GHX04} or \cite[Section 4]{GRS02}),
  from which it readily follows that $K(\cdot,x)$ and $K(\cdot,y)$ are linearly independent
  if $x \neq y$. We finish the proof by showing that $\sum_{n=0}^\infty a_n t^n$ never vanishes on $\bD$.
  Assume toward a contradiction that $t_0 \in \bD$ is a zero of $\sum_{n=0}^\infty a_n t^n$
  of minimal modulus. Then the equality in (ii) holds for all $t \in \bD$ with $|t| < |t_0|$,
  and $t_0$ is a pole of $\sum_{n=1}^\infty b_n t^n$. Since $b_n \ge 0$ for $n \ge 1$, this implies
  that $|t_0|$ is a pole of $\sum_{n=1}^\infty b_n t^n$, and consequently $|t_0|$ is a zero
  of $\sum_{n=0}^\infty a_n t^n$. This is a contradiction, since $a_0 = 1$ and $a_n \ge 0$ for
  $n \ge 0$, and the proof is complete.
\end{proof}

Perhaps the most important example of a unitarily invariant complete Nevanlinna-Pick space is the Drury-Arveson space
$H^2_m$ on $\bB_m$, where $m \in \bN \cup \{\infty\}$.
This space corresponds to the choice $a_n = 1$ for all $n \in \bN$ above, hence its reproducing
kernel is given by
\begin{equation*}
  k_m(z,w) = \frac{1}{1 - \langle z,w \rangle }.
\end{equation*}
The following theorem of Agler and McCarthy \cite{Am00a} (see also \cite[Theorem 8.2]{AM02}) asserts
that $H^2_m$ is a universal complete Nevanlinna-Pick space.

\begin{thm}[Agler-McCarthy]
  \label{thm:DA_univ}
  If $\cH$ is a normalized irreducible complete Nevanlinna-Pick space on a set $X$ with kernel $K$, then
  there exists $m \in \bN \cup \{\infty\}$ and an embedding $j: X \to \bB_m$ such that
  \begin{equation*}
    K(z,w) = k_m(j(z),j(w)) \quad (z,w \in X).
  \end{equation*}
  In this case, $f \mapsto f \circ j$ defines a unitary operator from $H^2_m \big|_{j(X)}$
  onto $\cH$.
\end{thm}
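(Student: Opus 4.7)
The plan is to construct $j$ directly from the kernel, using the McCullough--Quiggin/Agler--McCarthy characterization (Theorem~\ref{thm:CNP_char}) to get a factorization, and then to recognize the Drury--Arveson kernel on the target.

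First, since $\cH$ is an irreducible normalized complete Nevanlinna--Pick space, Theorem~\ref{thm:CNP_char} gives that $F(x,y) = 1 - 1/K(x,y)$ is a positive definite Hermitian kernel on $X$ (note that by irreducibility $K(x,y) \neq 0$ everywhere, so the expression makes sense). By the Moore--Aronszajn/Kolmogorov factorization theorem, there exists a separable Hilbert space $H$ and a map $j\colon X \to H$ with
\begin{equation*}
  F(x,y) = \langle j(x), j(y) \rangle_H \quad (x,y \in X),
\end{equation*}
and one can take $H$ to be the closed span of $\{j(x) : x \in X\}$. Choosing an orthonormal basis identifies $H$ with $\bC^m$ for some $m \in \bN \cup \{\infty\}$.

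Next I verify $j(X) \subset \bB_m$: since $F$ is positive definite at a single point, $\|j(x)\|^2 = F(x,x) = 1 - 1/K(x,x)$, and because $K(x,x)$ is finite and positive, this quantity lies in $[0,1)$. Computing on $j(X)$:
\begin{equation*}
  k_m(j(x), j(y)) = \frac{1}{1 - \langle j(x), j(y)\rangle} = \frac{1}{1 - F(x,y)} = \frac{1}{1/K(x,y)} = K(x,y),
\end{equation*}
giving the desired kernel identity. Injectivity of $j$ follows from irreducibility: if $j(x) = j(y)$, the identity above forces $K(\cdot,x) = K(\cdot,y)$ as functions on $X$, contradicting linear independence of these kernel sections for $x \neq y$.

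Finally, for the unitarity statement, I use the standard kernel-function correspondence. The space $H^2_m\big|_{j(X)}$ is the reproducing kernel Hilbert space on $j(X)$ with kernel $k_m\big|_{j(X) \times j(X)}$, so its kernel functions are $k_m(\cdot, j(y))\big|_{j(X)}$ for $y \in X$. Define $U\colon H^2_m\big|_{j(X)} \to \cH$ on these generators by $U\bigl(k_m(\cdot, j(y))\big|_{j(X)}\bigr) = K(\cdot, y)$ and extend linearly. The kernel identity ensures that $U$ preserves inner products on the dense span of kernel functions, hence extends to a unitary, and by construction it implements $f \mapsto f \circ j$ on kernel functions, hence on all of $H^2_m\big|_{j(X)}$.

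The only nontrivial step is the factorization of $F$, but this is a standard consequence of positive definiteness; the rest amounts to bookkeeping. No single step is a genuine obstacle provided one freely invokes Theorem~\ref{thm:CNP_char}, which packages the difficult McCullough--Quiggin theorem.
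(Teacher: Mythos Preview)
The paper does not supply its own proof of this theorem; it is stated in the preliminaries section as a result of Agler and McCarthy, with a reference to \cite{Am00a} and \cite[Theorem~8.2]{AM02}, and is used as a black box thereafter. So there is no in-paper argument to compare your proposal against.

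That said, your sketch is correct and is essentially the standard derivation once Theorem~\ref{thm:CNP_char} is available: factor the positive definite kernel $F = 1 - 1/K$ through a Hilbert space via Kolmogorov/Moore--Aronszajn, recognize the resulting inner-product formula as the Drury--Arveson kernel on the image, and check injectivity and unitarity from the kernel identity. One small point worth making explicit is why the factorization space can be taken separable (so that $m \in \bN \cup \{\infty\}$): this follows from the paper's standing assumption that all reproducing kernel Hilbert spaces considered are separable, which forces the RKHS associated to $F$ to be separable as well. With that caveat, no step is missing.
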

In this setting, we say that $j$ is \emph{an embedding for $\cH$}.

\section{From multiplier algebras to kernels}
\label{sec:mult_to_kernel}

We begin by observing that the kernel of a Nevanlinna-Pick space can be recovered from the
isometric structure of its multiplier algebra. Results similar to the next proposition
are well known, see for example \cite{GRS02} and \cite{AM02}, especially Exercise 8.35. Since we do not
have a reference for the exact statement, a complete proof is provided.

\begin{prop}
  \label{prop:kernel_recover_iso}
  Let $\cH$ be an irreducible reproducing kernel Hilbert space on a set $X$ with kernel $K$.
  Suppose that $K$ is normalized at $x_0 \in X$ and satisfies the two-point Nevanlinna-Pick property.
  Then
  \begin{equation*}
    \sup \{ \Re \varphi(w) : ||\varphi||_{\Mult(\cH)} \le 1 \text { and } \varphi(x_0) = 0 \} = \Big( 1 - \frac{1}{K(w,w)} \Big)^{1/2}
  \end{equation*}
  for every $w \in X$, and this number is strictly positive if $w \neq x_0$.
  Moreover, there is a unique multiplier $\varphi_w$ which achieves the supremum if $w \neq x_0$, namely
  \begin{equation*}
    \varphi_w(z) = \frac{1 - \frac{1}{K(z,w)}}{\sqrt{1 - \frac{1}{K(w,w)}}}.
  \end{equation*}
  Equivalently,
  \begin{equation*}
    K(z,w) = \frac{1}{1 - \varphi_w(z) \varphi_w(w)}.
  \end{equation*}
\end{prop}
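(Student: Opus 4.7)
The plan is to apply the $2$-point Nevanlinna-Pick property at the pair $x_0, w$, exploiting that the normalization $K(\cdot, x_0) \equiv 1$ reduces all relevant Pick matrices to very tractable form. For any admissible $\varphi$, set $\lambda := \varphi(w)$; since $\varphi(x_0) = 0$ and $K(x_0, x_0) = K(x_0, w) = K(w, x_0) = 1$, the Pick matrix at the data $(x_0, 0)$ and $(w, \lambda)$ is
$$\begin{pmatrix} 1 & 1 \\ 1 & (1 - |\lambda|^2) K(w, w) \end{pmatrix},$$
whose positivity forces $|\lambda|^2 \le 1 - 1/K(w, w) =: c^2$. This yields the upper bound $\Re \varphi(w) \le |\varphi(w)| \le c$.

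For attainment, at $\lambda = c$ the Pick matrix becomes the all-ones matrix, which is positive semidefinite, so the $2$-point Nevanlinna-Pick property directly produces some admissible $\varphi$ with $\varphi(w) = c$, attaining the supremum. When $w \neq x_0$, irreducibility forces $K(\cdot, x_0)$ and $K(\cdot, w)$ to be linearly independent, making the Gram matrix $\begin{pmatrix} 1 & 1 \\ 1 & K(w,w) \end{pmatrix}$ positive definite; hence $K(w, w) > 1$ and $c > 0$.

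The delicate step will be to extract the explicit formula for $\varphi_w$ and establish uniqueness, since the $2$-point Nevanlinna-Pick property only guarantees existence. For any extremal admissible $\varphi$ (necessarily with $\varphi(w) = c$), let $P := I - M_\varphi M_\varphi^* \ge 0$ and use that $M_\varphi^* K_x = \overline{\varphi(x)}\, K_x$ for every $x \in X$. A direct computation, using $c^2 = 1 - 1/K(w,w)$, gives
$$\langle P K_{x_0}, K_{x_0} \rangle = \langle P K_{x_0}, K_w \rangle = \langle P K_w, K_w \rangle = 1,$$
so the Gram matrix of $P^{1/2} K_{x_0}$ and $P^{1/2} K_w$ has rank one; the equality case of Cauchy-Schwarz then forces $P^{1/2} K_{x_0} = P^{1/2} K_w$. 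Testing against an arbitrary $K_z$ yields $\langle P K_{x_0}, K_z \rangle = \langle P K_w, K_z \rangle$, which after expansion reads $1 = K(z, w)(1 - c \varphi(z))$; rearranging gives the claimed formula $\varphi(z) = (1 - 1/K(z, w))/c$, simultaneously pinning down both uniqueness and the explicit description of $\varphi_w$. The equivalent kernel identity $K(z, w) = 1/(1 - \varphi_w(z) \varphi_w(w))$ then follows immediately from $\varphi_w(w) = c$.
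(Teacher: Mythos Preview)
Your proof is correct and follows essentially the same approach as the paper's: the two-point Pick matrix at $(x_0,w)$ for the supremum, Cauchy--Schwarz and irreducibility for strict positivity, and then a rank-degeneracy argument for uniqueness. The only difference is cosmetic packaging of the last step---the paper writes down the $3\times 3$ Pick matrix at $(x_0,w,z)$ and computes its determinant to be $-|1-K(z,w)(1-\varphi(z)\overline{\varphi(w)})|^2$, while you phrase the same computation via $P=I-M_\varphi M_\varphi^*$ and the Cauchy--Schwarz equality case for $P^{1/2}K_{x_0}$ and $P^{1/2}K_w$; since that Pick matrix is exactly the Gram matrix of $P^{1/2}K_{x_0},P^{1/2}K_w,P^{1/2}K_z$, the two arguments are identical.
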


\begin{proof}
  By the two-point Nevanlinna-Pick property, there exists a contractive multiplier $\varphi$ with
  $\varphi(x_0) = 0$ and $\varphi(w) = \lambda$ if and only if
  the Pick matrix at points $(x_0, w)$,
  \begin{equation*}
    \begin{pmatrix}
      1 & 1 \\ 1 & K(w,w) (1 - |\lambda|^2)
    \end{pmatrix},
  \end{equation*}
  is positive, which, in turn, happens if and only if
  \begin{equation*}
    K(w,w) \ge \frac{1}{1 - |\lambda|^2}.
  \end{equation*}
  This proves the formula for the supremum. Moreover, we see that the supremum
  is actually attained.

  Irreducibility of $\cH$ implies that $K(w,w) > 1$ if $w \neq x_0$. Indeed, since $K$
  is normalized at $x_0$, we have
  \begin{equation*}
    1 = K(x_0,w) = |\langle K(\cdot,w), K(\cdot,x_0) \rangle| \le K(w,w)^{1/2}
  \end{equation*}
  by Cauchy-Schwarz,
  with equality occurring only if $K(\cdot,w)$ and $K(\cdot,x_0)$ are linearly dependent. Since $\cH$
  is irreducible, this only happens if $w = x_0$.

  Let $\varphi = \varphi_w$ be any multiplier which achieves the supremum.
  If $z \in X$ is arbitrary, then the Pick matrix at points $(x_0, w ,z)$,
  \begin{equation*}
    \begin{pmatrix}
      1 & 1 & 1 \\
      1 & 1 & K(w,z) (1 - \varphi(w) \overline{\varphi(z)}) \\
      1 & K(z,w) (1 - \varphi(z) \overline{\varphi(w)}) & K(z,z) (1 - |\varphi(z)|^2)
    \end{pmatrix},
  \end{equation*}
  is positive, since $||\varphi||_{\Mult(\cH)} \le 1$ (observe that the three-point Nevanlinna-Pick property
  is not needed for this implication).
  The determinant of this matrix is
  \begin{equation*}
    -|1 - K(z,w) (1 - \varphi(z) \ol{\varphi(w)})|^2,
  \end{equation*}
  hence
  \begin{equation*}
    K(z,w) (1 - \varphi_w(z) \overline{\varphi_w(w)}) = 1.
  \end{equation*}
  Since
  \begin{equation*}
    \varphi_w(w) = \Big(1 - \frac{1}{K(w,w)} \Big)^{1/2},
  \end{equation*}
  the formula for $\varphi_w$ follows. In particular, $\varphi_w$ is unique if $w \neq x_0$.
\end{proof}

The following consequence, which generalizes Section 5.4 in \cite{AM02}, is immediate.

\begin{cor}
  \label{cor:multiplier_equal}
  Let $\cH_1$ and $\cH_2$ be two irreducible Nevanlinna-Pick spaces
  on the same set $X$, with kernels $K_1$ and $K_2$, respectively, which are
  normalized at a point $x_0 \in X$.
  Then the following are equivalent:
  \begin{enumerate}[label=\normalfont{(\roman*)}]
    \item $\Mult(\cH_1) = \Mult(\cH_2)$ isometrically.
    \item $\Mult(\cH_1) = \Mult(\cH_2)$ completely isometrically.
    \item $\cH_1 = \cH_2$ isometrically.
    \item $K_1 = K_2$.
  \end{enumerate}
\end{cor}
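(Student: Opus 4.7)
The plan is to prove the cycle (i) $\Rightarrow$ (iv) $\Rightarrow$ (iii) $\Rightarrow$ (ii) $\Rightarrow$ (i). Three of these four implications are essentially bookkeeping. The implication (ii) $\Rightarrow$ (i) is trivial. The implication (iv) $\Rightarrow$ (iii) is the standard fact that the reproducing kernel determines the reproducing kernel Hilbert space together with its inner product. For (iii) $\Rightarrow$ (ii), if $\cH_1$ and $\cH_2$ are literally the same Hilbert space of functions on $X$, then they share the same bounded operators and in particular the same multipliers with identical norms; since matrix multiplier norms coincide with operator norms of the corresponding multiplication operators on $\cH \otimes \bC^n$, this equality is automatically complete.

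The one substantive implication is (i) $\Rightarrow$ (iv), and this is exactly what Proposition \ref{prop:kernel_recover_iso} was designed to deliver. Assume $\Mult(\cH_1) = \Mult(\cH_2)$ as algebras of functions on $X$ with identical norms. Fix $w \in X$. The quantity
\begin{equation*}
  \sup \{ \Re \varphi(w) : \|\varphi\|_{\Mult} \le 1 \text{ and } \varphi(x_0) = 0 \}
\end{equation*}
is determined purely by the isometric algebra structure together with point evaluations at $x_0$ and $w$, hence takes the same value in both spaces. By Proposition \ref{prop:kernel_recover_iso}, this value equals both $(1 - 1/K_1(w,w))^{1/2}$ and $(1 - 1/K_2(w,w))^{1/2}$, so $K_1(w,w) = K_2(w,w)$.

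For $w \neq x_0$, irreducibility gives $K_j(w,w) > 1$, so the supremum is strictly positive and is achieved by a unique multiplier $\varphi_w$. Uniqueness forces this extremal multiplier to be the same function $\varphi_w \in \Mult(\cH_1) = \Mult(\cH_2)$ for both spaces. Applying the formula
\begin{equation*}
  K_j(z,w) = \frac{1}{1 - \varphi_w(z)\, \overline{\varphi_w(w)}} \qquad (j = 1, 2)
\end{equation*}
from the same proposition then yields $K_1(z,w) = K_2(z,w)$ for every $z \in X$. For $w = x_0$ the equality $K_1(z, x_0) = 1 = K_2(z, x_0)$ is immediate from normalization. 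Hence $K_1 = K_2$, closing the cycle.

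I do not expect any genuine obstacle: the only point that requires a moment's care is verifying that the data used in Proposition \ref{prop:kernel_recover_iso}, namely the unit ball of $\Mult$, the norm, and pointwise evaluation at the two points $x_0$ and $w$, are indeed preserved under an isometric isomorphism of multiplier algebras realized as function algebras on $X$. Since the multipliers are identified as functions, this is tautological, and the entire proof reduces to assembling the four implications as above.
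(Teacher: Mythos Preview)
Your proof is correct and follows essentially the same approach as the paper: the paper simply notes that the implications (iv) $\Rightarrow$ (iii) $\Rightarrow$ (ii) $\Rightarrow$ (i) are clear and that (i) $\Rightarrow$ (iv) follows from Proposition~\ref{prop:kernel_recover_iso}. Your write-up spells out how Proposition~\ref{prop:kernel_recover_iso} is applied (first matching diagonal values via the supremum, then off-diagonal values via uniqueness of the extremal multiplier), which is a faithful expansion of the same argument.
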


\begin{proof}
  The implications (iv) $\Rightarrow$ (iii) $\Rightarrow$ (ii) $\Rightarrow$ (i) are clear.
  The implication (i) $\Rightarrow$ (iv) follows from the preceding proposition.
\end{proof}

Observe that the last result is generally false without the assumption that both spaces
are Nevanlinna-Pick spaces. Indeed, the Hardy space and the Bergman space on the unit disc
both have $H^\infty(\bD)$ as their multiplier algebra.

We can also use Proposition \ref{prop:kernel_recover_iso} to show that certain
algebras of functions are not multiplier algebras
of complete Nevanlinna-Pick spaces. For $H^\infty(\bB_d)$, this is done in Proposition 8.83 in \cite{AM02}.

\begin{cor}
  There is no irreducible reproducing kernel Hilbert space on $\bD^d$ for $d \ge 2$ which satisfies the two-point Nevanlinna-Pick property
  and whose multiplier algebra is $H^\infty(\bD^d)$.
\end{cor}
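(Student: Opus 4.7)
The plan is to derive a contradiction from the uniqueness assertion in Proposition \ref{prop:kernel_recover_iso} by exhibiting two distinct optimizers to the Schwarz-type extremal problem on the polydisc.

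I would proceed by contradiction, assuming such a space $\cH$ exists. Because $\cH$ is irreducible, its kernel $K$ satisfies $K(\cdot, 0) \neq 0$ throughout $\bD^d$, so I can normalize $K$ at $x_0 = 0$ via $\tilde K(x,y) = K(x,y) K(0,0)/(K(x,0) K(0,y))$. Since normalization corresponds to conjugation by a nonvanishing multiplication operator, it does not alter the multiplier algebra as a subalgebra of functions on $\bD^d$, nor its isometric structure, and it preserves the two-point Nevanlinna-Pick property. Hence I may assume from the outset that $K$ is normalized at $0$ and that $\Mult(\cH) = H^\infty(\bD^d)$ isometrically.

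With this set-up, Proposition \ref{prop:kernel_recover_iso} guarantees that for every $w \in \bD^d \setminus \{0\}$ there is a \emph{unique} contractive multiplier $\varphi_w$ with $\varphi_w(0) = 0$ which maximizes $\Re \varphi(w)$. Here is where the hypothesis $d \ge 2$ enters: take $w = (\tfrac12, \tfrac12, 0, \ldots, 0)$. The classical Schwarz lemma on the polydisc, obtained by slicing along the complex line $\lambda \mapsto \lambda w / \max_j |w_j|$, shows that any $\varphi \in H^\infty(\bD^d)$ with $\varphi(0) = 0$ and $\|\varphi\|_\infty \le 1$ satisfies $|\varphi(w)| \le \max_j |w_j| = \tfrac12$. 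On the other hand, the two distinct coordinate functions $\varphi_1(z) = z_1$ and $\varphi_2(z) = z_2$ both lie in the unit ball of $H^\infty(\bD^d)$, vanish at the origin, and satisfy $\Re \varphi_j(w) = \tfrac12$. This contradicts the uniqueness clause of Proposition \ref{prop:kernel_recover_iso}.

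The only mildly technical ingredient is the Schwarz bound $|\varphi(w)| \le \max_j |w_j|$, but this reduces immediately to the one-variable Schwarz lemma applied to $\psi(\lambda) = \varphi(\lambda w / \max_j |w_j|)$ on $\bD$. No further obstacles are anticipated; the argument is essentially formal once the uniqueness of the extremal multiplier is combined with the obvious redundancy among coordinate functions on the polydisc.
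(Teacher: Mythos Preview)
Your argument is correct and essentially identical to the paper's own proof: both normalize the kernel at $0$, take the point $w=(\tfrac12,\tfrac12,0,\ldots,0)$, use the one-variable Schwarz lemma along the diagonal slice $\lambda\mapsto(\lambda,\lambda,0,\ldots,0)$ to bound the extremal value by $\tfrac12$, and then observe that the distinct coordinate functions $z_1$ and $z_2$ both attain this value, contradicting the uniqueness clause of Proposition~\ref{prop:kernel_recover_iso}.
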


\begin{proof}
  Let $a= 1/2$ and let $w=(a,a,0,\ldots) \in \bD^d$. Let $\varphi \in H^\infty(\bD^d)$
  be non-constant with $||\varphi||_\infty \le 1$ and $\varphi(0) = 0$.
  Then $z \mapsto \varphi(z,z,0,\ldots)$ defines an analytic map from $\bD$ into $\bD$
  which fixes the origin, hence $|\varphi(a)| \le 1/2$ by the Schwarz lemma. In particular,
  \begin{equation*}
    \sup \{ \Re (\varphi(a)): ||\varphi||_{H^\infty(\bD^d)} \le 1 \text{ and }
  \varphi(0) = 0 \} \le \frac{1}{2}.
  \end{equation*}
  But there are several functions which realize the value $1/2$, for example the coordinate
  projections $z_1$ and $z_2$. In particular, the extremal problem with normalization point $0$
  in Proposition \ref{prop:kernel_recover_iso} does not have a unique solution. Since every
  non-vanishing kernel can be normalized at an arbitrary point without changing the multiplier algebra
  (see \cite[Section 2.6]{AM02}), it follows that
  $H^\infty(\bD^d)$ is not the multiplier algebra of an irreducible reproducing kernel Hilbert space
  which satisfies the two-point Nevanlinna-Pick property.
\end{proof}

We now consider a second way of recovering the reproducing kernel of a complete
Nevanlinna-Pick space from its multiplier algebra.
In contrast to Proposition \ref{prop:kernel_recover_iso},
this approach uses the operator space structure of the multiplier algebra.

If $\varphi: X \to \cB(\cE,\bC)$ is a function with $||\varphi(x)|| < 1$ for all $x \in X$,
where $\cE$ is an auxiliary Hilbert space, we define a kernel $K_\varphi$ on $X$ by
\begin{equation*}
  K_\varphi (z,w) = \frac{1}{1-\varphi(z)\varphi(w)^*}.
\end{equation*}
Expressing the last identity as a geometric series, we see that $K_\varphi$ is positive definite.

\begin{prop}
  \label{prop:kernel_recover_ci}
  Let $\cH$ be an irreducible reproducing kernel Hilbert space on a set $X$ with kernel $K$, normalized at $x_0 \in X$.
  Then $K$ is an upper bound for the set
  \begin{equation*}
    \Big\{ K_\varphi:  \varphi \in \Mult(\cH \otimes \cE, \cH) \text{ with } 
    ||M_\varphi|| \le 1 \text{ and } \varphi(x_0) = 0 \}
  \end{equation*}
  with respect to the partial order given by positivity. Moreover, $K$ is the maximum of this set if and only if
  $\cH$ is a complete Nevanlinna-Pick space.
\end{prop}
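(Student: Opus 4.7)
The plan is to prove the upper bound via a Schur complement argument, and then read off the characterization of complete Nevanlinna-Pick spaces directly from Theorem \ref{thm:CNP_char}. Throughout I will rely on the standard Pick-type characterization that a function $\varphi: X \to \cB(\cE,\bC)$ defines a contractive multiplier from $\cH \otimes \cE$ into $\cH$ if and only if the kernel
\begin{equation*}
P_\varphi(z,w) := \bigl(1 - \varphi(z)\varphi(w)^*\bigr) K(z,w)
\end{equation*}
is positive definite on $X$.

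For the upper bound, fix such a $\varphi$ with $\varphi(x_0) = 0$. Since $K$ is normalized at $x_0$, we have $P_\varphi(z,x_0) = P_\varphi(x_0,w) = 1$ identically. The key claim is that $P_\varphi - 1$ is itself positive definite. To see this, pick any finite set $\{z_1,\ldots,z_n\} \subset X$ and consider the positive matrix of $P_\varphi$ at the points $\{x_0,z_1,\ldots,z_n\}$. In block form it equals $\bigl(\begin{smallmatrix} 1 & \mathbf{1}^T \\ \mathbf{1} & A \end{smallmatrix}\bigr)$, and a Schur complement computation shows that its positivity is equivalent to $A - \mathbf{1}\mathbf{1}^T \ge 0$, which is exactly the matrix of $P_\varphi - 1$ at the points $z_1,\ldots,z_n$. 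Combined with the algebraic identity
\begin{equation*}
K(z,w) - K_\varphi(z,w) = K_\varphi(z,w)\bigl(P_\varphi(z,w) - 1\bigr),
\end{equation*}
which is immediate from $K_\varphi(1 - \varphi\varphi^*) \equiv 1$, this exhibits $K - K_\varphi$ as the Schur product of two positive kernels ($K_\varphi$ is positive by the geometric series expansion noted in the text), whence $K \ge K_\varphi$.

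For the second assertion, one direction is essentially tautological: if $K = K_\varphi$ for some $\varphi$ with $\varphi(x_0) = 0$, then $1 - 1/K(z,w) = \varphi(z)\varphi(w)^*$ is manifestly a positive definite Hermitian kernel, so $\cH$ is a complete Nevanlinna-Pick space by Theorem \ref{thm:CNP_char}. Conversely, suppose $\cH$ is CNP, so that $F := 1 - 1/K$ is positive definite. A Kolmogorov factorization produces a Hilbert space $\cE$ and a function $\varphi: X \to \cB(\cE,\bC)$ with $\varphi(z)\varphi(w)^* = F(z,w)$. Then $\varphi(x_0) = 0$ because $F(x_0,x_0) = 1 - 1 = 0$, and $(1 - \varphi\varphi^*)K = (1/K)\cdot K \equiv 1$ is trivially positive, so $\|M_\varphi\| \le 1$. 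Finally $K_\varphi = 1/(1-F) = K$, and $K_\varphi$ is well-defined as a function since $\|\varphi(z)\|^2 = F(z,z) = 1 - 1/K(z,z) < 1$.

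The main obstacle is the Schur complement step; however, once one realizes that the condition $\varphi(x_0) = 0$ together with the normalization of $K$ forces the distinguished $x_0$-row of $P_\varphi$ to be constantly $1$, peeling off the rank-one constant kernel reduces the matter to elementary linear algebra.
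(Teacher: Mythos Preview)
Your proof is correct and follows essentially the same route as the paper's: both show that the normalized positive kernel $P_\varphi = K/K_\varphi$ satisfies $P_\varphi - 1 \ge 0$ (you via an explicit Schur complement, the paper by citing \cite{GRS05}), then apply the Schur product theorem, and for the converse the paper invokes the Agler--McCarthy universality theorem whereas you use a direct Kolmogorov factorization, which amounts to the same thing. One small remark: the paper first checks explicitly that $\|\varphi(x)\| < 1$ so that $K_\varphi$ is defined, but this is already implicit in your Schur complement step, since $P_\varphi(x,x) - 1 \ge 0$ forces $\|\varphi(x)\|^2 \le 1 - 1/K(x,x) < 1$.
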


\begin{proof}
  We first observe that every $\varphi$ as in the proposition maps $X$ into the open unit ball of $\cB(\cE,\bC)$. To this end,
  let $x \in X$, and consider the Pick matrix associated to $\{x_0, x\}$. Since $K$ is normalized at $x_0$, and since
  $\varphi(x_0)=0$, we obtain
  \begin{equation*}
    \begin{pmatrix}
      1 & 1 \\ 1 & K(x,x) (1-\varphi(x) \varphi(x)^*)
    \end{pmatrix},
  \end{equation*}
  so this matrix is positive. In particular, the (2,2)-entry is necessarily bounded above by $1$, so that
  \begin{equation*}
    ||\varphi(x)||^2 \le 1 - \frac{1}{K(x,x)} < 1.
  \end{equation*}

  Now, if $\varphi: X \to \cB(\cE,\bC)$ is a multiplier of norm at most $1$, then $K / K_\varphi$ is positive definite
  by a well-known characterization of contractive multipliers.
  If in addition $\varphi(x_0) = 0$, then $K_\varphi$ is normalized at $0$, hence so is $K / K_\varphi$.
  This implies that $K/K_\varphi - 1$ is positive definite (see, for example, the proof of Corollary 4.2 in \cite{GRS05}),
  and thus also
  \begin{equation*}
    K - K_\varphi = K_\varphi \Big( \frac{K}{K_\varphi} - 1 \Big)
  \end{equation*}
  is positive definite
  by the Schur product theorem. Consequently, $K_\varphi \le K$.

  If $K$ belongs to the set in the statement of the proposition, then $1 - 1/K$ is positive definite, so
  $\cH$ is a complete Nevanlinna-Pick space by Theorem \ref{thm:CNP_char}.

  Assume now that $\cH$ is a complete Nevanlinna-Pick space, so that we can write
  \begin{equation*}
    K(z,w) = \frac{1}{1- \langle b(z),b(w) \rangle}
  \end{equation*}
  for some function $b: X \to \bB_\infty$ by Theorem \ref{thm:DA_univ}.
  Consider for $z \in X$ the row operator
  \begin{equation*}
    \varphi(z) = (b_1(z), b_2(z), \ldots) \in \cB(\ell^2, \bC),
  \end{equation*}
  where the $b_i$ are the coordinate functions of $b$.
  Since
  \begin{equation*}
    K(z,w)  (1 - \varphi(z) \varphi(w)^*) = 1,
  \end{equation*}
  we have $\varphi \in \Mult(\cH \otimes \ell^2, \cH)$ with $||\varphi|| \le 1$, and $K = K_\varphi$.
  Also, $\varphi(x_0) = 0$ since $K$ is normalized at $x_0$.
\end{proof}

One advantage of this second approach is that we also obtain information about inclusions
of multiplier algebras.
\begin{cor}
  Let $\cH_1$ and $\cH_2$ be reproducing kernel Hilbert spaces on the same set $X$ with kernels
  $K_1$ and $K_2$, respectively. Assume that $\cH_1$ is an irreducible complete Nevanlinna-Pick space,
  and suppose that $K_1$ and $K_2$ are both normalized at $x_0 \in X$. Then the following are equivalent:
  \begin{enumerate}[label=\normalfont{(\roman*)}]
    \item $\Mult(\cH_1) \subset \Mult(\cH_2)$, and the inclusion map is a complete contraction.
    \item $K_2 / K_1$ is positive definite.
  \end{enumerate}
  In this case, $\cH_1 \subset \cH_2$, and the inclusion map is a contraction.
\end{cor}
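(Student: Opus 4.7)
The plan is to mirror the strategy of Proposition~\ref{prop:kernel_recover_ci}: in one direction, use the Schur product theorem to transfer multipliers from $\cH_1$ to $\cH_2$; in the other, use the universal row multiplier $\varphi$ built from the Drury-Arveson embedding to extract positivity of the quotient kernel. The final assertion about inclusion of spaces will come from the normalization trick observed in the proof of Proposition~\ref{prop:kernel_recover_ci}.

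For the implication (ii) $\Rightarrow$ (i), I would argue as follows. If $\Phi \in M_n(\Mult(\cH_1))$ is contractive, then by the standard characterization of (matrix-valued) contractive multipliers, the kernel $K_1(z,w)(I_n - \Phi(z)\Phi(w)^*)$ is positive semidefinite (as an $M_n$-valued kernel on $X$). Writing
\begin{equation*}
  K_2(z,w)(I_n - \Phi(z)\Phi(w)^*) = \frac{K_2(z,w)}{K_1(z,w)} \cdot K_1(z,w)(I_n - \Phi(z)\Phi(w)^*),
\end{equation*}
the Schur product theorem together with (ii) shows that the left-hand side is positive semidefinite, so $\Phi$ is a contractive multiplier of $\cH_2^n$. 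Applied at every matrix level, this proves that the inclusion is a complete contraction.

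For the implication (i) $\Rightarrow$ (ii), I would follow the construction in the proof of Proposition~\ref{prop:kernel_recover_ci}. By Theorem~\ref{thm:DA_univ}, write $K_1(z,w) = (1 - \langle b(z), b(w)\rangle)^{-1}$ for some $b : X \to \bB_\infty$; since $K_1$ is normalized at $x_0$, we have $b(x_0) = 0$. Assembling the coordinate functions into the row $\varphi = (b_1, b_2, \ldots)$, one obtains $\varphi \in \Mult(\cH_1 \otimes \ell^2, \cH_1)$ with $\|M_\varphi\| \le 1$, $\varphi(x_0) = 0$, and $K_1 = K_\varphi$. Since a complete contraction between operator algebras induces a contraction on all row amplifications (truncate $\varphi$ to its first $n$ entries, apply the complete contraction hypothesis on $M_{1,n}$, and let $n \to \infty$), the row $\varphi$ is also a contractive multiplier of $\cH_2 \otimes \ell^2$ into $\cH_2$. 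Thus $K_2(z,w)(1 - \varphi(z)\varphi(w)^*)$ is positive definite; but $1 - \varphi(z)\varphi(w)^* = 1/K_1(z,w)$, so this is exactly the statement that $K_2 / K_1$ is positive definite.

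For the closing assertion, since both $K_1$ and $K_2$ are normalized at $x_0$, the positive kernel $K_2 / K_1$ is likewise normalized at $x_0$. By the fact invoked in the proof of Proposition~\ref{prop:kernel_recover_ci}, a positive kernel normalized at a point exceeds the constant kernel $1$, so $K_2/K_1 - 1$ is positive definite, and another application of the Schur product theorem gives that
\begin{equation*}
  K_2 - K_1 = K_1 \bigl( K_2/K_1 - 1 \bigr)
\end{equation*}
is positive definite. This is the standard criterion for $\cH_1 \subset \cH_2$ with contractive inclusion. The only step requiring any real care is the passage from complete contractivity of the algebra inclusion to contractivity on infinite row multipliers, which I expect to handle by a finite truncation argument as indicated.
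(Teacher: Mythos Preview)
Your proof is correct and follows essentially the same approach as the paper: the Schur product argument for (ii) $\Rightarrow$ (i), the row multiplier $\varphi$ with $K_\varphi = K_1$ for (i) $\Rightarrow$ (ii), and the normalization trick for the contractive inclusion $\cH_1 \subset \cH_2$. In fact you are more careful than the paper at one point: the paper simply asserts that the row multiplier $\varphi$ passes to $\Mult(\cH_2 \otimes \ell^2,\cH_2)$ with norm at most $1$ ``by assumption,'' whereas you correctly note that complete contractivity of the algebra inclusion only immediately controls finite matrices, and supply the finite-truncation-and-limit argument needed to reach the infinite row.
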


\begin{proof}
  (i) $\Rightarrow$ (ii) Proposition \ref{prop:kernel_recover_ci} yields a multiplier
  $\varphi \in \Mult(\cH_1 \otimes \cE, \cH_1)$ with
  $||\varphi||_{\Mult(\cH_1 \otimes \cE, \cH_1)} \le 1$ such
  that $K_1 = K_\varphi$. By assumption, $\varphi$ is a multiplier on $\cH_2$ of norm at most $1$, hence
  $K_2 / K_1 = K_2 / K_\varphi$ is positive. Moreover, another application of the proposition shows that
  \begin{equation*}
    K_1 = K_\varphi \le K_2,
  \end{equation*}
  so that $\cH_1 \subset \cH_2$ and the inclusion map is a contraction.

  (ii) $\Rightarrow$ (i) always holds for reproducing kernel Hilbert spaces. Indeed,
  $\varphi \in \Mult(\cH_1 \otimes \ell^2(n), \cH_1 \otimes \ell^2(n))$ with $||\varphi|| \le 1$, if
  and only if
  \begin{equation*}
    K_1(z,w) (I - \varphi(z) \varphi(w)^*)
  \end{equation*}
  is a positive definite operator valued kernel.
  By assumption and the Schur product theorem, it follows that
  \begin{equation*}
    K_2(z,w) (I - \varphi(z) \varphi(w)^*)
  \end{equation*}
  is positive definite, hence
  $\varphi \in \Mult(\cH_2 \otimes \ell^2(n), \cH_2 \otimes \ell^2(n))$ with $||\varphi|| \le 1$.
\end{proof}

We finish this section by observing that the completely bounded version of Corollary \ref{cor:multiplier_equal} is not true, that is,
if the identity map from $\Mult(\cH_1)$ to $\Mult(\cH_2)$ is merely assumed to be a completely bounded isomorphism,
then it does not follow that $\cH_1 = \cH_2$ as vector spaces.

\begin{exa}
  \label{exa:mult_equal}
  Let $\cD$ be the Dirichlet space on $\bD$, whose reproducing kernel is given by
  \begin{equation*}
    K_{\cD}(z,w) = - \frac{\log(1-\overline{w}z)}{\ol{w}z}
  \end{equation*}
  and let $H^2 = H^2(\bD)$ be the Hardy space on $\bD$ with reproducing kernel
  \begin{equation*}
    K_{H^2}(z,w) = \frac{1}{1 - z \ol{w}}.
  \end{equation*}
  Then $H^2$ and $\cD$ are complete Nevanlinna-Pick spaces (see, for example, \cite[Corollary 7.41]{AM02}).

  Let $(z_n)_{n=0}^\infty$ be a sequence in $(0,1)$ with $z_0 = 0$ and $\lim_{n \to \infty} z_n = 1$ which is interpolating
  for the multiplier algebra of the Dirichlet space $\cD$. Then $(z_n)$ is also interpolating for $H^\infty = \Mult(H^2)$,
  so if $V = \{z_n: n \in \bN \}$, then
  $\Mult(H^2 \big|_V)$ and $\Mult(\mathcal D \big|_V)$ are equal as algebras, since they are
  both equal to $\ell^\infty$.

  In fact, the normalized kernels in $\mathcal D \big|_V$ and $H^2 \big|_V$ form a Riesz system (see Section 9.3 in
  \cite{AM00}), so there is a bounded invertible map
  \begin{equation*}
    A: H^2 \big|_V \to \cD \big|_V \quad \text{ such that } \quad A \Big( \frac{K_{H^2} (\cdot,w)}{||K_{H^2} (\cdot,w)||} \Big)
    = \frac{K_{\cD} (\cdot,w)}{||K_{\cD} (\cdot,w)||}
  \end{equation*}
  for all $w \in V$. A straightforward computation shows that if $\varphi \in \Mult(H^2 \big|_V)$, then
  \begin{equation*}
    ((A^*)^{-1} M_\varphi A^* f) (w) = \varphi(w) f(w)
  \end{equation*}
  for $f \in \cD \big|_V$ and $w \in V$, so
  \begin{equation*}
    (A^*)^{-1} M_\varphi A^* = M_\varphi.
  \end{equation*}
  It follows that the identity map between $\Mult(H^2 \big|_V)$ and $\Mult(\cD \big|_V)$ is given by
  a similarity.

  However, the spaces $H^2 \big|_V$ and $\cD \big|_V$ are not equal. Indeed, if $f \in \cD$, then
  \begin{equation*}
    |f(z)| = | \langle f, K_{\cD}(\cdot,z) \rangle| \le ||f|| \, \sqrt{K_{\cD}(z,z)} \approx ||f|| \sqrt{- \log(1-z^2)}
  \end{equation*}
  as $z \to 1$, but there are functions in $H^2$ which grow faster, such as
  \begin{equation*}
    f(z) = \sum_{n=0}^\infty (n+1)^{-3/4} z^n,
  \end{equation*}
  for which
  \begin{equation*}
    |f(z)| \approx \Gamma \Big( \frac{1}{4} \Big) (1 - z)^{-1/4}
  \end{equation*}
  as $z \to 1$ from below (see \cite[Chap. XIII,p.280,ex. 7]{WW52}).
\end{exa}

\section{Composition Operators}
\label{sec:composition_operators}

The methods of the last section also apply to composition operators on multiplier algebras.
If $K_1$ and $K_2$ are two kernels on a set $X$, we say that $K_1$ is a rescaling of $K_2$
if there exists a nowhere vanishing function $\delta: X \to \bC$ such that
\begin{equation*}
  K_1(z,w) = \delta(z) \overline{\delta(w)} K_2(z,w) \quad (z,w \in X).
\end{equation*}
Rescaling is an equivalence relation on kernels, and two kernels which are equivalent in this sense
give rise to the same multiplier algebra (see Section 2.6 in \cite{AM02}).

\begin{prop}
  \label{prop:isometric_comp_operator}
  Let $\cH_1$ and $\cH_2$ be irreducible complete Nevanlinna-Pick
  spaces on sets $X_1$ and $X_2$ with kernels $K_1$ and $K_2$, respectively.
  Suppose that $F: X_2 \to X_1$ is a bijection. Then the following are equivalent:
  \begin{enumerate}[label=\normalfont{(\roman*)}]
    \item $C_F: \Mult(\cH_1) \to \Mult(\cH_2), \varphi \mapsto \varphi \circ F$, is an isometric
      isomorphism.
    \item $K_2$ is a rescaling of $(K_1)_F$, where $(K_1)_F = K_1(F(z),F(w))$ for $z,w \in X_2$.
  \end{enumerate}
  In fact, if
  \begin{equation*}
    K_2(z,w) = \delta(z) \overline{\delta(w)} K_1(F(z),F(w)) \quad (z,w \in X_2)
  \end{equation*}
  for some nowhere vanishing function $\delta$ on $X_2$, then
  \begin{equation*}
    U: \cH_1 \to \cH_2, \quad f \mapsto \delta (f \circ F),
  \end{equation*}
  is unitary, and $C_F = \operatorname{Ad}(U)$.
\end{prop}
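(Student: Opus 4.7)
The plan splits naturally into two directions, with the converse direction also delivering the ``in fact'' statement.

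For (ii) $\Rightarrow$ (i) and unitarity of $U$, I would define $U$ by the given formula and verify it on kernel functions. Using the rescaling identity, one finds $U K_1(\cdot,F(w)) = K_2(\cdot,w)/\overline{\delta(w)}$, and the inner product on $\cH_2$ of two such vectors reduces, after the scalar factors cancel against the formula $K_2(w_2,w_1) = \delta(w_2)\overline{\delta(w_1)} K_1(F(w_2),F(w_1))$, to $K_1(F(w_2),F(w_1)) = \langle K_1(\cdot,F(w_1)), K_1(\cdot,F(w_2))\rangle_{\cH_1}$. Since $F$ is a bijection, the vectors $K_1(\cdot,F(w))$ span a dense subspace of $\cH_1$, and the vectors $K_2(\cdot,w)$ span a dense subspace of $\cH_2$ and lie in the range of $U$, so $U$ extends to a unitary. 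A direct calculation using $U^{-1} g = (1/(\delta\circ F^{-1}))(g\circ F^{-1})$ then yields $U M_\varphi U^{-1} = M_{\varphi\circ F}$ for every $\varphi\in\Mult(\cH_1)$, giving both (i) and $C_F = \Ad(U)$.

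For (i) $\Rightarrow$ (ii) I would first rescale $K_1$ so that it is normalized at $y_0 := F(x_0)$, and rescale $K_2$ so that it is normalized at some fixed $x_0 \in X_2$. This does not alter the multiplier algebras or their norms (see \cite[Section 2.6]{AM02}), so $C_F$ remains an isometric isomorphism, and it suffices to prove the identity $K_2(z,w) = K_1(F(z),F(w))$ under this normalization. Now I apply Proposition \ref{prop:kernel_recover_iso} on both sides. For $w\in X_2\setminus\{x_0\}$, the multiplier $C_F(\varphi^{(1)}_{F(w)}) = \varphi^{(1)}_{F(w)}\circ F$ is a contractive multiplier on $\cH_2$ vanishing at $x_0$ whose value at $w$ is the non-negative real number $\sqrt{1 - 1/K_1(F(w),F(w))}$. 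The extremal characterization gives $K_1(F(w),F(w)) \le K_2(w,w)$, and the same argument applied to $C_{F^{-1}} = C_F^{-1}$ yields the reverse inequality. The uniqueness clause in Proposition \ref{prop:kernel_recover_iso} then forces
\begin{equation*}
  \varphi^{(1)}_{F(w)} \circ F = \varphi^{(2)}_w,
\end{equation*}
and substituting the explicit formula for the extremal multiplier (and using that the common normalizing constant is nonzero) yields $K_1(F(z),F(w)) = K_2(z,w)$ for all $z\in X_2$ and all $w\neq x_0$; the remaining case $w = x_0$ is immediate from normalization. Finally, undoing the two rescalings produces the function $\delta$ required by (ii).

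The main obstacle is the forward direction, where one must argue that the pulled-back extremal multiplier on $\cH_2$ is really the unique extremal one. This step depends on the equality $K_1(F(w),F(w)) = K_2(w,w)$, which is exactly why Proposition \ref{prop:kernel_recover_iso} has to be invoked symmetrically, using both $C_F$ and $C_F^{-1}$, before the uniqueness statement becomes applicable.
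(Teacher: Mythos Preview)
Your proof is correct. The direction (ii) $\Rightarrow$ (i) and the verification that $U$ is unitary with $C_F = \Ad(U)$ match the paper's argument essentially verbatim (the paper computes via $U^*$ on kernel functions, you via $U$; these are the same computation).

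For (i) $\Rightarrow$ (ii) the paper takes a slightly shorter path. Instead of normalizing both kernels and invoking the extremal characterization of Proposition~\ref{prop:kernel_recover_iso} twice to pin down first the diagonal and then the off-diagonal values, the paper simply normalizes the pulled-back kernel $(K_1)_F$ at $x_0$, observes that the resulting space has the same multiplier algebra as $\cH_2$ isometrically (by assumption and the fact that rescaling does not change $\Mult$), and then applies Corollary~\ref{cor:multiplier_equal} in one stroke to conclude that the two normalized kernels coincide. Your argument is really the content of that corollary unpacked in this specific situation: the two-sided inequality you derive from $C_F$ and $C_F^{-1}$ is exactly how Corollary~\ref{cor:multiplier_equal} is proved from Proposition~\ref{prop:kernel_recover_iso}. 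So the approaches are close cousins; the paper's is a bit cleaner by citing the packaged corollary, while yours is more self-contained and makes explicit where the isometry of \emph{both} $C_F$ and its inverse enters.
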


\begin{proof}
  (i) $\Rightarrow$ (ii). We may assume that $K_2$ is normalized at a point $x_0 \in X_2$. Define
  a kernel $K$ on $X_2$ by
  \begin{equation*}
    K(z,w) = \frac{K_1(F(z),F(w)) K_1(F(x_0),F(x_0))}{K_1(F(z),F(x_0)) K_1(F(x_0),F(w))}
  \end{equation*}
  and let $\cH$ be the reproducing kernel Hilbert space on $X_2$ with kernel $K$. Since $K$
  is a rescaling of $(K_1)_F$, the assumption implies that $\Mult(\cH) = \Mult(\cH_2)$,
  isometrically. Moreover, $K$ is normalized at $x_0$, hence $K_2 = K$ by Corollary \ref{cor:multiplier_equal}.

  (ii) $\Rightarrow$ (i). This implication holds in general, without the assumption that the kernels are
  complete Nevanlinna-Pick kernels.  To see this, it suffices to show the additional assertion.
  It is a standard fact from the theory
  of reproducing kernels that $U$ is unitary. Indeed, the adjoint of $U$ satisfies
  \begin{equation*}
    U^* K_2(\cdot,w) = \ol{\delta(w)} K_1(\cdot,F(w))
  \end{equation*}
  for all $w \in X_2$, thus the assumption easily implies that $U^*$ is unitary.
  Moreover, for $f \in \cH_2$ and $\varphi \in \Mult(\cH_1)$, we have
  \begin{equation*}
    U M_\varphi U^* f = U (\varphi \frac{1}{\delta} (f \circ F^{-1})) = (\varphi \circ F) f,
  \end{equation*}
  hence $C_F = \operatorname{Ad}(U)$ is a well-defined completely isometric isomorphism.
\end{proof}

The last result applies in particular to automorphisms of multiplier algebras.

\begin{cor}
  \label{cor:K_auto}
  Let $\cH$ be an irreducible complete Nevanlinna-Pick space on a set $X$ with kernel $K$,
  normalized at $x_0$, and let $F: X \to X$ be a bijection and $a = F^{-1}(x_0)$. Then $C_F$
  is an isometric automorphism of $\Mult(\cH)$ if and only if
  \begin{equation*}
    K(F(z),F(w)) = \frac{K(z,w) K(a,a)}{K(z,a) K(a,w)}
  \end{equation*}
  for all $z,w \in X$.
\end{cor}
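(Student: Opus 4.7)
The plan is to apply Proposition \ref{prop:isometric_comp_operator} to the special case $\cH_1 = \cH_2 = \cH$, $X_1 = X_2 = X$, and $K_1 = K_2 = K$. That proposition reduces the problem to showing that the existence of a nowhere vanishing function $\delta: X \to \bC$ satisfying
\begin{equation*}
  K(z,w) = \delta(z) \overline{\delta(w)} K(F(z), F(w)) \qquad (z,w \in X)
\end{equation*}
is equivalent to the displayed identity in the corollary. Note that $F(a) = x_0$ by the definition of $a$, and $K(\cdot, x_0) \equiv 1$ because $K$ is normalized at $x_0$.

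For the forward direction, assume such a $\delta$ exists. Setting $w = a$ gives $K(z,a) = \delta(z) \overline{\delta(a)}$, setting $z = a$ gives $K(a,w) = \delta(a) \overline{\delta(w)}$, and setting $z = w = a$ yields $K(a,a) = |\delta(a)|^2$, which is strictly positive (here irreducibility enters, via $K(a,a) \neq 0$). Multiplying and dividing these three identities shows
\begin{equation*}
  \delta(z) \overline{\delta(w)} = \frac{K(z,a) K(a,w)}{K(a,a)},
\end{equation*}
and substituting back into the rescaling equation and solving for $K(F(z),F(w))$ yields the formula in the statement. This direction also uses irreducibility to guarantee $K(z,a) \neq 0$, so that no division by zero occurs.

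For the converse, assume the displayed formula holds, and define
\begin{equation*}
  \delta(z) = \frac{K(z,a)}{\sqrt{K(a,a)}}.
\end{equation*}
This is well defined because $K(a,a) > 0$, and nowhere vanishing because $\cH$ is irreducible. A direct substitution using the formula shows that $\delta(z) \overline{\delta(w)} K(F(z),F(w)) = K(z,w)$, so the rescaling condition of Proposition \ref{prop:isometric_comp_operator} is satisfied.

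I do not anticipate a real obstacle; the argument is essentially a substitution after reducing to Proposition \ref{prop:isometric_comp_operator}. The only subtle point is to make sure that $\delta$ can be chosen nowhere vanishing, which is exactly what irreducibility of $\cH$ provides through $K(z,a) \neq 0$ for all $z \in X$.
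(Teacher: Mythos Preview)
Your proof is correct and follows the same route as the paper, which simply says the result ``follows from the preceding proposition as $K_F$ is normalized at $a$.'' You have unpacked this one-liner: the key point is that $K_F(z,a) = K(F(z),x_0) = 1$, so $K_F$ is normalized at $a$, and hence $K$ being a rescaling of $K_F$ forces the rescaling factor to be $\delta(z)\overline{\delta(w)} = K(z,a)K(a,w)/K(a,a)$, exactly as you compute.
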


\begin{proof}
  This follows from the preceding proposition as $K_F$ is normalized at $a$. 
\end{proof}

We wish to apply the preceding result to spaces of analytic functions on $\bB_d$. The group
of conformal automorphisms of $\bB_d$ is denoted by $\Aut(\bB_d)$. We also allow the case $d = \infty$,
see \cite{HS71} and the references therein.

\begin{prop}
  \label{prop:K_auto_ball}
  Let $d \in \bN \cup \{\infty\}$ and let
  $\cH$ be a reproducing kernel Hilbert space of analytic functions on $\bB_d$ with
  kernel $K$. Assume that $K$ is normalized at $0$ and does not vanish anywhere on $\bB_d$.
  Then the identity
  \begin{equation}
    \label{eq:K_auto}
    K(\varphi(z),\varphi(w)) = \frac{K(z,w) K(a,a)}{K(z,a)K(a,w)} \quad (z,w \in \bB_d),
  \end{equation}
  where $a = \varphi^{-1}(0)$,
  holds for every $\varphi \in \operatorname{Aut}(\bB_d)$ if and only if
  \begin{equation*}
    K(z,w) = \frac{1}{(1- \langle z,w \rangle)^\alpha}
  \end{equation*}
  for some $\alpha \in [0,\infty)$.
\end{prop}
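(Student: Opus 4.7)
The plan is to handle the two implications separately.

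For $(\Leftarrow)$, every $\varphi \in \Aut(\bB_d)$ factors as $\varphi = U \circ \varphi_a$, where $U$ is a unitary and $\varphi_a$ is the involutive automorphism exchanging $0$ and $a = \varphi^{-1}(0)$. Unitary invariance of $\langle \cdot, \cdot \rangle$ reduces the verification of \eqref{eq:K_auto} to the case $\varphi = \varphi_a$, which in turn follows by raising the classical Möbius identity
\[
1 - \langle \varphi_a(z), \varphi_a(w) \rangle = \frac{(1 - |a|^2)(1 - \langle z, w \rangle)}{(1 - \langle z, a \rangle)(1 - \langle a, w \rangle)}
\]
to the power $-\alpha$.

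For $(\Rightarrow)$, I would first apply \eqref{eq:K_auto} with $\varphi = U$ a unitary, so that $a = 0$ and the identity collapses to $K(Uz, Uw) = K(z, w)$. Combined with the normalization and analyticity assumptions on $K$, Lemma \ref{lem:unitarily_invariant_power_series} yields an analytic $f \colon \bD \to \bC$ with $f(0) = 1$ and non-negative Taylor coefficients such that $K(z, w) = f(\langle z, w \rangle)$. The non-vanishing of $K$ forces $f$ to be zero-free on $\bD$, and hence
\[
h(x) := \frac{1}{f(1 - x)}
\]
is analytic and non-vanishing on the disc $\{|x - 1| < 1\}$ with $h(1) = 1$. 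Next I would specialize \eqref{eq:K_auto} to $\varphi = \varphi_a$ and $z = 0$; using $\varphi_a(0) = a$ and the normalization of $K$ at $0$, the identity collapses to $K(a, \varphi_a(w)) K(a, w) = K(a, a)$. The $z = 0$ case of the Möbius identity gives $1 - \langle a, \varphi_a(w) \rangle = (1 - |a|^2)/(1 - \langle a, w \rangle)$, and rewriting the previous equation in terms of $h$ yields
\[
h(uv) = h(u) \, h(v),
\]
where $u = 1 - \langle a, w \rangle$ and $uv = 1 - |a|^2$.

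Choosing $w = s \, a / |a|$ with $s \in (-1, 1)$ makes both $u$ and $v$ real and positive; a Jacobian computation at $(s, |a|) = (0, \epsilon)$ shows that the resulting map $(s, |a|) \mapsto (u,v)$ is a local diffeomorphism, so its image sweeps out an open subset of $\bR^2$ that accumulates at $(1, 1)$. By the identity theorem for holomorphic functions of two variables, the multiplicative relation extends to a complex neighborhood of $(1, 1)$. Setting $g = \log h$ (well-defined near $1$ since $h(1) = 1$), the equation becomes $g(uv) = g(u) + g(v)$; differentiating in $u$ and setting $u = 1$ gives $v \, g'(v) = g'(1)$, whence $g(v) = \alpha \log v$ with $\alpha = g'(1)$. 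Thus $h(v) = v^\alpha$, so $f(t) = (1 - t)^{-\alpha}$ and $K(z, w) = (1 - \langle z, w \rangle)^{-\alpha}$. Reality of the Taylor coefficients of $f$ forces $\alpha \in \bR$, and non-negativity of $a_1 = \alpha$ forces $\alpha \ge 0$. I expect the main obstacle to be the bookkeeping in the specialization $z = 0$: one must verify that the resulting one-parameter family of pairs $(u,v)$ actually sweeps out a real two-dimensional open set, so that the identity theorem can be invoked. Once the multiplicative functional equation is established on a complex open set, passing to a power function is routine.
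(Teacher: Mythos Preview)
Your proof is correct, but the $(\Rightarrow)$ direction proceeds quite differently from the paper's.

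The paper first reduces to $d=1$ (observing that every disc automorphism extends to a ball automorphism, so the functional equation for $f$ is already determined by the one-variable case). It then writes the full identity
\[
f(\varphi_r(z)\varphi_r(w))\,f(rz)\,f(rw)=f(zw)\,f(r^2)
\]
for the one-parameter family $\varphi_r(z)=(r-z)/(1-rz)$, differentiates in $r$ at $r=0$, and reads off the linear ODE $f'(z)(1-z)=f'(0)f(z)$ directly. Your route instead specializes to $z=0$, passes to $h(x)=1/f(1-x)$, and extracts the multiplicative functional equation $h(uv)=h(u)h(v)$; you then need the two-variable identity theorem to promote this from the real slice swept out by $(s,|a|)$ to a complex neighbourhood before taking logarithms. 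Both arguments end at the same ODE $vg'(v)=g'(1)$, but the paper's differentiation-in-the-parameter is shorter and avoids the Jacobian/identity-theorem bookkeeping; your approach has the conceptual appeal of isolating the Cauchy-type equation $h(uv)=h(u)h(v)$ explicitly.

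One small point on the $(\Leftarrow)$ direction: when raising the M\"obius identity to the power $-\alpha$ for non-integer $\alpha$, you should justify that the branch choices are consistent. The paper does this by first taking $\|z\|$ small so that every factor lies near $1$ (where the principal branch is multiplicative), and then extending to all $z\in\bB_d$ by analytic continuation in $z$. This is routine but worth a sentence.
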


\begin{proof}
  It is well-known that \eqref{eq:K_auto} holds if $K(z,w) = (1 - \langle z,w \rangle)^{-1}$, see \cite[Theorem 2.2.5]{Rudin08}.
  When raising this identity to the power of $\alpha$, care must be taken if $\alpha$ is not an integer.
  However, \eqref{eq:K_auto} holds for arbitrary $\alpha$, and $z,w \in \bB_d$ with $||z||$ small,
  as $K(z,w)$ is close to $1$ in this case. Since both sides of \eqref{eq:K_auto} are analytic in $z$,
  it holds for all $z \in \bB_d$.

  Conversely, suppose that \eqref{eq:K_auto} holds for all automorphisms $\varphi$. Choosing $\varphi$
  to be unitary, it follows that $K(U z, U w) = K(z,w)$ for all unitary operators $U$ on $\bC^d$.
  By Lemma \ref{lem:unitarily_invariant_power_series},
  there exists an analytic function $f: \bD \to \bC$
  with $f(0) = 1$ and non-negative derivatives at $0$ such that
  \begin{equation*}
    K(z,w) = f(\langle z,w \rangle).
  \end{equation*}
  We wish to show that $f(z) = (1 - z)^{-\alpha}$ for some $\alpha \in [0,\infty)$. Since every
  conformal automorphism of $\bD$ extends to a conformal automorphism of $\bB_d$ (see \cite[Section 2.2.8]{Rudin08}),
  it suffices to prove this for the case $d = 1$.
  
  For $r \in (-1,1)$, consider the conformal
  automorphism $\varphi_r$ of $\bD$ given by
  \begin{equation*}
    \varphi_r(z) = \frac{r -z}{1 - rz}.
  \end{equation*}
  Then for $z \in \bD$ and $w \in (0,1)$, we have
  \begin{equation*}
    f(\varphi_r(z) \varphi_r(w)) = \frac{f(zw) f(r^2)}{f(r z) f(r w)},
  \end{equation*}
  hence
  \begin{equation*}
    f(\varphi_r(z) \varphi_r(w)) f(r z) f(r w) = f(z w) f(r^2).
  \end{equation*}
  Taking the derivative with respect to $r$ at $r = 0$, and simplifying, we obtain
  \begin{equation*}
    (z + w) (f'(z w) (z w - 1) + f(zw) f'(0))) = 0
  \end{equation*}
  for all $z \in \bD$ and $w \in (0,1)$, hence
  \begin{equation*}
    f'(z) (1-z) - f(z) f'(0) = 0.
  \end{equation*}
  for all $z \in \bD$, and $f(0)=1$. This is a first order linear ODE, whose solutions are given by
  \begin{equation*}
    f(z) = (1 - z)^{ -\alpha},
  \end{equation*}
  where $\alpha = f'(0)$. Since $f'(0) \ge 0$, the result follows.
\end{proof}

The desired result about complete Nevanlinna-Pick spaces on $\bB_d$ whose multiplier algebras are isometrically
automorphism invariant is the following corollary.

\begin{cor}
  \label{cor:multiplier_auto_inv}
  Let $d \in \bN \cup \{ \infty \}$ and let $\cH$ be a reproducing kernel Hilbert space of analytic functions on $\bB_d$
  with kernel $K$, normalized at $0$.
  The following are equivalent:
  \begin{enumerate}[label=\normalfont{(\roman*)}]
    \item $\cH$ is an irreducible complete Nevanlinna-Pick space and every
      $\varphi \in \operatorname{Aut}(\bB_d)$ induces an isometric composition operator on $\Mult(\cH)$.
    \item There exists $\alpha \in (0,1]$ such that
      \begin{equation*}
        K(z,w) = \frac{1}{(1-\langle z,w \rangle)^\alpha} \quad (z,w \in \bB_d).
      \end{equation*}
  \end{enumerate}
\end{cor}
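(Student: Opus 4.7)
The plan is to obtain the corollary as a direct synthesis of Corollary \ref{cor:K_auto}, Proposition \ref{prop:K_auto_ball}, and Lemma \ref{lem:NP_a_b}; the only real work concerns pinning down the admissible range of the exponent $\alpha$.

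To establish (ii) $\Rightarrow$ (i), I would apply Lemma \ref{lem:NP_a_b} to the unitarily invariant kernel $K(z,w) = \sum_{n \geq 0} \binom{n+\alpha-1}{n} \langle z,w \rangle^n$, for which $a_1 = \alpha > 0$. A brief sign computation shows that the Taylor coefficients of $1 - (1-t)^\alpha = -\sum_{n \geq 1} \binom{\alpha}{n}(-t)^n$ are non-negative for every $\alpha \in (0,1]$: the case $\alpha = 1$ collapses to $t$, while for $\alpha \in (0,1)$ the product $\alpha(\alpha-1)\cdots(\alpha-n+1)$ carries the sign $(-1)^{n-1}$, which exactly cancels the extra $(-1)^n$. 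Lemma \ref{lem:NP_a_b} thus guarantees that $\cH$ is an irreducible complete Nevanlinna--Pick space. Automorphism invariance is then immediate: Proposition \ref{prop:K_auto_ball} provides the functional identity \eqref{eq:K_auto} for every $\varphi \in \Aut(\bB_d)$, and Corollary \ref{cor:K_auto} translates that identity into the statement that $C_\varphi$ is an isometric automorphism of $\Mult(\cH)$.

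For the reverse direction (i) $\Rightarrow$ (ii), I would first note that irreducibility guarantees $K$ is nowhere vanishing on $\bB_d$, so Corollary \ref{cor:K_auto}, applied to each $\varphi \in \Aut(\bB_d)$ with $a = \varphi^{-1}(0)$, yields exactly the identity \eqref{eq:K_auto}. Proposition \ref{prop:K_auto_ball} then forces $K(z,w) = (1-\langle z,w \rangle)^{-\alpha}$ for some $\alpha \in [0,\infty)$. The value $\alpha = 0$ produces $K \equiv 1$ and hence linearly dependent kernel sections, contradicting irreducibility. The range $\alpha > 1$ is excluded by the complete Nevanlinna--Pick property via Lemma \ref{lem:NP_a_b}: the Taylor series of $1 - (1-t)^\alpha$ must have non-negative coefficients, yet its $t^2$ coefficient equals $-\alpha(\alpha-1)/2$, which is strictly negative when $\alpha > 1$. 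This confines $\alpha$ to $(0,1]$, as required.

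The main obstacle (such as it is) is the sign bookkeeping for the generalized binomial coefficients $\binom{\alpha}{n}$ at non-integer $\alpha$, which is what forces the sharp interval $(0,1]$ rather than some larger range. Everything else is a routine invocation of the three results cited above.
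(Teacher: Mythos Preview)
Your proposal is correct and follows essentially the same route as the paper: both arguments combine Corollary~\ref{cor:K_auto} and Proposition~\ref{prop:K_auto_ball} to reduce the question to determining for which $\alpha \ge 0$ the kernel $(1-\langle z,w\rangle)^{-\alpha}$ is an irreducible complete Nevanlinna--Pick kernel, and both settle that via Lemma~\ref{lem:NP_a_b} together with the same sign analysis of the binomial coefficients (ruling out $\alpha=0$ by irreducibility and $\alpha>1$ by the negative $t^2$-coefficient $-\alpha(\alpha-1)/2$). The only difference is organizational---you split the two implications explicitly, whereas the paper first reduces to the characterization of admissible $\alpha$---but the substance is identical.
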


\begin{proof}
  In light of Corollary \ref{cor:K_auto} and Proposition \ref{prop:K_auto_ball}, it suffices to show
  that
  \begin{equation*}
    K(z,w) = \frac{1}{(1 - \langle z,w \rangle)^\alpha}
  \end{equation*}
  is an irreducible complete Nevanlinna-Pick kernel if and only if $\alpha \in (0,1]$.

  If $\alpha = 0$, then $K$ is identically $1$, and thus not irreducible. If $\alpha > 0$, then
  Lemma \ref{lem:NP_a_b} applies to show that $K$ is an irreducible complete Nevanlinna-Pick kernel
  if and only if the function
  $1 - (1 - x)^\alpha$ has non-negative Taylor coefficients at $0$.
  Observe that
  \begin{equation*}
    1 - (1 - x)^\alpha = \sum_{k=1}^\infty (-1)^{k+1} \binom{\alpha}{k} x^k,
  \end{equation*}
  where
  \begin{equation*}
    \binom{\alpha}{k} = \frac{\alpha (\alpha - 1) (\alpha - 2) \ldots (\alpha - k +1)}{k!}.
  \end{equation*}
  The coefficient of $x^2$ in this formula equals
  \begin{equation*}
    - \frac{\alpha (\alpha - 1)}{2},
  \end{equation*}
  which is negative if $\alpha > 1$. Conversely, if $\alpha \le 1$, then
  all Taylor coefficients are non-negative.
\end{proof}

\section{Algebraic consistency and varieties}
\label{sec:alg_cons}

When studying isomorphisms of multiplier algebras, we
will usually make an additional assumption,
which, roughly speaking, guarantees that the functions in the reproducing kernel Hilbert space
are defined on their natural domain of definition.

More precisely, let $\cH$ be a Hilbert function space
on a set $X$ with $1 \in \cH$. A non-zero bounded
linear functional $\rho$ on $\cH$ is called \emph{partially multiplicative} if $\rho(\varphi f) = \rho(\varphi) \rho(f)$ whenever $\varphi \in \Mult(\cH)$ and $f \in \cH$. We say that
$\cH$ is \emph{algebraically consistent} if for every partially multiplicative
functional $\rho$ on $\cH$, there exists $x \in X$ such that $\rho(f) = f(x)$
for all $f \in \cH$.

\begin{exa}
  The reproducing kernel Hilbert space $\cH$ on $\bD$ with kernel
  \begin{equation*}
    K(z,w) = \sum_{n=0}^\infty 2^{-n} (z \overline w)^n = \frac{1}{1 - \frac{1}{2} z \overline{w}}
  \end{equation*}
  is not algebraically consistent. Indeed, every function in $\cH$ extends uniquely to
  an analytic function on the open disc of radius $\sqrt{2}$ around the origin.
\end{exa}

\begin{rem}
  Our definitions of a partially multiplicative functional and of algebraic consistency are
  inspired by \cite[Definition 1.5]{CM95} of Cowen-MacCluer, but are slightly different.
  A non-zero bounded linear functional $\rho$ on $\cH$ is partially multiplicative
  in the sense of Cowen-MacCluer
  if $\rho(f g) = \rho(f) \rho(g)$ whenever $f,g \in \cH$ such that
  the pointwise product $f g$ belongs to $\cH$ as well. The Hilbert function space $\cH$ is algebraically consistent
  in the sense of Cowen-MacCluer if every such functional is given by evaluation at a point in $X$. We also
  refer the reader to \cite[Section 2]{MS15}; their \emph{generalized kernel functions} are precisely
  the elements of $\cH$ which give rise to partially multiplicative functionals in the sense of Cowen-MacCluer.

  Clearly, every functional which is partially multiplicative in the sense of Cowen-MacCluer is partially
  multiplicative in our sense. Therefore, every Hilbert function space which is algebraically consistent in our
  sense is algebraically consistent in the sense of Cowen-MacCluer.

  Our definition of algebraic consistency requires that $1 \in \cH$ and is only meaningful
  if $\cH$ has ``enough'' multipliers. This limits its applicability for general Hilbert function spaces.
  However, it seems to be well-suited for normalized irreducible complete Nevanlinna-Pick spaces.
  In particular, we will see that in this setting,
  algebraic consistency in our sense is closely related to the notion of
  a variety from \cite{DRS15} (see Proposition \ref{prop:alg_consistent_equiv} below) and behaves
  well with respect to restrictions of complete Nevanlinna-Pick spaces to subsets
  (see Lemma \ref{lem:alg_cons_restr} below). Moreover, if $\cH$ is a normalized irreducible
  complete Nevanlinna-Pick space, then $1 \in \cH$ and the multiplier algebra contains at least
  all kernel functions (this known fact can be deduced, for example, from Proposition \ref{prop:kernel_recover_iso},
  as $\psi_w = 1 - 1 / K(\cdot,w)$ is a strictly contractive multiplier, so
  $K(\cdot,w) = \sum_{n=0}^\infty \psi_w^n$ converges absolutely in the Banach algebra $\Mult(\cH)$).
  In particular, $\Mult(\cH)$ is dense in $\cH$. It remains open if the two definitions of algebraic
  consistency agree for normalized irreducible complete Nevanlinna-Pick spaces (see also Remark \ref{rem:issue_part_mult} below).
\end{rem}

The following lemma provides examples of algebraically consistent spaces (compare with \cite[Theorem 2.15]{CM95}).
The proof in fact shows that for unitarily invariant spaces, our notion of algebraic consistency and
the one of Cowen-MacCluer coincide.

\begin{lem}
  \label{lem:alg_cons_examples}
  Let $d \in \bN \cup \{\infty\}$ and let $\cH$ be a complete Nevanlinna-Pick space on $\bB_d$
  with kernel of the form
  \begin{equation*}
    K(z,w) = \sum_{n=0}^\infty a_n \langle z,w \rangle^n
  \end{equation*}
  such that $a_0 = 1$ and $a_1 \neq 0$.
  \begin{enumerate}[label=\normalfont{(\alph*)}]
    \item If $\sum_{n=0}^\infty a_n = \infty$, then $\cH$ is algebraically consistent on $\bB_d$.
    \item If $\sum_{n=0}^\infty a_n < \infty$, but the series $\sum_{n=0}^\infty a_n x^n$
      has radius of convergence $1$, then the functions in $\cH$ extend to (norm) 
      continuous functions on $\overline{\bB_d}$, and $\cH$ is an algebraically
      consistent space of functions on $\overline{\bB_d}$.
    \item If $\sum_{n=0}^\infty a_n x^n$ has radius of convergence greater than $1$, then $\cH$ is not
      algebraically consistent on $\bB_d$ or on $\ol{\bB_d}$.
  \end{enumerate}
\end{lem}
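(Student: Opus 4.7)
The proof hinges on characterizing the bounded partially multiplicative functionals on $\cH$ as scalar multiples of reproducing kernels.

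Let $\rho$ be a nonzero bounded partially multiplicative functional on $\cH$. Choosing any $\varphi_0 \in \Mult(\cH)$ with $\rho(\varphi_0) \neq 0$, the identity $\rho(\varphi_0 \cdot 1) = \rho(\varphi_0)\rho(1)$ forces $\rho(1) = 1$. By Riesz representation, there is $g \in \cH$ with $\rho(f) = \langle f, g \rangle$. Since $a_1 > 0$ and $\cH$ is CNP, Theorem \ref{thm:DA_univ} combined with Lemma \ref{lem:NP_a_b} realizes $\cH$ via an embedding whose first $d$ coordinates are $\sqrt{a_1}\, z_i$; these form a sub-row of a row-contractive multiplier, so each coordinate function $z_i$ belongs to $\Mult(\cH)$. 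Setting $w_i := \rho(z_i)$ and applying partial multiplicativity with $\varphi = z_i$ yields $\langle z_i f, g \rangle = w_i \langle f, g \rangle$ for all $f$, i.e.\ $M_{z_i}^* g = \overline{w_i}\, g$. Evaluating $\rho$ coordinatewise on the bounded row $\sqrt{a_1}(z_1, z_2, \ldots)$ shows $\sum |w_i|^2 \le 1/a_1 < \infty$, so $w = (w_i)$ is a well-defined point of $\bC^d$.

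Next, expand $g = \sum_\alpha c_\alpha z^\alpha$ in the orthogonal monomial basis. Unitary invariance yields $\|z^\alpha\|^2 = \alpha!/(|\alpha|!\, a_{|\alpha|})$ and, consequently, $M_{z_i}^* z^\beta = \frac{\beta_i\, a_{|\beta|-1}}{|\beta|\, a_{|\beta|}} z^{\beta - e_i}$ whenever $\beta_i \ge 1$. The eigenvector equations then translate into the recursion $c_{\alpha + e_i} = \frac{(|\alpha|+1)\, a_{|\alpha|+1}}{(\alpha_i + 1)\, a_{|\alpha|}}\, \overline{w_i}\, c_\alpha$, which iterates to $c_\alpha = c_0\, a_{|\alpha|} \binom{|\alpha|}{\alpha} \overline{w^\alpha}$. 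Comparing with the expansion $K(\cdot, w) = \sum_\alpha a_{|\alpha|} \binom{|\alpha|}{\alpha} \overline{w^\alpha} z^\alpha$ gives $g = c_0\, K(\cdot, w)$, and the requirement $g \in \cH$ becomes $|c_0|^2 K(w,w) = \|g\|^2 < \infty$, i.e.\ $\sum_n a_n \|w\|^{2n} < \infty$. The normalization $\rho(1) = \overline{c_0} = 1$ pins down $c_0 = 1$, so $\rho$ coincides with evaluation at $w$ wherever that makes sense.

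Now each case reduces to analyzing when $\sum_n a_n \|w\|^{2n}$ converges. In case (a), $K$ is defined on $\bB_d$ (radius of convergence $\ge 1$) while $\sum a_n = \infty$ (radius $\le 1$), so the radius is exactly $1$ and $K(w,w) < \infty$ forces $\|w\| < 1$; hence $w \in \bB_d$ and $\rho(f) = f(w)$. In case (b), the same reasoning gives radius $1$ with convergence holding throughout $\overline{\bB_d}$, so $w \in \overline{\bB_d}$. One then checks that every $f \in \cH$ extends continuously to $\overline{\bB_d}$: the series $K(z,w) = \sum a_n \langle z,w\rangle^n$ is dominated by $\sum a_n < \infty$ on $\overline{\bB_d} \times \overline{\bB_d}$, so $K$ is continuous there, and the bound $|f(w) - f(w')| \le \|f\|\, \|K(\cdot, w) - K(\cdot, w')\|$ yields continuity of $f$. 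In case (c), where the radius $R > 1$, choosing any $w \in \bC^d$ with $1 < \|w\|^2 < R$ gives $K(\cdot, w) \in \cH$ and a corresponding bounded partially multiplicative functional which, by the computation above, cannot equal evaluation at any point of $\overline{\bB_d}$; this refutes algebraic consistency.

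The main technical obstacle I anticipate is justifying the identification $g = c_0\, K(\cdot, w)$ with full rigor, particularly handling the infinite-dimensional case $d = \infty$, where verifying $w \in \ell^2$ depends on the row-contraction structure of the Agler-McCarthy embedding. Once that identification is secure, cases (a)-(c) follow directly from the elementary convergence dichotomy for the scalar series $\sum a_n \|w\|^{2n}$, supplemented in (b) by the uniform convergence argument for continuity on $\overline{\bB_d}$.
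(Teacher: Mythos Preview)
Your argument is correct and takes a genuinely different route from the paper's. The paper never introduces the Riesz vector $g$ or the eigenvector equations $M_{z_i}^* g = \overline{w_i}\,g$; instead it works directly with $\rho$ on kernel functions. Setting $\lambda_i = \rho(z_i)$ and using partial multiplicativity on the expansion $K(\cdot,w) = \sum_n a_n \langle \cdot,w\rangle^n$, the paper gets $\rho(K(\cdot,w)) = \sum_n a_n \langle \lambda,w\rangle^n$ for finitely supported $w$, whence $|\langle \lambda,w\rangle| \le 1$ and so $\lambda \in \overline{\bB_d}$. In case~(a) the paper then rules out $\|\lambda\|=1$ by the single inequality
\[
  \sum_n a_n r^n = \rho(K(\cdot,r\lambda)) \le \|\rho\|\,\Big(\sum_n a_n r^{2n}\Big)^{1/2} \le \|\rho\|\,\Big(\sum_n a_n r^{n}\Big)^{1/2},
\]
which blows up as $r \to 1$. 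Case~(c) is dispatched by noting that all functions extend analytically to the ball of radius $\sqrt{R}$, so evaluation at an exterior point is already partially multiplicative.

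Your approach has the merit of yielding a clean structural statement: every partially multiplicative functional is exactly evaluation at some $w$ with $K(w,w)<\infty$, and the three cases then become a trichotomy for the scalar series $\sum_n a_n \|w\|^{2n}$. The paper's approach, by contrast, is shorter and sidesteps the monomial recursion and the need to secure $w\in\ell^2$ separately. One small point: in your case~(c), the claim that $\langle \cdot, K(\cdot,w)\rangle$ is partially multiplicative for $\|w\|>1$ does not follow just from ``the computation above'' (which ran in the opposite direction); it needs the observation that multipliers extend to the larger ball and that the product identity $(\varphi f)(w)=\varphi(w)f(w)$ persists by analytic continuation---exactly the argument the paper uses for~(c).
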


\begin{proof}
  We begin with some considerations that apply to both (a) and (b). It is known that the condition $a_1 \neq 0$ implies
  that the function $\langle \cdot,w \rangle$ is a multiplier for $w \in \bB_d$ (see, for example,
  \cite[Section 4]{GRS02}). Incidentally, this can also
  be deduced from Proposition \ref{prop:norm_hom} below. For each $i$, let
  \begin{equation*}
    \lambda_i = \rho(z_i).
  \end{equation*}
  We claim that $(\lambda_i) \in \overline{\bB_d}$. To this end, let $w \in \bB_d$ be
  finitely supported, say $w_i = 0$ if $i > N$. Then
  \begin{equation*}
    \langle \lambda,w \rangle = \sum_{i=1}^N \lambda_i \overline{w_i}
    = \rho( \langle \cdot,w \rangle).
  \end{equation*}
  Since $\rho$ is partially multiplicative and non-zero, $\rho(1) = 1$. Thus, we get
  \begin{equation}
    \label{eqn:alg_cons_examples}
    \rho(K(\cdot,w)) = \sum_{n=0}^\infty a_n \rho(\langle \cdot,w \rangle^n)
    = \sum_{n=0}^\infty a_n \rho( \langle \cdot,w \rangle)^n
    = \sum_{n=0}^\infty a_n \langle \lambda,w \rangle^n.
  \end{equation}
  In either case, the series $\sum_{n=0}^\infty a_n x^n$ has radius of convergence $1$, hence
  $|\langle \lambda,w \rangle| \le 1$. Since $w \in \bB_d$ was an arbitrary finitely
  supported sequence, we conclude that $\lambda \in \overline{\bB_d}$.

  Assume now that $\sum_{n=0}^\infty a_n = \infty$. We wish to show that $\lambda \in \bB_d$.
  Suppose for a contradiction that $||\lambda||=1$. Observe that \eqref{eqn:alg_cons_examples}
  holds for all $w \in \bB_d$, so choosing $w = r \lambda$ for $0 < r < 1$, we see that
  \begin{equation*}
    \sum_{n=0}^\infty a_n r^n = \rho(K(\cdot,r \lambda))
    \le ||\rho|| \, \Big( \sum_{n=0}^\infty a_n r^{2 n} \Big)^{1/2}
    \le ||\rho|| \, \Big( \sum_{n=0}^\infty a_n r^{n} \Big)^{1/2},
  \end{equation*}
  which is not possible as $\sum_{n=0}^\infty a_n = \infty$. Consequently,
  $\lambda \in \bB_d$, and it follows from \eqref{eqn:alg_cons_examples} that
  $\rho$ equals point evaluation at $\lambda$. This proves (a).

  For the proof of (b), we observe that $K$ extends to a jointly norm continuous
  function on $\overline{\bB_d} \times \overline{\bB_d}$, hence all functions
  in $\cH$ extend to norm continuous functions on $\overline{\bB_d}$, and
  $\cH$ becomes a reproducing kernel Hilbert space on $\overline{\bB_d}$ in this way.
  Equation \eqref{eqn:alg_cons_examples} shows that every partially multiplicative
  functional is given by point evaluation at a point $\lambda \in \overline{\bB_d}$, so
  that $\cH$ is algebraically consistent.

  Finally, to show (c), we observe that if $\sum_{n=0}^\infty a_n x^n$ has radius of convergence $R > 1$, then
  the functions in $\cH$ extend uniquely to analytic functions on the ball of radius $\sqrt{R}$. In particular,
  $\cH$ is not algebraically consistent on $\bB_d$ or on $\ol{\bB_d}$.
\end{proof}

To show that algebraic consistency is closely related to
the notion of a variety from \cite{DRS15}, we first need a simple lemma.

\begin{lem}
  \label{lem:alg_cons_restr}
  Let $\cH$ be a normalized irreducible complete Nevanlinna-Pick space on a set $X$ which is
  algebraically consistent.
  If $Y \subset X$, then $\cH \big|_Y$ is an algebraically consistent space of functions
  on $Y$ if and only if there is a set of functions $S \subset \cH$ such that
  \begin{equation*}
    Y = \{ x \in X: f(x) = 0 \text{ for all } f \in S \}. 
  \end{equation*}
\end{lem}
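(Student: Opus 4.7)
The plan is to handle the two implications separately, using in each case that the restriction map $\cH \to \cH \big|_Y$ is a co-isometry and, by the Nevanlinna-Pick property of $\cH$, surjective on multipliers (Section~\ref{sec:prelim}). For $(\Leftarrow)$, suppose $Y$ is the common zero set of some $S \subset \cH$ and let $\rho$ be a partially multiplicative functional on $\cH \big|_Y$. Pull it back to $\tilde\rho(f) := \rho(f \big|_Y)$ on $\cH$. Since restriction sends products of multipliers and functions to the corresponding products in $\cH \big|_Y$, the functional $\tilde\rho$ inherits partial multiplicativity; non-triviality follows from $\tilde\rho(1) = \rho(1) = 1$ (any non-zero partially multiplicative functional must send the unit to $1$). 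Algebraic consistency of $\cH$ supplies $x \in X$ with $\tilde\rho(f) = f(x)$ for all $f \in \cH$. Applied to $f \in S$, this gives $f(x) = \rho(0) = 0$, so $x \in Y$; and then $\rho$ agrees with evaluation at $x$ on all of $\cH \big|_Y$, since every element of $\cH \big|_Y$ is a restriction from $\cH$.

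For $(\Rightarrow)$, I would take the tautological choice $S := \{f \in \cH : f \big|_Y = 0\}$, the kernel of the restriction. Trivially $Y$ lies in the common zero set $Z$ of $S$, and the content of the direction is $Z \subset Y$. Given $x \in Z$, define $\rho: \cH \big|_Y \to \bC$ by $\rho(f \big|_Y) := f(x)$. This is well-defined since two lifts of the same element of $\cH \big|_Y$ differ by an element of $S$, and $S$ vanishes at $x$; and it is bounded because $K(\cdot, x) \perp S$ in $\cH$, so point evaluation at $x$ factors through the co-isometric restriction. Partial multiplicativity is then verified by lifting $\varphi \in \Mult(\cH \big|_Y)$ to $\tilde\varphi \in \Mult(\cH)$ and $g \in \cH \big|_Y$ to $\tilde g \in \cH$, and computing $\rho(\varphi g) = (\tilde\varphi \tilde g)(x) = \tilde\varphi(x) \tilde g(x) = \rho(\varphi) \rho(g)$; here I use that $1 \in \cH$ (the space is normalized), so multipliers embed into $\cH$ and can be evaluated by $\rho$. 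Since $\rho(1) = 1 \neq 0$, algebraic consistency of $\cH \big|_Y$ produces $y \in Y$ with $\rho = \operatorname{ev}_y$, so that $f(x) = f(y)$ for every $f \in \cH$.

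The concluding and most delicate step is then to deduce $x = y$ from this last identity, which places $x$ in $Y$ and completes the reverse containment. Setting $f = K(\cdot, x)$ and $f = K(\cdot, y)$ in turn gives $K(x,x) = K(y,x)$ and $K(x,y) = K(y,y)$; combined with $K(y,x) = \overline{K(x,y)}$, this saturates the Cauchy--Schwarz inequality $|K(x,y)|^2 \le K(x,x) K(y,y)$, forcing $K(\cdot, x)$ and $K(\cdot, y)$ to be linearly dependent in $\cH$. Irreducibility of $\cH$ then gives $x = y$, as required.
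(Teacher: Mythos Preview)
Your proof is correct and follows essentially the same approach as the paper's: both directions are handled identically (pull back the functional for $(\Leftarrow)$; take $S$ to be the kernel of restriction and build a partially multiplicative functional from a point of the zero set for $(\Rightarrow)$), with the only cosmetic difference that the paper argues $(\Rightarrow)$ by contradiction while you argue it directly. Your concluding Cauchy--Schwarz step is more elaborate than needed---the identity $f(x)=f(y)$ for all $f\in\cH$ already gives $K(\cdot,x)=K(\cdot,y)$ as elements of $\cH$, so irreducibility (linear independence of distinct kernel functions) yields $x=y$ immediately---but this does not affect correctness.
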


\begin{proof}
  Suppose that $Y$ is the common vanishing locus of a set $S \subset \cH$, and let
  $\rho$ be a partially multiplicative functional on $\cH \big|_Y$. Then
  $\widetilde \rho(f) = \rho(f \big|_Y)$ defines a partially multiplicative
  functional on $\cH$. Since $\cH$ is assumed to be algebraically consistent,
  $\widetilde \rho$ is given by point evaluation at a point $y \in X$. We claim that
  $y \in Y$. To this end, observe that for $f \in S$, we have
  \begin{equation*}
    f(y) = \widetilde \rho (f) = \rho( f \big|_Y) = \rho(0) = 0,
  \end{equation*}
  from which we deduce that $y \in Y$. Since every function in $\cH \big|_Y$ is the
  restriction of a function in $\cH$, it follows that $\rho$ is given by evaluation at $y$.
  Hence, $\cH \big|_Y$ is algebraically consistent.

  Conversely, assume that $\cH \big|_Y$ is algebraically consistent. Let
  $S$ be the kernel of the restriction map $\cH \to \cH \big|_Y$ and let $\widehat Y$ denote the
  vanishing locus of $S$. Clearly, $Y \subset \widehat Y$, and we wish to show that $Y = \widehat Y$.
  To this end, observe that every function
  $f \in \cH \big|_Y$ extends uniquely to a function $\widehat f \in \cH \big|_{\widehat Y}$ of the same norm.
  Assume for a contradiction that there exists $x \in \widehat Y \setminus Y$.
  Then we obtain a bounded functional $\rho$ on $\cH \big|_Y$ which is defined
  by $\rho(f) = \widehat f(x)$. To see that $\rho$ is partially multiplicative,
  note that if $\varphi \in \Mult(\cH \big|_Y)$, then by the Nevanlinna-Pick property,
  $\varphi$ extends to a multiplier on $\cH \big|_{\widehat Y}$, which necessarily equals $\widehat \varphi$. Thus, $\widehat{\varphi f} = \widehat \varphi \widehat f$.
  Since $\cH$ is irreducible, it separates the points of $X$, so $\rho$ is not equal to point evaluation at a point in $Y$,
  a contradiction. Therefore, $Y = \widehat Y$.
\end{proof}

\begin{rem}
  \label{rem:issue_part_mult}
  It is the second part of the above proof where the difference between our definition of partially multiplicative functional
  and the one of Cowen-MacCluer is important. Whereas the functional $\rho$ constructed above is partially
  multiplicative in our sense, it does not seem to be clear if $\rho$ is partially multiplicative
  in the sense of Cowen-MacCluer. Using notation as in the proof, the crucial question is the following:
  If $f,g \in \cH \big|_Y$ such
  that $f g \in \cH \big|_Y$, is $\widehat{f g} = \widehat f \widehat g$?

  It is not hard to see that the following properties are equivalent for a normalized irreducible complete Nevanlinna-Pick
  space $\cH$ on a set $X$ and a subset $Y \subset X$:
  \begin{enumerate}[label=\normalfont{(\roman*)}]
    \item Whenever $f,g \in \cH \big|_Y$ such that $f g \in \cH \big|_Y$, then $\widehat{f g} = \widehat{f} \widehat{g}$.
    \item Whenever $f,g \in \cH \big|_Y$ such that $f g \in \cH \big|_Y$, then $\widehat{f} \widehat{g} \in \cH \big|_{\widehat Y}$.
    \item Whenever $h_1,h_2,h_3 \in \cH$ such that $h_1 = h_2 h_3$ on $Y$, then $h_1 = h_2 h_3$ on $\widehat Y$.
  \end{enumerate}
  Here, as in the proof, $\widehat Y$ denotes the vanishing locus of the kernel of the restriction map $\cH \to \cH \big|_Y$,
  which is the smallest common zero set of a family of functions in $\cH$ which contains $Y$. Moreover,
  for $f \in \cH \big|_Y$, the unique extension of $f$ to a function in $\cH \big|_{\widehat Y}$ is denoted by $\widehat f$.

  Property (iii) and hence all properties are satisfied if
  $\cH = H^2(\bD)$, the Hardy space on the unit disc, and $Y \subset \bD$ is any subset, since the product of two functions
  in $H^2(\bD)$ belongs to $H^1(\bD)$, and the zero sets of families of functions in $H^2(\bD)$ and $H^1(\bD)$ coincide (they
  are precisely the Blaschke sequences in $\bD$, see \cite[Section II.2]{Garnett07}).
  
  It does not seem to be known
  if these properties hold if $\cH = H^2_d$ for $d \ge 2$ and $Y \subset \bB_d$ is an arbitrary subset. If they always
  hold in this case, then the arguments of this section show that our notion of algebraic consistency and the one
  of Cowen-MacCluer agree for normalized irreducible complete Nevanlinna-Pick spaces.
  We also refer the reader to \cite[Section 5]{MS15}, where
  it is shown these properties hold for $\cH = H^2_\infty$ and certain special subsets $Y$ of $\bB_\infty$.
\end{rem}

Let $\cH$ be a normalized irreducible complete Nevanlinna-Pick space on $X$ with kernel $K$. Recall from
Section \ref{sec:prelim} that
an embedding for $\cH$ is an injective function
$j: X \to \bB_m$ such that
\begin{equation*}
  K(z,w) = k_m(j(z),j(w)) \quad (z,w \in X),
\end{equation*}
where $k_m$ denotes the kernel of the Drury-Arveson space on $\bB_m$. A \emph{variety} in $\bB_m$
(see \cite[Section 2]{DRS15}) is the common zero set of a family of functions in $H^2_m$.

\begin{prop}
  \label{prop:alg_consistent_equiv}
  Let $\cH$ be a normalized
  irreducible complete Nevanlinna-Pick space on a set $X$ with kernel $K$.
  The following assertions are equivalent:
  \begin{enumerate}[label=\normalfont{(\roman*)}]
    \item $\cH$ is algebraically consistent.
    \item There exists an embedding $j: X \to \bB_m$ for $\cH$ such that $j(X)$ is a variety.
    \item For every embedding $j: X \to \bB_m$ for $\cH$, the set $j(X)$ is a variety.
    \item Every weak* continuous character on $\Mult(\cH)$ is given by evaluation at a point in $X$.
  \end{enumerate}
\end{prop}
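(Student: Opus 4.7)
The plan is to proceed in two stages: first establish the equivalence of (i), (ii), (iii) by reducing everything to a statement about the Drury-Arveson space, then treat (i) $\Leftrightarrow$ (iv) by translating between partially multiplicative functionals on $\cH$ and weak* continuous characters on $\Mult(\cH)$.

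\smallskip

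For (i), (ii), (iii): The implication (iii) $\Rightarrow$ (ii) is immediate since Theorem \ref{thm:DA_univ} guarantees the existence of at least one embedding. For the remaining equivalences, fix any embedding $j: X \to \bB_m$; Theorem \ref{thm:DA_univ} makes $f \mapsto f \circ j$ a unitary from $H^2_m \big|_{j(X)}$ onto $\cH$ that carries $\Mult(H^2_m \big|_{j(X)})$ onto $\Mult(\cH)$, so algebraic consistency of $\cH$ on $X$ is equivalent to algebraic consistency of $H^2_m \big|_{j(X)}$ on $j(X)$. Since $H^2_m$ has all Taylor coefficients equal to $1$, Lemma \ref{lem:alg_cons_examples}(a) shows $H^2_m$ is algebraically consistent on $\bB_m$, and Lemma \ref{lem:alg_cons_restr} then yields that $H^2_m \big|_{j(X)}$ is algebraically consistent if and only if $j(X)$ is a variety in $\bB_m$. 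This gives (i) $\Leftrightarrow$ (iii); the same argument applied to the single embedding provided by (ii) gives (ii) $\Rightarrow$ (i).

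\smallskip

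For (iv) $\Rightarrow$ (i), let $\rho$ be a partially multiplicative functional on $\cH$. The identity $\rho(1) = \rho(1)^2$ together with $\rho \neq 0$ and the density of $\Mult(\cH) \cdot 1$ in $\cH$ (recorded in the remark preceding Lemma \ref{lem:alg_cons_examples}) forces $\rho(1) = 1$, so $\tau(\varphi) := \rho(\varphi)$ is a character on $\Mult(\cH)$. Representing $\rho = \langle \cdot, h \rangle$ for some $h \in \cH$, we have $\tau(\varphi) = \langle M_\varphi 1, h\rangle$, which is manifestly weak* continuous in $\varphi$. By (iv), $\tau$ equals evaluation at some $x \in X$, and since $\Mult(\cH)$ is dense in $\cH$ and $\rho$ is bounded, this upgrades to $\rho(f) = f(x)$ for every $f \in \cH$.

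\smallskip

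The converse (i) $\Rightarrow$ (iv) is the main obstacle: given a weak* continuous character $\tau$ on $\Mult(\cH)$, the obvious attempt to define $\rho$ on $\cH$ by $\rho(\varphi) := \tau(\varphi)$ on the dense subspace $\Mult(\cH) \subset \cH$ admits no a priori $||\cdot||_\cH$-norm estimate, since $|\tau(\varphi)|$ is controlled only by the multiplier norm. My plan is to leverage (iii) and transport the problem into $H^2_m$: fix an embedding $j : X \to \bB_m$ so that $V := j(X)$ is a variety, identify $\cH$ with $H^2_m \big|_V$, and use the Nevanlinna-Pick property to see that the restriction $\Mult(H^2_m) \to \Mult(H^2_m \big|_V) \cong \Mult(\cH)$ is a weak*-continuous surjection. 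Pulling $\tau$ back yields a weak* continuous character $\widetilde\tau$ on $\Mult(H^2_m)$; invoking the known identification of weak* continuous characters of $\Mult(H^2_m)$ with evaluations at points of $\bB_m$ produces $w \in \bB_m$ with $\widetilde\tau = \mathrm{ev}_w$. Since $\widetilde\tau$ annihilates every multiplier vanishing on $V$, and since $V$ (as a variety cut out by $H^2_m$-functions) coincides with the common zero set of the multipliers vanishing on it by a standard approximation argument in \cite{DRS15}, we must have $w \in V$, so $\tau = \mathrm{ev}_{j^{-1}(w)}$ as required.
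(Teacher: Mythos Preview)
Your proof is correct and follows essentially the same route as the paper: the equivalence of (i), (ii), (iii) is obtained identically via the unitary identification with $H^2_m\big|_{j(X)}$, Lemma~\ref{lem:alg_cons_examples}, and Lemma~\ref{lem:alg_cons_restr}, and your (iv) $\Rightarrow$ (i) matches the paper verbatim. For (i)/(iii) $\Rightarrow$ (iv) the paper simply invokes \cite[Proposition 3.2]{DRS15} for $\Mult(H^2_m\big|_V)$, whereas you unpack that citation by pulling back to $\Mult(H^2_m)$ and then arguing $w\in V$; this is exactly the content of that proposition, so the approaches coincide.
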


\begin{proof}
  Let $j: X \to \bB_m$ be an embedding for $\cH$, and let $V = j(X)$.
  Then
  \begin{equation*}
    U: H^2_m \big|_V \to \cH, \quad f \mapsto f \circ j,
  \end{equation*}
  is a unitary operator,
  and consideration of the map $T \mapsto U T U^*$ shows that $U$ maps
  $\Mult(H^2_m \big|_V)$ onto $\Mult(\cH)$. Thus, $\cH$ is algebraically consistent
  if and only if $H^2_m \big|_V$ is. Observe that $H^2_m$ is algebraically consistent
  by Lemma \ref{lem:alg_cons_examples}. Thus, the equivalence of (i), (ii) and (iii) follows
  from Lemma \ref{lem:alg_cons_restr}.

  To see that (iii) implies (iv), we note that the identification of $\Mult(H^2_m \big|_V)$ with $\Mult(\cH)$ from
  the first part is a weak*-weak* homeomorphism, since it is implemented by conjugation with a unitary operator.
  Thus, the result follows from the fact that every weak* continuous character on $\Mult(H^2_m \big|_V)$
  is given by evaluation at a point in $V$, provided that $V$ is a variety (see \cite[Proposition 3.2]{DRS15}).

  Conversely, suppose that (iv) holds, and let $\rho$ be a partially multiplicative functional on $\cH$.
  Then the restriction of $\rho$ to $\Mult(\cH)$ is a character. Since
  \begin{equation*}
    \rho(\varphi) = \rho(M_\varphi 1) \quad \text{ for all } \varphi \in \Mult(\cH),
  \end{equation*}
  it is weak* continuous. By assumption, there is a point $x \in X$ such that $\rho(\varphi) = \varphi(x)$ for
  all $\varphi \in \Mult(\cH)$. Since $\Mult(\cH)$ is dense in $\cH$, it follows that $\rho$ is given by evaluation
  at $x$. Consequently, $\cH$ is algebraically consistent.
\end{proof}

In the setting of the last proposition, we identify $X$ with a subset of the maximal ideal space
of $\Mult(\cH)$ via point evaluations.

\begin{lem}
  \label{lem:hom_weak_star}
  Let $\cH_1$ and $\cH_2$ be normalized algebraically consistent irreducible complete Nevanlinna-Pick spaces on sets $X_1$ and $X_2$, respectively. Let $\Phi: \Mult(\cH_1) \to \Mult(\cH_2)$ be a unital
  homomorphism.
  Then the following assertions
  are equivalent:
  \begin{enumerate}[label=\normalfont{(\roman*)}]
    \item $\Phi$ is weak*-weak* continuous.
    \item $\Phi^*(X_2) \subset X_1$.
    \item There is a map $F: X_2 \to X_1$ such that
      \begin{equation*}
        \Phi(\varphi) = \varphi \circ F
      \end{equation*}
      for all $\varphi \in \Mult(\cH_1)$.
  \end{enumerate}
  In this case, the map $F$ in (iii) is the restriction of $\Phi^*$ to $X_2$.
\end{lem}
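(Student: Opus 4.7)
The plan is to handle (ii)$\Leftrightarrow$(iii) by bookkeeping, (i)$\Rightarrow$(ii) by using the weak* continuity of point evaluations together with Proposition \ref{prop:alg_consistent_equiv}, and (iii)$\Rightarrow$(i) --- the only substantive step --- by combining the closed graph theorem with Krein--Smulian.

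Under the identification of $X_i$ with a subset of the maximal ideal space of $\Mult(\cH_i)$ via point evaluation, condition (ii) asserts that for every $x \in X_2$ there exists a point $F(x) \in X_1$ with $\operatorname{ev}_x \circ \Phi = \operatorname{ev}_{F(x)}$. Uniqueness of $F(x)$ follows from the fact that $\Mult(\cH_1)$ separates the points of $X_1$ (see Proposition \ref{prop:kernel_recover_iso}), and unpacking the equation yields $\Phi(\varphi)(x) = \varphi(F(x))$, which is (iii); reading the same computation backwards gives (iii)$\Rightarrow$(ii). In either direction, $F$ is visibly the restriction of $\Phi^*$ to $X_2$. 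For (i)$\Rightarrow$(ii), each point evaluation $\operatorname{ev}_x$ on $\Mult(\cH_2)$ is weak* continuous, as the identity $\operatorname{ev}_x(M_\varphi) = K(x,x)^{-1} \langle M_\varphi K(\cdot,x), K(\cdot,x) \rangle$ presents it as a vector functional; so if $\Phi$ is weak*-weak* continuous, then $\operatorname{ev}_x \circ \Phi$ is a weak* continuous character on $\Mult(\cH_1)$ and Proposition \ref{prop:alg_consistent_equiv} together with algebraic consistency of $\cH_1$ forces it to be evaluation at a point of $X_1$.

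The main obstacle is (iii)$\Rightarrow$(i). First I would argue that $\Phi$ is bounded by the closed graph theorem: if $\varphi_n \to \varphi$ and $\varphi_n \circ F \to \psi$ in multiplier norm, then multiplier norms dominate sup norms (since $K(\cdot,x)$ is an eigenvector of $M_\varphi^*$ with eigenvalue $\overline{\varphi(x)}$), so $\varphi_n(F(y)) \to \varphi(F(y)) = \Phi(\varphi)(y)$ and $\varphi_n(F(y)) \to \psi(y)$ pointwise on $X_2$, forcing $\psi = \Phi(\varphi)$. With boundedness in hand, I would invoke the Krein--Smulian theorem to reduce weak*-weak* continuity of $\Phi$ to its continuity on each norm-bounded ball. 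The crucial ingredient here is that on bounded subsets of $\Mult(\cH_i)$ the weak* topology coincides with pointwise convergence on $X_i$: this is seen by testing against the rank-one functionals $M \mapsto \langle M K(\cdot,x), K(\cdot,y) \rangle$ and using the density of the span of kernels in $\cH_i$ (together with irreducibility, so these functionals separate multipliers). Since pointwise convergence is trivially preserved by composition with $F$, and the image of a bounded set under the now-bounded map $\Phi$ remains bounded, the Krein--Smulian reduction closes the argument.
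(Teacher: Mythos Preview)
Your proof is correct and follows essentially the same route as the paper: (i)$\Rightarrow$(ii) via Proposition~\ref{prop:alg_consistent_equiv}, (ii)$\Leftrightarrow$(iii) by unwinding the definitions, and (iii)$\Rightarrow$(i) via Krein--Smulian together with the equivalence of weak* and pointwise convergence on bounded nets of multipliers. The one place you add something is the explicit closed graph argument for boundedness of $\Phi$; the paper's proof does not spell this out, tacitly relying on automatic continuity of homomorphisms into a semi-simple commutative Banach algebra (or simply taking boundedness as read), but your direct verification is clean and self-contained.
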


\begin{proof}
  The implication (i) $\Rightarrow$ (ii) follows immediately from the description
  of the weak* continuous characters in Proposition \ref{prop:alg_consistent_equiv}.
  Assume that (ii) holds, and let $F$ denote the restriction of $\Phi^*$ to $X_2$.
  Then
  \begin{equation*}
    \Phi(\varphi)(\lambda) = \Phi^*(\delta_\lambda) (\varphi) = (\delta_{F(\lambda)}) (\varphi)
    = (\varphi \circ F)(\lambda)
  \end{equation*}
  for all $\lambda \in X_2$. Hence, $\Phi$ is given by composition with $F$, that is, (iii) holds.

  To show that (iii) implies (i), it suffices to show that $\Phi$ is weak*-weak* continuous
  on bounded sets by the Krein-Smulian theorem.
  This in turn follows from the general fact that for a bounded net of multipliers,
  convergence in the weak* topology is equivalent to pointwise convergence.

  Finally, if $F$ is as in (iii), then
  \begin{equation*}
    \varphi(F(x)) = \Phi(\varphi) (x) = \varphi(\Phi^*(x))
  \end{equation*}
  for all $x \in X_2$ and all $\varphi \in \Mult(\cH_1)$, so the assertion follows from the 
  fact that $\Mult(\cH_1)$ separates the points of $X_1$ as $\cH_1$ is an irreducible complete
  Nevanlinna-Pick space (this can be deduced, for example, from Proposition \ref{prop:kernel_recover_iso}).
\end{proof}

As a consequence, we see that weak*-weak* homeomorphic isometric isomorphisms between multiplier algebras are always unitarily implemented.
In \cite{DRS15}, this was shown for spaces which admit an embedding into a
finite dimensional ball using different
methods. This additional assumption was recently removed in \cite{SS14} by refining these methods.
\begin{prop}
  Let $\cH_1$ and $\cH_2$ be normalized algebraically consistent irreducible complete Nevanlinna-Pick spaces on sets $X_1$ and $X_2$, respectively.
  Let $\Phi: \Mult(\cH_1) \to \Mult(\cH_2)$ be a unital isometric isomorphism. If $\Phi$ is a weak*-weak* homeomorphism,
  then $\Phi$ is given by composition with a bijection $F: Y \to X$ and it is unitarily implemented.
\end{prop}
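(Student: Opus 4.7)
The plan is to piece this together from Lemma \ref{lem:hom_weak_star} and Proposition \ref{prop:isometric_comp_operator}, which together already do essentially all of the work. The hypothesis that $\Phi$ is a weak*-weak* homeomorphism is exactly what unlocks Lemma \ref{lem:hom_weak_star}; the isometric assumption then converts a mere composition operator into a unitarily implemented one.

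First I would apply Lemma \ref{lem:hom_weak_star} to $\Phi$: since $\Phi$ is weak*-weak* continuous, condition (i) holds, so condition (iii) produces a map $F: X_2 \to X_1$ with $\Phi(\varphi) = \varphi \circ F$ for all $\varphi \in \Mult(\cH_1)$. I would then apply the same lemma to $\Phi^{-1}$, which is also weak*-weak* continuous by assumption and unital, to get a map $G: X_1 \to X_2$ with $\Phi^{-1}(\psi) = \psi \circ G$. The identities $\Phi \circ \Phi^{-1} = \operatorname{id}$ and $\Phi^{-1} \circ \Phi = \operatorname{id}$ translate into $\varphi \circ F \circ G = \varphi$ and $\psi \circ G \circ F = \psi$ on $\Mult(\cH_1)$ and $\Mult(\cH_2)$, respectively, and since both multiplier algebras separate points (the spaces being irreducible complete Nevanlinna-Pick, as noted at the end of the proof of Lemma \ref{lem:hom_weak_star}), I conclude that $F \circ G = \operatorname{id}_{X_1}$ and $G \circ F = \operatorname{id}_{X_2}$. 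Thus $F: X_2 \to X_1$ is a bijection.

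Now $\Phi = C_F$ is an isometric isomorphism $\Mult(\cH_1) \to \Mult(\cH_2)$ induced by a bijection, so Proposition \ref{prop:isometric_comp_operator} applies: condition (i) of that proposition holds, hence (ii) holds, i.e.\ $K_2$ is a rescaling of $(K_1)_F$. Writing $K_2(z,w) = \delta(z) \overline{\delta(w)} K_1(F(z), F(w))$ for a nowhere vanishing $\delta$ on $X_2$, the final assertion of Proposition \ref{prop:isometric_comp_operator} yields the unitary $U: \cH_1 \to \cH_2$, $f \mapsto \delta (f \circ F)$, with $\Phi = C_F = \operatorname{Ad}(U)$, completing the proof.

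There is no real obstacle here; the only mild subtlety is remembering to apply Lemma \ref{lem:hom_weak_star} to $\Phi^{-1}$ as well in order to upgrade the map $F$ from being merely a set-theoretic map to a bijection, which is in turn what Proposition \ref{prop:isometric_comp_operator} requires as input.
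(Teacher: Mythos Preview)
Your proposal is correct and follows exactly the same route as the paper: apply Lemma \ref{lem:hom_weak_star} to both $\Phi$ and $\Phi^{-1}$ to obtain the bijection $F$, then invoke Proposition \ref{prop:isometric_comp_operator} for the unitary implementation. You have simply fleshed out the details (in particular the separating-points argument showing $F$ is a bijection) that the paper leaves implicit.
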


\begin{proof}
  Lemma \ref{lem:hom_weak_star}, applied to $\Phi$ and $\Phi^{-1}$, shows that $\Phi$ is given by composition. Thus,
  Proposition \ref{prop:isometric_comp_operator} implies that $\Phi$ is unitarily implemented.
\end{proof}

\section{Graded complete Nevanlinna-Pick spaces}
\label{sec:graded_spaces}

In this section, we consider reproducing kernel Hilbert spaces which admit a natural grading.
Let $\cH$ be a reproducing kernel Hilbert space on a set $X$ with reproducing kernel $K$, and let
$X$ be equipped with an action of the circle group $\bT$. We say that $K$ is $\bT$-invariant if
\begin{equation*}
  \bT \to \bC, \quad \lambda \mapsto K(\lambda z,w),
\end{equation*}
is continuous for all $z,w \in X$, and
\begin{equation*}
  K(\lambda z, \lambda w) = K(z,w)
\end{equation*}
for all $\lambda \in \bT$ and $z,w \in X$.
Then the $\bT$-action on $X$ induces a strongly continuous unitary representation
\begin{equation*}
  \Gamma: \bT \to \cB(\cH), \quad \Gamma(\lambda) (f) (z) = f(\lambda z).
\end{equation*}
Indeed, $\Gamma(\lambda)$ is unitary for $\lambda \in \bT$, and for $v,w \in X$, we have
\begin{equation*}
  \langle \Gamma(\lambda) K(\cdot,w), K(\cdot,v) \rangle =  K(\lambda v,w),
\end{equation*}
which is continuous in $\lambda$.
For $n \in \bZ$, let
\begin{equation*}
  \cH_n = \{ f \in \cH: \Gamma(\lambda) f = \lambda^n f \text{ for all } \lambda \in \bT \}.
\end{equation*}
Then the closed subspaces $\cH_n$ are pairwise orthogonal, and
it follows from a standard application of the Fej\'er kernel that
\begin{equation*}
  \cH = \bigoplus_{n \in \bZ} \cH_n.
\end{equation*}
Elements of $\cH_n$ are called homogeneous of degree $n$.

\begin{exa}
  \phantomsection
  \label{eg:graded_spaces}
  \begin{enumerate}[label=\normalfont{(\alph*)},wide]
    \item
  Let $d < \infty$ and $\Omega \subset \bC^d$ be open and connected with $0 \in \Omega$ and $\bT \Omega \subset \Omega$.
  Then $\bT$ acts on $\Omega$ by scalar multiplication. Let $\cH$ be a reproducing kernel Hilbert space
  of analytic functions on $\Omega$ with a $\bT$-invariant kernel $K$.
  It is not hard to see that
  \begin{equation*}
    \cH_n = \{ f \in \cH: f \text{ is a homogeneous polynomial of degree } n \}
  \end{equation*}
  for $n \ge 0$, and $\cH_n = \{0\}$ for $n < 0$.
  Concrete examples of this type include many classical spaces on $\bB_d$ or $\bD^d$, such as the Hardy space and the Dirichlet space.
  \item
  Let $d \in \bN \cup \{\infty\}$, and let $X \subset \bC^d$ satisfy $\overline{\bD} X \subset X$.
  Let $\cH$ be a reproducing kernel Hilbert space on $X$ with a $\bT$-invariant kernel $K$, and assume
  that for $f \in \cH$ and $x \in X$, the function
  \begin{equation*}
    f_x: \overline{\bD} \to \bC, \quad z \mapsto f(z x),
  \end{equation*}
  is contained in the disc algebra. Then $\cH_n = \{0\}$ for $n < 0$ and for $n \ge 0$, the
  space $\cH_n$ consists of all functions $f$ in $\cH$ such that $f_x$ is a multiple of $z^n$
  for every $x \in X$.
  \end{enumerate}
\end{exa}

We require a homogeneous decomposition not only for functions, but also for kernels.
\begin{lem}
  \label{lem:kernel_decomposition}
  Let $K$ be a $\bT$-invariant positive definite kernel on $X$, possibly with zeroes on the diagonal.
  Then there are uniquely determined Hermitian kernels $K_n$ on $X$ such that
  for $z,w \in X$, we have
  \begin{enumerate}[label=\normalfont{(\arabic*)}]
    \item $K_n(\lambda z, w) = \lambda^n K_n(z,w)$ for $\lambda \in \bT$ and
    \item $K(z,w) = \sum_{n \in \mathbb Z} K_n(z,w)$, where the series converges absolutely.
  \end{enumerate}
  In this case, $K_n$ is the reproducing kernel of the space of homogeneous
  elements of degree $n$ in $\cH$. In particular, each $K_n$ is positive definite.
\end{lem}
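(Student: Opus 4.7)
The plan is to build the $K_n$ out of the spectral decomposition of the representation $\Gamma$ already introduced immediately before the lemma. Since $\Gamma$ is a strongly continuous unitary representation of $\bT$, for each $n \in \bZ$ the orthogonal projection $P_n$ onto $\cH_n$ is given by the (weak) integral
\[
  P_n = \int_\bT \lambda^{-n} \Gamma(\lambda) \, d\lambda,
\]
and the Fej\'er argument already cited in the text yields the orthogonal decomposition $\cH = \bigoplus_{n \in \bZ} \cH_n$. Each $\cH_n$ is a closed subspace of $\cH$, hence inherits a structure of reproducing kernel Hilbert space on $X$; I will take $K_n$ to be its reproducing kernel and verify the two properties.

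The standard formula for the kernel of a closed subspace gives $K_n(\cdot, w) = P_n K(\cdot, w)$, which is in $\cH_n$. Since point evaluation at any $z$ is a bounded functional on $\cH$, it commutes with the integral defining $P_n$, and using the identity $(\Gamma(\lambda) K(\cdot, w))(z) = K(\lambda z, w)$ I obtain the explicit formula
\[
  K_n(z,w) = \int_\bT \lambda^{-n} K(\lambda z, w) \, d\lambda.
\]
Property (1) then follows from the substitution $\lambda \mapsto \mu^{-1} \lambda$ together with the translation invariance of Haar measure on $\bT$. The fact that $K_n$ is Hermitian (and positive definite) follows from its being the reproducing kernel of $\cH_n$; alternatively, $K_n(z,z) = \langle P_n K(\cdot,z), K(\cdot,z) \rangle = \|P_n K(\cdot,z)\|^2 \ge 0$, and positive semidefiniteness of the matrix $(K_n(z_i,z_j))$ follows in the same way.

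For property (2), the orthogonal decomposition of $\cH$ applied to the vector $K(\cdot, w)$ gives $K(\cdot, w) = \sum_n K_n(\cdot, w)$ with convergence in $\cH$; evaluating at $z$ and using that $\langle K_n(\cdot, w), K_m(\cdot, z) \rangle = 0$ for $n \neq m$ produces $K(z,w) = \sum_n K_n(z,w)$. To upgrade to absolute convergence I will apply Cauchy--Schwarz twice: first in $\cH_n$ to obtain $|K_n(z,w)| \le \sqrt{K_n(z,z) K_n(w,w)}$, and then in $\ell^2(\bZ)$ to obtain
\[
  \sum_{n \in \bZ} \sqrt{K_n(z,z)}\sqrt{K_n(w,w)} \le \Bigl(\sum_{n \in \bZ} K_n(z,z)\Bigr)^{1/2} \Bigl(\sum_{n \in \bZ} K_n(w,w)\Bigr)^{1/2} = K(z,z)^{1/2} K(w,w)^{1/2},
\]
where the last equality is the already established series identity applied at the diagonal points $(z,z)$ and $(w,w)$.

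For uniqueness, suppose $(K_n')_{n \in \bZ}$ is a second family satisfying (1) and (2). Then for each fixed $z,w \in X$, property (1) combined with (2) gives $K(\lambda z, w) = \sum_n \lambda^n K_n'(z,w)$ with absolute convergence in $\lambda \in \bT$, so the numbers $K_n'(z,w)$ are the Fourier coefficients of the continuous function $\lambda \mapsto K(\lambda z, w)$ on $\bT$, and therefore coincide with the $K_n(z,w)$ defined above. The only delicate point in the whole argument is the bookkeeping needed to justify the interchange of the Bochner integral with point evaluation and to move from convergence in $\cH$ to pointwise and finally to absolute pointwise convergence; once this is set up carefully, all remaining steps are short computations.
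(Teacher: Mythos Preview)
Your proof is correct and follows essentially the same approach as the paper's: define $K_n(\cdot,w)$ as the $\cH_n$-component of $K(\cdot,w)$, identify it as the reproducing kernel of $\cH_n$, pass from $\cH$-convergence to pointwise convergence, and obtain uniqueness from the Fourier expansion of $\lambda \mapsto K(\lambda z,w)$. The only cosmetic differences are that you write out the integral formula for $P_n$ explicitly and use Cauchy--Schwarz in $\ell^2(\bZ)$ for absolute convergence, whereas the paper uses the cruder bound $|K_n(z,w)| \le \max\{K_n(z,z),K_n(w,w)\}$.
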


\begin{proof}
  Let $\cH$ be the reproducing kernel Hilbert space on $X$ with kernel $K$. For $w \in X$, let
  \begin{equation*}
    K(\cdot, w) = \sum_{n \in \bZ} K_n(\cdot,w)
  \end{equation*}
  be the homogeneous expansion of $K(\cdot,w)$ in $\cH = \bigoplus_{n \in \bZ} \cH_n$.
  Observe that for $f \in \cH_n$ and $w \in X$, we have
  \begin{equation*}
    \langle f, K_n(\cdot,w) \rangle = \langle f, K(\cdot,w) \rangle = f(w),
  \end{equation*}
  hence $K_n$ is the reproducing kernel of $\cH_n$, and in particular positive definite.
  The first property is clear. Since convergence in $\cH$ implies pointwise convergence on $X$, it follows that
  \begin{equation*}
    K(z,w) = \sum_{n \in \bZ} K_n(z,w).
  \end{equation*}
  Positive definiteness of $K$ implies that $|K_n(z,w)|^2 \le K_n(z,z) K_n(w,w)$, thus
  $|K_n(z,w)| \le \max \{ K_n(z,z),K_n(w,w) \}$, so the series converges absolutely.

  The uniqueness statement follows from the uniqueness of the Fourier expansion of the continuous function
  \begin{equation*}
    \lambda \mapsto K(\lambda z ,w) = \sum_{n \in \bZ} K_n(\lambda z,w) = \sum_{n \in \bZ} \lambda^n K_n(z,w)
  \end{equation*}
  on $\bT$.
\end{proof}

Incidentally, the last lemma provides a simple proof of the following known fact (cf. the proof
of Theorem 7.33 in \cite{AM02}).

\begin{cor}
  \label{cor:powerseries_kernel}
  Let $(a_n)_n$ be a sequence of complex numbers such that the power series $\sum_{n=0}^\infty a_n t^n$
  has a positive radius of convergence $R$. Let $\cE$ be a Hilbert space and let $B_R(0)$ denote
  the open ball of radius $R$ around $0$ in $\cE$. Then the function $K$ defined by
  \begin{equation*}
    K(z,w) = \sum_{n=0}^\infty a_n \langle z,w \rangle^n \quad (z,w \in B_R(0))
  \end{equation*}
  is a positive definite kernel if and only if $a_n \ge 0$ for all $n \in \bN$.
\end{cor}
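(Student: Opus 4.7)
The plan is to use Lemma \ref{lem:kernel_decomposition} to reduce to analyzing the homogeneous components of $K$. The ``if'' direction is standard: if each $a_n \ge 0$, then the kernel $(z,w) \mapsto \langle z,w \rangle$ is positive definite (as a Gramian), so by the Schur product theorem $(z,w) \mapsto \langle z,w \rangle^n$ is positive definite for every $n \ge 0$. Multiplying by the non-negative scalar $a_n$ preserves positive definiteness, and taking a locally uniform limit of positive definite kernels gives a positive definite kernel.

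For the ``only if'' direction, I would equip $B_R(0)$ with the $\bT$-action $(\lambda, z) \mapsto \lambda z$. Then $\langle \lambda z, \lambda w \rangle = \langle z,w \rangle$, so $K(\lambda z, \lambda w) = K(z,w)$, and continuity of $\lambda \mapsto K(\lambda z, w) = \sum_{n=0}^\infty a_n \lambda^n \langle z,w \rangle^n$ follows from the absolute convergence of the power series (since $|\langle z,w \rangle| < R$ for $z,w \in B_R(0)$). Hence $K$ is a $\bT$-invariant positive definite kernel in the sense of Section \ref{sec:graded_spaces}.

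Now Lemma \ref{lem:kernel_decomposition} applies and yields a unique homogeneous decomposition $K = \sum_{n \in \bZ} K_n$ with each $K_n$ positive definite and $K_n(\lambda z, w) = \lambda^n K_n(z,w)$. On the other hand, the obvious decomposition
\begin{equation*}
  K(z,w) = \sum_{n=0}^\infty a_n \langle z,w \rangle^n
\end{equation*}
also satisfies these homogeneity properties (the $n$-th term transforms as $\lambda^n$ under $z \mapsto \lambda z$), and converges absolutely. Uniqueness therefore forces $K_n(z,w) = a_n \langle z,w \rangle^n$ for $n \ge 0$ and $K_n \equiv 0$ for $n < 0$.

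Finally, since each $K_n$ is positive definite, $K_n(z,z) = a_n \|z\|^{2n} \ge 0$ for every $z \in B_R(0)$. Choosing any nonzero $z$ gives $a_n \ge 0$ (in particular, $a_n$ is real). There is no real obstacle in this argument; the only point requiring a moment of care is verifying the hypotheses of Lemma \ref{lem:kernel_decomposition} (continuity and diagonal invariance under the $\bT$-action), which is handled by the absolute convergence of the power series on $B_R(0)$.
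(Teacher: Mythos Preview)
Your proof is correct and follows essentially the same approach as the paper: apply Lemma \ref{lem:kernel_decomposition} to identify $K_n(z,w) = a_n \langle z,w \rangle^n$ via uniqueness of the homogeneous decomposition, then read off $a_n \ge 0$ from positive definiteness of $K_n$. The paper phrases the final step slightly differently (first $a_n \in \bR$ from Hermitian symmetry, then $a_n < 0$ would make both $K_n$ and $-K_n$ positive definite, forcing $K_n = 0$), but your diagonal evaluation $K_n(z,z) = a_n \|z\|^{2n} \ge 0$ is equally valid and a bit more direct.
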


\begin{proof}
  By the Schur product theorem, $(z,w) \mapsto \langle z,w \rangle^n$ is a positive definite kernel for
  all $n \in \bN$. Thus, the backward direction is clear. Conversely, if $K$ is a positive definite kernel,
  then an application of Lemma \ref{lem:kernel_decomposition} shows that
  \begin{equation*}
    K_n(z,w) = a_n \langle z,w \rangle^n
  \end{equation*}
  defines a positive definite kernel for all $n \in \bN$. In particular, each $K_n$ is Hermitian, hence
  $a_n \in \bR$. Moreover, if $a_n \le 0$, then $-K_n$ is positive definite as well, hence $K_n = 0$
  and thus $a_n = 0$. This observation finishes the proof.
\end{proof}

Let $\cH$ be a reproducing kernel Hilbert space on a set $X$ with a $\bT$-invariant kernel $K$. Assume
that $K$ is normalized at a point in $X$, so that the constant function $1$ is contained in $\cH$
and has norm $1$. Recall that $\cH$ admits an orthogonal decomposition $\cH = \bigoplus_{n \in \bZ} \cH_n$.
We say that $\cH$ is \emph{\graded}
if $\cH_0 = \bC 1$ and $\cH_n = \{0\}$ for $n < 0$. All spaces in Example \ref{eg:graded_spaces}
are \graded, provided their kernel is normalized at a point. In particular, unitarily invariant spaces
on $\bB_d$ are \graded.

In Drury-Arveson space, the multiplier norm of a homogeneous polynomial is equal to its Drury-Arveson norm.
This can be shown by embedding $H^2_d$ into the full Fock space (see also \cite[Lemma 9.5]{SS09}).
For a special class of complete Nevanlinna-Pick spaces $\cH$ on $\bD$,
it was shown that $||z^n||_{\cH} = ||z^n||_{\Mult(\cH)}$ \cite[Lemma 7.2]{DHS15} for all $n \in \bN$.
The next proposition generalizes these results.

\begin{prop}
  \label{prop:norm_hom}
  Let $\cH$ be an irreducible complete Nevanlinna-Pick space which is \graded. If $f \in \cH$ is homogeneous,
  then $f \in \Mult(\cH)$ and $||f||_{\Mult(\cH)} = ||f||_{\cH}$.
\end{prop}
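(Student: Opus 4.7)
The plan is as follows. Since $\cH$ is standard graded, $1 \in \cH$ has norm one, so once $f$ is known to be a multiplier we obtain $\|f\|_\cH = \|M_f 1\|_\cH \leq \|f\|_{\Mult(\cH)}$. It therefore suffices to prove $\|f\|_{\Mult(\cH)} \leq \|f\|_\cH$. Rescaling, I may assume $\|f\|_\cH = 1$ and aim to show $f$ is a contractive multiplier. By the complete Nevanlinna-Pick characterization of contractive multipliers, this is equivalent to showing that
\[
  G(z,w) := (1 - f(z) \overline{f(w)}) K(z,w)
\]
is a positive definite kernel on $X$.

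The crucial observation is that $G$ is $\bT$-invariant under the diagonal action: $K$ is by hypothesis, and $f(\lambda z) \overline{f(\lambda w)} = \lambda^n \overline{\lambda^n} f(z) \overline{f(w)} = f(z) \overline{f(w)}$ since $f$ is homogeneous of degree $n$. By Lemma \ref{lem:kernel_decomposition}, $G$ admits a homogeneous decomposition $G = \sum_{k \geq 0} G_k$. Since $K_0 = 1$ (standard gradedness) and $f \overline{f}$ is pure of degree $n$, one checks that $G_k = K_k$ for $0 \leq k < n$ and $G_k = K_k - f \overline{f}\, K_{k-n}$ for $k \geq n$. To prove $G$ is positive definite it suffices to show each $G_k$ is positive definite: the sum $\sum_k G_k$ converges absolutely, and an absolutely convergent sum of positive definite kernels is positive definite.

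For $0 \leq k < n$, $K_k$ is the reproducing kernel of $\cH_k$ and hence positive definite. For $k = n$, $G_n = K_n - f \overline{f}$ is positive definite because the one-dimensional space $\bC f$, with the norm making $f$ a unit vector and thus reproducing kernel $f \overline{f}$, embeds contractively into $\cH_n$ by virtue of $\|f\|_{\cH_n} = \|f\|_\cH = 1$. For $k > n$, I proceed by induction on $k$. Write $F = 1 - 1/K = \sum_{j \geq 1} F_j$; each $F_j$ is positive definite by Lemma \ref{lem:kernel_decomposition} applied to $F$, which is itself positive definite by Theorem \ref{thm:CNP_char}. Decomposing the identity $K(1 - F) = 1$ mode by mode yields the recursion
\[
  K_k = \sum_{l=0}^{k-1} K_l F_{k-l} \qquad (k \geq 1).
\]
Applying this to both $K_k$ and $K_{k-n}$ in the definition of $G_k$ and re-indexing, a short calculation gives
\[
  G_k = \sum_{l=0}^{n-1} K_l F_{k-l} + \sum_{l=n}^{k-1} G_l F_{k-l}.
\]
By the inductive hypothesis, $G_l \geq 0$ for $n \leq l < k$; combined with the positivity of the $K_l$ and the $F_j$, every summand is a Schur product of positive definite kernels, hence positive definite. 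Summing, $G_k \geq 0$, which closes the induction.

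The main obstacle is deriving the recursive identity for $G_k$ and keeping the indices straight. Once it is in place, positivity reduces to an application of the Schur product theorem together with the positivity of the $F_j$ supplied by the complete Nevanlinna-Pick hypothesis. The argument is internal to $\cH$ and does not require embedding into a universal space such as the Drury-Arveson space.
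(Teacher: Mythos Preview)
Your argument is correct and follows essentially the same route as the paper: both rely on the recursion $K_k=\sum_{l=0}^{k-1}K_lF_{k-l}$ (obtained from $K=KF+1$ and $F_0=0$) together with the Schur product theorem. The paper organizes the induction a little differently: it first proves, by induction on $n$, that $K_{n+k}-K_nK_k$ is positive definite for all $n,k$, and then combines this with $K_n-f\overline f\ge 0$ to bound $(1-f\overline f)K$ from below; your induction on $k$ for the homogeneous components $G_k$ of $(1-f\overline f)K$ is a direct repackaging of the same computation. The paper's detour has the small advantage that the intermediate inequality $K_nK_k\le K_{n+k}$ is recorded and used again (Corollary~\ref{cor:NP_nec}).

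One minor point of presentation: you invoke Lemma~\ref{lem:kernel_decomposition} to decompose $G$, but that lemma assumes the kernel is positive definite, which is precisely what you are trying to establish. The fix is immediate: the decomposition $G=\sum_k G_k$ with $G_k=K_k$ for $k<n$ and $G_k=K_k-f\overline f\,K_{k-n}$ for $k\ge n$ follows purely algebraically from the (absolutely convergent) decomposition $K=\sum_k K_k$ and the fact that $f\overline f$ is homogeneous of degree $n$; no appeal to the lemma is needed for $G$ itself.
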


\begin{proof}
  The proof is an abstract version of the proof of Lemma 7.2 in \cite{DHS15}.
  Let $K = \sum_{n=0}^\infty K_n$ be the homogeneous decomposition of $K$ from Lemma \ref{lem:kernel_decomposition}.
  In a first step, we will show that for every pair of natural numbers $n$ and $k$, the kernel
  \begin{equation*}
    K_{n+k} - K_n K_k
  \end{equation*}
  is positive definite. We proceed by induction on $n$. The assumption $\cH_0 = \bC 1$ implies that $K_0 = 1$,
  so this is trivial for $n=0$. Assume that $n \ge 1$ and that the assertion is true for $0,1,\ldots, n-1$. Since $K$ is a normalized irreducible complete
  Nevanlinna-Pick kernel, $F = 1 - \frac{1}{K}$ is a positive definite kernel on $X$
  by Theorem \ref{thm:CNP_char}, and it is clearly $\bT$-invariant.
  Let $F = \sum_{j=0}^\infty F_j$ be the homogeneous decomposition of $F$.
  Since $K = K F + 1$, we have
  \begin{equation*}
    \sum_{i=0}^\infty K_i = \sum_{i=0}^\infty \sum_{j=0}^i K_{i-j} F_{j} + 1,
  \end{equation*}
  where we have used that all series converge absolutely.
  Since the homogeneous expansion is unique, we may compare
  homogeneous components in this equation.
  For $i=0$, we use that $K_0 = 1$ to obtain $F_0 = 0$. For $i \ge 1$, we therefore
  get the identity
  \begin{equation*}
    K_i = \sum_{j=1}^{i} K_{i-j} F_{j}.
  \end{equation*}
  Using this identity with $i=n+k$ and $i=n$, we deduce that
  \begin{align*}
    K_{n+k} - K_n K_k &= \sum_{j=1}^{n+k} K_{n+k-j} F_j - \sum_{j=1}^n K_{n-j} K_k F_j  \\
    &\ge \sum_{j=1}^n (K_{n+k-j} - K_{n-j} K_{k}) F_j \ge 0
  \end{align*}
  by induction hypothesis and the Schur product theorem. This finishes the inductive proof.

  Now, let $f \in \cH$ be homogeneous of degree $n \ge 0$ and suppose that $||f||_{\cH} \le 1$. A well-known characterization
  of the norm in a reproducing kernel Hilbert space implies that
  \begin{equation*}
    (z,w) \mapsto K(z,w) - f(z) \overline{f(w)}
  \end{equation*}
  is positive definite. Note that the degree $n$ homogeneous component of this kernel is $K_n(z,w) - f(z) \overline{f(w)}$,
  which is positive definite by Lemma \ref{lem:kernel_decomposition}. Using the Schur product theorem, we deduce that
  \begin{equation*}
     \sum_{k=0}^\infty K_k(z,w) (K_n(z,w) - f(z) \overline{f(w)})
  \end{equation*}
  is positive definite. Since $K_n K_k \le K_{n+k}$, this implies that
  \begin{equation*}
     0 \le \sum_{k=0}^\infty K_{n+k}(z,w) - \sum_{k=0}^\infty (K_{k} (z,w) f(z) \overline{f(w)})
     \le K(z,w) ( 1 - f(z) \overline{f(w)}),
  \end{equation*}
  so that $f$ is a contractive multiplier on $\cH$.
\end{proof}

\begin{rem}
  For Drury-Arveson space, the above proof can be somewhat simplified. In this case,
  $K_n(z,w) = \langle z,w \rangle^n$, hence
  $K_{n+k} = K_n K_k$,
  so that the first step is trivial.
\end{rem}

As a consequence, we obtain a simple necessary condition for the complete Nevanlinna-Pick property
of a unitarily invariant space.
\begin{cor}
  \label{cor:NP_nec}
  Let $d \in \bN \cup \{\infty\}$ and let
  $\cH$ be an irreducible unitarily invariant reproducing kernel Hilbert space on $\bB_d$ with reproducing kernel
  \begin{equation*}
    K(z,w) = \sum_{n=0}^\infty a_n \langle z,w \rangle^n
  \end{equation*}
  such that $a_0 = 1$. If $\cH$ is a complete Nevanlinna-Pick space, then
  \begin{equation*}
    a_n a_k \le a_{n + k}
  \end{equation*}
  for all $n,k \in \bN$.
\end{cor}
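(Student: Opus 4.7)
My plan is to apply Proposition \ref{prop:norm_hom} to the monomial $z_1^n$, whose $\cH$-norm and multiplier norm can both be computed explicitly in terms of the sequence $(a_n)$.

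First, I would verify that $\cH$ is \graded{}. Since $\cH$ is unitarily invariant on $\bB_d$ with kernel normalized at $0$, it falls into the setting of Example \ref{eg:graded_spaces}, so it admits the orthogonal decomposition $\cH = \bigoplus_{n \ge 0} \cH_n$ into homogeneous polynomials of degree $n$, with $\cH_0 = \bC$.

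Next, I would compute $\|z_1^n\|_{\cH}$. Expanding
\begin{equation*}
  K(z,w) = \sum_{n=0}^\infty a_n \langle z,w\rangle^n
  = \sum_{\alpha} \frac{a_{|\alpha|} |\alpha|!}{\alpha!} z^\alpha \overline{w}^{\alpha},
\end{equation*}
one reads off that the monomials $\{z^\alpha\}$ form an orthogonal family, and in particular, whenever $a_n > 0$, one has $\|z_1^n\|_{\cH}^2 = 1/a_n$. (If $a_n = 0$, then $z_1^n \notin \cH$, but the inequality $a_n a_k \le a_{n+k}$ is then immediate since $a_n a_k = 0$ and $a_{n+k} \ge 0$.)

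Now the core of the argument: assuming $a_n, a_k > 0$, Proposition \ref{prop:norm_hom} yields $z_1^n \in \Mult(\cH)$ with
\begin{equation*}
  \|z_1^n\|_{\Mult(\cH)} = \|z_1^n\|_{\cH} = \frac{1}{\sqrt{a_n}}.
\end{equation*}
Applying the multiplication operator $M_{z_1^n}$ to the vector $z_1^k \in \cH$ and using that $z_1^n \cdot z_1^k = z_1^{n+k}$, I obtain
\begin{equation*}
  \frac{1}{\sqrt{a_{n+k}}} = \|z_1^{n+k}\|_{\cH} \le \|z_1^n\|_{\Mult(\cH)} \|z_1^k\|_{\cH} = \frac{1}{\sqrt{a_n}} \cdot \frac{1}{\sqrt{a_k}},
\end{equation*}
which rearranges to $a_n a_k \le a_{n+k}$. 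There is no serious obstacle here: the only real input is Proposition \ref{prop:norm_hom}, and the remaining work is the elementary monomial norm calculation plus the defining inequality for multiplier norms.
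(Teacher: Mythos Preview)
Your proof is correct but takes a slightly different route from the paper. The paper's argument does not use the \emph{conclusion} of Proposition \ref{prop:norm_hom}; instead it extracts the intermediate step established inside that proof, namely that the kernel $K_{n+k} - K_n K_k$ is positive definite for all $n,k$. Since here $K_n(z,w) = a_n \langle z,w\rangle^n$, this reads $(a_{n+k} - a_n a_k)\langle z,w\rangle^{n+k} \ge 0$, giving the inequality immediately. Your approach instead treats Proposition \ref{prop:norm_hom} as a black box, computes the $\cH$-norm and hence multiplier norm of $z_1^n$, and then tests $M_{z_1^n}$ on the vector $z_1^k$. This has the advantage of citing only the statement of the proposition rather than reaching into its proof, at the cost of a small amount of bookkeeping (the monomial norm computation and the case split on whether $a_n$ or $a_k$ vanishes). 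Both arguments are short and essentially equivalent in depth; the paper's version is marginally more direct because it bypasses the detour through multiplier norms.
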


\begin{proof}
  The proof of Proposition \ref{prop:norm_hom} shows that if $\cH$ is a complete Nevanlinna-Pick space,
  then
  \begin{equation*}
    K_{n+k} - K_n K_k
  \end{equation*}
  is positive definite for every $k,n \in \bN$. But
  \begin{equation*}
    K_n(z,w) = a_n \langle z,w \rangle^n
  \end{equation*}
  for $z,w \in \bB_d$, hence the result follows.
\end{proof}

\begin{exa}
  In the setting of the last lemma, let $a_n = (n+1)^s$ for $n \in \bN$. If $s > 0$, then
  \begin{equation*}
    a_1^2 = 4^s > 3^s = a_2,
  \end{equation*}
  so $\cH$ is not a complete Nevanlinna-Pick space. Observe that if $d= 1$ and $s=1$, we obtain
  the well-known fact that the Bergman space on $\bD$ is not a complete Nevanlinna-Pick space.
\end{exa}

\begin{exa}
  Let us observe that the necessary condition in Corollary \ref{cor:NP_nec} is not sufficient.
  Let $d = 1$ and define
  $a_0 = 1, a_1 = \frac{1}{2}, a_n =1$ for $n \ge 2$, that is, $\cH$ is the space on $\bD$ with reproducing kernel
  \begin{equation*}
    K(z,w) = \sum_{n=0}^\infty a_n (z \overline w)^n = \frac{1}{1 - z \overline{w}} - \frac{1}{2} z \overline{w}.
  \end{equation*}
  Then $a_k a_n \le a_{n + k}$ for $n,k \in \bN$.
  However,
  \begin{equation*}
    1 - \frac{1}{K(z,w)} = \frac{1}{2} z \overline{w} + \frac{3}{4} (z \overline w)
    + \frac{1}{8} (z \overline w)^3 - \frac{5}{16} (z \overline {w})^4 + \text{h.o.t.}.
  \end{equation*}
  Hence, $\cH$ is not a complete Nevanlinna-Pick space by \cite[Theorem 7.33]{AM02}.
\end{exa}

If $\cH$ is a \graded\ complete Nevanlinna-Pick space, we let $A(\cH)$ denote the norm closed
linear span of the homogeneous elements in $\Mult(\cH)$. For example, $A(H^2)$ is the disc algebra.

For \graded\ complete Nevanlinna-Pick spaces, there is a bounded version of Corollary \ref{cor:multiplier_equal}.

\begin{prop}
  Let $X$ be a set equipped with an action of $\bT$.
  Let $\cH_1$ and $\cH_2$ be two irreducible complete Nevanlinna-Pick spaces on
  $X$ with reproducing kernels $K_1$ and $K_2$, respectively.
  Assume that $\cH_1$ and $\cH_2$
  are \graded\ with respect to the action of $\bT$ on $X$. Then the following assertions are equivalent:
  \begin{enumerate}[label=\normalfont{(\roman*)}]
    \item $\cH_1 = \cH_2$ as vector spaces.
    \item $\Mult(\cH_1) = \Mult(\cH_2)$ as algebras.
    \item $A(\cH_1) = A(\cH_2)$ as algebras.
    \item There exist $c_1,c_2 > 0$ such that $c_1^2 K_2 - K_1$ and $c_2^2 K_1 - K_2$ are positive definite.
    \item The identity map $\cH_1 \to \cH_2$ is a bounded isomorphism which induces similarities $\Mult(\cH_1) = \Mult(\cH_2)$
      and $A(\cH_1) = A(\cH_2)$.
  \end{enumerate}
\end{prop}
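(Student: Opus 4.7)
My plan is to close the cycle $(\text{iv}) \Leftrightarrow (\text{i}) \Rightarrow (\text{v}) \Rightarrow (\text{ii}) \Rightarrow (\text{iii}) \Rightarrow (\text{i})$. For $(\text{i}) \Leftrightarrow (\text{iv})$ I would invoke the standard reproducing kernel fact that a contractive inclusion $\cH(K) \subset \cH(K')$ is equivalent to positivity of $K' - K$: given (i), the identity $T: \cH_1 \to \cH_2$ has closed graph since point evaluations are continuous on both sides, hence is a bounded isomorphism, and rescaling yields (iv); the converse is immediate. For $(\text{i}) \Rightarrow (\text{v})$ I would observe that $\varphi$ multiplies $\cH_1$ into itself iff it multiplies $\cH_2$ into itself, and that $M_\varphi^{(2)} = T M_\varphi^{(1)} T^{-1}$, so the identification of multiplier algebras is implemented by $\Ad(T)$, which restricts to $A(\cH_i)$. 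The implications $(\text{v}) \Rightarrow (\text{ii})$ and $(\text{v}) \Rightarrow (\text{iii})$ are trivial, and for $(\text{ii}) \Rightarrow (\text{iii})$ I would again apply the closed graph theorem: if $\Mult(\cH_1) = \Mult(\cH_2)$ as algebras, then the identity is bounded in both directions (operator-norm convergence of multipliers forces pointwise convergence since $1 \in \cH_i$), and this equivalence of multiplier norms descends to $A(\cH_i)$.

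The heart of the argument is $(\text{iii}) \Rightarrow (\text{i})$, and here the \graded\ hypothesis is essential. Given (iii), the closed graph theorem supplies $c > 0$ with $c^{-1} \|f\|_{A(\cH_1)} \le \|f\|_{A(\cH_2)} \le c \|f\|_{A(\cH_1)}$ for every $f \in A(\cH_1) = A(\cH_2)$. By Proposition~\ref{prop:norm_hom}, every homogeneous $f \in (\cH_i)_n$ lies in $\Mult(\cH_i)$, hence in $A(\cH_i)$, and satisfies $\|f\|_{A(\cH_i)} = \|f\|_{\cH_i}$. Since being homogeneous of degree $n$ is the pointwise condition $f(\lambda z) = \lambda^n f(z)$, it is intrinsic to $f$ as a function, so the equality $A(\cH_1) = A(\cH_2)$ forces $(\cH_1)_n = (\cH_2)_n$ as sets of functions, with $c$-equivalent Hilbert norms on each homogeneous component. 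I would then use the orthogonal decomposition $\cH_i = \bigoplus_n (\cH_i)_n$ to propagate this to the whole space: for $f = \sum_n f_n \in \cH_1$ the estimate
\[
\sum_n \|f_n\|_{\cH_2}^2 \le c^2 \sum_n \|f_n\|_{\cH_1}^2 = c^2 \|f\|_{\cH_1}^2 < \infty
\]
shows that $\sum_n f_n$ converges in $\cH_2$, and continuity of point evaluations in both spaces forces the limit to equal $f$ pointwise. Symmetry gives $\cH_1 = \cH_2$.

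The main obstacle is this last implication, where the \graded\ hypothesis enters twice: Proposition~\ref{prop:norm_hom} is what lets me transport information between the multiplier norm on $A(\cH_i)$ and the Hilbert norm on homogeneous components, and the orthogonal homogeneous decomposition is what lifts the resulting equivalence from each component to all of $\cH_i$. Without gradedness I see no direct route from equality of the specific closed subalgebra $A(\cH_i)$ of $\Mult(\cH_i)$ back to equality of the underlying Hilbert spaces, which is why the analogous statement for arbitrary irreducible complete Nevanlinna-Pick spaces is not available.
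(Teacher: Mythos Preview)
Your proposal is correct and follows essentially the same route as the paper. The paper organizes the implications slightly differently (it handles (ii)~$\Rightarrow$~(i) and (iii)~$\Rightarrow$~(i) together rather than going through (ii)~$\Rightarrow$~(iii)), and it justifies automatic continuity of the identity map by citing semi-simplicity of the algebras \cite[Proposition~4.2]{Dales78} instead of your closed graph argument, but the core idea---use Proposition~\ref{prop:norm_hom} to identify multiplier norm with Hilbert norm on each homogeneous component, then exploit the orthogonal decomposition $\cH_i = \bigoplus_n (\cH_i)_n$---is identical.
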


\begin{proof}
  The equivalence of (i) and (iv) is well known. (v) implies (i), (ii) and (iii) is trivial, and (i) implies (v)
  follows from the closed graph theorem.
  It remains to see that (ii) or (iii) implies (i). In both cases, $\cH_1$ and $\cH_2$ have
  the same homogeneous elements by Proposition \ref{prop:norm_hom}.
  Moreover, since all algebras in question are semi-simple, there are
  constants $C_1, C_2 > 0$ such that
  \begin{equation*}
    \frac{1}{C_2} ||f||_{\Mult(\cH_2)} \le ||f||_{\Mult(\cH_1)} \le C_1 ||f||_{\Mult(\cH_2)}
  \end{equation*}
  for every homogeneous element $f$ (see \cite[Proposition 4.2]{Dales78}). Since homogeneous elements of different
  degree are orthogonal in $\cH_1$ and $\cH_2$, we deduce from Proposition \ref{prop:norm_hom} that there is a bounded isomorphism
  $\cH_1 \to \cH_2$ which acts as the identity on homogeneous elements, and hence everywhere. Thus, (i) holds.
\end{proof}

\section{Restrictions of unitarily invariant spaces}
\label{sec:unit_inv}

For the remainder of this article, we will consider restrictions of unitarily invariant
spaces on $\bB_d$, and from now on, we will always assume that $d < \infty$.

Suppose that $\cH$ is a unitarily invariant space on $\bB_d$ with reproducing kernel
\begin{equation}
  \label{eqn:kernel_power_series}
  K(z,w) = \sum_{n=0}^\infty a_n \langle z,w \rangle^n,
\end{equation}
where $a_0 = 1$ and $a_n \ge 0$ for all $n \in \bN$. We will assume that $\cH$ has the following properties:
\begin{enumerate}[label=\normalfont{(\alph*)}]
  \item $\cH$ contains the coordinate functions.
  \item $\cH$ is algebraically consistent on $\bB_d$.
  \item $\cH$ is an irreducible complete Nevanlinna-Pick space.
\end{enumerate}
For simplicity, we will call a space
which satisfies these conditions a \emph{unitarily invariant complete NP-space on $\bB_d$}.

The conditions above can also be expressed in terms of the reproducing kernel. If the kernel is given as in
\eqref{eqn:kernel_power_series}, then (a) is equivalent to demanding that $a_1 > 0$ (see, for example
\cite[Section 4]{GRS02} or \cite[Proposition 4.1]{GHX04}). 
Lemma \ref{lem:alg_cons_examples} shows that Condition (b)
holds if and only if the radius of convergence of the series $\sum_{n=0}^\infty a_n t^n$ is $1$ (so that $\cH$ is defined on $\bB_d$) and $\sum_{n=0}^\infty a_n = \infty$.
In the presence of (a), $\cH$ is
an irreducible complete Nevanlinna-Pick space
if and only if the sequence $(b_n)_{n=1}^\infty$ defined by
\begin{equation}
  \label{eqn:complete_NP_cond}
  \sum_{n=1}^\infty b_n t^n = 1 - \frac{1}{\sum_{n=0}^\infty a_n t^n}
\end{equation}
for $t$ in a neighbourhood of $0$ is a sequence of non-negative real numbers, see Lemma \ref{lem:NP_a_b}.

We will also consider spaces on $\ol{\bB_d}$. The only difference to the above setting is that here,
the functions in $\cH$ are assumed to be analytic on $\bB_d$ and continuous on $\ol{\bB_d}$.
Moreover, $\cH$ is assumed to be algebraically consistent on $\ol{\bB_d}$. In terms of the reproducing kernel $K$, this means
that $\sum_{n=0}^\infty a_n < \infty$ but the power series $\sum_{n=0}^\infty a_n t^n$ has radius of convergence $1$ (see
Lemma \ref{lem:alg_cons_examples}).
We call such a space a \emph{unitarily invariant complete NP-space on $\overline{\bB_d}$}.
We say that $\cH$ is a \emph{unitarily invariant complete NP-space} to mean that $\cH$ is either
a unitarily invariant complete NP-space on $\bB_d$ or on $\ol{\bB_d}$.

\begin{rem} Let $\cH$ be a unitarily invariant complete NP-space as above.
  \label{rem:unitarily_invariant}
  \begin{enumerate}[label=\normalfont{(\alph*)},wide]
    \item $\cH$ is \graded\ in the sense of Section \ref{sec:graded_spaces}.
    \item The condition that the sequence $(b_n)$ in Equation \eqref{eqn:complete_NP_cond} in non-negative
      is often difficult to check in practice. A sufficient condition for this to hold is that
  the sequence $(a_n)_n$ is strictly positive and log-convex,
  i.e.
  \begin{equation*}
    \frac{a_n}{a_{n+1}} \le \frac{a_{n-1}}{a_n}  \quad (n \ge 1),
  \end{equation*}
  see, for example, \cite[Lemma 7.38]{AM02}.
  \item Since $\cH$ contains the coordinate functions, it follows from Proposition \ref{prop:norm_hom}
   that the coordinate functions are multipliers. Thus, all polynomials are multipliers. In particular,
   $\cH$ contains all polynomials, so that $a_n > 0$ for all $n \in \bN$ (see also \cite[Section 4]{GRS02}).
  \item The monomials $z^\alpha$, where $\alpha$ runs through all multi-indices of non-negative integers of length $d$,
    form an orthogonal basis for $\cH$. Moreover,
    \begin{equation*}
      ||z^\alpha||_{\cH}^2 = \frac{\alpha!}{|\alpha|! a_{|\alpha|}}
    \end{equation*}
    for every multi-index $\alpha$ (see, for example, \cite[Section 4]{GRS02} or \cite[Proposition 4.1]{GHX04}).
    It follows from unitary invariance that
    \begin{equation*}
      ||\langle \cdot,w \rangle^n||^2_{\cH_I} = \frac{||w||^{2 n}}{a_n}
    \end{equation*}
    for all $w \in \bC^d$ and all $n \in \bN$.
  \end{enumerate}
\end{rem}

\begin{exa}
  \label{exa:unit_invariant}
  For $- 1 \le s \le 0$, let $\cH_s(\bB_d)$ be the reproducing kernel Hilbert space
  on $\bB_d$ with kernel
  \begin{equation*}
    K_s(z,w) = \sum_{n=0}^\infty (n+1)^{-s} \langle z,w \rangle.
  \end{equation*}
  Using part (b) of Remark \ref{rem:unitarily_invariant}, it is easy to see that
  $\cH_s(\bB_d)$ is a unitarily invariant complete NP-space on $\bB_d$.

  If $s < -1$, the series in the definition of $K_s$ converges on $\overline{\bB_d} \times \overline{\bB_d}$.
  Let $\cH_s(\overline{\bB_d})$ be the reproducing kernel Hilbert space on $\overline{\bB_d}$
  with this kernel. As above, it is not hard to see that this space is a unitarily invariant complete NP-space on $\ol{\bB_d}$.

  Closely related to the spaces $\cH_s(\bB_d)$ for $s \in (-1,0]$ are the spaces from Corollary \ref{cor:multiplier_auto_inv}.
  If $\alpha \in (0,1]$, the space $\cK_\alpha$ with reproducing kernel
  \begin{equation*}
    K(z,w) = \frac{1}{(1- \langle z,w \rangle)^\alpha} \quad (z,w \in \bB_d)
  \end{equation*}
  is a unitarily invariant complete NP-space on $\bB_d$.
  Expressing the reproducing kernel as a binomial series and using part (d) of Remark \ref{rem:unitarily_invariant},
  it is straightforward to see that $\cK_\alpha$ and $\cH_{\alpha -1}$ agree as vector spaces,
  and have equivalent norms.
\end{exa}

\begin{rem}
  While we assume that our spaces on $\bB_d$ are invariant under unitary maps, we specifically
  do not assume that they are invariant under other conformal automorphisms of the unit ball.
  Such an assumption would simplify some arguments, but the condition of automorphism invariance
  is often difficult to check in practice. Indeed, even for spaces on $\bD$, there does
  not seem to be a simple criterion for automorphism invariance. We refer the reader to \cite[Section 3.1]{CM95}
  and \cite[Section 8]{CM98}.
\end{rem}

We now turn to restrictions of unitarily invariant complete NP-spaces.
Let $I \subsetneq \bC[z_1,\ldots,z_d]$ be a homogeneous ideal. Following \cite{DRS11}, we define
\begin{equation*}
  Z^0(I) = V(I) \cap \bB_d
\end{equation*}
and
\begin{equation*}
  Z(I) = V(I) \cap \overline{\bB_d},
\end{equation*}
where
\begin{equation*}
  V(I) = \{z \in \bC^d: f(z) = 0 \text{ for all } f \in I \}
\end{equation*}
denotes the vanishing locus of $I$.
Observe that since $I$ is a proper ideal, $Z^0(I)$ always contains the origin.

If $\cH$ is a unitarily invariant complete NP-space on $\bB_d$, we define $\cH_I = \cH \big|_{Z^0(I)}$.
If $\cH$ is a unitarily invariant complete NP-space on $\ol{\bB_d}$, we define $\cH_I = \cH \big|_{Z(I)}$.
Recall from Section \ref{sec:prelim} that the norm on $\cH_I$ is defined in such a way that the
restriction map from $\cH$ onto $\cH_I$ is a co-isometry.
Lemma \ref{lem:alg_cons_restr} shows that $\cH_I$ is algebraically consistent in both cases.
Observe that the circle group acts on $Z^0(I)$ and on $Z(I)$ by scalar multiplication, which
gives the spaces $\cH_I$ a grading in the sense of Section \ref{sec:graded_spaces}. Moreover, the
restriction map from $\cH$ onto $\cH_I$ respects the grading. Thus, a function
in $\cH_I$ is homogeneous of degree $n$ if and only if it is
the restriction of a homogeneous polynomial of degree $n$.

Since an ideal and its radical have the same vanishing locus, there is no loss of generality
in restricting our attention to radical homogeneous ideals. If $I \subsetneq \bC[z_1,\ldots,z_d]$
is a radical homogeneous ideal, then the ring of polynomial functions on $V(I)$ is canonically
isomorphic to the quotient $\bC[z_1,\ldots,z_d]/I$ by Hilbert's Nullstellensatz. The following
lemma, which gives a different description of the space $\cH_I$,
can be thought of as a Hilbert function space analogue of this fact. Results of this type are
certainly well known (cf. \cite[Section 6]{DRS11}), but we do not have a convenient reference for the
precise statement.

\begin{lem}
  \label{lem:nullstellensatz}
  Let $\cH$ be a unitarily invariant space on $\bB_d$ or on $\ol{\bB_d}$ with reproducing kernel $K$,
  and let $I \subsetneq \bC[z_1,\ldots,z_d]$ be a radical homogeneous ideal. Then the closure of $I$ in
  $\cH$ is given by
  \begin{equation*}
    \overline{I} = \{f \in \cH: f \big|_{Z^0(I)} = 0 \}.
  \end{equation*}
  Hence
  the map
  \begin{equation*}
    \cH \ominus I \to \cH_I,
  \end{equation*}
  given by restriction, is a unitary operator. Moreover, $\cH \ominus I$ is the closed linear span of
  the kernel functions $K(\cdot,w)$ for $w \in Z^0(I)$.
\end{lem}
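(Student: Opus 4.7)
The plan is to set $N = \{f \in \cH : f|_{Z^0(I)} = 0\}$ and prove $\overline{I} = N$; the remaining assertions then follow readily. Since point evaluations are bounded on $\cH$ (they are given by inner products with kernel functions), $N$ is closed. The inclusion $\overline{I} \subseteq N$ is immediate: every polynomial in $I$ vanishes on $V(I) \supseteq Z^0(I)$, and $N$ is closed.

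For the harder inclusion $N \subseteq \overline{I}$, I would exploit the grading. Each $\cH_n$ consists of homogeneous polynomials of degree $n$ (Example \ref{eg:graded_spaces}(a)), which is finite-dimensional because $d < \infty$. Since $I$ is homogeneous, $I_n := I \cap \cH_n$ is a closed subspace of $\cH_n$, and because the orthogonal projection $P_n : \cH \to \cH_n$ is continuous, one checks that $\overline{I} = \bigoplus_n I_n$ as a Hilbert direct sum. Now take $f \in N$ and decompose $f = \sum_n f_n$ with $f_n \in \cH_n$. Since $f$ is analytic on $\bB_d$ and this decomposition agrees with the grouping of its Taylor series at $0$ by homogeneous degree, the series converges uniformly on compact subsets of $\bB_d$. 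Fix $w \in V(I)$; since $V(I)$ is a cone, one may scale so that $\lambda w \in Z^0(I)$ for all $\lambda \in \overline{\bD}$ (after first replacing $w$ by $r w$ with $r > 0$ small, which does not change vanishing of the $f_n$). Then
\begin{equation*}
  0 = f(\lambda w) = \sum_{n=0}^\infty \lambda^n f_n(w) \quad (\lambda \in \overline{\bD}),
\end{equation*}
so by uniqueness of power series $f_n(w) = 0$ for every $n$. As $w \in V(I)$ was arbitrary, each homogeneous polynomial $f_n$ vanishes on $V(I)$; since $I$ is radical, Hilbert's Nullstellensatz gives $f_n \in I \cap \cH_n = I_n$. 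Hence $f = \sum f_n \in \bigoplus_n I_n = \overline{I}$.

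For the remaining assertions, by definition the restriction map $\mathrm{res}: \cH \to \cH_I$ is a co-isometry, so it factors through a unitary $\cH \ominus \ker(\mathrm{res}) \to \cH_I$. In the $\bB_d$ setting $\ker(\mathrm{res}) = N = \overline{I}$ directly. In the $\overline{\bB_d}$ setting $\ker(\mathrm{res}) = \{f : f|_{Z(I)} = 0\}$; but since $V(I)$ is a cone, $Z^0(I)$ is dense in $Z(I)$, and the functions in $\cH$ are continuous on $\overline{\bB_d}$, so this kernel again coincides with $N = \overline{I}$. Finally, because $N$ is by definition the annihilator of the set $\{K(\cdot, w) : w \in Z^0(I)\}$, we have
\begin{equation*}
  \cH \ominus \overline{I} = N^\perp = \overline{\operatorname{span}}\{K(\cdot, w) : w \in Z^0(I)\},
\end{equation*}
which gives the last claim. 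The main obstacle is the passage from vanishing on $Z^0(I)$ to membership in $I$ of each homogeneous component; this is where the homogeneity of $I$, the cone structure of $V(I)$, and the Nullstellensatz (needing $I$ radical) all enter together.
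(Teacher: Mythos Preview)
Your argument is correct and follows essentially the same route as the paper: both use the homogeneous decomposition of $f$, the fact that $Z^0(I)$ is star-shaped at the origin to separate out the components via a power series in a scalar variable, and then invoke Hilbert's Nullstellensatz (with $I$ radical) to conclude each $f_n \in I$. Your version adds a bit more scaffolding (the explicit identification $\overline{I}=\bigoplus_n I_n$ and the density remark for the $\overline{\bB_d}$ case), but the core idea is identical.
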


\begin{proof}
  Let
  \begin{equation*}
    R: \cH \to \cH_I
  \end{equation*}
  be the restriction map. Then $R$ is a co-isometry by definition of $\cH_I$,
  thus it suffices to show that $\ker R = \overline{I}$.
  It is clear that $\overline{I} \subset \ker R$. Conversely, let $f \in \ker R$ and let
  $f = \sum_{n=0}^\infty f_n$ be the homogeneous decomposition of $f$.
  Then
  \begin{equation*}
    0 = f(t z) = \sum_{n=0}^\infty t^n f_n(z)
  \end{equation*}
  for all $t \in \bD$ and all $z \in Z^0(I)$, hence each $f_n$ vanishes on $V(I)$.
  Consequently, $f_n \in I$ for all $n \in \bN$ by Hilbert's Nullstellensatz, thus $f \in \overline{I}$.

  Since the restriction map from $\cH$ onto $\cH_I$ is a co-isometry, it follows that this map
  is a unitary operator from $\cH \ominus I$ onto $\cH_I$. Moreover, given $f \in \cH$, we see
  that $f$ is orthogonal to the kernel functions $K(\cdot,w)$ for $w \in Z^0(I)$ if and only
  if $f$ vanishes on $Z^0(I)$, which happens if and only if $f \in \overline{I}$ by the first part.
\end{proof}

Thus, instead of thinking of $\cH_I$ as a space of functions on $Z^0(I)$ or on $Z(I)$,
we may also regard it as a subspace of $\cH$. The following lemma shows
how composition operators act in this second picture of $\cH_I$.
It is a straightforward generalization of a well-known result about
composition operators on reproducing kernel Hilbert spaces (see, for example, \cite[Theorem 1.4]{CM98}).
\begin{lem}
  \label{lem:comp_operators}
  Let $\cH$ be a reproducing kernel Hilbert space on a set $X$ with reproducing kernel $K_{\cH}$, and let
  $\cK$ be a reproducing kernel Hilbert space on a set $Y$ with reproducing kernel $K_{\cK}$. Suppose that
  $Z \subset X$ and $W \subset Y$, and define
  \begin{equation*}
    I(Z) = \{f \in \cH: f \big|_{Z} = 0 \}
  \end{equation*}
  and
  \begin{equation*}
    I(W) = \{f \in \cK: f \big|_{W} = 0 \}.
  \end{equation*}
  Then for a function $\varphi: W \to Z$, the following are equivalent:
  \begin{enumerate}[label=\normalfont{(\roman*)}]
    \item There exists a bounded composition operator $C_\varphi: \cH \big|_{Z} \to \cK \big|_{W}$ such that
      $C_\varphi(f) = f \circ \varphi$ for all $f \in \cH \big|_{Z}$.
    \item There exists a bounded operator $T_\varphi: \cK \ominus I(W) \to \cH \ominus I(Z)$ with
      $T( K_{\cK} (\cdot,w)) = K_{\cH} (\cdot,\varphi(w))$ for all $w \in W$.
  \end{enumerate}
  In this case,
  \begin{equation*}
    T_\varphi = R_Z^{-1} (C_\varphi)^* R_W,
  \end{equation*}
  where
  \begin{equation*}
    R_Z : \cH \ominus I(Z) \to \cH \big|_{Z}, \quad f \mapsto f \big|_{Z},
  \end{equation*}
  and
  \begin{equation*}
    R_W : \cK \ominus I(W) \to \cK \big|_{W}, \quad f \mapsto f \big|_{W},
  \end{equation*}
  denote the unitary restriction maps.
\end{lem}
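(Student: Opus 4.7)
The plan is to use the unitary restriction maps $R_Z$ and $R_W$ to translate between operators on the restricted spaces and operators on the subspaces $\cH \ominus I(Z)$ and $\cK \ominus I(W)$, and then to exploit the well-known identity that the adjoint of a composition operator acts on kernel functions by $K(\cdot,w) \mapsto K(\cdot, \varphi(w))$. Since $R_Z$ and $R_W$ are unitary, a bounded $C_\varphi: \cH\big|_Z \to \cK\big|_W$ exists if and only if $R_Z^{-1} C_\varphi^* R_W : \cK \ominus I(W) \to \cH \ominus I(Z)$ exists as a bounded operator, so the equivalence (i) $\Leftrightarrow$ (ii) reduces to verifying that this conjugate operator is exactly the $T_\varphi$ characterized by its action on kernel functions.

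The key preliminary observation is the identity $R_W^* K_{\cK|_W}(\cdot,w) = K_{\cK}(\cdot,w)$ for $w \in W$ (and similarly for $R_Z$). This holds because $K_\cK(\cdot,w) \in \cK \ominus I(W)$ (every $f \in I(W)$ satisfies $\langle f, K_\cK(\cdot,w)\rangle = f(w) = 0$), and the restriction map sends $K_\cK(\cdot,w)$ to the reproducing kernel of $\cK\big|_W$ at $w$, which is $K_\cK(\cdot,w)\big|_W = K_{\cK|_W}(\cdot,w)$. Equivalently, $R_W K_\cK(\cdot,w) = K_{\cK|_W}(\cdot,w)$.

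Assuming (i), set $T_\varphi := R_Z^{-1} C_\varphi^* R_W$. For $w \in W$, the standard computation
\begin{equation*}
  \langle f, C_\varphi^* K_{\cK|_W}(\cdot,w)\rangle = \langle C_\varphi f, K_{\cK|_W}(\cdot,w)\rangle = f(\varphi(w)) = \langle f, K_{\cH|_Z}(\cdot,\varphi(w))\rangle
\end{equation*}
shows $C_\varphi^* K_{\cK|_W}(\cdot,w) = K_{\cH|_Z}(\cdot,\varphi(w))$, and applying $R_Z^{-1}$ (which sends the kernel function of the restricted space to the kernel function of the ambient space, by the same argument as above) yields $T_\varphi K_\cK(\cdot,w) = K_\cH(\cdot,\varphi(w))$, so (ii) holds with the stated formula.

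Conversely, given (ii), one defines $C_\varphi := R_W T_\varphi^* R_Z^{-1}$ and checks that it is indeed given by composition with $\varphi$. For $f \in \cH\big|_Z$ and $w \in W$, unwinding the reproducing property and using unitarity of $R_Z, R_W$ gives
\begin{equation*}
  (C_\varphi f)(w) = \langle R_W T_\varphi^* R_Z^{-1} f, K_{\cK|_W}(\cdot,w)\rangle = \langle R_Z^{-1} f, T_\varphi K_\cK(\cdot,w)\rangle = \langle R_Z^{-1} f, K_\cH(\cdot,\varphi(w))\rangle = f(\varphi(w)),
\end{equation*}
which finishes the proof and also shows that the formula $T_\varphi = R_Z^{-1} C_\varphi^* R_W$ is consistent. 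The only real work is bookkeeping of adjoints and unitaries; no deep tool is needed beyond the correspondence between composition operators and kernel-to-kernel operators on reproducing kernel Hilbert spaces, so I do not anticipate a serious obstacle.
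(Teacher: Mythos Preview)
Your proof is correct and follows essentially the same approach as the paper: both directions are handled by conjugating with the unitary restriction maps $R_Z$ and $R_W$ (the paper writes $R_Z^*$ where you write $R_Z^{-1}$, which is the same since $R_Z$ is unitary) and checking the kernel-to-kernel action via the reproducing property. The only cosmetic difference is that you first isolate the identity $C_\varphi^* K_{\cK|_W}(\cdot,w) = K_{\cH|_Z}(\cdot,\varphi(w))$ and then transport it, whereas the paper does the same computation in a single inner-product chain; the content is identical.
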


\begin{proof}
  Suppose that (i) holds. For $w \in W$ and $f \in \cH \ominus I(Z)$, we have
  \begin{align*}
    \langle f,  R_Z^{*} (C_\varphi)^* R_W K_{\cK} (\cdot,w) \rangle_{\cH}
    &= \langle (f \big|_{Z}) \circ \varphi, K_{\cK} (\cdot,w) \big|_{W} \rangle_{\cK |_{W}} \\
    &= f(\varphi(w)) \\
    &= \langle f, K_{\cH}(\cdot,\varphi(w)) \rangle_{\cH}.
  \end{align*}
  Since $K_{\cH}(\cdot,\varphi(w)) \in \cH \ominus I(W)$, we conclude that (ii) holds with
  $T_\varphi = R_Z^* (C_\varphi)^* R_W$, which also proves the additional assertion.

  Conversely, if (ii) holds, let $f \in \cH \ominus I(Z)$ and let $w \in W$. Clearly,
  $R_Z^* (f \big|_Z) = f$ and $R_W^* (K_{\cK}(\cdot,w) \big|_{W}) = K_{\cK}(\cdot,w)$, hence
  \begin{align*}
    (R_W T_\varphi^* R_Z^* f \big|_Z)(w) &= \langle R_W T_\varphi^* f, K_\cK(\cdot,w) \big|_{W} \rangle_{\cK |_W} \\
    &= \langle f, T_\varphi K_{\cK}(\cdot,w) \rangle_{\cH} \\
    &= \langle f, K_{\cH}(\cdot, \varphi(w)) \rangle_{\cH} \\
    &= (f \circ \varphi)(w).
  \end{align*}
  Consequently, (i) holds with $C_\varphi = R_W T_\varphi^* R_Z^*$.
\end{proof}

It may seem restrictive that we only consider restrictions to varieties defined by homogeneous polynomials.
Indeed, if $\cH$ is a unitarily invariant complete NP-space on $\bB_d$ and $X \subset \bB_d$ has circular symmetry, i.e.
$\bT X = X$, then $\cH \big|_{X}$ is \graded\ in the sense of Section \ref{sec:graded_spaces}.
It turns out, however, that algebraic consistency forces $X$ to be a homogeneous variety. More generally, we obtain
the following result.

\begin{lem}
  Let $\cH$ be a normalized irreducible Hilbert function space of analytic
  functions on $\bB_d$ (respectively of continuous functions on $\ol{\bB_d}$ which are analytic
  on $\bB_d$). Let $X \subset \bB_d$ (respectively $X \subset \ol{\bB_d}$) be a non-empty set which
  satisfies $\bT X \subset X$. If $\cH \big|_X$ is algebraically consistent,
  then $X = Z^0(I)$ (respectively $X = Z(I)$) for some
  radical homogeneous ideal $I \subsetneq \bC[z_1,\ldots,z_d]$.
\end{lem}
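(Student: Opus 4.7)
My plan is to take $I$ to be the vanishing ideal of $X$,
\begin{equation*}
  I = \{ p \in \bC[z_1,\ldots,z_d] : p\big|_X = 0 \},
\end{equation*}
which is clearly radical, and homogeneous by the following standard Fourier argument: if $p = \sum_n p_n$ is the homogeneous decomposition of $p \in I$, then for every $x \in X$ and $\lambda \in \bT$ one has $\sum_n \lambda^n p_n(x) = p(\lambda x) = 0$ by the $\bT$-invariance of $X$, and uniqueness of Fourier coefficients forces $p_n(x) = 0$ for each $n$. The inclusion $X \subset Z^0(I)$ (respectively $X \subset Z(I)$) is immediate, so the content of the lemma lies in the reverse inclusion. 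I would assume toward a contradiction that there is some $y \in Z^0(I) \setminus X$ (respectively $y \in Z(I) \setminus X$) and construct a partially multiplicative functional on $\cH\big|_X$ which is not point evaluation at any point of $X$, contradicting algebraic consistency.

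The heart of the argument is the following key assertion: any function $F$ that is holomorphic on $\bB_d$ (continuous on $\ol{\bB_d}$ in the closed-ball case) and vanishes on $X$ must also vanish at $y$. Expanding $F = \sum_n F_n$ into homogeneous polynomials and considering $\lambda \mapsto F(\lambda x)$ for fixed $x \in X$, one obtains a holomorphic function on $\{\lambda \in \bC : |\lambda| < 1/\|x\|\}$ (or, if $\|x\| = 1$, on $\bD$ and continuous on $\ol{\bD}$) which vanishes on $\bT$; the identity theorem, or the maximum modulus principle in the boundary case, then gives $F_n(x) = 0$ for all $n$, so $F_n \in I$ and hence $F_n(y) = 0$. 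Summing yields $F(y) = 0$ if $y \in \bB_d$, and for $y$ on the sphere one concludes via the radial limit $F(y) = \lim_{r \to 1^-} F(ry) = 0$. With this in hand one defines $\rho : \cH\big|_X \to \bC$ by $\rho(f) = F(y)$ for any extension $F \in \cH$ of $f$; well-definedness, boundedness by $\sqrt{K(y,y)}$, and $\rho(1) = 1$ all follow routinely.

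The subtlety in verifying partial multiplicativity is that for $\varphi \in \Mult(\cH\big|_X)$ and $f \in \cH\big|_X$ with extensions $\Phi, F \in \cH$, the pointwise product $\Phi F$ generally does not lie in $\cH$; however, it is still holomorphic on $\bB_d$ (continuous on $\ol{\bB_d}$) and restricts to $\varphi f$ on $X$. Since the key assertion applies to arbitrary such functions rather than merely to elements of $\cH$, applying it to $\Phi F - G$ for any extension $G \in \cH$ of $\varphi f$ yields $\rho(\varphi f) = G(y) = (\Phi F)(y) = \rho(\varphi)\rho(f)$. Algebraic consistency would then force $\rho$ to be point evaluation at some $x_0 \in X$, which means $F(x_0) = F(y)$ for every $F \in \cH$, and hence $K(\cdot, x_0) = K(\cdot, y)$ in $\cH$; irreducibility then gives $x_0 = y$, contradicting $y \notin X$. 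I expect the main obstacle to be the key assertion itself: the interior case is a direct series argument, but for the closed-ball setting one must carefully combine a maximum modulus argument at boundary points of $X$ with a radial continuity argument when $y$ itself lies on the sphere.
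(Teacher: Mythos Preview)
Your proposal is correct and follows essentially the same route as the paper's own proof: define $I$ as the vanishing ideal of $X$, use the $\bT$-invariance and a Fourier/homogeneous-decomposition argument to show that any holomorphic (respectively ball-algebra) function vanishing on $X$ vanishes on $Z^0(I)$ (respectively $Z(I)$), and then build a partially multiplicative functional from evaluation at a hypothetical point $y$ outside $X$. The paper phrases the partial multiplicativity step as ``$\cO(\bB_d)\big|_{Z^0(I)}$ is an algebra, so $\widehat{\varphi f} = \widehat{\varphi}\,\widehat{f}$,'' which is exactly your observation that the key assertion applies to $\Phi F - G$ even though $\Phi F$ need not lie in $\cH$.
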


\begin{proof}
  We first consider the case where $\cH$ is a space of analytic functions on $\bB_d$.
  Let $I$ be the ideal of all polynomials that
  vanish on $X$. Suppose that $f \in \cO(\bB_d)$ vanishes on $X$,
  and let $f = \sum_{n=0}^\infty f_n$ be the homogeneous expansion of $f$. Given $x \in X$,
  the function 
  \begin{equation*}
    \overline{\bD} \to \bC, \quad \lambda \mapsto f(\lambda x),
  \end{equation*}
  is contained in the disc algebra and vanishes on $\bT$, hence it vanishes identically.
  Using the homogeneous expansion of $f$, we see that $f_n(x) = 0$ for all $n \in \bN$.
  Thus, every $f_n$ and hence $f$ vanishes
  on $Z^0(I)$. This argument also shows that $I$ is a homogeneous
  ideal. Moreover, every function in $\cH \big|_X$ extends uniquely to a function in $\cO(\bB_d) \big|_{Z^0(I)}$.

  Clearly, $X \subset Z^0(I)$. To establish equality, denote for $f \in \cH$ the unique extension
  of $f$ to a function in $\cO(\bB_d) \big|_{Z^0(I)}$ by $\widehat f$.
  Observe that $\widehat f$ in fact belongs to $\cH \big|_{Z^0(I)}$ and has the same norm as $f$.
  Since $\cO(\bB_d) \big|_{Z^0(I)}$ is
  an algebra, we see that $\widehat{\varphi f} = \widehat{\varphi} \widehat{f}$ for all
  $\varphi \in \Mult(\cH \big|_X)$ and $f \in \cH \big|_X$. Assume for a contradiction that there exists $x \in Z^0(I) \setminus X$.
  Then $f \mapsto \widehat{f} (x)$ defines a bounded functional on $\cH \big|_X$ which is
  partially multiplicative.
  Since $\cH$ is irreducible, this functional is not given by evaluation at a point in $X$. 
  This contradicts algebraic consistency of $\cH \big|_X$, hence $X = Z^0(I)$.

  Finally, if $\cH$ is a space of continuous functions on $\ol{\bB_d}$ which are analytic on $\bB_d$,
  then the proof above applies to this setting as well once we replace
  $Z^0(I)$ with $Z(I)$ and $\cO(\bB_d)$ with the algebra of all continuous functions on $\ol{\bB_d}$
  which are analytic on $\bB_d$.
\end{proof}

\section{The maximal ideal space}
\label{sec:maximal_ideal_space}

To classify the multiplier algebras of the spaces $\cH_I$ introduced in the last section,
we follow the same route as \cite{DRS11}. To this end, we first study the character spaces
of these multiplier algebras.
We begin with an easier object. Recall that if $\cH$ is a \graded\ complete Nevanlinna-Pick space,
$A(\cH)$ denotes the norm closure of the span of all homogeneous elements in $\Mult(\cH)$.
If $\cA$ is a unital Banach algebra, we let $\cM(\cA)$ denote its maximal ideal space.

\begin{lem}
  \label{lem:gelfand_A_H}
  Let $\cH$ be a unitarily invariant complete NP-space on $\bB_d$ or on $\ol{\bB_d}$,
  and let $I \subsetneq \bC[z_1,\ldots,z_d]$ be a radical homogeneous ideal.
  Then
  \begin{equation*}
    \cM(A(\cH_I)) \to Z(I), \quad \rho \mapsto (\rho(z_1),\ldots,\rho(z_d)),
  \end{equation*}
  is a homeomorphism.
\end{lem}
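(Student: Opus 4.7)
The plan is to construct a candidate inverse $\lambda \mapsto \mathrm{ev}_\lambda$ and invoke the fact that a continuous bijection between compact Hausdorff spaces is a homeomorphism. First, since $\cH_I$ is a \graded\ irreducible complete Nevanlinna-Pick space whose homogeneous elements of degree $n$ are precisely the restrictions of homogeneous polynomials of degree $n$, Proposition \ref{prop:norm_hom} implies that $A(\cH_I)$ is the multiplier-norm closure of the restricted polynomials. Because the multiplier norm always dominates the supremum norm on the domain of definition, and because $Z^0(I)$ is dense in $Z(I)$ by homogeneity of $I$ (if $z \in Z(I)$, then $rz \in Z^0(I)$ for all $r \in (0,1)$), every polynomial $p$ satisfies $\sup_{z \in Z(I)} |p(z)| \le \|p\|_{A(\cH_I)}$, and every element of $A(\cH_I)$ extends uniquely to a continuous function on $Z(I)$. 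Consequently, for each $\lambda \in Z(I)$, evaluation at $\lambda$ extends to a character on $A(\cH_I)$, yielding an injection $Z(I) \hookrightarrow \cM(A(\cH_I))$ which is manifestly a right inverse to the map in the statement of the lemma.

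The heart of the argument is showing that for an arbitrary $\rho \in \cM(A(\cH_I))$, the point $\lambda := (\rho(z_1), \ldots, \rho(z_d))$ actually lies in $Z(I)$. Membership in $V(I)$ is immediate: for $p \in I$, the identity $p(\lambda) = \rho(p) = 0$ holds by multiplicativity and the fact that $p$ represents the zero element of $\cH_I$. The more delicate point is the bound $\|\lambda\| \le 1$, and here I would apply $\rho$ to the homogeneous polynomial $p_n(z) := \langle z, \lambda\rangle^n$. On one hand, $\rho(p_n) = \|\lambda\|^{2n}$ by multiplicativity. On the other hand, combining Proposition \ref{prop:norm_hom} applied to $\cH_I$ with the contractivity of the restriction map $\cH \to \cH_I$ and the explicit norm formula $\|\langle \cdot, \lambda \rangle^n\|_\cH^2 = \|\lambda\|^{2n}/a_n$ from Remark \ref{rem:unitarily_invariant}(d) gives
\begin{equation*}
  \|p_n\|_{A(\cH_I)} = \|p_n\|_{\cH_I} \le \|\lambda\|^n \, a_n^{-1/2}.
\end{equation*}
Chaining $\|\lambda\|^{2n} = |\rho(p_n)| \le \|p_n\|_{A(\cH_I)}$ with the above then forces $\|\lambda\|^n \sqrt{a_n} \le 1$ for every $n \in \bN$. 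Since the power series $\sum a_n t^n$ has radius of convergence $1$, there is a subsequence along which $a_n^{1/n} \to 1$, which forces $\|\lambda\| \le 1$, as desired.

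Once $\lambda \in Z(I)$ is established, the two characters $\rho$ and $\mathrm{ev}_\lambda$ agree on all polynomials by construction and therefore agree on all of $A(\cH_I)$ by density, so the map in the statement is a bijection. Continuity is built into the Gelfand topology (each coordinate $\rho \mapsto \rho(z_i)$ is continuous by definition), and continuity of the inverse is automatic since $\cM(A(\cH_I))$ is compact and $Z(I)$ is Hausdorff. The main obstacle is the step involving $p_n(z) = \langle z, \lambda\rangle^n$ and the radius-of-convergence argument to obtain $\|\lambda\| \le 1$; all remaining verifications are essentially bookkeeping.
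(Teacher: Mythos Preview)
Your proof is correct. The overall architecture---constructing evaluation characters on $Z(I)$ via the sup-norm bound, checking that $\lambda \in V(I)$ by multiplicativity, and concluding with the compact--Hausdorff argument---matches the paper. The one genuine difference is in how you establish $\|\lambda\|\le 1$.

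The paper argues by contradiction through algebraic consistency: assuming $\|\lambda\|>1$, it picks $r\in(0,1)$ with $\|r\lambda\|>1$ and shows that $p\mapsto p(r\lambda)$ extends to a bounded partially multiplicative functional on $\cH_I$ (using Proposition~\ref{prop:norm_hom} and orthogonality of homogeneous components to bound $|p(r\lambda)|$ by $(1-r^2)^{-1/2}\|p\|_{\cH_I}$), which contradicts algebraic consistency since $r\lambda\notin\overline{\bB_d}$. Your route is more direct: you feed the single family $p_n(z)=\langle z,\lambda\rangle^n$ into $\rho$, combine Proposition~\ref{prop:norm_hom} with the explicit norm $\|\langle\cdot,\lambda\rangle^n\|_{\cH}^2=\|\lambda\|^{2n}/a_n$ and contractivity of restriction, and read off $\|\lambda\|\le a_n^{-1/(2n)}$; then $\limsup a_n^{1/n}=1$ finishes it. Your argument is more elementary and self-contained (it bypasses the algebraic-consistency framework entirely), while the paper's argument is more conceptual and ties the lemma into the theme developed in Section~\ref{sec:alg_cons}. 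Both ultimately rest on the same ingredient, namely that the kernel power series has radius of convergence exactly~$1$.
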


\begin{proof}
  Let $\rho \in \cM(A(\cH_I))$. We first show that
  $\lambda = (\rho(z_1),\ldots,\rho(z_d)) \in \overline{\bB_d}$. Suppose otherwise,
  and let $0 < r < 1$ be such that $||r \lambda|| > 1$. If $p$ is a polynomial
  with homogeneous decomposition $p = \sum_{n=0}^N p_n$, then
  \begin{equation*}
    |p(r \lambda)|^2 \le \Big( \sum_{n=0}^N r^n | \rho(p_n)| \Big)^2 \le \sum_{n=0}^N r^{2 n}
    \sum_{n=0}^N ||p_n||^2_{\Mult(\cH_I)}.
  \end{equation*}
  By Proposition \ref{prop:norm_hom},
  $||p_n||_{\Mult(\cH_I)} = ||p_n||_{\cH_I}$, hence this quantity is dominated by
  \begin{equation*}
    \frac{1}{1 - r^2} \sum_{n=0}^N ||p_n||^2_{\cH_I}  = \frac{1}{1 - r^2} ||p||^2_{\cH_I}.
  \end{equation*}
  Consequently, $p \mapsto p(r \lambda)$ extends to a well-defined bounded functional $\widetilde \rho$ on $\cH_I$. It is easy to see
  that $\widetilde \rho$ is partially multiplicative, but
  \begin{equation*}
    (\widetilde \rho(z_1),\ldots,\widetilde \rho(z_d)) = r \lambda \notin \overline{\bB_d}.  
  \end{equation*}
  This contradicts the fact that $\cH_I$ is algebraically consistent.
  Clearly, $\lambda \in V(I)$. Thus, if $\Phi$ denotes the map from the statement of the lemma,
  it follows that $\Phi(\rho) \in Z(I)$.
  It is clear that $\Phi$ is continuous, and since the polynomials are dense in
  $A(\cH_I)$ by definition, it is also  injective.
  
  Since $\cM(A(\cH_I))$ is compact, we may finish the proof by showing that $\Phi$ is surjective.
  If $\cH$ is a space on $\bB_d$, then the elements of $A(\cH_I)$ extend to continuous functions
  on $Z(I)$,
  as the multiplier norm dominates the supremum norm. If $\cH$ is a space on $\ol{\bB_d}$, they
  are already defined on $Z(I)$, so in both cases,
  every $\lambda \in Z(I)$ gives rise to a character
  $\delta_\lambda$ given by point evaluation at $\lambda$, and this character
  satisfies $\Phi(\delta_\lambda) = \lambda$.
\end{proof}

The character space of the whole multiplier algebra is often much more complicated.
Indeed, if $\cH$ is the Hardy space $H^2(\bD)$, then $\Mult(\cH) = H^\infty$,
an algebra whose
character space is known to be very complicated (see, for example, \cite[Chapter V]{Garnett07}).

Since every character on $\Mult(\cH_I)$ restricts to a character on $A(\cH_I)$, we obtain in the setting of the last lemma
a continuous map
\begin{equation*}
  \pi: \cM(\Mult(\cH_I)) \to Z(I), \quad \rho \mapsto \rho(z_1,\ldots,z_d).
\end{equation*}
This map is surjective, as evaluation at a point in $Z^0(I)$ is a character and the character space is compact.

If $\cH$ is a space on $\ol{\bB_d}$, then
$\Mult(\cH_I)$ consists of continuous functions on the compact set $Z(I)$. The weak* continuous characters are precisely
the point evaluations at points in $Z(I)$ by Proposition \ref{prop:alg_consistent_equiv}
and thus form a compact subset of $\cM(\Mult(\cH_I))$. The question
whether every character is a point evaluation in this setting remains open.

If $\cH$ is a space on $\bB_d$, then the weak* continuous characters are point evaluations at points in $Z^0(I)$,
again by Proposition \ref{prop:alg_consistent_equiv},
thus they form a proper subset of the maximal ideal space. The next lemma shows that in this case, multipliers
can oscillate wildly near the boundary of $\bB_d$, and hence the character space of $\Mult(\cH_I)$ is rather complicated.

\begin{lem}
  \label{lem:interpolating_sequences}
  Let $\cH$ be a unitarily invariant complete NP-space on $\bB_d$ and let $I \subsetneq \bC[z_1,\ldots,z_d]$ be a
  radical homogeneous ideal.
  Let $(\lambda_n)$ be a sequence in $Z^0(I)$
  with $\lim_{n \to \infty} ||\lambda_n||=1$. Then $(\lambda_n)$ contains
  a subsequence which is interpolating for $\Mult(\cH_I)$. In particular, $\pi^{-1} (\lambda)$
  contains a copy of $\beta \bN \setminus \bN$ for every $\lambda \in V(I) \cap \partial \bB_d$. 
\end{lem}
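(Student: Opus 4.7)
The plan is to extract a subsequence whose normalized reproducing kernels are almost orthonormal, and then to invoke the complete Nevanlinna-Pick property of $\cH_I$ to promote this almost-orthogonality to an interpolation statement for $\Mult(\cH_I)$. Write the kernel of $\cH$ as $K(z,w) = \sum_{n=0}^\infty a_n \langle z,w\rangle^n$ and set $\phi(t) = \sum_{n=0}^\infty a_n t^n$ for $t \in [0,1)$. By Lemma \ref{lem:nullstellensatz}, $\cH_I$ has kernel $K$ restricted to $Z^0(I)$, so the normalized kernel at $\lambda \in Z^0(I)$ is $\hat k_\lambda = K(\cdot,\lambda)/\sqrt{K(\lambda,\lambda)}$. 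Because each $a_n \geq 0$, one has $K(\lambda,\lambda) = \phi(\|\lambda\|^2)$ and $|K(\lambda,\mu)| \leq \phi(\|\lambda\|\|\mu\|)$, and algebraic consistency on $\bB_d$ forces $\sum_n a_n = \infty$, so $\phi(t) \to \infty$ as $t \to 1^-$.

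First I would observe that for each fixed $\lambda \in Z^0(I)$, $|\langle \hat k_\mu, \hat k_\lambda\rangle| \to 0$ as $\mu \in Z^0(I)$ with $\|\mu\| \to 1$. Indeed, once $\|\mu\| \geq \|\lambda\|$ we have $\|\lambda\|\|\mu\| \leq \|\lambda\| < 1$, so monotonicity of $\phi$ gives
\[
  |\langle \hat k_\mu, \hat k_\lambda\rangle| \;\leq\; \frac{\phi(\|\lambda\|\|\mu\|)}{\sqrt{\phi(\|\lambda\|^2)\phi(\|\mu\|^2)}} \;\leq\; \frac{\phi(\|\lambda\|)}{\sqrt{\phi(\|\lambda\|^2)\phi(\|\mu\|^2)}} \;\longrightarrow\; 0.
\]
A routine diagonal extraction then yields, for any prescribed $\delta \in (0,1)$, a subsequence $(\lambda_{n_k})$ with $\sup_k \sum_{j \neq k}|\langle \hat k_{\lambda_{n_j}}, \hat k_{\lambda_{n_k}}\rangle| < \delta$. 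Applying the Schur test to the self-adjoint matrix $\tilde G - I$, where $\tilde G$ is the Gram matrix of the normalized kernels, shows $(1-\delta) I \leq \tilde G \leq (1+\delta)I$ on every finite section.

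The main step is to convert this Riesz-type behaviour into multiplier interpolation. Given $(a_k) \in \ell^\infty$ with $\|(a_k)\|_\infty \leq 1$ and $M > 0$, a finite truncation of the Pick matrix $[(M^2 - a_i\bar a_j)K(\lambda_{n_i},\lambda_{n_j})]$, after conjugation by the diagonal of the numbers $\|K(\cdot,\lambda_{n_i})\|^{-1}$, becomes $M^2\tilde G - A\tilde G A^*$ with $A = \mathrm{diag}(a_k)$. Since $\|A\| \leq 1$ and $(1-\delta)I \leq \tilde G \leq (1+\delta)I$, this matrix is at least $[M^2(1-\delta)-(1+\delta)]I$, which is positive whenever $M^2 \geq (1+\delta)/(1-\delta)$. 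The complete Nevanlinna-Pick property of $\cH_I$ then produces, for every finite truncation, a multiplier of norm at most $M$ solving the corresponding finite interpolation problem; a weak*-compactness argument in the unit ball of $\Mult(\cH_I)$ delivers a single multiplier interpolating all the values $a_k$ simultaneously. The most delicate point is obtaining the positivity estimate on $M^2\tilde G - A\tilde G A^*$ uniformly in the size of the truncation, but this is exactly what the operator-norm bounds on $\tilde G$ are designed to handle.

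For the final assertion, let $\lambda \in V(I) \cap \partial\bB_d$ and set $\mu_n = (1 - \tfrac{1}{n})\lambda$; since $I$ is homogeneous, $\mu_n \in Z^0(I)$, and $\|\mu_n\| \to 1$. By the first part there is a subsequence $(\mu_{n_k})$ interpolating for $\Mult(\cH_I)$, so the evaluation map $R\colon \Mult(\cH_I) \to \ell^\infty$, $\varphi \mapsto (\varphi(\mu_{n_k}))_k$, is a surjective unital algebra homomorphism. Each $u \in \beta\bN \setminus \bN$ induces a character $\chi_u$ of $\ell^\infty$, and $\rho_u := \chi_u \circ R$ is a character of $\Mult(\cH_I)$; surjectivity of $R$ makes $u \mapsto \rho_u$ injective and continuous, hence a topological embedding. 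Finally, $(\mu_{n_k})_i \to \lambda_i$ as $k \to \infty$, so $\pi(\rho_u)_i = \rho_u(z_i) = \chi_u\big(((\mu_{n_k})_i)_k\big) = \lambda_i$, placing the entire copy of $\beta\bN \setminus \bN$ inside $\pi^{-1}(\lambda)$.
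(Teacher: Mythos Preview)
Your proof is correct and follows the same underlying idea as the paper. The paper's own argument simply cites \cite[Proposition 9.1]{DHS15} for the claim that it suffices to show $K(\lambda_n,\lambda_n)\to\infty$, and then observes that algebraic consistency on $\bB_d$ forces $\sum_n a_n=\infty$, whence $K(\lambda_n,\lambda_n)=\phi(\|\lambda_n\|^2)\to\infty$; the second part is argued exactly as you do. What you have done is to unpack the cited black box: your Step~1 (inner products of normalized kernels tend to zero), diagonal extraction, Schur bound on the normalized Gram matrix, and the Pick-matrix positivity estimate $M^2\tilde G-A\tilde G A^*\ge [M^2(1-\delta)-(1+\delta)]I$ together constitute a self-contained proof of the interpolation criterion that the paper imports from \cite{DHS15}. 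Either presentation is fine; yours has the advantage of being self-contained, at the cost of a little length.
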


\begin{proof}
  The proof of Proposition 9.1 in \cite{DHS15}
  shows that it suffices to show that $K_I(\lambda_n, \lambda_n)$ converges to $\infty$, where
  $K_I$ denotes the reproducing kernel of $\cH_I$. However, if
  \begin{equation*}
    K(z,w) = \sum_{n=0}^\infty a_n \langle z,w \rangle^n \quad (z,w \in \bB_d)
  \end{equation*}
  denotes the reproducing kernel of $\cH$, then $K_I$ is simply the restriction of $K$ to $Z^0(I) \times Z^0(I)$.
  Moreover, since $\cH$ is algebraically consistent on $\bB_d$, we have
  $\sum_{n=0}^\infty a_n = \infty$ by Lemma \ref{lem:alg_cons_examples},
  thus $K_I(\lambda_n,\lambda_n)$ tends to $\infty$, as asserted.

  For the proof of the additional assertion, we note that for every $\lambda \in V(I) \cap \partial \bB_d$,
  there is an interpolating sequence $(\lambda_n)$ which converges to $\lambda$ by the first part.
  Hence, the algebra homomorphism
  \begin{equation*}
    \Mult(\cH_I) \to \ell^\infty, \quad \varphi \mapsto (\varphi(\lambda_n)),
  \end{equation*}
  is surjective, and its adjoint is a topological embedding of $\beta \bN \setminus \bN$ into
  $\pi^{-1}(\lambda)$.
\end{proof}

We now turn to the fibres of $\pi$ over points in the open ball. Let $\cH$ be a unitarily invariant
complete NP-space on $\bB_d$ or on $\ol{\bB_d}$, and let
\begin{equation*}
  \pi: \cM(\Mult(\cH)) \to \ol{\bB_d}, \quad \rho \mapsto (\rho(z_1),\ldots,\rho(z_d)),
\end{equation*}
be the map from above. For $\lambda \in \bB_d$, the fibre $\pi^{-1} (\lambda)$
always contains the character of evaluation at $\lambda$. If one allows
the case $d = \infty$, then these fibres can be much larger, see \cite{DHS15a}.
We say that $\cH$ is \emph{tame} if the fibres of $\pi$ over $\bB_d$ are singletons.
Equivalently, if $\rho$ is a character
on $\Mult(\cH)$ such that $\lambda = \pi(\rho) \in \bB_d$, then $\rho$ is the character of evaluation at $\lambda$.
Note that even if $\cH$ is a space on $\ol{\bB_d}$,
we do not impose a condition on fibres over the boundary.
It remains open whether there are non-tame spaces if $d < \infty$. We also mention that for spaces on $\bD$,
the question of when the fibres of $\pi$ are singletons already appears in \cite{Shields74} (see Question 3
on page 78).

\begin{exa}
  \label{exa:H_infty_tame}
  Perhaps the easiest example of a tame space is the Hardy space $H^2(\bD)$ on the unit disc.
  Let us briefly recall the well-known argument, which we will generalize below. Suppose $\rho$ is a character
  on $H^\infty(\bD) = \Mult(H^2(\bD))$ such that $\lambda = \rho(z) \in \bD$.
  If $\varphi \in H^\infty(\bD)$, then
  \begin{equation*}
    \varphi_\lambda = \frac{\varphi - \varphi(\lambda)}{z - \lambda} \in H^\infty(\bD)
  \end{equation*}
  by the maximum modulus principle,
  and
  \begin{equation*}
    \varphi = \varphi(\lambda) + (z - \lambda) \varphi_\lambda.
  \end{equation*}
  Since $\rho$ is a character and since $\rho(z - \lambda) = 0$, it follows that
  \begin{equation*}
    \rho(\varphi) = \varphi(\lambda),
  \end{equation*}
  thus $\rho$ is the character of evaluation at $\lambda$.
\end{exa}

\begin{rem}
  \label{rem:tame_restriction}
  If $\cH$ is tame and if $I \subsetneq \bC[z_1,\ldots,z_d]$ is a radical homogeneous ideal, then $\cH_I$ is similarly
  well-behaved. More precisely, if $\rho$ is a character on $\Mult(\cH_I)$ such that $\pi(\rho) \in \bB_d$ (and
  hence $\pi(\rho) \in Z^0(I)$), then $\rho$ is the character of evaluation at $\lambda$. Indeed, this follows
  from tameness of $\cH$ and from the fact that the restriction map from $\Mult(\cH)$ to $\Mult(\cH_I)$
  is surjective, since $\cH$ is a Nevanlinna-Pick space.
\end{rem}

Proposition 3.2 in \cite{DRS15} shows that $H^2_d$ is tame (for $d < \infty$).  The argument in \cite{DRS15}
uses a result about characters on the non-commutative free semigroup algebra $\cL_d$ from
\cite{DP98a}, and the fact that $\Mult(H^2_d)$ is a quotient of $\cL_d$ \cite{DP98}.
Since this does not apply to unitarily invariant complete NP-spaces besides $H^2_d$, we will use a different
argument similar to the one in Example \ref{exa:H_infty_tame}.
The underlying principle, however, is always the same, namely a factorization result for elements
in the Banach algebra in question.
In the following proposition, we record some sufficient conditions for tameness in decreasing order of generality.

\begin{prop}
  \label{prop:tame}
  Let $\cH$ be a unitarily invariant complete NP-space
  with reproducing kernel $K(z,w) = \sum_{n=0}^\infty a_n \langle z,w \rangle^n$.
  Consider the following conditions:
  \begin{enumerate}[label=\normalfont{(\alph*)}]
    \item Gleason's problem can be solved in $\Mult(\cH)$. That is, given $\lambda \in \bB_d$ and
      $\varphi \in \Mult(\cH)$, there are $\varphi_1,\ldots,\varphi_d \in \Mult(\cH)$ such that
      \begin{equation*}
        \varphi - \varphi(\lambda) = \sum_{i=1}^d (z_i - \lambda_i) \varphi_i.
      \end{equation*}
    \item For every $\lambda \in \bB_d$, the space
      \begin{equation*}
        \sum_{i=1}^d (z_i - \lambda_i) \cH \subset \cH
      \end{equation*}
      is closed in $\cH$.
    \item The condition
      \begin{equation*}
        \lim_{n \to \infty} \frac{a_n}{a_{n+1}} = 1
      \end{equation*}
      holds.
  \end{enumerate}
  Then $(c) \Rightarrow (b) \Rightarrow (a)$, and each of $(a),(b),(c)$ implies that $\cH$ is tame.
\end{prop}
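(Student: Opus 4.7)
I aim to establish the chain $(c)\Rightarrow(b)\Rightarrow(a)\Rightarrow$ tameness, from which each of the three conditions implies tameness automatically. The last link is immediate: given a character $\rho$ on $\Mult(\cH)$ with $\lambda:=(\rho(z_1),\ldots,\rho(z_d)) \in \bB_d$, condition (a) expresses any $\varphi\in\Mult(\cH)$ as $\varphi=\varphi(\lambda)\cdot 1 + \sum_{i=1}^d (z_i-\lambda_i)\varphi_i$ with $\varphi_i \in \Mult(\cH)$. Applying the unital homomorphism $\rho$, the sum vanishes and $\rho(\varphi)=\varphi(\lambda)$, so $\rho$ is evaluation at $\lambda$.

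\medskip

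\textbf{$(b)\Rightarrow(a)$.} Assemble the coordinate multipliers into the row operator $R_\lambda := (M_{z_1-\lambda_1},\ldots,M_{z_d-\lambda_d}):\cH^d\to\cH$. Using $M_{z_i}^*K(\cdot,\lambda) = \overline{\lambda_i}K(\cdot,\lambda)$, one checks that $(\mathrm{range}\,R_\lambda)^\perp = \bC\,K(\cdot,\lambda)$; hence under (b) the range is closed and equals $\{g\in\cH:g(\lambda)=0\}$. The open mapping theorem supplies a bounded right inverse of $R_\lambda$ onto this range, and composing with $M_\varphi - \varphi(\lambda)I$ yields operators $T_1,\ldots,T_d\in\cB(\cH)$ with $M_\varphi - \varphi(\lambda)I = \sum_{i=1}^d M_{z_i-\lambda_i}\, T_i$. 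To upgrade the $T_i$ to multiplication operators, consider the affine set $\mathcal{T}$ of all admissible tuples: it is non-empty, convex, weak-operator closed in $\cB(\cH)^d$, and invariant under the action $(T_i)\mapsto(U^*T_i U)$ for unitaries $U$ in the von Neumann algebra generated by $\Mult(\cH)$, since such $U$ commute with each $M_{z_i-\lambda_i}$ and fix $M_\varphi - \varphi(\lambda)I$. A Ryll-Nardzewski fixed-point or conditional-expectation averaging argument then produces a tuple in $\mathcal{T}$ whose entries commute with all of $\Mult(\cH)$. Irreducibility of $\cH$ identifies the commutant of $\Mult(\cH)$ in $\cB(\cH)$ with $\Mult(\cH)$ itself, so each $T_i = M_{\varphi_i}$ for some $\varphi_i\in\Mult(\cH)$, establishing (a).

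\medskip

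\textbf{$(c)\Rightarrow(b)$.} Unitary invariance together with Schur's lemma (each $\cH_n$ is an irreducible representation of the unitary group of $\bC^d$) forces $D := \sum_{i=1}^d M_{z_i}M_{z_i}^*$ to act as a scalar on every homogeneous component. The monomial norm formula $\|z^\alpha\|^2 = \alpha!/(|\alpha|!\,a_{|\alpha|})$ from Remark~\ref{rem:unitarily_invariant}(d) identifies this scalar as $a_{n-1}/a_n$ on $\cH_n$ for $n\ge 1$. Since $d<\infty$ makes each $\cH_n$ finite-dimensional, condition (c) forces $I-D$ to be compact, so $R_0 R_0^* = D$ differs from $I$ by a compact operator; equivalently, the image of the row $M_z$ in the Calkin algebra is an isometric row. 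A standard Fredholm-theoretic perturbation argument -- the joint essential spectrum of $(M_{z_1},\ldots,M_{z_d})$ is then contained in the unit sphere and avoids $\lambda\in\bB_d$ -- yields that $R_\lambda$ has closed range, which is exactly~(b).

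\medskip

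\textbf{Expected obstacle.} The principal difficulty is the multiplier upgrade in $(b)\Rightarrow(a)$: one must carry out the averaging in a topology that preserves the affine constraint while producing compact orbits, and then justify that the commutant of $\Mult(\cH)$ in $\cB(\cH)$ coincides with $\Mult(\cH)$ itself -- a fact specific to irreducible reproducing-kernel Hilbert spaces. The computation in $(c)\Rightarrow(b)$ identifying the eigenvalues of $D$ is mechanical; the subtlety there is in converting compactness of $I-D$ into closed range of the shifted row $R_\lambda$ rather than merely $R_0$.
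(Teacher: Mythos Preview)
Your argument for $(a)\Rightarrow$tame coincides with the paper's. The other two implications take genuinely different routes.

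For $(b)\Rightarrow(a)$ the paper invokes the Leech-type factorization theorem for complete Nevanlinna--Pick kernels: from the range containment and the Douglas lemma one has $M_{\varphi-\varphi(\lambda)} M_{\varphi-\varphi(\lambda)}^{*} \le c^2\, R_\lambda R_\lambda^{*}$, and the factorization theorem then yields a column multiplier $\Psi$ with $(z-\lambda)\Psi = \varphi-\varphi(\lambda)$, whose entries are the desired $\varphi_i$. Your averaging approach also works: since $\Mult(\cH)$ (as multiplication operators) is a maximal abelian von Neumann subalgebra of $\cB(\cH)$---this uses $1\in\cH$ together with density of multipliers in $\cH$, the latter a consequence of the NP property---it is injective and hence the range of a conditional expectation $E$, and applying $E$ to any bounded-operator solution of the Gleason equation produces a multiplier solution by the bimodule property. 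The paper's argument is shorter and uses the NP hypothesis explicitly at the factorization step; yours would apply to any space whose multiplier algebra is a MASA. One caution on the fixed-point variant: you must intersect $\mathcal{T}$ with a norm ball to obtain compactness, though this is harmless since conjugation preserves norms and the group is abelian, so Markov--Kakutani suffices.

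For $(c)\Rightarrow(b)$ the paper simply quotes a result of Guo--Hu--Xu that, under (c), the essential Taylor spectrum of $M_z$ is $\partial\bB_d$, so the Koszul complex at any $\lambda\in\bB_d$ is Fredholm and in particular $R_\lambda$ has closed range. Your eigenvalue computation for $D = R_0 R_0^{*}$ and the conclusion that $I-D$ is compact are correct, but this makes the Calkin image of $R_0$ a \emph{co}-isometric row, not an isometric one, and by itself does not determine any joint essential spectrum in the Taylor sense. What it \emph{does} give directly, and what suffices here, is an elementary estimate: in the Calkin algebra $R_\lambda R_\lambda^{*}$ becomes $(1+\|\lambda\|^2)I - S - S^{*}$, where $S$ is the Calkin image of $\sum_i \overline{\lambda_i}M_{z_i}$ and has norm at most $\|\lambda\|$ (since the Calkin image of $R_0$ has norm $1$); hence $R_\lambda R_\lambda^{*}$ is Fredholm with lower bound $(1-\|\lambda\|)^2$, and $R_\lambda$ has closed range. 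Your ``standard perturbation argument'' should be this inequality rather than an appeal to joint essential spectra.
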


\begin{proof}
  We first show that $(a)$ implies that $\cH$ is tame.
  Let $\rho$ be a character on $\Mult(\cH)$ with $\pi(\rho) = \lambda \in \bB_d$ and let
  $\varphi \in \Mult(\cH)$. By assumption,
  there are $\varphi_1,\ldots,\varphi_d \in \Mult(\cH)$ such that
  \begin{equation*}
    \varphi - \varphi(\lambda) = \sum_{i=1}^d (z_i - \lambda_i) \varphi_i.
  \end{equation*}
  Since the right-hand side is contained in the kernel of the multiplicative
  linear functional $\rho$, it follows that
  \begin{equation*}
    \rho(\varphi) = \varphi(\lambda),
  \end{equation*}
  hence $\rho$ is the character of point evaluation at $\lambda$.

  (b) $\Rightarrow$ (a) We use a factorization theorem for multipliers on complete Nevanlinna-Pick spaces to show that
  (a) is satisfied (cf. Section 4 of \cite{GRS05}). We first claim that
  \begin{equation*}
    \sum_{i=1}^d (z_i - \lambda_i) \cH = \{f \in \cH: f(\lambda) = 0 \}.
  \end{equation*}
  Indeed, to see the nontrivial inclusion, suppose that $f \in \cH$ vanishes at $\lambda$.
  Since the polynomials form a dense subset of $\cH$, there is a sequence $(p_n)$ of polynomials
  which converges to $f$ in $\cH$. Then $(p_n - p_n(\lambda))$ is a sequence of polynomials
  vanishing at $\lambda$ which converges to $f$, as evaluation at $\lambda$ is continuous. Observe that the space on the left-hand side contains
  all polynomials vanishing at
  $\lambda$ and is closed by assumption. Thus, $f$ belongs to the space on the left-hand side, as asserted.

  Hence, if $\varphi \in \Mult(\cH)$ with $\varphi(\lambda) = 0$, then $\ran(M_\varphi)$ is contained in the range
  of the row multiplication operator
  \begin{equation*}
  (M_{z_1 - \lambda_1}, \ldots, M_{z_d - \lambda_d}).
  \end{equation*}
  Let $z - \lambda$ denote the $\cB(\bC^d,\bC)$-valued multiplier $(z_1 - \lambda_1,\ldots,z_d - \lambda_d)$.
  Then by the Douglas lemma, there exists $c > 0$ such that
  \begin{equation*}
    M_\varphi M_\varphi^* \le c^2 M_{z-\lambda} M_{z - \lambda}^*.
  \end{equation*}
  In this situation, a factorization theorem valid for multiplier algebras
  of complete Nevanlinna-Pick spaces (see, for example, Theorem 8.57 in \cite{AM02}) implies the existence
  of a $\cB(\bC,\bC^d)$-valued multiplier $\Psi$ such that
  \begin{equation*}
    c (z- \lambda) \Psi = \varphi.
  \end{equation*}
  Writing
  \begin{equation*}
    \Psi =
    \begin{pmatrix}
      \psi_1 \\ \vdots \\ \psi_d
    \end{pmatrix},
  \end{equation*}
  we see that
  \begin{equation*}
    \varphi = \sum_{i=1}^d (z_i - \lambda_i) (c \psi_i).
  \end{equation*}
  Consequently, Gleason's problem can be solved in $\Mult(\cH)$, so (a) holds.

  (c) $\Rightarrow$ (b)
  The proof uses the notion of essential Taylor spectrum (see, for example, Section 33 and 34 in \cite{Muller07},
  Section 2.6 in \cite{EP96},
  or \cite{Curto81}).
  By Theorem 4.5 (2) in \cite{GHX04}, the assumption that $a_n/a_{n+1}$ converges to $1$ implies that
  the essential Taylor spectrum of $M_z = (M_{z_1},\ldots,M_{z_d})$
  equals $\partial \bB_d$,
  hence the $d$-tuple $(M_{z_1} - \lambda_1, \ldots,M_{z_d} - \lambda_d)$ is a Fredholm tuple for all
  $\lambda = (\lambda_1,\ldots,\lambda_d) \in \bB_d$.
  In particular, the last coboundary map in the Koszul complex has closed range, thus
  the row operator
  \begin{equation*}
    (M_{z_1} - \lambda_1, \ldots, M_{z_d} - \lambda_d)
  \end{equation*}
  has closed range for all $\lambda \in \bB_d$.
  Consequently, (b) holds.
\end{proof}

\begin{exa}
  The spaces $\cH_s(\bB_d)$, $\cH_s(\ol{\bB_d})$ and $\cK_\alpha$ in Example \ref{exa:unit_invariant}
  all satisfy condition (c) of the preceding proposition and are hence tame.
\end{exa}

\begin{rem}
  (a) The regularity condition $\lim_{n \to \infty} \frac{a_n}{a_{n+1}} = 1$ is not uncommon in the study of unitarily
  invariant kernels, see for example Section 4 in \cite{GRS02}. Proposition 4.5 in \cite{GRS02} shows that this condition
  automatically holds if $\sum_{n=0}^\infty a_n = \infty$ and $(a_n)$ is eventually decreasing.

  (b) If $(a_n)_n$ is log-convex (see part (b) of Remark \ref{rem:unitarily_invariant}),
  then $\lim_{n \to \infty} \frac{a_n}{a_{n+1}}$ always exists in $[0,\infty]$. Since $\cH$ is assumed to be algebraically
  consistent on $\bB_d$ or on $\ol{\bB_d}$, the power series $\sum_{n=0}^\infty a_n z^n$
  has radius of convergence $1$ (see Lemma \ref{lem:alg_cons_examples}),
  hence $\lim_{n \to \infty} \frac{a_n}{a_{n+1}} = 1$ is automatic in this case.

  (c) The construction of a complete Nevanlinna-Pick space $\cH$ on $\bD$ which violates condition (a)
  at $\lambda = 0$, and
  hence all conditions of the preceding proposition, is outlined in \cite[p. 326]{DHS15a}.
  It is not known if this space $\cH$ is tame.

  (d) The idea to use the factorization theorem to solve Gleason's
  problem in $\Mult(\cH)$ already appears in \cite{GRS05}, where this was done for the multiplier
  algebra of the Drury-Arveson space. The main difference between the two arguments is that in \cite{GRS05},
  it was shown that Gleason's problem can be solved in $\Mult(H^2_d)$ for $\lambda = 0$, and automorphism
  invariance of $\Mult(H^2_d)$ was used to deduce the general case. The argument here does not require automorphism
  invariance.
\end{rem}

We finish this section by observing that
tameness is also implied by the presence of a Corona theorem. In practice,
this result is of very limited use,
since establishing tameness is usually much easier than establishing a Corona theorem.
Indeed, it is very easy to see that $H^2(\bD)$ is tame (see Example \ref{exa:H_infty_tame}), whereas the Corona theorem for $H^\infty(\bD)
= \Mult(H^2(\bD))$ is hard.
Nevertheless, since there are no known examples of complete Nevanlinna-Pick spaces on $\bB_d$ for which the Corona
theorem fails, the next result explains the lack of examples of spaces which are not tame.

\begin{prop}
  Let $\cH$ be a unitarily invariant complete NP-space on $\bB_d$. If the set
  of all point evaluations at points in $\bB_d$ is weak* dense in the maximal ideal space of $\Mult(\cH)$,
  then $\cH$ is tame.
\end{prop}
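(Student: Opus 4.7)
The plan is to exploit the weak* density hypothesis to approximate an arbitrary character by point evaluations and then use continuity of multipliers as analytic functions on $\bB_d$.

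First, suppose $\rho$ is a character on $\Mult(\cH)$ with $\pi(\rho) = \lambda \in \bB_d$; the goal is to show $\rho(\varphi) = \varphi(\lambda)$ for every $\varphi \in \Mult(\cH)$. By the weak* density assumption, there exists a net $(\lambda_\alpha) \subset \bB_d$ such that the point evaluation characters $\delta_{\lambda_\alpha}$ converge weak* to $\rho$, which, by definition of the weak* topology on $\cM(\Mult(\cH))$, just means $\varphi(\lambda_\alpha) \to \rho(\varphi)$ for every $\varphi \in \Mult(\cH)$.

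Next, I would apply this to the coordinate functions $z_1,\ldots,z_d$, which by Remark \ref{rem:unitarily_invariant}(c) are multipliers of $\cH$. Evaluating, we get $\lambda_{\alpha,i} = z_i(\lambda_\alpha) \to \rho(z_i) = \lambda_i$ for each $i$, so $\lambda_\alpha \to \lambda$ in $\bC^d$. Since $\lambda$ is an interior point of $\bB_d$, eventually all $\lambda_\alpha$ lie in a fixed compact neighbourhood of $\lambda$ contained in $\bB_d$.

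Finally, every multiplier $\varphi \in \Mult(\cH)$ is itself an element of $\cH$ (since $\varphi = \varphi \cdot 1 \in \cH$, using that $1 \in \cH$ because $\cH$ is normalized), and hence is analytic on $\bB_d$ by the discussion following Lemma \ref{lem:unitarily_invariant_power_series}. In particular, $\varphi$ is continuous at $\lambda$, so $\varphi(\lambda_\alpha) \to \varphi(\lambda)$. Combining with the previous paragraph gives $\rho(\varphi) = \lim_\alpha \varphi(\lambda_\alpha) = \varphi(\lambda)$, so $\rho$ is evaluation at $\lambda$, establishing tameness. There is no real obstacle here beyond the bookkeeping; the proof is essentially a soft argument that converts weak* density into pointwise convergence via the coordinate functions and exploits interior continuity on $\bB_d$.
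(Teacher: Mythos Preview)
Your proof is correct and is essentially identical to the paper's own argument: approximate $\rho$ by point evaluations $\delta_{\lambda_\alpha}$, apply $\pi$ (i.e., evaluate at the coordinate functions) to get $\lambda_\alpha \to \lambda$, and then use continuity of multipliers on $\bB_d$ to conclude $\rho = \delta_\lambda$. The only difference is that you spell out more carefully why multipliers are continuous (via analyticity) and why $\lambda_\alpha \to \lambda$, whereas the paper states these in one line.
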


\begin{proof}
  Let $\rho$ be a character on $\Mult(\cH)$ such that $\pi(\rho) = \lambda \in \bB_d$. By assumption,
  there is a net of points $(\lambda_\alpha)$ in $\bB_d$ such that $\delta_{\lambda_\alpha}$ converges
  to $\rho$ in the weak* topology. Hence, $\lambda_\alpha = \pi(\delta_{\lambda_\alpha})$
  converges to $\lambda = \pi(\rho)$. Since the multipliers are continuous on $\bB_d$,
  it follows that $\delta_{\lambda_\alpha}$ converges to $\delta_\lambda$ in the weak* topology,
  whence $\rho = \delta_\lambda$.
\end{proof}

\section{Holomorphic maps on homogeneous varieties}
\label{sec:hol_maps_on_varieties}

In the last section, we saw that the maximal ideal space of an algebra of the type
$A(\cH_I)$ or $\Mult(\cH_I)$ contains a copy of the homogeneous variety $Z^0(I)$. We will see in the next
section that under suitable conditions,
algebra homomorphisms between our algebras induce holomorphic maps between the varieties.
Thus, we will require some results about holomorphic maps on homogeneous varieties.

Throughout this section, let $I,J \subsetneq \bC[z_1,\ldots,z_d]$ be radical homogeneous ideals.
We say that a map $F: Z^0(I) \to \bC^{d'}$,
where $d' \in \bN$, is holomorphic if for every $z \in Z^0(I)$, there exists an open neighbourhood $U$
of $z$ and a holomorphic function $G$ on $U$ which agrees with $F$ on $U \cap Z^0(I)$.

We require the following variant of the maximum modulus principle.

\begin{lem}
  \label{lem:maximum_modulus}
  Let $F: Z^0(I) \to \overline{\bB_d}$ be a holomorphic map. If $F$ is not constant,
  then $F(Z^0(I)) \subset \bB_d$.
\end{lem}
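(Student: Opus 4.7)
The plan is to argue by contradiction: suppose that $F$ is non-constant but there exists $z_0 \in Z^0(I)$ with $F(z_0) \in \partial \bB_d$, i.e.\ $\|F(z_0)\| = 1$, and then derive that $F$ must be constant after all. The key observation I will exploit is that because $I$ is homogeneous, for every $z \in Z^0(I)$ the entire disc $\{\lambda z : |\lambda| < 1/\|z\|\}$ lies in $Z^0(I)$ (it lies in $V(I)$ by homogeneity and in $\bB_d$ by the norm bound). This lets me parametrize slices of the variety by genuine (open) discs in $\bC$ and fall back on the classical one-variable maximum modulus principle, completely avoiding any issues with singularities of $V(I)$.

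First I would record that $\lambda \mapsto F(\lambda z)$ is holomorphic on the disc $\bD_{R_z}$ of radius $R_z := 1/\|z\|$ for every $z \in Z^0(I)$: given $\lambda_0 \in \bD_{R_z}$, the point $\lambda_0 z$ lies in $Z^0(I)$, so by the definition of holomorphicity on $Z^0(I)$ there is a holomorphic extension of $F$ to a neighbourhood of $\lambda_0 z$ in $\bC^d$, and this extension pulled back through $\lambda \mapsto \lambda z$ is holomorphic at $\lambda_0$. Next, define
\begin{equation*}
  h_z(\lambda) = \langle F(\lambda z), F(z_0) \rangle \qquad (\lambda \in \bD_{R_z}).
\end{equation*}
This is holomorphic and satisfies $|h_z(\lambda)| \le \|F(\lambda z)\|\,\|F(z_0)\| \le 1$ by Cauchy--Schwarz.

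The main step is a two-stage application of the maximum modulus principle on discs. Taking $z = z_0$ we have $h_{z_0}(1) = \|F(z_0)\|^2 = 1$, and since $R_{z_0} > 1$ the point $1$ is interior to $\bD_{R_{z_0}}$, so the classical maximum modulus theorem forces $h_{z_0} \equiv 1$ on $\bD_{R_{z_0}}$. Evaluating at the origin yields $\langle F(0), F(z_0) \rangle = 1$. Now for an arbitrary $z \in Z^0(I)$, the same identity gives $h_z(0) = \langle F(0), F(z_0) \rangle = 1$, and since $0$ is interior to $\bD_{R_z}$, a second application of maximum modulus gives $h_z \equiv 1$, hence $\langle F(z), F(z_0) \rangle = h_z(1) = 1$. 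Combined with $\|F(z)\| \le 1 = \|F(z_0)\|$, the equality case of Cauchy--Schwarz forces $F(z) = F(z_0)$. Thus $F$ is constant on $Z^0(I)$, contradicting our hypothesis.

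The only real subtlety I anticipate is justifying in step one that $\lambda \mapsto F(\lambda z)$ is honestly holomorphic on the \emph{open} disc of radius $1/\|z\|$, which is what makes the two interior points $\lambda = 1$ (in the $z = z_0$ step) and $\lambda = 0$ (in the general step) eligible for the one-variable maximum modulus principle. This is where the homogeneity of $I$ is essential: without it one could only guarantee the parametrization on $\bD$ itself, and the critical points would lie on its boundary, making the argument collapse.
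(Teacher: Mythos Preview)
Your proof is correct and follows essentially the same route as the paper's own argument: both proofs apply the one-variable maximum modulus principle twice to the holomorphic function $\lambda \mapsto \langle F(\lambda \cdot \text{base point}), F(z_0)\rangle$ on a disc through the origin in $Z^0(I)$, first to show $\langle F(0), F(z_0)\rangle = 1$ and then to conclude $F(z) = F(z_0)$ for every $z$. The only cosmetic difference is that you parametrize by $\lambda \in \bD_{1/\|z\|}$ with base point $z$, while the paper rescales to work on the unit disc with a base point $\widetilde w$ satisfying $w \in \bD\,\widetilde w$; one minor point you might add for completeness is that the case $z_0 = 0$ (and more generally the degenerate case $Z^0(I) = \{0\}$) needs a word, since your formula $R_{z_0} = 1/\|z_0\|$ is then undefined, but here the first stage is vacuous and the argument goes through trivially.
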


\begin{proof}
  We may assume that $\{0\} \subsetneq Z^0(I)$. Suppose that there exists $w \in Z^0(I)$
  such that $||F(w)||=1$ and choose $\widetilde w \in Z^0(I)$ satisfying $w \in \bD \widetilde w$.
  The ordinary maximum modulus principle shows that the holomorphic function
  \begin{equation*}
    \bD \to \ol{\bD}, \quad t \mapsto \langle F(t \widetilde w), F(w) \rangle,
  \end{equation*}
  is the constant function $1$. Consequently, $F(t \widetilde w)= F(w)$ for all $t \in \ol{\bD}$,
  and in particular $F(0) = F(w) \in \partial \bB_d$. Now, if $z \in Z^0(I)$ is arbitrary, another
  application of
  the maximum modulus principle shows that the function
  \begin{equation*}
    \bD \to \ol{\bD}, \quad t \mapsto \langle F(t z), F(0) \rangle ,
  \end{equation*}
  is the constant function $1$, hence $F(z) = F(0)$. Thus, $F$ is constant.
\end{proof}

The next goal is to show that every biholomorphism between $Z^0(I)$ and $Z^0(J)$ which fixes the origin
is the restriction
of an invertible linear map. This result is Theorem 7.4 in \cite{DRS11}, where it was established by
adjusting the proof of Cartan's uniqueness theorem from \cite[Theorem 2.1.3]{Rudin08}. We provide
a simpler proof, which only uses the Schwarz lemma from ordinary complex analysis. This proof already appeared
in the author's Master's thesis \cite{Hartz12a}.
We begin with
the following variant of the Schwarz lemma.

\begin{lem}
  \label{lem:schwarz}
  Let $d' \in \bN$ and let $F: Z^0(I) \to \bB_{d'}$ be a holomorphic map such that $F(0) = 0$. Then $||F(z)|| \le ||z||$ for all
  $z \in Z^0(I)$. If equality holds for some $z \in Z^0(I) \setminus \{0\}$, then there exists $w_0 \in \partial \bB_{d'}$
  such that
  \begin{equation}
    \label{eqn:schwarz}
    F \Big( t \frac{z}{||z||} \Big) = t w_0
  \end{equation}
  for all $t \in \bD$. In particular, $F$ maps the disc $\bC z \cap \bB_d$ biholomorphically onto
  the disc $\bC F(z) \cap \bB_{d'}$ in this case.
\end{lem}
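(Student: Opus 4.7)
The plan is to reduce to the classical one-variable Schwarz lemma by slicing $Z^0(I)$ along complex lines through the origin. The crucial point, which uses homogeneity of $I$, is that $Z^0(I)$ is stable under scalar multiplication by elements of $\ol{\bD}$: for $z \in Z^0(I)$ and $t \in \bD$, one has $tz \in Z^0(I) \cap \bB_d$. I expect the only subtlety to lie in the equality case, where one needs an application of the maximum modulus principle in the form supplied by Lemma \ref{lem:maximum_modulus}.

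For the norm inequality, I would fix $z \in Z^0(I) \setminus \{0\}$ and form the composition
\begin{equation*}
  g: \bD \to \bB_{d'}, \quad g(t) = F(t z/||z||).
\end{equation*}
This is holomorphic, maps into $\bB_{d'}$, and satisfies $g(0) = 0$. The vector-valued Schwarz lemma then yields $||g(t)|| \le |t|$ for all $t \in \bD$; it is obtained by fixing $t_0 \in \bD \setminus \{0\}$ and applying the classical scalar Schwarz lemma to the holomorphic function $t \mapsto \langle g(t), g(t_0)\rangle / ||g(t_0)||$, which maps $\bD$ to $\ol{\bD}$ and vanishes at $0$. Specialising to $t = ||z||$ yields $||F(z)|| \le ||z||$.

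For the equality case, I would define $\tilde g: \bD \to \ol{\bB_{d'}}$ by $\tilde g(t) = g(t)/t$ for $t \neq 0$ and $\tilde g(0) = g'(0)$. The removable singularity theorem, applied to each coordinate, shows that $\tilde g$ is holomorphic on $\bD$, and the previous bound gives $||\tilde g(t)|| \le 1$ for all $t \in \bD$. If $||F(z)|| = ||z||$, then $||\tilde g(||z||)|| = 1$, so $\tilde g$ attains a boundary point of $\ol{\bB_{d'}}$ at an interior point of $\bD$. By Lemma \ref{lem:maximum_modulus} applied with source $\bD$ (that is, $d=1$ and $I = 0$), the map $\tilde g$ must be the constant $w_0 := F(z)/||z|| \in \partial \bB_{d'}$, and then $g(t) = t \tilde g(t) = t w_0$ gives $F(tz/||z||) = t w_0$, which is \eqref{eqn:schwarz}. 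The biholomorphism assertion is immediate afterwards: the disc $\bC z \cap \bB_d$ is parametrised by $s \mapsto s z/||z||$ for $s \in \bD$, and the formula $F(s z/||z||) = s w_0$ shows that $F$ maps it linearly and bijectively onto $\bC w_0 \cap \bB_{d'} = \bC F(z) \cap \bB_{d'}$.
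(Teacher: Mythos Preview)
Your proof is correct and follows essentially the same slicing-to-one-variable approach as the paper: both restrict to the disc $t \mapsto F(tz/\|z\|)$ and reduce the norm bound to the classical scalar Schwarz lemma by pairing with a suitable unit vector. The only difference is in the equality case, where the paper applies the equality clause of the scalar Schwarz lemma directly to conclude $f(t)=t$ and then combines with the norm bound, while you instead form $\tilde g(t)=g(t)/t$ and invoke Lemma~\ref{lem:maximum_modulus}; both are standard and equally short.
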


\begin{proof}
  We may assume that $\{0\} \subsetneq Z^0(I)$. Let $z \in Z^0(I) \setminus \{0\}$, suppose that $F(z) \neq 0$
  and define $w_0 = F(z) / ||F(z)||$. By the classical Schwarz lemma, the function
  \begin{equation*}
    f: \bD \to \bD, \quad t \mapsto \Big \langle F \Big(t \frac{z}{||z||} \Big), w_0  \Big \rangle,
  \end{equation*}
  satisfies $|f(t)| \le |t|$ for all $t \in \bD$. The first statement now follows by choosing $t = ||z||$.

  If $||F(z)|| = ||z||$, then $f(||z||) = ||z||$, thus $f$ is the identity by the Schwarz lemma. Since
  $||F(t \frac{z}{||z||}) || \le |t|$ for all $t \in \bD$ by the first part, Equation \eqref{eqn:schwarz} holds. The last assertion
  is now obvious.
\end{proof}

The desired result about biholomorphisms which fix the origin follows as an application of the last lemma.
\begin{prop}[{\cite[Theorem 7.4]{DRS11}}]
  \label{prop:bihol_0_linear}
  Let $F: Z^0(I) \to Z^0(J)$ be a biholomorphism such that $F(0) = 0$. Then there exists an invertible linear map $A$
  on $\bC^d$ which maps $V(I)$ isometrically onto $V(J)$ such that $A \big|_{Z^0(I)} = F$.
\end{prop}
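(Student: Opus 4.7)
The plan is to recover $A$ as the linear part of $F$ at the origin and to use the equality case of the Schwarz lemma (Lemma \ref{lem:schwarz}) to argue that $F$ already coincides with this linear part on $Z^0(I)$. As a first step, I would apply Lemma \ref{lem:schwarz} to both $F : Z^0(I) \to \bB_d$ and its inverse $F^{-1} : Z^0(J) \to \bB_d$; the two one-sided estimates combine to give $||F(z)|| = ||z||$ for every $z \in Z^0(I)$.

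Next, the equality statement in Lemma \ref{lem:schwarz} yields, for each unit vector $u$ of $V(I) \cap \partial \bB_d$, a unit vector $w_u \in V(J) \cap \partial \bB_d$ with $F(tu) = t w_u$ for all $t \in \bD$. Since $F$ is holomorphic on $Z^0(I)$, it extends locally near $0$ to a holomorphic map $\widetilde{F}$ on an open neighbourhood of the origin in $\bC^d$ with $\widetilde{F}(0) = 0$. Expanding $\widetilde{F} = \sum_{n \ge 1} \widetilde{F}_n$ into its homogeneous polynomial components and matching Taylor coefficients in the identity $\sum_n t^n \widetilde{F}_n(u) = t w_u$ (valid for $t$ near $0$) forces $\widetilde{F}_1(u) = w_u$ and $\widetilde{F}_n(u) = 0$ for all $n \ge 2$ and every unit vector $u \in V(I) \cap \partial \bB_d$. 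Because $V(I)$ is a cone and each $\widetilde{F}_n$ is homogeneous, these relations propagate to all of $V(I)$. Writing $L := \widetilde{F}_1$, one concludes that $F$ agrees on $Z^0(I)$ with the linear map $L$, and in particular $L(V(I)) \subset V(J)$.

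Applying the same argument to $F^{-1}$ produces a linear map $L' : \bC^d \to \bC^d$ with $L'(V(J)) \subset V(I)$ and $F^{-1} = L'|_{Z^0(J)}$. Since $F^{-1} \circ F = \mathrm{id}_{Z^0(I)}$ and $F \circ F^{-1} = \mathrm{id}_{Z^0(J)}$, the linearity of $L, L'$ then forces $L' L = \mathrm{id}$ on $\operatorname{span}_{\bC} V(I)$ and $L L' = \mathrm{id}$ on $\operatorname{span}_{\bC} V(J)$, so $L$ restricts to a linear isomorphism of these two subspaces. Moreover this restriction is isometric on $V(I)$ thanks to the first step. Since $\operatorname{span}_{\bC} V(I)$ and $\operatorname{span}_{\bC} V(J)$ have the same complex dimension, any extension of $L|_{\operatorname{span}_{\bC} V(I)}$ to an invertible linear endomorphism $A$ of $\bC^d$ now furnishes the desired map.

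I expect the main obstacle to be the second paragraph, where one must transfer the Schwarz-type rigidity along each complex line through $0$ in $V(I)$ into the statement that \emph{all} higher-order homogeneous components of an ambient extension of $F$ vanish along $V(I)$. Once this is done, the remaining identifications and the promotion of $L$ to an invertible map on $\bC^d$ are routine linear-algebra bookkeeping.
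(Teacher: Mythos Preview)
Your proposal is correct and follows essentially the same route as the paper: apply the Schwarz-type lemma to both $F$ and $F^{-1}$ to obtain $\|F(z)\| = \|z\|$, use the equality case along each line $\bD u$ to see that $F$ agrees there with the linear part of a local holomorphic extension at $0$, and then extend this linear map from $\spa V(I)$ to an invertible map on $\bC^d$. The only difference is cosmetic: the paper differentiates $F(t\,z/\|z\|) = t w_0$ once at $t=0$ to read off $A_0 z = \|z\| w_0 = F(z)$ directly, whereas you phrase the same step via the full homogeneous expansion of $\widetilde F$---your ``main obstacle'' therefore dissolves immediately, since the identity $F(tu)=tw_u$ already holds for all $t\in\bD$ and $L(tu)=tL(u)=tw_u$ by linearity.
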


\begin{proof}
  We may again assume that $\{0\} \subsetneq Z^0(I)$.
  Let $G$ be a holomorphic map which is defined on a neighbourhood $U$ of $0$ and which coincides with $F$ on $U \cap Z^0(I)$.
  Let $A_0$ be the derivative of $G$ at $0$. Lemma \ref{lem:schwarz}, applied to $F$ and its inverse, shows that $||F(z)|| = ||z||$
  for all $z \in Z^0(I)$, so the second part of the same lemma applies. Taking the derivative with respect to $t$
  in Equation \eqref{eqn:schwarz}
  for fixed $z \in Z^0(I) \setminus \{0\}$,
  we see that $w_0$ necessarily satisfies $w_0 ||z|| = A_0 z$, hence
  \begin{equation*}
    F(z) = ||z|| w_0 = A_0 z.
  \end{equation*}
  Thus, $A_0 \big|_{Z^0(I)} = F$, and $A_0$ is isometric on $Z^0(I)$ since $F$ is. Linearity of
  $A_0$ implies that $A_0$ maps $V(I)$ isometrically onto $V(J)$.

  Finally, the same argument, applied to $F^{-1}$ in place of $F$, shows that there exists a linear map $B_0$ on $\bC^d$
  such that $B_0 \big|_{Z^0(J)} = F^{-1}$. From this, we deduce that $A_0$ restricts to a linear isomorphism
  from the linear span of $Z^0(I)$ onto the linear span of $Z^0(J)$. Thus, if we let $A$ be an invertible
  extension of $A_0 \big|_{Z^0(I)}$ to $\bC^d$, then $A$ satisfies all the requirements of the proposition.
\end{proof}

We also crucially require a result from \cite{DRS11}, which, loosely speaking, allows us
to repair biholomorphisms which do not fix the origin. This result is contained in the proof of Proposition 4.7
in \cite{DRS11}.
The proof in \cite{DRS11} proceeds in two steps. In a first step, tools from algebraic
geometry and knowledge about the structure of conformal automorphisms of $\bB_d$ are used
to reduce the statement about arbitrary homogeneous varieties to the case of discs. The second
step, which deals with the case of discs, is an argument from plane conformal geometry.

It turns out that the first step, namely the reduction to discs, also follows immediately from Lemma \ref{lem:schwarz}.

\begin{lem}
  \label{lem:bihol_disc}
  Let $F: Z^0(I) \to Z^0(J)$ be a biholomorphism with $F(0) \neq 0$. Let $b = F(0)$ and let $a = F^{-1} (0)$.
  Then $||a|| = ||b||$ and $F$ maps the disc $D_1 = \bC a \cap \bB_d$ biholomorphically onto the disc $D_2 = \bC b \cap \bB_d$.
\end{lem}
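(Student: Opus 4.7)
The plan is to reduce to the fixed-origin situation already handled by Lemma \ref{lem:schwarz} by composing $F$ with a conformal involution of the ball. Recall from \cite[Section 2.2]{Rudin08} that for every $c \in \bB_d$ there is an involution $\varphi_c \in \Aut(\bB_d)$ satisfying $\varphi_c(0) = c$ and $\varphi_c(c) = 0$.

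First I would consider the holomorphic map $G = \varphi_b \circ F \colon Z^0(I) \to \bB_d$, which sends $0$ to $\varphi_b(b) = 0$ and is therefore in the scope of Lemma \ref{lem:schwarz}. Specializing the inequality $\|G(z)\| \le \|z\|$ to $z = a$ gives
\begin{equation*}
 \|b\| = \|\varphi_b(0)\| = \|G(a)\| \le \|a\|.
\end{equation*}
Running the symmetric argument with $\varphi_a \circ F^{-1} \colon Z^0(J) \to \bB_d$ evaluated at $z = b$ yields the reverse inequality $\|a\| \le \|b\|$, and hence $\|a\| = \|b\|$, establishing the first claim.

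For the second claim, the computation above has in fact produced the equality case of Lemma \ref{lem:schwarz} for $G$ at $a$. The lemma therefore tells us that $G$ maps $D_1 = \bC a \cap \bB_d$ biholomorphically onto the disc $\bC G(a) \cap \bB_d = \bC b \cap \bB_d = D_2$. Writing $F = \varphi_b \circ G$ (using that $\varphi_b$ is an involution), this means $F$ sends $D_1$ biholomorphically onto $\varphi_b(D_2)$, so the proof reduces to checking $\varphi_b(D_2) = D_2$. This is immediate: an easy computation from Rudin's explicit formula shows that $\varphi_b$ preserves the complex line $\bC b$, and since $\varphi_b \in \Aut(\bB_d)$, it restricts to an automorphism of $D_2 = \bC b \cap \bB_d$.

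No substantial obstacle is expected, since the argument is essentially a single application of Lemma \ref{lem:schwarz} after the right automorphism is introduced. The only point demanding care is the choice of $\varphi_b$ as the Möbius involution swapping $0$ and $b$ (rather than a general automorphism sending $b$ to $0$); with this choice, the norm identity and the disc identification both fall out of the inequality and the equality cases of Lemma \ref{lem:schwarz} simultaneously.
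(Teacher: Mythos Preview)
Your proof is correct, and it follows the same underlying idea as the paper's: compose with an automorphism so that the origin is fixed, then invoke the equality case of Lemma \ref{lem:schwarz}. The implementations differ, however. The paper parametrizes $D_1$ by $\bD$ via $f(t)=F\bigl(t\,a/\|a\|\bigr)$ and pre-composes with a one-variable disc automorphism $\varphi$ swapping $0$ and $\|a\|$; Lemma \ref{lem:schwarz} is then applied to $h=f\circ\varphi:\bD\to\bB_d$. You instead post-compose $F$ with the $d$-dimensional Rudin involution $\varphi_b\in\Aut(\bB_d)$ and apply Lemma \ref{lem:schwarz} directly to $G=\varphi_b\circ F:Z^0(I)\to\bB_d$.

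The trade-off: the paper's argument is a bit more self-contained, since it never leaves the one-dimensional setting and needs nothing about $\Aut(\bB_d)$ beyond the classical disc M\"obius maps. Your route is more direct (no parametrization of the source disc), but it imports the explicit Rudin involution and the additional check that $\varphi_b$ preserves the complex line $\bC b$, which you verify correctly from the formula. Either way, the result drops out of a single application of Lemma \ref{lem:schwarz} plus symmetry.
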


\begin{proof}
  Let
  \begin{equation*}
    f: \bD \to Z^0(J), \quad t \mapsto F \Big( t \frac{a}{||a||} \Big),
  \end{equation*}
  and let $\varphi$ be an automorphism of $\bD$ which maps $0$ to $||a||$ and vice versa. Then $h = f \circ \varphi$ satisfies
  the assumptions of Lemma \ref{lem:schwarz}, hence
  \begin{equation*}
    ||b|| = ||h(||a||)|| \le ||a||.
  \end{equation*}
  By symmetry, $||a|| \le ||b||$, so $||a|| = ||b||$. It now follows from the second part of Lemma \ref{lem:schwarz}
  that $h$ maps $\bD$ biholomorphically onto the disc $D_2$. The result follows.
\end{proof}

The second step is essentially the following lemma.
For $\lambda \in \bT$, let $U_\lambda$ denote the unitary map on $\bC^d$ defined by
\begin{equation*}
  U_\lambda(z) = \lambda z
\end{equation*}
for $z \in \bC^d$.

\begin{lem}[Davidson-Ramsey-Shalit]
  \label{lem:auto_disc_turn}
  Let $\varphi$ be a conformal automorphism of $\bD$. The set
  \begin{equation*}
    \{ (U_\lambda \circ \varphi^{-1} \circ U_\mu \circ \varphi)(0): \lambda,\mu \in \bT\} \subset \bD
  \end{equation*}
  is a closed disc around $0$ which contains the point $\varphi^{-1}(0)$.
\end{lem}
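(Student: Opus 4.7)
The plan is to reduce the lemma to an explicit computation, exploiting the fact that conformal automorphisms of $\bD$ have a very rigid form. First, I would write $\varphi$ in the standard parametrization
\begin{equation*}
  \varphi(z) = e^{i \theta} \frac{a - z}{1 - \bar{a} z}, \qquad a = \varphi^{-1}(0), \ \theta \in \bR,
\end{equation*}
and solve $\varphi(z) = w$ for $z$ to obtain a closed form for $\varphi^{-1}$. Substituting $\varphi(0) = e^{i \theta} a$ into $U_\lambda \circ \varphi^{-1} \circ U_\mu \circ \varphi$ and simplifying, the phase $\theta$ cancels cleanly, yielding
\begin{equation*}
  (U_\lambda \circ \varphi^{-1} \circ U_\mu \circ \varphi)(0) \;=\; \lambda\, a\, \frac{1 - \mu}{1 - |a|^2 \mu}.
\end{equation*}

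Because $\lambda$ ranges independently over $\bT$, the set in question is rotationally symmetric about $0$, so it equals $\{z \in \bC : |z| \in R\}$, where
\begin{equation*}
  R \;=\; \Big\{ |a| \cdot \Big|\frac{1 - \mu}{1 - |a|^2 \mu}\Big| : \mu \in \bT \Big\}.
\end{equation*}
To identify $R$, I would set $s = \Re(\mu) \in [-1, 1]$ and $t = |a|^2 \in [0, 1)$, and rewrite the squared modulus as the rational function $f(s) = (2 - 2s)/(1 + t^2 - 2 t s)$ of the real variable $s$. A one-line derivative computation factors $f'(s)$ as $-2(1-t)^2/(1 + t^2 - 2 t s)^2 \le 0$, so $f$ is continuous and monotonically decreasing on $[-1,1]$, taking the value $0$ at $s = 1$ and $4/(1+t)^2$ at $s = -1$. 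By the intermediate value theorem, $R = [0,\, 2|a|/(1 + |a|^2)]$, and hence the set in the lemma is the closed disc of radius $2|a|/(1 + |a|^2)$ centered at $0$.

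It remains to verify that $\varphi^{-1}(0) = a$ is contained in this disc, which amounts to the inequality $|a| \le 2|a|/(1+|a|^2)$, equivalent to $|a|^2 \le 1$; this holds trivially. The only genuinely substantive point is the fortunate cancellation of $\theta$ in the composition, together with the observation that the resulting modulus depends on $\mu$ only through $\Re(\mu)$; everything else is one-variable calculus.
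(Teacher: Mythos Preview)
Your proof is correct, and you even obtain the explicit radius $2|a|/(1+|a|^2)$ of the disc, which the paper does not compute. However, your approach is genuinely different from the paper's.

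The paper argues geometrically rather than by computation. It observes that $C = \{(U_\mu \circ \varphi)(0) : \mu \in \bT\}$ is the circle of radius $|\varphi(0)|$ about $0$; since M\"obius transformations send circles to circles, $\varphi^{-1}(C)$ is a circle, and it passes through $0$ because $\varphi(0) \in C$. The point $\varphi^{-1}(0)$ lies in the interior of this circle because $0$ lies in the interior of $C$. Rotating the circle $\varphi^{-1}(C)$ by all $\lambda \in \bT$ then sweeps out a closed disc about $0$ containing $\varphi^{-1}(0)$.

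Your route trades this conformal-geometry argument for a direct parametrization and a monotonicity computation. The advantage of your approach is that it is entirely self-contained and yields the exact radius; the advantage of the paper's approach is that it avoids any calculation by invoking the circle-preserving property of automorphisms of $\bD$, which makes the result conceptually transparent. Both are short, and neither is clearly superior.
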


\begin{proof} We repeat the relevant part of the proof of Theorem 7.4 in \cite{DRS11}.
  The assertion is trivial if $\varphi$ fixes the origin, so we may assume that $\varphi(0) \neq 0$. Then
  \begin{equation*}
    C = \{ (U_\mu \circ \varphi) (0): \mu \in \bT\}
  \end{equation*}
  is the circle around $0$ with radius $|\varphi(0)|$. 
  Since automorphisms of $\bD$ map circles to circles,
  it follows that the set $\varphi^{-1}(C)$ is a circle which obviously passes through $0$.
  Moreover, $\varphi^{-1}(0)$ is
  contained in the interior of the circle $\varphi^{-1}(C)$ as $0$ is contained in the interior of $C$.
  Thus
  \begin{equation*}
    \{ (U_\lambda \circ \varphi^{-1} \circ U_\mu \circ \varphi)(0): \lambda,\mu \in \bT\}
    = \{U_\lambda (\varphi^{-1}(C)): \lambda \in \bT \}
  \end{equation*}
  is a closed disc around $0$ which contains $\varphi^{-1}(0)$.
\end{proof}

Observe that if $I \subsetneq \bC[z_1,\ldots,z_d]$ is a radical homogeneous ideal, then
$U_\lambda$ leaves $Z^0(I)$ and $Z(I)$ invariant for each $\lambda \in \bT$.
Combining Lemmata \ref{lem:bihol_disc} and \ref{lem:auto_disc_turn}, we obtain the result
from \cite{DRS11} which allows us to repair biholomorphisms which do not fix the origin.

\begin{lem}[Davidson-Ramsey-Shalit]
  \label{lem:bihol_turn}
  Let $I,J \subsetneq \bC[z_1,\ldots,z_d]$ be radical homogeneous ideals and suppose that $F: Z^0(I) \to Z^0(J)$ is a biholomorphism.
  Then there are $\lambda,\mu \in \bT$ such that the biholomorphism
  \begin{equation*}
    F \circ U_\lambda \circ F^{-1} \circ U_\mu \circ F: Z^0(I) \to Z^0(J)
  \end{equation*}
  fixes the origin.
\end{lem}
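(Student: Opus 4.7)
My plan is to deduce the lemma directly from the two preceding preparatory lemmas (Lemma \ref{lem:bihol_disc} on biholomorphisms between discs sitting inside the varieties, and Lemma \ref{lem:auto_disc_turn} on the plane-geometric fact about circles). The case $F(0)=0$ is trivial with $\lambda=\mu=1$, so assume $F(0)\neq 0$, and write $b=F(0)$, $a=F^{-1}(0)$.

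By Lemma \ref{lem:bihol_disc}, $\|a\|=\|b\|$, and $F$ restricts to a biholomorphism between the complex discs $D_1=\bC a\cap\bB_d\subset Z^0(I)$ and $D_2=\bC b\cap\bB_d\subset Z^0(J)$. I would parametrize these by $\iota_1:\bD\to D_1$, $t\mapsto t\cdot a/\|a\|$, and $\iota_2:\bD\to D_2$, $t\mapsto t\cdot b/\|b\|$, and form $\varphi=\iota_2^{-1}\circ F\circ\iota_1$, which is a conformal automorphism of $\bD$. Two features of this identification are crucial: first, under $\iota_1$ and $\iota_2$ the map $U_\lambda$ (scalar multiplication by $\lambda\in\bT$ on $\bC^d$) corresponds to multiplication by $\lambda$ on $\bD$, since $U_\lambda\circ\iota_i=\iota_i\circ U_\lambda$; second, $\varphi(0)=\iota_2^{-1}(b)=\|b\|$ and $\varphi^{-1}(0)=\iota_1^{-1}(a)=\|a\|$.

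Next I would track the orbit of $0\in Z^0(I)$ under $F\circ U_\lambda\circ F^{-1}\circ U_\mu\circ F$. Following the points successively (and using that $U_\lambda$ preserves $D_1$ and $U_\mu$ preserves $D_2$, so everything stays inside the two discs throughout the computation), the image is $\iota_2\bigl(\varphi(\lambda\,\varphi^{-1}(\mu\,\varphi(0)))\bigr)$. Consequently $F\circ U_\lambda\circ F^{-1}\circ U_\mu\circ F$ fixes $0$ if and only if
\begin{equation*}
(U_\lambda\circ\varphi^{-1}\circ U_\mu\circ\varphi)(0)=\varphi^{-1}(0).
\end{equation*}
But this last equality is exactly what Lemma \ref{lem:auto_disc_turn} provides: the set of values $(U_\lambda\circ\varphi^{-1}\circ U_\mu\circ\varphi)(0)$ as $(\lambda,\mu)$ ranges over $\bT^2$ is a closed disc around $0$ containing the point $\varphi^{-1}(0)$, so suitable $\lambda,\mu\in\bT$ exist.

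The only mild obstacle I anticipate is bookkeeping: making sure that at each stage of the five-fold composition the point really lies in $D_1$ or $D_2$, so that the one-dimensional picture is genuinely applicable, and verifying the commutation $U_\lambda\circ\iota_i=\iota_i\circ U_\lambda$ which lets the scalar action in $\bC^d$ be read off directly from the disc. Once these are in place, the lemma reduces to a single application of Lemma \ref{lem:auto_disc_turn} to the automorphism $\varphi$.
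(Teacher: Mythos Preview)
Your proposal is correct and follows essentially the same approach as the paper's proof: both reduce to the one-dimensional disc situation via Lemma \ref{lem:bihol_disc} and then invoke Lemma \ref{lem:auto_disc_turn}. The paper is simply terser, stating that ``it suffices to consider the case where $d=1$ and $Z^0(I)=Z^0(J)=\bD$,'' whereas you spell out the reduction explicitly through the parametrizations $\iota_1,\iota_2$ and the bookkeeping of where each point lands; the content is the same.
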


\begin{proof}
  The assertion is trivial if $F(0) = 0$, so we may assume that $F(0) \neq 0$. It follows then from Lemma \ref{lem:bihol_disc}
  that it suffices to consider the case where $d=1$ and where $Z^0(I) = Z^0(J) = \bD$, the unit disc.
  An application of Lemma \ref{lem:auto_disc_turn} shows that there are $\lambda,\mu \in \bT$ such that
  \begin{equation*}
    F^{-1}(0) =( U_\lambda \circ F^{-1} \circ U_\mu \circ F) (0),
  \end{equation*}
  hence $F \circ U_\lambda \circ F^{-1} \circ U_\mu \circ F$ fixes the origin.
\end{proof}

We finish this section by giving another application of the crucial Lemma \ref{lem:auto_disc_turn}
of Davidson, Ramsey and Shalit.
We will show that
the group of unitaries is a maximal subgroup of $\Aut(\bB_d)$, the group of conformal
automorphisms of $\bB_d$. Since the group $\Aut(\bB_d)$ is well studied,
it is likely that this has been observed before. Nevertheless, even when $d=1$, the only result in this direction
that seems to be widely known is the fact that the group of unitaries is a maximal compact subgroup of $\Aut(\bB_d)$.

Recall that for $a \in \bB_d$, there exists an automorphism $\varphi_a$ of $\bB_d$ defined by
\begin{equation*}
  \varphi_a(z) = \frac{a-P_a z - s_a Q_a z}{1 - \langle z,a \rangle} \quad (z \in \bB_d),
\end{equation*}
where $P_a$ is the orthogonal projection of $\bC^d$ onto the subspace spanned by $a$, $Q_a = I - P_a$
and $s_a = (1 - |a|^2)^{1/2}$. Then $\varphi_a$ is an involution
which interchanges $0$ and $a$ (see, for example, \cite[Theorem 2.2.2]{Rudin08}).
Moreover, every $\varphi \in \Aut(\bB_d)$ is of the form $\varphi = U \circ \varphi_a$, where
$U$ is unitary and $a = \varphi^{-1}(0)$ \cite[Theorem 2.2.5]{Rudin08}. We begin with a preliminary lemma.
\begin{lem}
  \label{lem:maximal_subgroup_prelim}
  Let $G \subset \Aut(\bB_d)$ be a subsemigroup which contains all unitary maps and let
  $\cO$ denote the orbit of $0$ under $G$. Then the following assertions hold:
  \begin{enumerate}[label=\normalfont{(\alph*)}]
    \item $G$ is a subgroup of $\Aut(\bB_d)$.
    \item A point $a \in \bB_d$ belongs to $\cO$ if and only if $\varphi_a \in G$.
    \item $G= \Aut(\bB_d)$ if and only if $\cO = \bB_d$.
  \end{enumerate}
\end{lem}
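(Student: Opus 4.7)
The plan is to exploit the standard factorization of conformal automorphisms of $\bB_d$: every $\varphi \in \Aut(\bB_d)$ can be written as $\varphi = U \circ \varphi_a$, where $U$ is unitary and $a = \varphi^{-1}(0)$ (this is the decomposition already recalled just before the lemma statement). Each of the three assertions will follow by a one-line manipulation using this decomposition, together with the semigroup hypothesis on $G$ and the assumption that $G$ contains all unitaries. Interestingly, the deeper input from Lemma \ref{lem:auto_disc_turn} is \emph{not} needed here — it will be needed only in the subsequent maximality theorem, for which the present lemma is a setup.

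For part (a), I would take $\varphi \in G$ and set $a = \varphi^{-1}(0)$, so that $\varphi = U \circ \varphi_a$ for some unitary $U$. Since $U^{-1}$ is unitary and hence in $G$, the semigroup property gives $\varphi_a = U^{-1} \circ \varphi \in G$. Because $\varphi_a$ is an involution, $\varphi^{-1} = \varphi_a \circ U^{-1} \in G$, proving closure under inverses.

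For part (b), the direction $\varphi_a \in G \Rightarrow a \in \cO$ is immediate from $\varphi_a(0) = a$. For the converse, given $a \in \cO$, pick $\psi \in G$ with $\psi(0) = a$; by (a), $\psi^{-1} \in G$ and satisfies $\psi^{-1}(a) = 0$, so a factorization $\psi^{-1} = W \circ \varphi_a$ for some unitary $W$ yields $\varphi_a = W^{-1} \circ \psi^{-1} \in G$.

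For part (c), the forward direction follows from the fact that $\Aut(\bB_d)$ acts transitively on $\bB_d$ (since $\varphi_a(0) = a$ for each $a \in \bB_d$). Conversely, if $\cO = \bB_d$, then by (b) we have $\varphi_a \in G$ for every $a \in \bB_d$; combining this with the fact that $G$ contains the unitaries and the factorization $\varphi = U \circ \varphi_a$ shows $G = \Aut(\bB_d)$. There is no genuine obstacle here; the content of the lemma lies in organizing these observations so that they can be fed into the proof of the maximality of the unitary group, which is where Lemma \ref{lem:auto_disc_turn} will do the real work.
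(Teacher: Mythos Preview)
Your proof is correct and essentially identical to the paper's own argument: both use the factorization $\varphi = U \circ \varphi_a$ with $a = \varphi^{-1}(0)$ to peel off unitaries and reduce to the involutions $\varphi_a$, exactly as you do in each of parts (a), (b), and (c). Your remark that Lemma~\ref{lem:auto_disc_turn} is not needed here is also accurate; the paper reserves that lemma for the subsequent maximality result.
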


\begin{proof}
  (a) If $\varphi \in G$, then $\varphi = U \varphi_a$ for some unitary map $U$ and $a \in \bB_d$. Then $\varphi_a \in G$.
  Since $\varphi_a$ is an involution, it follows that $\varphi^{-1} = \varphi_a U^{-1} \in G$. Hence, $G$
  is a group.

  (b)
  For the proof of the non-trivial implication, suppose that $a \in \cO$
  and let $\varphi \in G$ with
  $a = \varphi(0)$. Then $\varphi^{-1} \in G$ by part (a) and $(\varphi^{-1})^{-1}(0) = a$, hence
  \begin{equation*}
    \varphi^{-1} = U \circ \varphi_a
  \end{equation*}
  for some unitary map $U$. Since $U \in G$, it follows that $\varphi_a \in G$, as asserted.

  (c) This follows immediately from (b) and the description of the automorphisms of $\bB_d$ in terms of unitary maps
  and the involutions $\varphi_a$.
\end{proof}

We now show that the group of rotations is a maximal subgroup of the group $\Aut(\bD)$. We will then deduce
the higher-dimensional analogue from this result.
\begin{lem}
  \label{lem:rot_maximal_subgroup}
  The group of rotations is a maximal subgroup of the group $\Aut(\bD)$.
\end{lem}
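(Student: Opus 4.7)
My plan is to apply Lemma \ref{lem:maximal_subgroup_prelim}: to show the rotation group is maximal, it suffices to show that if $G$ is any subgroup of $\Aut(\bD)$ properly containing the rotations, then the orbit $\cO$ of $0$ under $G$ equals $\bD$. The argument will proceed in three stages.

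First, because $G$ strictly contains the rotations, there exists $\varphi \in G$ with $\varphi(0) \neq 0$, so also $\varphi^{-1}(0) \neq 0$. Since $G$ is closed under composition and inversion, each composite $U_\lambda \circ \varphi^{-1} \circ U_\mu \circ \varphi$ lies in $G$, so applying Lemma \ref{lem:auto_disc_turn} to $\varphi$ shows that $\cO$ contains a closed Euclidean disc around $0$ of some positive radius $r_0 > 0$.

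Second, I will bootstrap this disc. Since $G$ contains every rotation, the set $\cO$ is rotation-invariant and is determined by its set of radii $R \subset [0, 1)$; the previous step gives $[0, r_0] \subset R$. For any $a \in \cO$, Lemma \ref{lem:maximal_subgroup_prelim}(b) yields $\varphi_a \in G$, and hence $\varphi_a(\cO) \subset \cO$. In the one-dimensional setting $\varphi_a(z) = (a-z)/(1-\bar{a} z)$, and a short calculation shows that for $|a| = |b| = s$, the quantity $|\varphi_a(b)|$ depends monotonically on the angle between $a$ and $b$ and therefore ranges continuously over the interval $[0, 2s/(1+s^2)]$. Rotation-invariance of $\cO$ upgrades each attained modulus to an entire centered circle, so we obtain the key implication: if $[0, s] \subset R$, then $[0, 2s/(1+s^2)] \subset R$.

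Third, I iterate the recursion $s_{n+1} = 2 s_n/(1+s_n^2)$ starting from $s_0 = r_0 > 0$. The substitution $s_n = \tanh(t_n)$ converts this into $t_{n+1} = 2 t_n$, so $t_n \to \infty$ and $s_n \to 1$. Consequently $R \supset \bigcup_n [0, s_n] = [0, 1)$, hence $\cO = \bD$, and Lemma \ref{lem:maximal_subgroup_prelim}(c) yields $G = \Aut(\bD)$. I anticipate no serious obstacle; the only computation of substance is the monotonicity in $\cos\theta$ of $|\varphi_a(b)|^2 = (r^2 + s^2 - 2rs\cos\theta)/(1 + r^2 s^2 - 2rs\cos\theta)$, which is a routine differentiation.
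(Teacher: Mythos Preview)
Your proof is correct and follows the same overall strategy as the paper: invoke Lemma~\ref{lem:maximal_subgroup_prelim}(c), use Lemma~\ref{lem:auto_disc_turn} to obtain an initial disc in $\cO$, and then push the radius to $1$ by iterating M\"obius maps in $G$. The only genuine difference is in the iteration step. The paper first proves the star-shapedness $\ol{\bD}\,\cO \subset \cO$ (by applying Lemma~\ref{lem:auto_disc_turn} to each $\varphi_a$ with $a\in\cO$), then fixes a single $r\in\cO\cap(0,1)$ and iterates the hyperbolic map $f(z)=(r+z)/(1+rz)=\varphi_r(-z)$, quoting that $f^n(0)\to 1$. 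You instead compute $|\varphi_a(-a)|=2|a|/(1+|a|^2)$ directly and iterate the doubling recursion $s\mapsto 2s/(1+s^2)$, which via $s=\tanh t$ becomes $t\mapsto 2t$. Both iterations are the hyperbolic addition law in disguise (the paper's gives $\tanh(nt)$, yours gives $\tanh(2^n t)$), so the arguments are really variants of each other; your version has the minor advantage of being fully self-contained, while the paper's is marginally quicker once one accepts the ``well known'' convergence of $f^n(0)$.
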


\begin{proof}
  Let $G$ be a subgroup of $\Aut(\bD)$ which properly contains the group of rotations.
  Let $\cO$ be the orbit of $0$ under $G$. We wish to show that $\cO = \bD$, which is equivalent
  to the assertion by part (c) of Lemma \ref{lem:maximal_subgroup_prelim}.
  
  We first claim that $\ol{\bD} \cO \subset \cO$.
  To this end, let $a \in \cO$. Then $\varphi_a \in G$ by part (b) of Lemma \ref{lem:maximal_subgroup_prelim}.
  An application of Lemma \ref{lem:auto_disc_turn} now shows that
  $\cO$ contains the closed disc of radius $|a|$ around $0$, which proves the claim.

  We finish the proof by showing that $\cO$ contains points of modulus arbitrarily close to $1$.
  Since $G$ contains a non-rotation automorphism, $\cO \neq \{0\}$. Clearly, $\cO$ is rotationally
  invariant, hence there exists $r > 0$ such that $r \in \cO$ and therefore $\varphi_r \in G$ by part (b) of Lemma \ref{lem:maximal_subgroup_prelim}. Consider the hyperbolic automorphism $f$ defined by
  \begin{equation*}
    f(z) = \varphi_r(-z) = \frac{r + z}{1 + r z}
  \end{equation*}
  for $z \in \bD$. Then $f \in G$. Moreover, it is well known and easy to see that
  \begin{equation*}
    \lim_{n \to \infty} f^n (0) = 1,
  \end{equation*}
  where $f^n$ denotes the $n$-fold iteration of $f$. Thus, the proof is complete.
\end{proof}

We are now ready to prove a multivariate analogue of the last lemma.
\begin{prop}
  \label{prop:unitary_max_subsemi}
  The group of unitary maps on $\bC^d$ is a maximal subsemigroup of $\Aut(\bB_d)$.
\end{prop}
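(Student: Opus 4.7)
The plan is to combine Lemma \ref{lem:maximal_subgroup_prelim} with the one-dimensional case in Lemma \ref{lem:rot_maximal_subgroup} to reduce to the disc. Let $G$ be a subsemigroup of $\Aut(\bB_d)$ that contains the unitary group. By Lemma \ref{lem:maximal_subgroup_prelim}(a), $G$ is automatically a subgroup, and by part (c) of the same lemma, it suffices to show that whenever $G$ strictly contains the unitaries, the orbit $\cO$ of $0$ under $G$ is all of $\bB_d$.

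Suppose then that $G$ strictly contains the unitary group, and pick a non-unitary $\varphi \in G$. Writing $\varphi = U \circ \varphi_a$ with $a = \varphi^{-1}(0) \neq 0$ and closing under composition with unitaries, we find $\varphi_a \in G$, so $a \in \cO$. Set $e = a/\|a\|$ and $D_e = \bC e \cap \bB_d$. A direct computation using the explicit formula for $\varphi_a$ shows that $\varphi_a(D_e) = D_e$, and that $\varphi_a|_{D_e}$, under the natural identification $D_e \cong \bD$, is the non-rotational automorphism $\lambda \mapsto (\|a\| - \lambda)/(1 - \|a\| \lambda)$.

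The main step is now to funnel everything through $D_e$. Consider
\begin{equation*}
  H = \{ f \in \Aut(\bD) : \text{there exists } \widetilde f \in G \text{ with } \widetilde f(D_e) = D_e \text{ and } \widetilde f|_{D_e} = f \}.
\end{equation*}
This is a subsemigroup of $\Aut(\bD)$. For each $\mu \in \bT$, the unitary $\mu P_e + (I - P_e)$ on $\bC^d$ lies in $G$, preserves $D_e$, and restricts to rotation by $\mu$ on $D_e$; hence $H$ contains every rotation. By Lemma \ref{lem:maximal_subgroup_prelim}(a) applied to $\bB_1 = \bD$, $H$ is in fact a subgroup of $\Aut(\bD)$. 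Since $\varphi_a|_{D_e} \in H$ is not a rotation, $H$ properly extends the rotation group, and Lemma \ref{lem:rot_maximal_subgroup} gives $H = \Aut(\bD)$.

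Finally, for every $t \in [0,1)$ there exists $f \in \Aut(\bD)$ with $f(0) = t$, and the corresponding $\widetilde f \in G$ satisfies $\widetilde f(0) = t e \in \cO$. Combined with the unitary invariance of $\cO$, this gives $\cO = \bB_d$, completing the proof. The main technical point is the construction of $H$ and the verification that it contains every rotation of $\bD$; once this is set up, the hyperbolic-iteration argument from the one-dimensional Lemma \ref{lem:rot_maximal_subgroup} does all the remaining work, and no additional complex geometry is needed.
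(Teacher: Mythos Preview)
Your proof is correct and follows essentially the same route as the paper's: both reduce to the one-dimensional Lemma \ref{lem:rot_maximal_subgroup} by restricting $G$ to automorphisms preserving a fixed complex line through the origin, and both feed that subgroup the non-rotation $\varphi_a$ (respectively $\varphi_{re_1}$) obtained from Lemma \ref{lem:maximal_subgroup_prelim}(b). The only cosmetic difference is that the paper first rotates to work with $\bD e_1$, whereas you work directly with $D_e$ for $e=a/\|a\|$.
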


\begin{proof}
  Suppose that $G$ is a subsemigroup of $\Aut(\bB_d)$ which properly contains the group of unitary maps.
  Then $G$ is a subgroup by part (a) of Lemma \ref{lem:maximal_subgroup_prelim}.
  Let $\cO$ denote the orbit of $0$ under $G$.
  Our goal is to show that $\cO = \bB_d$ (see part (c) of Lemma \ref{lem:maximal_subgroup_prelim}).
  Since $G$ contains
  all unitaries, it suffices to show that $\bD e_1 \subset \cO$, where $e_1$
  denotes the first standard basis vector of $\bC^d$.

  To this end, let
  \begin{equation*}
    H = \{\varphi \in G: \varphi(\bD e_1) = \bD e_1 \}.
  \end{equation*}
  Identifying $\bD e_1$ with $\bD$ we obtain a subgroup
  \begin{equation*}
    \widetilde H = \{ \varphi \big|_{\bD} : \varphi \in H \} 
  \end{equation*}
  of $\Aut(\bD)$. Clearly, $\widetilde H$ contains all rotations $U_\lambda$ for $\lambda \in \bT$.
  Since $G$ contains a non-unitary automorphism, $\{0\} \neq \mathcal \cO$. Moreover, $\cO$ is invariant
  under unitary maps, hence there exists $r > 0$ such that $r e_1 \in \cO$ and thus $\varphi_{r e_1} \in G$
  by part (b) of Lemma \ref{lem:maximal_subgroup_prelim}. Observe that $\varphi_{r e_1} \in H$, so $\widetilde H$
  contains the non-rotation automorphism $\varphi_r$. It now follows from Lemma \ref{lem:rot_maximal_subgroup}
  that $\widetilde H = \Aut(\bD)$. Since $\Aut(\bD)$ acts transitively on $\bD$, the definition
  of $\widetilde H$ implies that $\bD e_1 \subset \cO$, which completes the proof.
\end{proof}

There is an immediate consequence for collections of functions on $\bB_d$ which are unitarily invariant.

\begin{cor}
  Let $S \neq \emptyset$ be a collection of functions on $\bB_d$ and define
  \begin{equation*}
    G = \{\varphi \in \Aut(\bB_d): f \circ \varphi \in S \text{ for all }f \in S\}.
  \end{equation*}
  Assume that $G$ contains $\cU$, the group of unitary maps on $\bC^d$. Then either $G = \cU$ or $G = \Aut(\bB_d)$.
\end{cor}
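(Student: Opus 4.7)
The plan is to reduce the corollary directly to Proposition \ref{prop:unitary_max_subsemi}, which asserts maximality of $\cU$ as a subsemigroup of $\Aut(\bB_d)$. The only thing requiring verification is that the set $G$ defined in the statement is itself a subsemigroup of $\Aut(\bB_d)$ that contains $\cU$.

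First I would check closure under composition: if $\varphi_1, \varphi_2 \in G$ and $f \in S$, then $f \circ \varphi_1 \in S$ since $\varphi_1 \in G$, and applying the defining property of $\varphi_2$ to the element $f \circ \varphi_1 \in S$ yields $(f \circ \varphi_1) \circ \varphi_2 = f \circ (\varphi_1 \circ \varphi_2) \in S$. Hence $\varphi_1 \circ \varphi_2 \in G$. The identity clearly lies in $G$ (and already in $\cU \subset G$), so $G$ is a subsemigroup. By hypothesis $\cU \subset G$.

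Now Proposition \ref{prop:unitary_max_subsemi} states that $\cU$ is a maximal subsemigroup of $\Aut(\bB_d)$, so any subsemigroup of $\Aut(\bB_d)$ containing $\cU$ must equal either $\cU$ or $\Aut(\bB_d)$. Applying this to $G$ gives the dichotomy in the conclusion. There is no genuine obstacle here; the corollary is essentially a repackaging of the proposition, and the only minor point is the semigroup closure check above.
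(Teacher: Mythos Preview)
Your proof is correct and matches the paper's approach exactly: the paper simply notes that $G$ is a subsemigroup of $\Aut(\bB_d)$ and invokes Proposition \ref{prop:unitary_max_subsemi}. Your explicit verification of closure under composition is the only additional detail, and it is routine.
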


\begin{proof}
  It is clear that $G$ is a subsemigroup of $\Aut(\bB_d)$, so the result follows from Proposition
  \ref{prop:unitary_max_subsemi}.
\end{proof}

The last result applies in particular to reproducing kernel Hilbert spaces $\cH$ on $\bB_d$ with a kernel
of the form
\begin{equation*}
  K(z,w) = \sum_{n=0}^\infty a_n \langle z,w \rangle^n \quad (z,w \in \bB_d).
\end{equation*}
In this case, by the closed graph theorem, $G$ is also the set of all automorphisms
of $\bB_d$ which induce a bounded composition operator on $\cH$. Moreover, $G$ contains all unitaries.
Thus, the last result says that such a space $\cH$ is either invariant under all automorphisms of $\bB_d$, or
under unitaries only.

\section{Existence of graded isomorphisms}
\label{sec:existence_graded}

The question of when two algebras of the type $\Mult(\cH_I)$ are isomorphic
is more difficult than the question about equality of multiplier algebras studied
in Section \ref{sec:graded_spaces}.
The chief reason is that isomorphisms do not necessarily respect the grading. Thus, our
goal is to establish the existence of graded isomorphisms. As in \cite{DRS11}, this will follow
from an application of Lemma \ref{lem:bihol_turn}.

Throughout this section, let $\cH$ and $\cK$ be unitarily invariant complete NP-spaces
on $\bB_d$ or on $\ol{\bB_d}$ and let $I,J \subsetneq \bC[z_1,\ldots,z_d]$ be radical homogeneous
ideals. We allow the case where $\cH$ is a space on $\bB_d$, and $\cK$ is a space on $\ol{\bB_d}$, or vice versa.
We will consider the multiplier algebras $\Mult(\cH_I)$ and $\Mult(\cK_J)$, as well
as their norm closed versions $A(\cH_I)$ and $A(\cK_J)$.
To cover both cases, we first study homomorphisms from $A(\cH_I)$ into $\Mult(\cK_J)$.
We identify the maximal ideal space of $A(\cH_I)$ with $Z(I)$ by
Lemma \ref{lem:gelfand_A_H}. Similarly, we identify $Z^0(J)$ with a subset
of the maximal ideal space of $\Mult(\cK_J)$ via point evaluations.
The following lemma should be compared to Proposition 7.1 and Lemma 11.5 in \cite{DRS11}.

\begin{lem}
  \label{lem:phi_star_hol_homo}
  Let $\cH$ and $\cK$ as well as $I,J \subsetneq \bC[z_1,\ldots,z_d]$ be as above.
  \begin{enumerate}[label=\normalfont{(\alph*)}]
    \item If $\Phi: A(\cH_I) \to \Mult(\cK_J)$ is an injective unital homomorphism,
      then $\Phi^*$ maps $Z^0(J)$ holomorphically into $Z^0(I)$.
    \item If $\Phi: \Mult(\cH_I) \to \Mult(\cK_J)$ is an injective unital
      homomorphism and weak*-weak* continuous, then $\Phi^*$ maps $Z^0(J)$
      holomorphically into $Z^0(I)$.
    \item If $\Phi: \Mult(\cH_I) \to \Mult(\cK_J)$ is an injective unital
      homomorphism, and if $\cH$ is tame, then $\Phi^*$ maps $Z^0(J)$ holomorphically
      into $Z^0(I)$, and $\Phi$ is weak*-weak* continuous.
  \end{enumerate}
\end{lem}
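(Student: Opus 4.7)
The strategy in all three parts is to introduce the candidate map
\begin{equation*}
F: Z^0(J) \to \bC^d, \qquad F(\lambda) = \bigl(\Phi(z_1)(\lambda), \ldots, \Phi(z_d)(\lambda)\bigr),
\end{equation*}
and to verify that it agrees with the restriction of $\Phi^*$ to $Z^0(J)$ (through the appropriate identification of characters with points) and that $F(Z^0(J)) \subset Z^0(I)$. Holomorphicity of $F$ will come for free: each $\Phi(z_i)$ is a multiplier on $\cK_J$, hence (applied to $1 \in \cK_J$) extends from $Z^0(J)$ to an analytic function on $\bB_d$, providing the local holomorphic extension required by the definition in Section \ref{sec:hol_maps_on_varieties}. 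Moreover, since $\Phi$ is a unital homomorphism, the relation $p(F(\lambda)) = \Phi(p)(\lambda) = 0$ for every $p \in I$ immediately places $F$ inside $V(I)$.

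For part (a), the homeomorphism $\cM(A(\cH_I)) \cong Z(I)$ supplied by Lemma \ref{lem:gelfand_A_H} identifies $\Phi^*(\delta_\lambda)$ with the point $F(\lambda) \in Z(I)$, in particular $F(\lambda) \in \overline{\bB_d}$. The maximum modulus principle (Lemma \ref{lem:maximum_modulus}) then yields a dichotomy: either $F(Z^0(J)) \subset \bB_d \cap V(I) = Z^0(I)$, or $F \equiv c$ is constant. In the constant case, $\Phi(p) = p(c)\cdot 1$ for every polynomial $p$, so every polynomial that vanishes at $c$ but not identically on $Z^0(I)$ would lie in $\ker\Phi$; injectivity then forces $Z^0(I) \subset \{c\}$, and since $0 \in Z^0(I)$, we conclude $c = 0 \in Z^0(I)$.

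Part (b) will follow almost immediately from Lemma \ref{lem:hom_weak_star}: algebraic consistency of $\cH_I$ and $\cK_J$ (Lemma \ref{lem:alg_cons_restr}), together with the weak*-weak* continuity of $\Phi$, yields directly that $\Phi$ is given by composition with a map $Z^0(J) \to Z^0(I)$, and a coordinate computation identifies this map with $F$.

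Part (c) fuses the two preceding arguments. Tameness of $\cH_I$ descends from that of $\cH$ via Remark \ref{rem:tame_restriction}. For $\lambda \in Z^0(J)$, the character $\rho_\lambda = \Phi^*(\delta_\lambda)$ on $\Mult(\cH_I)$ satisfies $\pi(\rho_\lambda) = F(\lambda) \in Z(I)$, and running the same max-modulus-plus-injectivity dichotomy as in (a) forces $F(\lambda) \in Z^0(I) \subset \bB_d$. Tameness then upgrades the membership $\rho_\lambda \in \pi^{-1}(F(\lambda))$ to the equality $\rho_\lambda = \delta_{F(\lambda)}$, so $\Phi^*(Z^0(J)) \subset Z^0(I)$, at which point Lemma \ref{lem:hom_weak_star} returns the weak*-weak* continuity. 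The main obstacle will be the constant case of the dichotomy: handling it cleanly uses that the polynomial subalgebra separates the points of $Z^0(I)$ and that a polynomial is zero as a function on $Z^0(I)$ only if it vanishes on all of $Z^0(I)$, so injectivity collapses the constant case to the trivial situation $Z^0(I) = \{0\}$.
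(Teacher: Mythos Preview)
Your strategy for all three parts matches the paper's: define the coordinate map $F$, establish holomorphicity and $F(Z^0(J)) \subset V(I)$, then use the maximum modulus principle and injectivity to land in $Z^0(I)$. Your treatment of the constant case in (a) is slightly more general than the paper's (you allow the constant to lie anywhere in $\overline{\bB_d}$ and conclude $c=0$, whereas the paper only needs to rule out $c \in \partial\bB_d$), but both are correct.

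There is, however, a genuine gap in part (c). Recall that $\cK$ may be a space on $\overline{\bB_d}$, in which case $\cK_J$ is a space on $Z(J)$, not $Z^0(J)$. Lemma~\ref{lem:hom_weak_star} requires $\Phi^*(X_2) \subset X_1$ with $X_2 = Z(J)$; you have only established $\Phi^*(Z^0(J)) \subset Z^0(I)$, which says nothing about the characters $\delta_\lambda$ for $\lambda \in Z(J) \cap \partial\bB_d$. The paper treats this explicitly by distinguishing subcases. When both $\cH$ and $\cK$ are on $\overline{\bB_d}$, continuity of $\Phi^*$ together with density of $Z^0(J)$ in $Z(J)$ suffices. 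But when $\cH$ is on $\bB_d$ and $\cK$ is on $\overline{\bB_d}$, a genuinely new argument is needed: one must show that $F(Z^0(J))$ is contained in $r\,\overline{\bB_d}$ for some $r < 1$, so that $\Phi^*(Z(J)) \subset Z^0(I)$ by continuity. The paper accomplishes this via Lemma~\ref{lem:interpolating_sequences}, arguing that if $\|F(\lambda_n)\| \to 1$ along some sequence $\lambda_n \in Z^0(J)$, then one can build a multiplier $\varphi \in \Mult(\cH_I)$ for which $\varphi(F(\lambda_n))$ fails to converge, contradicting continuity of $\Phi(\varphi)$ on the compact set $Z(J)$. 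Your proposal does not anticipate this subcase.

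A smaller version of the same oversight appears in (b): when $\cH$ is a space on $\overline{\bB_d}$, Lemma~\ref{lem:hom_weak_star} only gives $\Phi^*(X_2) \subset Z(I)$, and one still needs the maximum modulus argument (equivalently, part (a) applied to $\Phi|_{A(\cH_I)}$, which is how the paper phrases it) to conclude that the image lies in $Z^0(I)$.
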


\begin{proof}
  (a) Clearly, $\Phi^*$ maps $Z^0(J)$ into $Z(I)$, and the $j$-th coordinate
  of $\Phi^*$ is given by $\Phi(z_j) \in A(\cH_I)$, hence $F = \Phi^* \big|_{Z^0(J)}$ is holomorphic.
  Lemma \ref{lem:maximum_modulus} shows that
  the range of $F$ contains points in $\partial \bB_d$
  only if $F$ is constant. In this case, $\Phi(z_j) = \lambda_j$,
  where $(\lambda_1,\ldots,\lambda_d) \in \partial \bB_d$.
  Since $\Phi$ is unital and injective, it follows that $\lambda_j = z_j$ on $Z^0(I)$,
  which is absurd. Thus, the range of $F$ is contained in $Z^0(I)$.

  (b) By definition of the map
  $\pi: \cM(\Mult(\cH_I)) \to Z(I)$, part (a) implies that
  $\pi \circ \Phi^*$ is holomorphic and maps $Z^0(J)$ into $Z^0(I)$. Since $\Phi$
  is weak*-weak* continuous,
  $\Phi^*(Z^0(J))$ consists of point evaluations by Lemma \ref{lem:hom_weak_star}, so the assertion follows.

  (c) Again by part (a),
  $\pi \circ \Phi^*$ is holomorphic and maps $Z^0(J)$ into $Z^0(I)$. Since
  $\cH$ is tame, we conclude that $\Phi^*$ maps $Z^0(J)$ into the set
  (of point evaluations at points in) $Z^0(I)$ (see Remark \ref{rem:tame_restriction}). If $\cK$ is a space
  on $\bB_d$,
  Lemma \ref{lem:hom_weak_star} therefore implies that $\Phi$ is weak*-weak* continuous.
  Now, assume that $\cK$ is a space on $\ol{\bB_d}$. If $\cH$ is a space on $\ol{\bB_d}$ as well,
  then $\Phi^*(Z(J)) \subset Z(I)$ by continuity of $\Phi^*$, thus $\Phi$ is again weak*-weak* continuous
  by Lemma \ref{lem:hom_weak_star}.
  
  It remains to consider the case where $\cH$ is a space on $\bB_d$
  and $\cK$ is space on $\ol{\bB_d}$. We claim that $\Phi^*(Z^0(J))$
  is contained in a ball of radius $r < 1$. This will finish the proof,
  as $\Phi^*(Z(J)) \subset r Z(I) \subset Z^0(I)$ by continuity, so once again, the assertion
  follows from Lemma \ref{lem:hom_weak_star}. Suppose that $\Phi^*(Z^0(J))$
  contains a sequence $(\Phi^*(\lambda_n))$ with $||\Phi^*(\lambda_n)|| \to 1$. By passing
  to a subsequence, we may assume that $(\lambda_n)$ converges
  to a point $\lambda \in Z(J)$.
  Lemma \ref{lem:interpolating_sequences}
  shows that there is a multiplier $\varphi \in \Mult(\cH_I)$ such that
  $(\varphi(\Phi^*(\lambda_n)))$ does not converge. However,
  \begin{equation*}
    \varphi(\Phi^*(\lambda_n)) = (\Phi(\varphi))(\lambda_n),
  \end{equation*}
  and $\Phi(\varphi) \in \Mult(\cK_J)$ is a continuous function on $Z(J)$.
  This is a contradiction, and the proof is complete.
\end{proof}

For isomorphisms, we obtain the following consequence.
\begin{cor}
  \label{cor:phi_star_hol_iso}
  Let $\cH$ and $\cK$ as well as $I,J \subsetneq \bC[z_1,\ldots,z_d]$ be as above.
  \begin{enumerate}[label=\normalfont{(\alph*)}]
    \item If $\Phi: A(\cH_I) \to A(\cH_J)$ is an isomorphism, then $\Phi^*$ maps $Z^0(J)$
      biholomorphically onto $Z^0(I)$.
    \item Let $\Phi: \Mult(\cH_I) \to \Mult(\cH_J)$ be an isomorphism, and assume that
      $\cH$ is tame or that $\Phi$ is weak*-weak* continuous. Then $\Phi^*$ maps $Z^0(J)$
      biholomorphically onto $Z^0(I)$, and $\Phi$ is a weak*-weak* homeomorphism.
  \end{enumerate}
\end{cor}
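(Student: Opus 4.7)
The plan is to deduce both parts by applying the preceding Lemma \ref{lem:phi_star_hol_homo} to both $\Phi$ and $\Phi^{-1}$, using different parts of that lemma depending on the hypothesis at hand.

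For part (a), both $\Phi$ and $\Phi^{-1}$ are injective unital homomorphisms. Composing with the inclusion $A(\cH_J) \hookrightarrow \Mult(\cH_J)$ lets us view $\Phi$ as a map of the form considered in Lemma \ref{lem:phi_star_hol_homo}(a), and similarly for $\Phi^{-1}$. Applying that part of the lemma to $\Phi$ produces a holomorphic map $\Phi^* \colon Z^0(J) \to Z^0(I)$, while applying it to $\Phi^{-1}$ produces a holomorphic map $(\Phi^{-1})^* \colon Z^0(I) \to Z^0(J)$. Since $(\Phi^{-1})^* = (\Phi^*)^{-1}$ as maps of character spaces, these combine to yield the desired biholomorphism.

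For part (b) I would handle the two hypotheses separately but in the same spirit. If $\cH$ is tame, then the assumptions of Lemma \ref{lem:phi_star_hol_homo}(c) are satisfied by both $\Phi$ and $\Phi^{-1}$; applying that part to both maps simultaneously yields the biholomorphism and shows that $\Phi$ and $\Phi^{-1}$ are each weak*-weak* continuous, so $\Phi$ is a weak*-weak* homeomorphism. If instead $\Phi$ is assumed to be weak*-weak* continuous, then Lemma \ref{lem:phi_star_hol_homo}(b) applied to $\Phi$ directly produces a holomorphic map $\Phi^* \colon Z^0(J) \to Z^0(I)$. For the reverse direction one cannot apply part (b) to $\Phi^{-1}$, since weak*-weak* continuity of $\Phi^{-1}$ is exactly what we want to conclude. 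The trick is to restrict $\Phi^{-1}$ to $A(\cH_J)$, viewing the restriction as an injective unital homomorphism $A(\cH_J) \to \Mult(\cH_I)$, and then invoke part (a) of the lemma. This yields a holomorphic map $Z^0(I) \to Z^0(J)$ which must coincide set-theoretically with $(\Phi^*)^{-1}$ on $Z^0(I)$, completing the biholomorphism claim. Once $(\Phi^{-1})^*$ is known to map $Z^0(I)$ into $Z^0(J)$, Lemma \ref{lem:hom_weak_star} immediately gives weak*-weak* continuity of $\Phi^{-1}$, so $\Phi$ is a weak*-weak* homeomorphism.

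The main obstacle is the second case of part (b): avoiding a circular appeal to Lemma \ref{lem:phi_star_hol_homo}(b) for $\Phi^{-1}$. The maneuver of restricting $\Phi^{-1}$ to $A(\cH_J)$ and invoking the weaker but unconditional part (a) is the key point; everything else is routine bookkeeping with adjoint maps and their domains, together with the identification of $Z^0(I)$ and $Z^0(J)$ inside the respective character spaces via Lemma \ref{lem:gelfand_A_H} and Proposition \ref{prop:alg_consistent_equiv}.
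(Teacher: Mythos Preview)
Your treatment of part~(a) and of the tame case in part~(b) is fine and matches the paper's approach in spirit. The genuine problem is in the second hypothesis of part~(b), where $\Phi$ is assumed weak*-weak* continuous but $\cH$ is not assumed tame.

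When you restrict $\Phi^{-1}$ to $A(\cH_J)$ and invoke Lemma~\ref{lem:phi_star_hol_homo}(a), you obtain a map $\Psi^* \colon Z^0(I) \to Z^0(J)$, where $\Psi = \Phi^{-1}\big|_{A(\cH_J)}$. But $\Psi^*$ takes values in $\cM(A(\cH_J)) = Z(J)$, whereas $(\Phi^{-1})^*$ takes values in $\cM(\Mult(\cH_J))$; the two are related by $\Psi^*(\chi) = \pi\bigl((\Phi^{-1})^*(\chi)\bigr)$. So what you have actually shown is that $\pi \circ (\Phi^{-1})^*$ maps $Z^0(I)$ into $Z^0(J)$, not that $(\Phi^{-1})^*$ itself sends point evaluations to point evaluations. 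Without tameness, the fibre $\pi^{-1}(\mu)$ over $\mu \in Z^0(J)$ may contain characters other than $\delta_\mu$, and you have no mechanism to rule these out. Consequently you cannot invoke Lemma~\ref{lem:hom_weak_star} to conclude weak*-weak* continuity of $\Phi^{-1}$, and the claimed identity $F \circ G = \mathrm{id}$ on $Z^0(I)$ (where $F = \Phi^*$ and $G = \Psi^*$) also fails to follow: you only get $G \circ F = \mathrm{id}$, since that direction only requires evaluating elements of $A(\cH_J)$.

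The paper closes this gap differently. Once $\Phi$ is known to be weak*-weak* continuous (either by hypothesis or from Lemma~\ref{lem:phi_star_hol_homo}(c) in the tame case), one observes that $\Phi$ is also a norm homeomorphism by automatic continuity of homomorphisms between semisimple Banach algebras. The Krein--Smulian theorem, combined with weak*-compactness of balls, then yields weak*-weak* continuity of $\Phi^{-1}$ directly: on norm-bounded sets $\Phi$ is a weak*-weak* continuous bijection between weak*-compact sets, hence a homeomorphism there. Now Lemma~\ref{lem:phi_star_hol_homo}(b) applies to $\Phi^{-1}$, and the biholomorphism follows. This bypasses the fibre issue entirely.
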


\begin{proof}
  (a) immediately follows from part (a) of the preceding lemma.

  (b) By part (c) of the last lemma, $\Phi$ is weak*-weak* continuous in both cases. Since it is also a homeomorphism
  in the norm topologies, the Krein-Smulian theorem combined with weak* compactness of the unit balls shows
  that $\Phi^{-1}$ is weak*-weak* continuous as well (see, for example, the argument at the end of \cite{DHS15a}).
  Thus, part (b) of the last lemma also applies to $\Phi^{-1}$,
  so that $\Phi^*$ is a biholomorphism between $Z^0(J)$ and $Z^0(I)$.
\end{proof}

For $n \in \bN$, let $(\cH_I)_n$ denote the space of all homogeneous elements of $\cH_I$ of degree $n$.
Recall that $(\cH_I)_n \subset A(\cH_I)$ for all $n \in \bN$.
We say that a homomorphism $\Phi: A(\cH_I) \to \Mult(\cK_J)$ is graded
if
\begin{equation*}
  \Phi( (\cH_I)_n ) \subset (\cK_J)_n
\end{equation*}
for all $n \in \bN$. Graded isomorphisms admit a particularly simple description in terms
of their adjoint.

\begin{lem}
  \label{lem:graded_char}
  Let $\cH$ and $\cK$ as well as $I,J \subsetneq \bC[z_1,\ldots,z_d]$ be as above,
  and suppose that $\Phi: A(\cH_I) \to A(\cK_J)$ is an isomorphism (respectively that
  $\Phi: \Mult(\cH_I) \to \Mult(\cK_J)$ is a weak*-weak* continuous isomorphism). Then the following are equivalent:
  \begin{enumerate}[label=\normalfont{(\roman*)}]
    \item $\Phi$ is graded.
    \item $\Phi^*(0) = 0$.
    \item There exists an invertible linear map $A$ on $\bC^d$ which maps $V(J)$ isometrically onto $V(I)$
      such that $\Phi$ is given
      by composition with $A$, that is,
      \begin{equation*}
        \Phi(\varphi) = \varphi \circ A
      \end{equation*}
     for all $\varphi \in A(\cH_I)$ (respectively $\varphi \in \Mult(\cH_I)$).
  \end{enumerate}
\end{lem}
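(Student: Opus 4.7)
The plan is to prove the cycle (i) $\Rightarrow$ (ii) $\Rightarrow$ (iii) $\Rightarrow$ (i), leaning heavily on the machinery already developed in the paper.

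For (i) $\Rightarrow$ (ii), I would just observe that each coordinate function $z_j$ restricts to a homogeneous element of degree $1$ in $A(\cH_I)$. Thus if $\Phi$ is graded, $\Phi(z_j) \in (\cK_J)_1$, which in particular vanishes at the origin. Therefore every coordinate of $\Phi^*(0)$ is zero, so $\Phi^*(0) = 0$.

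The main implication is (ii) $\Rightarrow$ (iii), and the plan is to combine Corollary \ref{cor:phi_star_hol_iso} with Proposition \ref{prop:bihol_0_linear}. The corollary (applied in either the $A(\cdot)$-case or the weak*-continuous $\Mult(\cdot)$-case) gives that $\Phi^*$ restricts to a biholomorphism $Z^0(J) \to Z^0(I)$; by hypothesis this biholomorphism fixes the origin, so Proposition \ref{prop:bihol_0_linear} produces an invertible linear map $A$ on $\bC^d$ that maps $V(J)$ isometrically onto $V(I)$ and whose restriction to $Z^0(J)$ coincides with $\Phi^*$. For $\varphi$ in the source algebra and $w \in Z^0(J)$, the identity
\[
\Phi(\varphi)(w) = \Phi^*(\delta_w)(\varphi) = \delta_{Aw}(\varphi) = \varphi(Aw)
\]
then shows $\Phi(\varphi) = \varphi \circ A$ on $Z^0(J)$. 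When $\cK$ is a space on $\ol{\bB_d}$, both sides are continuous on $Z(J)$ and thus agree on all of $Z(J)$. In particular $\varphi \circ A$ belongs to the target algebra since it coincides with $\Phi(\varphi)$.

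For (iii) $\Rightarrow$ (i), I would just note that for a \emph{linear} map $A$ and a homogeneous element $\varphi$ of degree $n$, we have $(\varphi \circ A)(\lambda w) = \varphi(\lambda A w) = \lambda^n (\varphi \circ A)(w)$ for all $\lambda \in \bT$, so $\Phi(\varphi) = \varphi \circ A$ is homogeneous of degree $n$. Hence $\Phi$ is graded. No serious obstacle is anticipated, as the heavy lifting is absorbed by Corollary \ref{cor:phi_star_hol_iso} and Proposition \ref{prop:bihol_0_linear}; the only minor point to verify is the continuous extension from $Z^0(J)$ to $Z(J)$ in the $\ol{\bB_d}$-case, which is automatic.
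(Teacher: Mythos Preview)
Your cycle (i) $\Rightarrow$ (ii) $\Rightarrow$ (iii) $\Rightarrow$ (i) matches the paper's approach, and your use of Corollary~\ref{cor:phi_star_hol_iso} together with Proposition~\ref{prop:bihol_0_linear} for the main implication is exactly what the paper does. Your (i) $\Rightarrow$ (ii) is in fact slightly slicker than the paper's contrapositive version.

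There is, however, one genuine gap in your (ii) $\Rightarrow$ (iii). You assert that ``when $\cK$ is a space on $\ol{\bB_d}$, both sides are continuous on $Z(J)$,'' and call the extension ``automatic.'' This is fine in the $A(\cdot)$-case (elements of $A(\cH_I)$ extend continuously to $Z(I)$) and in the $\Mult(\cdot)$-case when $\cH$ is also a space on $\ol{\bB_d}$. But in the mixed $\Mult(\cdot)$-case where $\cH$ is a space on $\bB_d$ and $\cK$ is a space on $\ol{\bB_d}$, a multiplier $\varphi \in \Mult(\cH_I)$ is only defined on $Z^0(I)$ and need not extend continuously to the boundary, so $\varphi \circ A$ is a priori only defined on $Z^0(J)$, not on $Z(J)$. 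Your continuity claim for the right-hand side fails here.

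The paper closes this gap not by extending $\varphi \circ A$, but by showing that this mixed case cannot occur (when $V(J) \neq \{0\}$): by Lemma~\ref{lem:hom_weak_star}, weak*-weak* continuity would force $\Phi^*$ to map the compact set $Z(J)$ into $Z^0(I)$, contradicting the fact that $\Phi^*$ maps $Z^0(J)$ \emph{onto} the non-compact set $Z^0(I)$. You should add this short argument rather than dismissing the point as automatic.
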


\begin{proof}
  (iii) $\Rightarrow$ (i) is obvious.

  (i) $\Rightarrow$ (ii)
  Let $\lambda = \Phi^*(0) \in Z(I)$. If $\lambda \neq 0$,
  then there is a homogeneous element $\varphi \in A(\cH_I)$ of degree $1$ such that $\varphi(\lambda) \neq 0$.
  Corollary \ref{cor:phi_star_hol_iso} implies that $\Phi^*(0) \in Z^0(I)$, hence
  \begin{equation*}
    \Phi(\varphi) (0) = \varphi( \Phi^*(0)) = \varphi(\lambda) \neq 0.
  \end{equation*}
  In particular, $\Phi(\varphi)$ is not homogeneous of degree $1$, hence $\Phi$ is not graded.

  (ii) $\Rightarrow$ (iii) By Corollary \ref{cor:phi_star_hol_iso}, $\Phi^*$ maps $Z^0(J)$ biholomorphically
  onto $Z^0(I)$. Since $\Phi^*(0) = 0$, Proposition \ref{prop:bihol_0_linear} therefore yields an invertible
  linear map $A$ which maps $V(J)$ isometrically onto $V(I)$ such that $\Phi^*$ coincides
  with $A$ on $Z^0(J)$. It follows that
  \begin{equation*}
    \Phi(\varphi) = \varphi \circ A
  \end{equation*}
  on $Z^0(J)$ for all $\varphi \in A(\cH_I)$ (respectively $\varphi \in \Mult(\cH_I)$). Moreover,
  if $\Phi$ is a map from $A(\cH_I)$ onto $A(\cK_J)$, then this identity holds on $Z(J)$ by continuity.

  Assume now that $\Phi$ is a map from $\Mult(\cH_I)$ onto $\Mult(\cK_J)$. If $\cK$ is a space on $\bB_d$, we are done.
  If $\cK$ and $\cH$ are spaces on $\ol{\bB_d}$, then $\Phi(\varphi) = \varphi \circ A$ again holds
  on $Z(J)$ by continuity. We finish the proof by showing that the remaining case where $\cK$ is a space
  on $\ol{\bB_d}$, $\cH$ is a space on $\bB_d$ and $V(J) \neq \{0\}$ does not occur. Indeed, in this 
  case, $V(I) \neq \{0\}$ and
  $\Phi^*$ would map $Z(J)$ onto a necessarily compact subset of $Z^0(I)$ by Lemma \ref{lem:hom_weak_star}.
  This contradicts the fact that $\Phi^*$ maps $Z^0(J)$ onto $Z^0(I)$.
\end{proof}

We mention that in the case where $\cH = \cK = H^2_d$, the Drury-Arveson space, isomorphisms
as above are called vacuum-preserving in \cite{DRS11}.

The desired consequence about the existence of graded isomorphisms is the following result.
\begin{prop}
  \label{prop:graded_isomorphism}
  Let $\cH$ and $\cK$ as well as $I,J \subsetneq \bC[z_1,\ldots,z_d]$ be as above.
  \begin{enumerate}[label=\normalfont{(\alph*)}]
    \item If $A(\cH_I)$ and $A(\cK_J)$ are algebraically (respectively isometrically) isomorphic, then
      there exists a graded algebraic (respectively isometric) isomorphism from $A(\cH_I)$ onto $A(\cK_J)$.
    \item If $\Mult(\cH_I)$ and $\Mult(\cK_J)$ are algebraically (respectively isometrically) isomorphic via
      a weak*-weak* continuous isomorphism, then
      there exists a graded weak*-weak* continuous algebraic (respectively isometric) isomorphism
      from $\Mult(\cH_I)$ onto $\Mult(\cK_J)$.
  \end{enumerate}
\end{prop}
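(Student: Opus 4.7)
The plan is to use Lemma \ref{lem:bihol_turn} to modify the given isomorphism by composing it with suitable rotation automorphisms, so that the adjoint of the new isomorphism fixes the origin, after which Lemma \ref{lem:graded_char} will apply. I will handle (a) and (b) in parallel, writing $\Phi$ for the given isomorphism (from $A(\cH_I)$ onto $A(\cK_J)$ in case (a), or weak*-weak* continuously from $\Mult(\cH_I)$ onto $\Mult(\cK_J)$ in case (b)).

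The first step is to note that for each $\lambda \in \bT$, the unitary $U_\lambda(z) = \lambda z$ restricts to a bijection of both $Z^0(I)$ and $Z^0(J)$, since $V(I)$ and $V(J)$ are homogeneous. Unitary invariance of the kernel of $\cH$ implies $K(U_\lambda z, U_\lambda w) = K(z,w)$, and similarly for $\cK$, so Proposition \ref{prop:isometric_comp_operator} (applied with $\delta = 1$) yields that $C_{U_\lambda}$ is a unitarily implemented isometric automorphism of $\Mult(\cH_I)$ (and similarly of $\Mult(\cK_J)$). Being implemented by a unitary, $C_{U_\lambda}$ is in particular a weak*-weak* homeomorphism, and since $U_\lambda$ preserves homogeneity of polynomials it also restricts to an isometric automorphism of $A(\cH_I)$ (and of $A(\cK_J)$).

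Next, by Corollary \ref{cor:phi_star_hol_iso}, $F := \Phi^*|_{Z^0(J)}$ is a biholomorphism from $Z^0(J)$ onto $Z^0(I)$, and Lemma \ref{lem:bihol_turn} supplies $\lambda, \mu \in \bT$ such that $F \circ U_\lambda \circ F^{-1} \circ U_\mu \circ F$ fixes the origin. I will then set
\[
\Psi := \Phi \circ C_{U_\mu} \circ \Phi^{-1} \circ C_{U_\lambda} \circ \Phi,
\]
which is a composition of isomorphisms of the same type as $\Phi$ (algebraic in either case, isometric whenever $\Phi$ is, and weak*-weak* continuous in case (b)). A short computation---using that the adjoint of $C_{U_\lambda}$ sends a character $\delta_z$ to $\delta_{U_\lambda z}$ together with the fact that taking adjoints reverses composition---shows that $\Psi^*|_{Z^0(J)} = F \circ U_\lambda \circ F^{-1} \circ U_\mu \circ F$, so $\Psi^*(0) = 0$. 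Lemma \ref{lem:graded_char} will then conclude that $\Psi$ is graded, finishing the proof.

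All of the hard work has been done by Lemma \ref{lem:bihol_turn}; what remains is essentially bookkeeping. The one delicate point is matching the order of the composition in $\Psi$ with the order of the rotations supplied by Lemma \ref{lem:bihol_turn}, which is forced by the reversal of composition under adjoints; a misordering would give $F \circ U_\mu \circ F^{-1} \circ U_\lambda \circ F$ instead of the desired expression, but this is trivially repaired by relabeling $\lambda$ and $\mu$.
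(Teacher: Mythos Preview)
Your proposal is correct and follows essentially the same approach as the paper: construct rotation automorphisms $C_{U_\lambda}$, use Corollary \ref{cor:phi_star_hol_iso} to get a biholomorphism $F = \Phi^*|_{Z^0(J)}$, invoke Lemma \ref{lem:bihol_turn} to choose $\lambda,\mu$, and then check that the adjoint of $\Psi = \Phi \circ C_{U_\mu} \circ \Phi^{-1} \circ C_{U_\lambda} \circ \Phi$ fixes $0$ so that Lemma \ref{lem:graded_char} applies. The paper distinguishes the rotation automorphisms on the two algebras by writing $\Phi^I_\mu$ and $\Phi^J_\lambda$, whereas you let context disambiguate $C_{U_\lambda}$, but the argument is the same.
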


\begin{proof}
  By Lemma \ref{lem:graded_char}, it suffices to show in each case that there exists an isomorphism
  whose adjoint fixes the origin. We will achieve this by applying Corollary \ref{cor:phi_star_hol_iso} and
  Lemma \ref{lem:bihol_turn}. To this end, observe that
  for $\lambda \in \bT$, the unitary map $U_\lambda$ on $\bC^d$ given by multiplication with $\lambda$
  induces a unitary composition operator $C_{U_\lambda}$ on $\cH_I$. If $\varphi \in \Mult(\cH_I)$, then
  \begin{equation*}
    C_{U_\lambda} M_\varphi C_{U_\lambda}^* = M_{\varphi \circ U_\lambda},
  \end{equation*}
  hence $\Phi_\lambda^I(M_{\varphi}) = C_{U_\lambda} M_\varphi C_{U_\lambda}^*$
  defines an isometric, weak*-weak* continuous automorphism of $\Mult(\cH_I)$ which maps
  $A(\cH_I)$ onto $A(\cH_I)$. Clearly, the adjoint of this automorphism, restricted to $Z^0(I)$, is given
  by multiplication with $U_\lambda$. The same result holds for $\cK_J$ in place of $\cH_I$.
  
  Suppose now that $\Phi$ is an isomorphism between $A(\cH_I)$ and $A(\cK_J)$ (respectively a weak*-weak* continuous
  isomorphism between $\Mult(\cH_I)$ and $\Mult(\cK_J)$). By Corollary \ref{cor:phi_star_hol_iso}, the adjoint
  $\Phi^*$ maps $Z^0(J)$ biholomorphically onto $Z^0(I)$. From Lemma \ref{lem:bihol_turn}, we infer that
  there exist $\lambda,\mu \in \bT$ such that the map
  \begin{equation*}
    \Phi^* \circ U_\lambda \circ (\Phi^*)^{-1} \circ U_\mu \circ \Phi^*
  \end{equation*}
  fixes the origin. This map is the adjoint of
  \begin{equation*}
    \Phi \circ \Phi^I_\mu \circ \Phi^{-1} \circ \Phi^J_\lambda \circ \Phi,
  \end{equation*}
  which is an isomorphism between $A(\cH_I)$ and $A(\cK_J)$ (respectively a weak*-weak* continuous isomorphism between $\Mult(\cH_I)$
  and $\Mult(\cK_J)$). Moreover, it is isometric if $\Phi$ is isometric, which finishes the proof.
\end{proof}

\section{Isomorphism results}
\label{sec:iso_results}

We are now ready to establish the main results about isomorphism of multiplier algebras of spaces
of the type $\cH_I$.
We will usually make an assumption
which guarantees that the Hilbert function spaces have dimension at least $2$. In projective algebraic geometry,
the maximal ideal of $\bC[z_1,\ldots,z_d]$ which is generated by the coordinate functions $z_1,\ldots,z_d$
is called the \emph{irrelevant ideal} (see \cite[Chapter VII]{ZS75}).
This is because the vanishing locus of this ideal in $\bC^d$ is just the origin, hence the projective
vanishing locus in $\bP^{d-1}(\bC)$ is empty. We will say that a radical homogeneous ideal of $\bC[z_1,\ldots,z_d]$
is \emph{relevant} if it is proper and not equal to the irrelevant ideal. By the projective Nullstellensatz,
the projective vanishing locus of every such ideal $I$ is not empty, thus $Z^0(I) \subset \bC^d$ always contains
a disc.

\begin{prop}
  \label{prop:structure_graded_algebraic}
  Let $\cH$ and $\cK$ be unitarily invariant complete NP-spaces, and let $I$ and $J$
  be relevant radical homogeneous ideals in $\bC[z_1,\ldots,z_d]$.
  Let $\Phi: A(\cH_I) \to A(\cK_J)$ be a graded algebraic isomorphism (respectively $\Phi: \Mult(\cH_I) \to \Mult(\cK_J)$
  a graded weak*-weak* continuous isomorphism).
  
  Then $\cH = \cK$ as vector spaces, and there exists an invertible linear
  map $A$ which maps $V(J)$ isometrically onto $V(I)$ such that $\Phi$ is given by composition with $A$.
  Moreover, $A$ induces a bounded invertible composition operator
  \begin{equation*}
    C_A: \cH_I \to \cK_J, \quad f \mapsto f \circ A,
  \end{equation*}
  such that
  \begin{equation*}
    \Phi(M_\varphi) = C_A M_\varphi (C_A)^{-1}
  \end{equation*}
  for all $\varphi \in A(\cH_I)$ (respectively $\varphi \in \Mult(\cH_I)$). In particular, $\Phi$ is given by a similarity.
\end{prop}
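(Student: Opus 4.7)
My plan is to extract the linear map $A$ from Lemma \ref{lem:graded_char}, use it together with Proposition \ref{prop:norm_hom} to compare the Taylor coefficients of the kernels of $\cH$ and $\cK$ (giving $\cH = \cK$), construct the composition operator $C_A$, and finally verify the intertwining relation. Throughout, Proposition \ref{prop:norm_hom} plays the key role of identifying multiplier and Hilbert norms on homogeneous elements.

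First, Lemma \ref{lem:graded_char} (applied to the graded isomorphism $\Phi$, with the weak*-weak* continuity hypothesis covering the multiplier case) yields an invertible linear map $A$ on $\bC^d$ mapping $V(J)$ isometrically onto $V(I)$ such that $\Phi(\varphi) = \varphi \circ A$ for every $\varphi$ in the domain. This gives the description of $\Phi$. To prove $\cH = \cK$, write the kernels of $\cH$ and $\cK$ as $\sum a_n \langle z,w\rangle^n$ and $\sum c_n \langle z,w\rangle^n$. Since $J$ is relevant, pick $w \in V(J) \cap \bB_d$ with $w \neq 0$ and set $v := Aw \in V(I) \cap \bB_d$, so that $\|v\| = \|w\|$. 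By Lemma \ref{lem:kernel_decomposition}, the reproducing kernel of $(\cH_I)_n$ is the restriction of $a_n \langle z,w\rangle^n$ to $V(I) \cap \bB_d$, so $\|\langle \cdot,v\rangle^n\|_{\cH_I}^2 = \|w\|^{2n}/a_n$. Under $\Phi$ this polynomial maps to $z \mapsto \langle Az, Aw\rangle^n$, which on $V(J)$ equals $\langle \cdot,w\rangle^n$ because $A$ preserves the inner product on $V(J)$; its $\cK_J$-norm is $\|w\|^n/\sqrt{c_n}$. Proposition \ref{prop:norm_hom} and boundedness of $\Phi$ then give $a_n/c_n \le \|\Phi\|^2$, and the symmetric argument with $\Phi^{-1}$ gives $c_n/a_n \le \|\Phi^{-1}\|^2$. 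In view of the monomial basis description of $\cH$ and $\cK$ from Remark \ref{rem:unitarily_invariant}(d), this yields $\cH = \cK$ with equivalent norms.

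To construct $C_A$, let $f \in \cH_I$ have homogeneous expansion $f = \sum_n f_n$ with $f_n \in (\cH_I)_n$. Proposition \ref{prop:norm_hom} together with $\Phi(f_n) = f_n \circ A$ yields $\|f_n \circ A\|_{\cK_J} \le \|\Phi\|\,\|f_n\|_{\cH_I}$; orthogonality of distinct homogeneous components in $\cK_J$ then gives
\begin{equation*}
\sum_n \|f_n \circ A\|_{\cK_J}^2 \le \|\Phi\|^2 \|f\|_{\cH_I}^2,
\end{equation*}
so the series $\sum_n f_n \circ A$ converges in $\cK_J$, necessarily to $f \circ A$ (norm convergence implies pointwise convergence). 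Hence $C_A: f \mapsto f \circ A$ is bounded from $\cH_I$ to $\cK_J$, and applying the same reasoning to $\Phi^{-1}$ and $A^{-1}$ yields the bounded inverse. The intertwining is then immediate: for $f \in \cK_J$ and $z \in V(J) \cap \bB_d$,
\begin{equation*}
(C_A M_\varphi C_A^{-1} f)(z) = \varphi(Az)\, f(z) = \Phi(\varphi)(z)\, f(z).
\end{equation*}

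The main obstacle, I expect, is securing the sharp equality $\|\langle \cdot,v\rangle^n\|_{\cH_I}^2 = \|w\|^{2n}/a_n$ rather than only an upper bound inherited from the coisometric restriction $\cH \to \cH_I$; this requires $v$ to lie in $V(I)$, so that $\langle \cdot,v\rangle^n|_{V(I)}$ is proportional to a reproducing kernel of $(\cH_I)_n$. Without it, the comparison would produce only a one-sided bound and would be insufficient to conclude $\cH = \cK$.
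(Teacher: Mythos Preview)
Your construction of $C_A$ and the intertwining relation are fine and essentially match the paper. The gap is in your comparison of the coefficients $a_n$ and $c_n$. You write that $\Phi(\langle\cdot,v\rangle^n)(z) = \langle Az, Aw\rangle^n$ ``on $V(J)$ equals $\langle\cdot,w\rangle^n$ because $A$ preserves the inner product on $V(J)$.'' But Lemma~\ref{lem:graded_char} only gives that $A$ is \emph{norm}-preserving on $V(J)$; since $V(J)$ is generally not a subspace, polarization is unavailable and $\langle Az,Aw\rangle \neq \langle z,w\rangle$ in general. Concretely, if $V(J) = \bC e_1 \cup \bC e_2 \subset \bC^2$ and $A e_1 = e_1$, $A e_2 = (e_1+e_2)/\sqrt{2}$, then $A$ is isometric on $V(J)$ but $\langle Ae_1, Ae_2\rangle = 1/\sqrt{2} \neq 0 = \langle e_1,e_2\rangle$. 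Thus $\Phi(\langle\cdot,v\rangle^n) = \langle\cdot, A^*Aw\rangle^n$ on $Z^0(J)$, and since $A^*Aw$ need not lie in $V(J)$ you lose the exact norm identity on the $\cK_J$ side; the remaining inequalities all point the same way and do not yield a two-sided bound on $a_n/c_n$.

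The paper circumvents this by reversing your order: it first constructs $C_A$ (your third step, which does not use $\cH = \cK$), and then works with the \emph{adjoint} $T_A = R_I^{-1}(C_A)^* R_J$, which satisfies $T_A K_{\cK}(\cdot,w) = K_{\cH}(\cdot,Aw)$ and hence, upon taking homogeneous parts, $T_A\big(a_n'\langle\cdot,w\rangle^n\big) = a_n\langle\cdot,Aw\rangle^n$. Because $w \in Z^0(J)$ and $Aw \in Z^0(I)$, both norms are computable exactly, and boundedness of $T_A$ and $T_A^{-1}$ yields the two-sided estimate. An equivalent repair of your argument, if you prefer to keep the order, is to take the Hilbert-space adjoint of $\Phi|_{(\cH_I)_n}$ (which is legitimate once you note $\|\cdot\|_{\Mult} = \|\cdot\|_{\cH}$ on each homogeneous piece) and observe it sends $c_n\langle\cdot,w\rangle^n$ to $a_n\langle\cdot,Aw\rangle^n$. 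Also, in the $A(\cH_I)$ case you silently use that $\Phi$ is bounded; you should invoke semisimplicity for automatic continuity, as the paper does.
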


\begin{proof}
  By Lemma \ref{lem:graded_char}, there exists an invertible linear map $A$ which maps $V(J)$ isometrically
  onto $V(I)$ and such that $\Phi$ is given by composition with $A$. Since all Banach algebras under consideration
  are semi-simple,
  $\Phi$ and its inverse are (norm) continuous (see \cite[Proposition 4.2]{Dales78}).
  Thus, if $f \in \cH_I$ is homogeneous, then Proposition \ref{prop:norm_hom} shows that
  \begin{equation*}
    || f \circ A||_{\cK_J} = || f \circ A||_{\Mult(\cK_J)} \le ||\Phi|| \, ||f||_{\Mult(\cH_I)} = ||\Phi|| \,
    ||f||_{\cH_I},
  \end{equation*}
  so there exists a bounded operator $C_A: \cH_I \to \cK_J$ such that
  \begin{equation*}
    C_A f = f \circ A
  \end{equation*}
  holds for every polynomial $f$, and hence for all $f \in \cH_I$.
  Consideration of $\Phi^{-1}$ shows that $C_A$ is invertible.
  Moreover, for $\varphi \in \Mult(\cH_I)$ and $f \in \cK_J$, we have
  \begin{equation*}
    C_A M_\varphi (C_A)^{-1} f = (\varphi \circ A) f,
  \end{equation*}
  hence $\Phi$ is given by conjugation with $C_A$.

  We finish the proof by showing that $\cH$ and $\cK$ coincide as vector spaces. To this end, let
  \begin{equation*}
    K_{\cH} (z,w) = \sum_{n=0}^\infty a_n \langle z,w \rangle^n
  \end{equation*}
  and
  \begin{equation*}
    K_{\cK} (z,w) = \sum_{n=0}^\infty a_n' \langle z,w \rangle^n
  \end{equation*}
  denote the reproducing kernels of $\cH$ and $\cK$, respectively.
  Since $I$ and $J$ are radical, Lemma \ref{lem:nullstellensatz} implies that
  the restriction maps $R_I: \cH \ominus I \to \cH_I$ and $R_J: \cK \ominus J \to \cK_J$ are unitary.
  Let
  \begin{equation*}
    T_A = R_I^{-1} (C_A)^* R_J \in \cB(\cK \ominus J, \cH \ominus I).
  \end{equation*}
  Then $T_A$ is bounded and invertible, and Lemma \ref{lem:comp_operators} combined with
  Lemma \ref{lem:nullstellensatz} implies that
  \begin{equation*}
    T_A K_{\cK}(\cdot,w) = K_{\cH} (\cdot, A w)
  \end{equation*}
  for all $w \in Z^0(J)$. Using the homogeneity of $J$, it is easy to deduce from
  $K_{\cK}(\cdot,w) \in \cK \ominus J$ for $w \in Z^0(J)$ that $\langle \cdot,w \rangle^n \in \cK \ominus J$
  for all $w \in Z^0(J)$ and all $n \in \bN$. Similarly, $\langle \cdot,z \rangle^n \in \cH \ominus I$
  for all $z \in Z^0(I)$ and all $n \in \bN$.
  Moreover, $C_A$ and hence $T_A$ respects the degree of homogeneous polynomials.
  Consequently,
  \begin{equation}
    \label{eq:T_A}
    T_A a_n' \langle \cdot,w \rangle^n = a_n \langle \cdot, A w \rangle^n
  \end{equation}
  for all $n \in \bN$ and all $w \in V(J)$.
  Using part (d) of Remark \ref{rem:unitarily_invariant} and the fact that $||A w|| = ||w||$, we see that
  \begin{equation*}
    ||a_n' \langle \cdot,w \rangle^n||^2_{\cK_J} = a_n' ||w||^{2 n}
  \end{equation*}
  and that
  \begin{equation*}
    ||a_n \langle \cdot, A w \rangle^n||^2_{\cH_I} = a_n ||w||^{2 n}.
  \end{equation*}
  Since $J$ is relevant, $V(J)$ contains a non-zero vector $w$, hence
  \begin{equation*}
    ||(C_A^*)^{-1}||^2 \le \frac{a_n}{a_n'} \le ||C_A^*||^2
  \end{equation*}
  for all $n \in \bN$, from which it immediately follows that $\cH = \cK$ as vector spaces (see part (d)
  of Remark \ref{rem:unitarily_invariant}).
\end{proof}

Using the same methods as in the last proof, we obtain a version of Proposition \ref{prop:structure_graded_algebraic}
for isometric isomorphisms.
\begin{prop}
  \label{prop:structure_graded_isometric}
  Let $\cH$ and $\cK$ be unitarily invariant complete NP-spaces, and let $I$ and $J$
  be relevant radical homogeneous ideals in $\bC[z_1,\ldots,z_d]$.
  Let $\Phi: A(\cH_I) \to A(\cK_J)$ be a graded isometric isomorphism (respectively $\Phi: \Mult(\cH_I) \to \Mult(\cK_J)$
  a graded weak*-weak* continuous isometric isomorphism).
  
  Then $\cH = \cK$ as Hilbert spaces, and there exists a unitary
  map $U$ which maps $V(J)$ onto $V(I)$ such that $\Phi$ is given by composition with $U$.
  Moreover, $U$ induces a unitary composition operator
  \begin{equation*}
    C_U: \cH_I \to \cK_J, \quad f \mapsto f \circ U,
  \end{equation*}
  such that
  \begin{equation*}
    \Phi(M_\varphi) = C_U M_\varphi (C_U)^{-1}
  \end{equation*}
  for all $\varphi \in A(\cH_I)$ (respectively $\varphi \in \Mult(\cH_I)$). In particular, $\Phi$ is unitarily implemented.
\end{prop}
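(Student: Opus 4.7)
The plan is to follow the same blueprint as the proof of Proposition \ref{prop:structure_graded_algebraic}, but exploit the isometric hypothesis at every step to upgrade ``bounded invertible'' to ``unitary''. First, I would apply Lemma \ref{lem:graded_char} to obtain an invertible linear map $A$ on $\bC^d$ mapping $V(J)$ isometrically onto $V(I)$, such that $\Phi$ is given by composition with $A$. Since $\Phi$ is isometric, Proposition \ref{prop:norm_hom} applied in both $\cH_I$ and $\cK_J$ gives, for every homogeneous $f \in \cH_I$,
\begin{equation*}
  \|f \circ A\|_{\cK_J} = \|f \circ A\|_{\Mult(\cK_J)} = \|f\|_{\Mult(\cH_I)} = \|f\|_{\cH_I}.
\end{equation*}
Composition with $A$ preserves degree, and homogeneous elements of different degrees are mutually orthogonal in both $\cH_I$ and $\cK_J$, so the Pythagorean theorem applied to homogeneous expansions of polynomials (which form a dense subspace) shows that the map $f \mapsto f \circ A$ extends to an isometry $C_A: \cH_I \to \cK_J$. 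Applying the same reasoning to $\Phi^{-1}$ produces an isometric inverse, so $C_A$ is unitary.

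Next I would promote the linear map $A$ to a genuine unitary on $\bC^d$. Since $A|_{V(J)} : V(J) \to V(I)$ is an isometric isomorphism between two subspaces of $\bC^d$ of the same (finite) dimension, it extends to a unitary $U$ on $\bC^d$ by choosing any unitary isomorphism $V(J)^\bot \to V(I)^\bot$. Because $U$ agrees with $A$ on $V(J) \supset Z^0(J) \supset Z(J)$, composition with $U$ equals composition with $A$ as operators on $\cH_I$; in particular $C_U = C_A$ is unitary, and the identity $\Phi(M_\varphi) = C_U M_\varphi C_U^{-1}$ follows exactly as in the proof of Proposition \ref{prop:structure_graded_algebraic}.

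It remains to show that $\cH = \cK$ as Hilbert spaces. Writing the reproducing kernels as
\begin{equation*}
  K_{\cH}(z,w) = \sum_{n=0}^\infty a_n \langle z,w \rangle^n, \qquad K_{\cK}(z,w) = \sum_{n=0}^\infty a_n' \langle z,w \rangle^n,
\end{equation*}
and setting $T_A = R_I^{-1} (C_A)^* R_J$, equation \eqref{eq:T_A} continues to hold. Now $T_A$ is unitary, hence norm preserving, so for every $w \in V(J)$ and every $n \in \bN$,
\begin{equation*}
  \|a_n' \langle \cdot, w \rangle^n\|_{\cK}^2 = \|a_n \langle \cdot, Uw \rangle^n\|_{\cH}^2.
\end{equation*}
Using part (d) of Remark \ref{rem:unitarily_invariant} together with $\|Uw\| = \|w\|$, the left-hand side equals $a_n' \|w\|^{2n}$ while the right-hand side equals $a_n \|w\|^{2n}$. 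Since $J$ is relevant, $V(J)$ contains a non-zero vector, and we conclude that $a_n = a_n'$ for every $n$. Hence $K_{\cH} = K_{\cK}$ and $\cH = \cK$ as Hilbert spaces. The main point of care throughout is that although $A$ is only isometric on $V(J)$ and not a priori unitary on the ambient space, finite-dimensionality lets us extend it to a unitary of $\bC^d$ without disturbing any of the relevant composition operators.
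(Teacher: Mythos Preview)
Your argument follows the paper's blueprint closely and most steps are fine, but there is a genuine gap in the paragraph where you extend $A$ to a unitary $U$. You write that ``$A|_{V(J)} : V(J) \to V(I)$ is an isometric isomorphism between two subspaces of $\bC^d$''. This is not correct: $V(J)$ and $V(I)$ are homogeneous \emph{varieties}, not linear subspaces. What Lemma~\ref{lem:graded_char} (via Proposition~\ref{prop:bihol_0_linear}) gives you is a linear map $A$ that is norm-preserving on the variety $V(J)$, and this does \emph{not} imply that $A$ is norm-preserving on $\spa V(J)$. For instance, if $V(J)$ is the union of the two coordinate axes in $\bC^2$, the map $(z_1,z_2)\mapsto(\cos\theta\, z_1,\ \sin\theta\, z_1 + z_2)$ is isometric on $V(J)$ but not on $\bC^2=\spa V(J)$. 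So at this point you cannot yet extend $A$ to a unitary on $\bC^d$.

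The missing ingredient is already in your hands: you have shown that $C_A$, and hence $T_A$, is unitary. Setting $n=1$ in \eqref{eq:T_A} gives $T_A\langle\cdot,w\rangle=\langle\cdot,Aw\rangle$ for $w\in V(J)$; both sides are conjugate-linear in $w$, so the identity extends to all $w\in\spa V(J)$. Since $T_A$ is isometric, part~(d) of Remark~\ref{rem:unitarily_invariant} (together with $a_1=a_1'$, which you have) then yields $\|Aw\|=\|w\|$ for every $w\in\spa V(J)$. Only now is $A$ a linear isometry on a genuine subspace, and your extension to a unitary $U$ on $\bC^d$ goes through. This is exactly how the paper closes the gap; once you insert this step, your proof coincides with the paper's.
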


\begin{proof}
  Proposition \ref{prop:structure_graded_algebraic} and its proof show that there exists an invertible linear
  map $U$ which maps $V(J)$ isometrically onto $V(I)$ such that $U$ induces a unitary composition operator
  \begin{equation*}
    C_U: \cK_J \to \cH_I, \quad f \mapsto f \circ U,
  \end{equation*}
  and such that $\Phi$ is given by conjugation with $C_U$. Since $C_U$ is a unitary operator, the last part of the proof
  of Proposition \ref{prop:structure_graded_algebraic} shows that $a_n = a_n'$ for all $n \in \bN$ in the notation
  of the proof, and hence
  $\cH = \cK$ as Hilbert spaces.

  Finally, setting $n=1$ in Equation \eqref{eq:T_A}, we see that
  \begin{equation*}
    T_U \langle \cdot,w \rangle = \langle \cdot, U w \rangle
  \end{equation*}
  for all $w \in V(J)$, and hence for all $w$ in the linear span of $V(J)$.
  Since $T_U$ is a unitary operator, part (d) of Remark \ref{rem:unitarily_invariant} implies that
  $U$ is isometric on the linear span of $V(J)$. Hence, $U$ is a unitary map from the linear
  span of $V(J)$ onto the linear span of $V(I)$. Changing $U$ on the orthogonal complement of $\spa(V(J))$ if necessary,
  we can therefore achieve that $U$ is a unitary map on $\bC^d$.
\end{proof}

The last result, combined with Proposition \ref{prop:graded_isomorphism}, provides a necessary
condition for the existence of an isometric isomorphism between two algebras of the form $A(\cH_I)$,
namely condition (iii) in the next theorem. This condition turns out to be sufficient as well.
We thus obtain our main result regarding the isometric isomorphism problem. It generalizes \cite[Theorem 8.2]{DRS11}.
For a bounded invertible operator $S$ between two Hilbert spaces $\cH$ and $\cK$, let
\begin{equation*}
  \Ad(S): \cB(\cH) \to \cB(\cK), \quad T \mapsto S T S^{-1},
\end{equation*}
be the induced isomorphism between $\cB(\cH)$ and $\cB(\cK)$.

\begin{thm}
  \label{thm:main_result_unitary}
  Let $\cH$ and $\cK$ be unitarily invariant complete NP-spaces, and let $I$ and $J$
  be relevant radical homogeneous ideals in $\bC[z_1,\ldots,z_d]$.
  Then the following are equivalent:
  \begin{enumerate}[label=\normalfont{(\roman*)}]
    \item $A(\cH_I)$ and $A(\cK_J)$ are isometrically isomorphic.
    \item $\Mult(\cH_I)$ and $\Mult(\cK_J)$ are isometrically isomorphic via a weak*-weak* continuous isomorphism.
    \item $\cH = \cK$ as Hilbert spaces and there is a unitary map $U$ on $\bC^d$ which maps $V(J)$ onto $V(I)$.
  \end{enumerate}
  If $\cH$ or $\cK$ is tame, then this is equivalent to
  \begin{enumerate}[label=\normalfont{(\roman*)},resume]
    \item $\Mult(\cH_I)$ and $\Mult(\cK_J)$ are isometrically isomorphic.
  \end{enumerate}
  If $U$ is a unitary map on $\bC^d$ as in (iii), then
  $U$ induces a unitary composition operator
  \begin{equation*}
    C_U: \cH_I \to \cK_J, \quad f \mapsto f \circ U,
  \end{equation*}
  and $\Ad(C_U)$ maps $A(\cH_I)$ onto $A(\cK_J)$ and $\Mult(\cH_I)$ onto $\Mult(\cK_J)$.
\end{thm}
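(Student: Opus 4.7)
The plan is first to handle the easy direction (iii) $\Rightarrow$ (i) and (iii) $\Rightarrow$ (ii), which also yields the final ``moreover'' assertion. Suppose $\cH = \cK$ as Hilbert spaces, so they share a common reproducing kernel $K$, and let $U$ be a unitary map on $\bC^d$ with $U(V(J)) = V(I)$. By unitary invariance of $K$, we have $K(Uz,Uw) = K(z,w)$ for all $z,w$, which after restriction yields $K_I(Uz,Uw) = K_J(z,w)$ for $z,w \in Z^0(J)$. Standard reproducing kernel theory (see also Proposition \ref{prop:isometric_comp_operator}) then gives a unitary $C_U: \cH_I \to \cK_J$ with $(C_U f)(w) = f(Uw)$ whose adjoint sends $K_J(\cdot,w)$ to $K_I(\cdot,Uw)$. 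A direct computation shows $\Ad(C_U)(M_\varphi) = M_{\varphi \circ U}$. Since composition with the linear map $U$ preserves the degree of homogeneous polynomials and carries polynomial functions on $V(I)$ to polynomial functions on $V(J)$, the map $\Ad(C_U)$ carries $A(\cH_I)$ onto $A(\cK_J)$ and $\Mult(\cH_I)$ onto $\Mult(\cK_J)$ isometrically. Spatial implementation by a unitary automatically makes the multiplier-algebra version weak*-weak* continuous.

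For the converses (i) $\Rightarrow$ (iii) and (ii) $\Rightarrow$ (iii), the plan is to reduce to the graded case. Given an isometric isomorphism $\Phi: A(\cH_I) \to A(\cK_J)$, Proposition \ref{prop:graded_isomorphism}(a) produces a \emph{graded} isometric isomorphism $\widetilde\Phi: A(\cH_I) \to A(\cK_J)$. Proposition \ref{prop:structure_graded_isometric} then concludes that $\cH = \cK$ as Hilbert spaces and $\widetilde\Phi$ is composition with a unitary $U$ on $\bC^d$ which maps $V(J)$ onto $V(I)$. The same argument, using Proposition \ref{prop:graded_isomorphism}(b) in place of (a), handles (ii) $\Rightarrow$ (iii).

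For the tame case, (ii) $\Rightarrow$ (iv) is trivial, so only (iv) $\Rightarrow$ (ii) needs argument. If $\cH$ (or, by symmetry, $\cK$) is tame, then Corollary \ref{cor:phi_star_hol_iso}(b) asserts that any algebraic isomorphism $\Phi: \Mult(\cH_I) \to \Mult(\cK_J)$ is automatically a weak*-weak* homeomorphism. So an isometric isomorphism in (iv) is an isometric weak*-weak* continuous isomorphism as in (ii), and the equivalence follows.

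No step here looks genuinely hard, since the heavy lifting has been done: Proposition \ref{prop:graded_isomorphism} supplies the graded replacement, Proposition \ref{prop:structure_graded_isometric} extracts the unitary $U$ and identifies the Hilbert spaces, Corollary \ref{cor:phi_star_hol_iso}(b) supplies automatic weak*-continuity under tameness, and unitary invariance of $K$ makes the forward direction (iii) $\Rightarrow$ (i),(ii) essentially a calculation. The only point requiring mild care is the bookkeeping about the two cases $\bB_d$ versus $\ol{\bB_d}$ for $\cH$ and $\cK$, but since $U$ acts linearly on all of $\bC^d$ and all the cited results were already stated uniformly across these two settings, this is routine.
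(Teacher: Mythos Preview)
Your proposal is correct and follows essentially the same approach as the paper: Propositions \ref{prop:graded_isomorphism} and \ref{prop:structure_graded_isometric} for the hard directions, Corollary \ref{cor:phi_star_hol_iso}(b) for the tame equivalence, and unitary invariance of $K$ for (iii) $\Rightarrow$ (i),(ii). The only cosmetic difference is that for the forward direction the paper routes the construction of $C_U$ through the ambient composition operator on $\cH$ together with Lemmas \ref{lem:nullstellensatz} and \ref{lem:comp_operators}, whereas you invoke the kernel identity $K_I(Uz,Uw)=K_J(z,w)$ and Proposition \ref{prop:isometric_comp_operator} directly; both are equivalent one-line verifications.
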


\begin{proof}
  It follows from Proposition \ref{prop:graded_isomorphism} and Proposition \ref{prop:structure_graded_isometric} that (i) or
  (ii) implies (iii).
  Moreover, if one of the spaces is tame, then Corollary \ref{cor:phi_star_hol_iso} (b)
  shows the equivalence of (ii) and (iv).

  Conversely, suppose that (iii) holds. Since $\cH = \cK$ is unitarily invariant, $U$ induces a unitary
  composition operator $\widehat C_U \in \cB(\cH)$. If $K$ denotes the reproducing kernel of $\cH$, then
  \begin{equation*}
    (\widehat C_U)^* K(\cdot,w) = K(\cdot, U w)
  \end{equation*}
  for all $w \in Z^0(J)$ (or $w \in Z(J)$ if $\cH$ is a space on $\ol{\bB_d}$). Since $\cH \ominus I$ and
  $\cH \ominus J$ are spanned by kernel functions (see Lemma \ref{lem:nullstellensatz}),
  the implication (ii) $\Rightarrow$ (i)
  in Lemma \ref{lem:comp_operators} shows that $U$ induces a unitary composition operator $C_U: \cH_I \to \cH_J$.
  
  Then for $\varphi \in \Mult(\cH_I)$ and $f \in \cK_J$,
  \begin{equation*}
    C_U M_\varphi (C_U)^{-1} f = (\varphi \circ U) \cdot f,
  \end{equation*}
  hence $\Ad(C_U)$ maps $\Mult(\cH_I)$ into $\Mult(\cH_J)$ and $A(\cH_I)$ into $A(\cH_J)$.
  If we consider $\Ad(C_{U^{-1}})$, we see that $\Ad(C_U)$ is an isomorphism
  from $\Mult(\cH_I)$ onto $\Mult(\cH_J)$ and from $A(\cH_I)$ onto $A(\cH_J)$.
  Hence, (i) and (ii) hold, and the additional assertion is proven.
\end{proof}

For algebraic isomorphisms, the situation is more difficult. Proposition \ref{prop:graded_isomorphism} and
Proposition \ref{prop:structure_graded_algebraic} show that if $A(\cH_I)$ and $A(\cK_J)$ are algebraically
isomorphic, then $\cH = \cK$ as vector spaces and there exists an invertible linear map $A$
on $\bC^d$ which maps $V(J)$ isometrically onto $V(I)$. Note that here, $A$ will in general only be
isometric on $V(J)$ and not on all of $\bC^d$. In this case,
it is no longer obvious that $A$ induces an algebraic isomorphism between $A(\cH_I)$ and $A(\cK_J)$.
The reason why the proof of Theorem \ref{thm:main_result_unitary} does not carry over is that now, $A$ does not
induce a composition operator on all of $\cH$. In the case of $\cH = H^2_d$,
this problem already appeared in \cite{DRS11}, where
it was solved under additional assumptions on the geometry of $V(J)$. The general case
was settled in \cite{Hartz12}. Fortunately, we can use a crucial reduction from \cite{DRS11} and the main result of \cite{Hartz12} in our setting as well.

\begin{lem}
  \label{lem:composition_bounded}
  Let $\cH$ be a reproducing kernel Hilbert space on $\bB_d$ (or on $\ol{\bB_d}$) with a reproducing kernel
  of the form
  \begin{equation*}
    K(z,w) = \sum_{n=0}^\infty a_n \langle z,w \rangle^n,
  \end{equation*}
  where $a_n > 0$ for all $n \in \bN$.
  Suppose that $I \subsetneq \bC[z_1,\ldots,z_d]$ is a radical homogeneous ideal.
  If $A$ is a linear map on $\bC^d$ which is isometric on $V(I)$, then there exists a bounded operator
  \begin{equation*}
    T_A: \cH \ominus I \to \cH \quad \text{such that} \quad T_A K(\cdot,w) = K(\cdot, Aw)
  \end{equation*}
  for all $w \in Z^0(I)$ (respectively
  $w \in Z(I)$ if $\cH$ is a space on $\ol{\bB_d}$).
\end{lem}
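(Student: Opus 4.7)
The plan is to show that the candidate $T_A$ is in fact an isometry, which is stronger than bounded. By Lemma \ref{lem:nullstellensatz}, $\cH \ominus I$ is the closed linear span of the kernel functions $\{K(\cdot, w) : w \in Z^0(I)\}$, so it suffices to define $T_A$ on finite linear combinations of such kernel functions and check that the assignment $K(\cdot, w) \mapsto K(\cdot, Aw)$ preserves norms.

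The key observation is that although $A$ is only assumed isometric on the subspace $V(I)$, the polarization identity, valid since $V(I)$ is a complex linear subspace and $A$ is $\bC$-linear, automatically upgrades this to inner-product preservation: $\langle Av, Aw \rangle = \langle v, w \rangle$ for all $v, w \in V(I)$. In particular, for $w \in Z^0(I) \subset V(I) \cap \bB_d$ we have $\|Aw\| = \|w\| < 1$, so $K(\cdot, Aw) \in \cH$, and a direct computation using the power-series form of $K$ yields
\[
\left\| \sum_i c_i K(\cdot, Aw_i) \right\|^2
= \sum_{i,j} c_i \overline{c_j} \sum_{n=0}^\infty a_n \langle Aw_j, Aw_i \rangle^n
= \sum_{i,j} c_i \overline{c_j} K(w_j, w_i)
= \left\| \sum_i c_i K(\cdot, w_i) \right\|^2
\]
for any $w_1, \ldots, w_n \in Z^0(I)$ and any $c_1, \ldots, c_n \in \bC$. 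This makes the assignment well-defined on the dense linear span and isometric there, so it extends uniquely to an isometry $T_A : \cH \ominus I \to \cH$.

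In the $\ol{\bB_d}$ case one additionally needs $T_A K(\cdot, w) = K(\cdot, Aw)$ for $w \in Z(I) \setminus Z^0(I)$, which is not covered by the definition directly. This is handled by continuity: since $I$ is homogeneous, $rw \in Z^0(I)$ for every $r \in (0,1)$, and since the series defining $K$ converges uniformly on $\ol{\bB_d} \times \ol{\bB_d}$ in this case, $K(\cdot, rw) \to K(\cdot, w)$ and $K(\cdot, rAw) \to K(\cdot, Aw)$ in $\cH$ as $r \uparrow 1$; the desired identity then follows from boundedness of $T_A$. There is no serious obstacle in this argument, as the hypothesis that $A$ be isometric on $V(I)$ was tailored precisely so that inner products between kernel functions parametrized by $V(I)$ are preserved under $w \mapsto Aw$, reducing the whole problem to the one-line computation above.
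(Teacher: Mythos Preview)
Your argument has a genuine gap: the claim that $V(I)$ is a complex linear subspace is false in general. The vanishing locus of a homogeneous ideal is a cone, but typically not closed under addition. For instance, if $d = 2$ and $I = (z_1 z_2)$, then $V(I) = (\bC e_1) \cup (\bC e_2)$ is a union of two lines. The polarization step therefore fails: to deduce $\langle Av, Aw \rangle = \langle v, w \rangle$ one needs $A$ to be isometric on linear combinations of $v$ and $w$, which need not lie in $V(I)$. Concretely, with $A e_1 = e_1$ and $A e_2 = \tfrac{1}{\sqrt 2}(e_1 + e_2)$, the map $A$ is isometric on $V(I)$ but $\langle A e_1, A e_2 \rangle = \tfrac{1}{\sqrt 2} \neq 0$; taking $v = \tfrac12 e_1$ and $w = \tfrac12 e_2$ in $Z^0(I)$ gives $K(Av, Aw) \neq K(v,w)$, so the assignment $K(\cdot,w) \mapsto K(\cdot, Aw)$ is \emph{not} isometric. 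The lemma only claims boundedness, and that is precisely what requires work.

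The paper's proof handles this by first invoking \cite[Proposition 7.6]{DRS11} to extend the isometric property of $A$ from $V(I)$ to $\spa V_1 \cup \cdots \cup \spa V_r$, the union of the linear spans of the irreducible components, thereby reducing to the case where $V(I)$ is a union of subspaces. On each subspace your computation then does show that $T_A$ is isometric on $\cH \ominus I_j$ (with $I_j$ the vanishing ideal of $V_j$). The remaining, nontrivial, step is that the algebraic sum $\sum_j (\cH \ominus I_j)$ is closed in $\cH$; this is reduced, via a graded unitary, to the Drury--Arveson case, which is the main result of \cite{Hartz12}.
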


\begin{proof}
  The first part of the proof is a straightforward adaptation of the proof of \cite[Proposition 2.5]{Hartz12}.
  Let
  \begin{equation*}
    V(I) = V_1 \cup \ldots \cup V_r
  \end{equation*}
  be the decomposition of $V(I)$ into irreducible homogeneous varieties and let $\widehat I$ be the vanishing ideal
  of $\spa{V_1} \cup \ldots \cup \spa{V_r}$. Then $\widehat I \subset I$ by Hilbert's Nullstellensatz.
  By \cite[Proposition 7.6]{DRS11}, the linear map $A$ is isometric on $V(\widehat I)$, so we may assume without
  loss of generality that
  \begin{equation*}
    V(I) = V_1 \cup \ldots \cup V_r
  \end{equation*}
  is a union of subspaces, so
  \begin{equation*}
    I = I_1 \cap \ldots \cap I_r,  
  \end{equation*}
  where $I_j$ is the vanishing ideal of $V_j$. Then by a variant of \cite[Lemma 2.3]{Hartz12},
  \begin{equation*}
    \cH \ominus I = \ol{ \cH \ominus I_1 + \ldots + \cH \ominus I_r}.
  \end{equation*}
  We define $T_A$ on the dense subspace of $\cH \ominus I$ consisting of polynomials by
  \begin{equation*}
    T_A p = p \circ A^*.
  \end{equation*}
  Then $T_A \langle \cdot,w \rangle^n = \langle \cdot, A w \rangle^n$ for all $w \in V(I)$.
  Using the fact that $\cH$ is unitarily invariant and that $A$ is isometric on each $V_j$, it is not
  hard to see that
  the map $T_A$ is isometric on $\cH \ominus I_j$ for every $j$ (cf. \cite[Lemma 2.2]{Hartz12}).
  As in the proof of \cite[Proposition 2.5]{Hartz12}, we may therefore
  finish the proof by showing that the algebraic sum
  \begin{equation*}
    \cH \ominus I_1 + \ldots + \cH \ominus I_r
  \end{equation*}
  is closed.
  
  If $\cH = H^2_d$, this is the main result of \cite{Hartz12}. More generally, in the present setting,
  there exists a unique unitary operator
  \begin{equation*}
    U: H^2_d \to \cH \quad \text{ with } \quad U(p) = \sqrt{a_n} p
  \end{equation*}
  for every homogeneous polynomial $p$ of degree $n$ (see, for example \cite[Proposition 4.1]{GHX04}).
  Since each $I_j$ is a homogeneous ideal, $U(I_j) = I_j$
  and hence $U(H^2_d \ominus I_j) = \cH \ominus I_j$ for $1 \le j \le r$. Consequently,
  closedness of the algebraic sum
  \begin{equation*}
    \cH \ominus I_1 + \ldots + \cH \ominus I_r
  \end{equation*}
  follows from the special case where $\cH = H^2_d$.
\end{proof}

With the help of Lemma \ref{lem:composition_bounded}, we can now prove the main result regarding
the algebraic isomorphism problem. It generalizes \cite[Theorem 8.5]{DRS11} and \cite[Theorem 5.9]{Hartz12}.
Observe that since the algebras $A(\cH_I)$ and $\Mult(\cH_I)$ are semi-simple, algebraic isomorphisms are
automatically norm continuous.

\begin{thm}
  \label{thm:main_result_algebraic}
  Let $\cH$ and $\cK$ be unitarily invariant complete NP-spaces, and let $I$ and $J$
  be relevant radical homogeneous ideals in $\bC[z_1,\ldots,z_d]$.
  Then the following are equivalent:
  \begin{enumerate}[label=\normalfont{(\roman*)}]
    \item $A(\cH_I)$ and $A(\cK_J)$ are algebraically isomorphic.
    \item $\Mult(\cH_I)$ and $\Mult(\cK_J)$ are isomorphic via a weak*-weak* continuous isomorphism.
    \item $\cH = \cK$ as vector spaces and there is an invertible linear map $A$ on $\bC^d$ which maps $V(J)$ isometrically onto $V(I)$.
  \end{enumerate}
  If $\cH$ or $\cK$ is tame, then this is equivalent to
  \begin{enumerate}[label=\normalfont{(\roman*)},resume]
    \item $\Mult(\cH_I)$ and $\Mult(\cK_J)$ are algebraically isomorphic.
  \end{enumerate}
  If $A$ is an invertible linear map on $\bC^d$ as in (iii), then $A$ induces a bounded invertible
  composition operator
  \begin{equation*}
    C_A: \cH_I \to \cK_J, \quad f \mapsto f \circ A,
  \end{equation*}
  and $\Ad(C_A)$ maps $A(\cH_I)$ onto $A(\cK_J)$ and $\Mult(\cH_I)$ onto $\Mult(\cK_J)$.
\end{thm}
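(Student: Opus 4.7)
The plan is to establish the equivalence of (i), (ii), (iii) in general, with (iv) joining the list under the tameness hypothesis. The forward implications (i), (ii) $\Rightarrow$ (iii) follow from the structural results of the previous section, while the reverse direction requires explicitly constructing the composition operator $C_A$ that witnesses the similarity.

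For the forward direction, I would first invoke Proposition \ref{prop:graded_isomorphism} to replace the given isomorphism by a graded one of the same type, and then apply Proposition \ref{prop:structure_graded_algebraic}, which directly produces both $\cH = \cK$ as vector spaces and the invertible linear $A$ on $\bC^d$ mapping $V(J)$ isometrically onto $V(I)$. The case (iv) $\Rightarrow$ (ii) under tameness is handled by Corollary \ref{cor:phi_star_hol_iso}(b), which upgrades any algebraic isomorphism between the multiplier algebras to a weak*-weak* homeomorphism when $\cH$ or $\cK$ is tame.

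The core content is the reverse direction (iii) $\Rightarrow$ (i), (iii) $\Rightarrow$ (ii), together with the additional assertion. Given $A$ from (iii), the goal is to show that $A$ induces a bounded invertible composition operator $C_A: \cH_I \to \cK_J$ via $f \mapsto f \circ A$. By Lemma \ref{lem:comp_operators}, together with the unitary identifications $\cH \ominus I \cong \cH_I$ and $\cK \ominus J \cong \cK_J$ provided by Lemma \ref{lem:nullstellensatz}, this reduces to the construction of a bounded operator
\begin{equation*}
    T_A: \cK \ominus J \to \cH \ominus I \qquad \text{satisfying} \qquad T_A K_\cK(\cdot, w) = K_\cH(\cdot, Aw)
\end{equation*}
for all $w \in Z^0(J)$ (or $Z(J)$). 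Writing $K_\cH = \sum_n a_n \langle \cdot, \cdot \rangle^n$ and $K_\cK = \sum_n a_n' \langle \cdot, \cdot \rangle^n$, the assumption $\cH = \cK$ as vector spaces combined with part (d) of Remark \ref{rem:unitarily_invariant} guarantees that the ratios $a_n / a_n'$ are bounded above and below. I would define $T_A$ on homogeneous polynomials of degree $n$ as $(a_n / a_n')$ times composition with $A^*$. The kernel identity is then a direct computation on the power series. Boundedness reduces, via the comparison of $\cH$- and $\cK$-norms on homogeneous components and the bounds on $a_n/a_n'$, to the boundedness of the map $p \mapsto p \circ A^*$ on $\cK \ominus J$ in the $\cK$-norm, which is exactly the content of Lemma \ref{lem:composition_bounded} applied to $\cK$ and $J$. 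Invertibility of $T_A$ follows from the symmetric construction using $A^{-1}$. Once $C_A$ is in hand, the identity
\begin{equation*}
    C_A M_\varphi (C_A)^{-1} = M_{\varphi \circ A}
\end{equation*}
for $\varphi \in \Mult(\cH_I)$ shows that $\Ad(C_A)$ maps $\Mult(\cH_I)$ onto $\Mult(\cK_J)$ and $A(\cH_I)$ onto $A(\cK_J)$; weak*-weak* continuity of $\Ad(C_A)$ is immediate since it is implemented by composition with a continuous map on $Z^0(J)$, so Lemma \ref{lem:hom_weak_star} applies.

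The principal obstacle is the boundedness of $T_A$ under the assumption that $A$ is merely isometric on the variety $V(J)$ rather than on all of $\bC^d$. This is precisely the technical content of Lemma \ref{lem:composition_bounded}, whose proof leans on the reduction of $V(J)$ to a union of linear subspaces via $\widehat J$ and on the nontrivial closedness of finite algebraic sums of the associated orthogonal complements, established in \cite{Hartz12} for the Drury-Arveson space and transferred to general unitarily invariant complete NP-spaces via the unitary identification $U: H^2_d \to \cH$ respecting each homogeneous ideal.
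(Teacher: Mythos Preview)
Your proposal is correct and follows essentially the same route as the paper. The only cosmetic difference is in the reverse direction: the paper packages the transfer from $\cK$ to $\cH$ via the adjoint $E^*$ of the formal identity $E:\cH\to\cK$ (so $T_A=E^*T$ with $T$ the operator from Lemma \ref{lem:composition_bounded}), whereas you write out $E^*$ explicitly as the degree-$n$ scaling by $a_n/a_n'$; a short computation with Remark \ref{rem:unitarily_invariant}(d) shows these are the same operator.
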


\begin{proof}
  It follows from Proposition \ref{prop:graded_isomorphism} and Proposition \ref{prop:structure_graded_algebraic} that (i) or
  (ii) implies (iii).
  Moreover, if one of the spaces is tame, then Corollary \ref{cor:phi_star_hol_iso} (b) once again
  shows the equivalence of (ii) and (iv).

  Assume that (iii) holds. Since $\cH = \cK$ as vector spaces, the formal identity
  \begin{equation*}
    E: \cH \to \cK, \quad f \mapsto f,
  \end{equation*}
  is bounded and bounded below by the closed graph theorem. By Lemma \ref{lem:composition_bounded}, there exists
  a bounded operator
  \begin{equation*}
    T: \cK \ominus J \to \cK \quad \text{ such that } \quad
    T K_{\cK}(\cdot,w) = K_{\cK}(\cdot, A w)
  \end{equation*}
  for all $w \in Z^0(J)$ (respectively $w \in Z(J)$). Let $T_A = E^* T$.
  Then
  \begin{equation*}
    T_A (K_{\cK}(\cdot,w)) = E^* K_{\cK}(\cdot,A w) =  K_{\cH} (\cdot, A w)
  \end{equation*}
  for all $w$, from which we deduce with the help of Lemma \ref{lem:nullstellensatz}
  that $T_A$ maps $\cK \ominus J$ into $\cH \ominus I$.
  Replacing $A$ with $A^{-1}$, we see that $T_A \in \cB(\cK \ominus J, \cH \ominus I)$ is invertible.
  It now follows from Lemma \ref{lem:comp_operators} that $A$ induces a bounded invertible composition
  operator $C_A: \cH_I \to \cK_J$.
  As in the proof of Theorem \ref{thm:main_result_unitary}, we see that $\Ad(C_A)$ is the desired isomorphism.
\end{proof}

Just as in \cite{DRS11}, we obtain from the geometric rigidity result \cite[Proposition 7.6]{DRS11} a rigidity
result for our algebras. It generalizes \cite[Theorem 8.7]{DRS11}. The author is grateful to the anonymous referee
for pointing out this corollary.

\begin{cor}
  Let $\cH$ be a unitarily invariant complete NP-space and let $I$ and $J$ be relevant radical homogeneous
  ideals in $\bC[z_1,\ldots,z_d]$. Suppose that $V(I)$ or $V(J)$ is irreducible.
  \begin{enumerate}[label=\normalfont{(\alph*)}]
    \item If $A(\cH_I)$ and $A(\cH_J)$ are algebraically isomorphic, then $A(\cH_I)$ and $A(\cH_J)$
      are unitarily equivalent.
    \item If $\Mult(\cH_I)$ and $\Mult(\cH_J)$ are isomorphic via a weak*-weak* continuous
      isomorphism, then $\Mult(\cH_I)$ and $\Mult(\cH_J)$ are unitarily equivalent.
    \item If $\cH$ or $\cK$ is tame and $\Mult(\cH_I)$ and $\Mult(\cH_J)$ are algebraically
      isomorphic, then $\Mult(\cH_I)$ and $\Mult(\cH_J)$ are unitarily equivalent.
  \end{enumerate}
\end{cor}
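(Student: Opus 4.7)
The plan is to combine the algebraic isomorphism theorems (Theorem \ref{thm:main_result_algebraic} in its various forms) with a purely geometric rigidity statement for irreducible homogeneous varieties, and then feed the output into the unitary isomorphism theorem (Theorem \ref{thm:main_result_unitary}). Note that since only one ambient unitarily invariant complete NP-space $\cH$ is involved, the condition ``$\cH = \cK$ as Hilbert spaces'' required in Theorem \ref{thm:main_result_unitary} is automatic; the whole game is therefore to upgrade an \emph{isometric} linear map $V(J) \to V(I)$ to a \emph{unitary} map on $\bC^d$.

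First I would apply Theorem \ref{thm:main_result_algebraic} in each of the three hypotheses (a), (b), (c): in all three cases its equivalent condition (iii) yields an invertible linear map $A$ on $\bC^d$ which carries $V(J)$ isometrically onto $V(I)$. At this stage $A$ need only be an isometry on the subvariety $V(J)$, not on its linear span, let alone on $\bC^d$.

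Next I would invoke the geometric rigidity result \cite[Proposition 7.6]{DRS11}, which is the linchpin of the corollary. Under the additional hypothesis that $V(I)$ or $V(J)$ is an irreducible homogeneous variety, that proposition upgrades an isometric linear bijection $A\big|_{V(J)} : V(J) \to V(I)$ to the restriction of a unitary operator. Concretely, there exists a unitary map $U$ on $\bC^d$ whose restriction to $V(J)$ coincides with $A\big|_{V(J)}$; in particular $U$ maps $V(J)$ onto $V(I)$. (If neither variety is irreducible, this step can fail, because an isometry on a reducible cone need only be isometric on each irreducible component, and the components can be rearranged by different partial isometries that do not fit together into a single unitary. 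So irreducibility is exactly what makes the ``fit'' possible, and this is the main obstacle avoided by the hypothesis.)

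Finally, with $U$ in hand, I would invoke Theorem \ref{thm:main_result_unitary}: condition (iii) of that theorem is satisfied (with $\cK = \cH$), so its conclusion produces a unitary composition operator $C_U : \cH_I \to \cH_J$ such that $\operatorname{Ad}(C_U)$ maps $A(\cH_I)$ onto $A(\cH_J)$ isometrically and $\Mult(\cH_I)$ onto $\Mult(\cH_J)$ isometrically. This is precisely the unitary equivalence asserted in (a), (b), and (c) respectively. The only step that requires any real work beyond bookkeeping is the rigidity result \cite[Proposition 7.6]{DRS11}, which is quoted as a black box here.
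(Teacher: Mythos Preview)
Your proposal is correct and follows essentially the same route as the paper: apply Theorem \ref{thm:main_result_algebraic} to obtain an invertible linear map $A$ mapping $V(J)$ isometrically onto $V(I)$, invoke \cite[Proposition 7.6]{DRS11} (noting that both varieties are then irreducible, since $A$ is a linear isomorphism) to see that $A$ is isometric on $\spa V(J)$ and can thus be replaced by a unitary $U$, and conclude via Theorem \ref{thm:main_result_unitary}.
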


\begin{proof}
  In each case, Theorem \ref{thm:main_result_algebraic} shows that there exists an invertible
  linear map $A$ on $\bC^d$ which maps $V(J)$ isometrically onto $V(I)$. In particular,
  $V(I)$ and $V(J)$ are both irreducible. Proposition 7.6 in \cite{DRS11} implies
  that $A$ is isometric on the linear span of $V(J)$, and hence can be chosen to be unitary.
  All assertions now follow from Theorem \ref{thm:main_result_unitary}.
\end{proof}

Let us apply Theorems \ref{thm:main_result_unitary} and \ref{thm:main_result_algebraic} in the setting
where $\cH$ and $\cK$ are given by log-convex sequences (see part (b) of Remark \ref{rem:unitarily_invariant}).
This includes in particular the spaces $\cH_s(\bB_d)$ and $\cH_s(\ol{\bB_d})$ of Example \ref{exa:unit_invariant}.
If $\boldsymbol{a} = (a_n)_n$ is a sequence of positive real numbers such that the series
\begin{equation*}
  \sum_{n=0}^\infty a_n z^n
\end{equation*}
has radius of convergence $1$, we write $\cH(\boldsymbol{a})$ for the reproducing kernel
Hilbert space with reproducing kernel
\begin{equation*}
  K(z,w) = \sum_{n=0}^\infty a_n \langle z,w \rangle^n.
\end{equation*}
If $\sum_{n=0}^\infty a_n = \infty$, this is a reproducing kernel Hilbert space on $\bB_d$,
and if $\sum_{n=0}^\infty a_n < \infty$, this a space on $\ol{\bB_d}$.

\begin{cor}
  \label{cor:log_convex_spaces}
  Let $\boldsymbol{a} = (a_n)$ and $\boldsymbol{a'}= (a_n')$ be two log-convex sequences of positive real numbers such that
  \begin{equation*}
    a_0 = 1 = a_0'
  \end{equation*}
  and
  \begin{equation*}
    \lim_{n \to \infty} \frac{a_n}{a_{n+1}} = 1
    = \lim_{n \to \infty} \frac{a'_n}{a'_{n+1}}.
  \end{equation*}
  Let $\cH = \cH(\boldsymbol{a})$ and $\cK = \cH(\boldsymbol{a'})$. Let $I,J \subset \bC[z_1,\ldots,z_d]$ be two
  relevant radical homogeneous ideals of polynomials. Then the following are equivalent:
  \begin{enumerate}[label=\normalfont{(\roman*)}]
    \item $A(\cH_I)$ and $A(\cK_J)$ are isometrically isomorphic.
    \item $\Mult(\cH_I)$ and $\Mult(\cK_J)$ are isometrically isomorphic.
    \item $a_n = a_n'$ for all $n \in \bN$ and there exists a unitary map $U$ on $\bC^d$ which
      maps $V(J)$ onto $V(I)$.
  \end{enumerate}
  Moreover, the following assertions are equivalent as well:
  \begin{enumerate}[label=\normalfont{(\roman*)}]
    \item $A(\cH_I)$ and $A(\cK_J)$ are algebraically isomorphic.
    \item $\Mult(\cH_I)$ and $\Mult(\cK_J)$ are algebraically isomorphic.
    \item There exist constants $C_1,C_2 > 0$ such that
      \begin{equation*}
        C_1 \le \frac{a_n}{a_n'} \le C_2
      \end{equation*}
      for all $n \in \bN$ and there exists an invertible linear map $A$ on $\bC^d$
      which maps $V(J)$ isometrically onto $V(I)$.
  \end{enumerate}
\end{cor}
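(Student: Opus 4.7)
The plan is to reduce the corollary to Theorems \ref{thm:main_result_unitary} and \ref{thm:main_result_algebraic} and then translate the conditions ``$\cH = \cK$ as Hilbert spaces'' and ``$\cH = \cK$ as vector spaces'' into concrete statements about the sequences.

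First I would verify that $\cH = \cH(\boldsymbol{a})$ and $\cK = \cH(\boldsymbol{a'})$ are unitarily invariant complete NP-spaces (on $\bB_d$ or $\ol{\bB_d}$, depending on whether the sequence is summable or not). The hypothesis $a_0 = 1$ and the log-convexity of $(a_n)$ together with $a_n > 0$ give condition (b) of Remark \ref{rem:unitarily_invariant}, so the kernel has the Nevanlinna-Pick property via Lemma \ref{lem:NP_a_b}; algebraic consistency follows from Lemma \ref{lem:alg_cons_examples} once one knows the radius of convergence is $1$, which is automatic under the log-convexity hypothesis since $a_n/a_{n+1}$ is then monotone (Remark \ref{rem:unitarily_invariant}(b) again) and tends to the reciprocal of that radius. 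The same argument applies to $\boldsymbol{a'}$. Second, the hypothesis $\lim a_n / a_{n+1} = 1$ is precisely condition (c) of Proposition \ref{prop:tame}, so both $\cH$ and $\cK$ are tame. Consequently, Theorems \ref{thm:main_result_unitary} and \ref{thm:main_result_algebraic} are available, and moreover tameness allows us to drop the weak*-weak* continuity hypothesis in (ii) of each theorem, so algebraic (respectively isometric) isomorphism of $\Mult(\cH_I)$ and $\Mult(\cK_J)$ in the bare sense already implies the corresponding statement with weak*-weak* continuity.

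What remains is the translation step. For the isometric statement, Theorem \ref{thm:main_result_unitary} reduces everything to the existence of a unitary map $U$ on $\bC^d$ sending $V(J)$ onto $V(I)$, together with the equality $\cH = \cK$ as Hilbert spaces; the latter equality forces equality of the reproducing kernels, and uniqueness of the power series expansion of $K(z,w)$ in $\langle z,w\rangle$ gives $a_n = a_n'$ for all $n$, and conversely. For the algebraic statement, Theorem \ref{thm:main_result_algebraic} reduces to the existence of an invertible $A$ on $\bC^d$ isometric from $V(J)$ onto $V(I)$ together with equality $\cH = \cK$ as vector spaces. By the closed graph theorem, this last equality is equivalent to the identity map being a bounded invertible operator $\cH \to \cK$; using the orthogonal basis of monomials with norms computed in Remark \ref{rem:unitarily_invariant}(d), namely $\|z^\alpha\|_\cH^2 = \alpha!/(|\alpha|!\, a_{|\alpha|})$ and similarly for $\cK$, boundedness above and below of the identity is equivalent to the existence of constants $C_1, C_2 > 0$ with $C_1 \le a_n/a_n' \le C_2$ for all $n$.

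The main obstacle is not really an obstacle so much as a bookkeeping point: one must verify that all the standing hypotheses of ``unitarily invariant complete NP-space'' hold for $\cH(\boldsymbol{a})$, including algebraic consistency, across both cases (summable and non-summable) in a uniform way, and then match the two theorems' conclusions carefully against the corollary's conclusions. The translation between equality as vector spaces and two-sided bounds on $a_n/a_n'$ uses in an essential way that the homogeneous monomials are an \emph{orthogonal} basis of $\cH$ and $\cK$ simultaneously (so the norm comparison decouples across degrees), which is immediate from unitary invariance.
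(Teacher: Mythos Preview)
Your proposal is correct and follows essentially the same route as the paper's proof: verify that $\cH(\boldsymbol{a})$ and $\cH(\boldsymbol{a'})$ are tame unitarily invariant complete NP-spaces (via log-convexity for the NP property and Proposition~\ref{prop:tame}(c) for tameness), apply Theorems~\ref{thm:main_result_unitary} and~\ref{thm:main_result_algebraic}, and then translate ``$\cH=\cK$ as Hilbert spaces'' and ``$\cH=\cK$ as vector spaces'' into the conditions on the sequences using the monomial norm formula in Remark~\ref{rem:unitarily_invariant}(d) and the closed graph theorem. One small citation slip: the monotonicity of $a_n/a_{n+1}$ under log-convexity and its identification with the radius of convergence is discussed in the remark following Proposition~\ref{prop:tame} rather than in Remark~\ref{rem:unitarily_invariant}(b), but the mathematics is exactly right.
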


\begin{proof}
  The assumptions on $\boldsymbol{a}$ and $\boldsymbol{a'}$ imply that $\cH$ and $\cK$ are unitarily
  invariant complete NP-spaces on $\bB_d$ or on $\ol{\bB_d}$ (see the beginning of Section \ref{sec:unit_inv}).
  Proposition \ref{prop:tame} (c) shows that $\cH$ and $\cK$ are tame. The first set of equivalences
  is now an immediate consequence of Theorem \ref{thm:main_result_unitary}. To prove the second
  set of equivalences, in light of Theorem \ref{thm:main_result_algebraic},
  it suffices to show that $\cH = \cK$ as vector spaces if and only if there exist constants
  $C_1 , C_2 > 0$ such that
  \begin{equation*}
    C_1 \le \frac{a_n}{a_n'} \le C_2
  \end{equation*}
  for all $n \in \bN$. To this end, observe that if $\cH = \cK$ as vector spaces,
  then the formal identity $E: \cH \to \cK, f \mapsto f$, is bounded and invertible
  by the closed graph theorem, hence the existence of the constants follows from
  the description of the norm in part (d) of Remark \ref{rem:unitarily_invariant}.
  The other implication follows from part (d) of Remark \ref{rem:unitarily_invariant} as well.
\end{proof}

We finish this article by considering the last result about algebraic isomorphism from
the point of view adopted in \cite{DRS15} and \cite{DHS15}.
That is, we will identify a multiplier algebra $\Mult(\cH_I)$
with an algebra of the form $\cM_V = \Mult(H^2_\infty \big|_V)$ for
a suitable variety $V \subset \bB_\infty$.

We first show that most of our examples of unitarily invariant complete NP-spaces
cannot be embedded into a finite dimensional ball.
More generally, let $\cH$ be an irreducible complete Nevanlinna-Pick space on $\bB_d$
with reproducing kernel of the form
\begin{equation*}
  K(z,w) = \sum_{n=0}^\infty a_n \langle z,w \rangle^n,
\end{equation*} 
where $a_0 = 1$.
Recall that an embedding for $\cH$ is an injective function $j: \bB_d \to \bB_m$ for some $m \in \bN \cup \{\infty\}$
such that
\begin{equation*}
  \langle j(z), j(w) \rangle  =
  1 - \frac{1}{
    \sum_{n=0}^\infty a_n \langle z,w \rangle^n}
\end{equation*}
for all $z,w \in \bB_d$. By Lemma \ref{lem:NP_a_b},
there is a sequence $(c_n)$ of non-negative real numbers such that
\begin{equation*}
  1 - \frac{1}{\sum_{n=0}^\infty a_n \langle z,w \rangle^n} = \sum_{n=1}^{\infty} c_n \langle z,w \rangle^n
\end{equation*}
for all $z,w \in \bB_d$.
Since
\begin{equation*}
  \langle z,w \rangle^n = \sum_{|\alpha|=n} \binom{n}{\alpha} z^{\alpha} \ol{w}^\alpha
  = \langle \psi_n(z), \psi_n(w) \rangle,
\end{equation*}
where
\begin{equation*}
  \psi_n: \bC^d \to \bC^{\binom{n+d-1}{n}}, \quad z \mapsto \Bigg(\sqrt{\binom{n}{\alpha}} z^\alpha \Bigg)_{|\alpha| = n},
\end{equation*}
an embedding $j$ for $\cH$ can be explicitly constructed by setting
\begin{equation*}
  j(z) = (\sqrt{c_1} \psi_1(z), \sqrt{c_2} \psi_2(z), \sqrt{c_3} \psi_3(z), \ldots).
\end{equation*}
Using the fact that $\sum_{n=1}^\infty c_n \le 1$, it is not hard to see that $j$ is an analytic
map from $\bB_d$ into $\bB_m$ which extends to a norm continuous map from $\ol{\bB_d}$ to $\ol{\bB_m}$.
If $d=1$, these embeddings are simply the embeddings considered in Section 7 and 8 of \cite{DHS15}.

In particular, we see that if only finitely many of the $c_n$ are non-zero, then $\cH$ admits an embedding
into a finite dimensional ball, that is, there exists $m < \infty$ and an embedding
$j: \bB_d \to \bB_m$ for $\cH$. In fact, this property characterizes unitarily invariant spaces which admit
an embedding into a finite dimensional ball.

\begin{prop}
  \label{prop:finite_embedding}
  Let $\cH$ be an irreducible complete Nevanlinna-Pick space on $\bB_d$ 
  with reproducing kernel of the form
  \begin{equation*}
    K(z,w) = \sum_{n=0}^\infty a_n \langle z,w \rangle^n,
  \end{equation*}
  where $a_0 = 1$.
  Then $\cH$ admits an embedding into a finite dimensional ball if and only if the analytic
  function $f$ on $\bD$ defined by
  \begin{equation*}
    f(t) = \frac{1}{\sum_{n=0}^\infty a_n t^n}
  \end{equation*}
  is a polynomial.
\end{prop}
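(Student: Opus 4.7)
The plan is to translate the question into a dimension count on reproducing kernel Hilbert spaces. By Lemma \ref{lem:NP_a_b} there is a sequence $(c_n)_{n=1}^\infty$ of non-negative real numbers with $1 - f(t) = \sum_{n=1}^\infty c_n t^n$, so $f$ is a polynomial if and only if $c_n = 0$ for all but finitely many $n$. The backward direction is essentially the explicit construction in the paragraph preceding the proposition: if only finitely many $c_n$ are nonzero, then $j(z) = (\sqrt{c_n}\,\psi_n(z))_{n \ge 1}$ has only finitely many nonzero components, each taking values in a finite-dimensional space, so $j$ maps $\bB_d$ into $\bB_M$ with $M = \sum_{n : c_n > 0} \binom{n+d-1}{n} < \infty$.

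For the forward direction, assume $\cH$ admits an embedding $j \colon \bB_d \to \bB_m$ with $m < \infty$, and write $j = (j_1, \ldots, j_m)$. The defining identity of the embedding translates into
\begin{equation*}
  L(z,w) := \sum_{n=1}^\infty c_n \langle z,w\rangle^n = \sum_{i=1}^m j_i(z)\,\overline{j_i(w)},
\end{equation*}
so the kernel $L$ admits two representations that determine the same reproducing kernel Hilbert space $\cH_L$. The right-hand representation exhibits $\cH_L$ as the linear span of $\{j_1, \ldots, j_m\}$, hence $\dim \cH_L \le m$. On the other hand, $L$ is $\bT$-invariant in the sense of Section \ref{sec:graded_spaces}, so by Lemma \ref{lem:kernel_decomposition} its homogeneous expansion gives an orthogonal direct sum decomposition
\begin{equation*}
  \cH_L = \bigoplus_{n \,:\, c_n > 0} \cH\bigl(c_n \langle \cdot, \cdot\rangle^n \bigr),
\end{equation*}
where each summand has dimension $\binom{n+d-1}{n} \ge 1$. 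Comparing the two expressions for $\dim \cH_L$ forces only finitely many $c_n$ to be nonzero, which is the desired conclusion.

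The main obstacle is the rigorous dimension identification of $\cH_L$ from the two sides. The upper bound from the right-hand representation is a standard fact about sum-of-squares kernels, while the lower bound from the left-hand representation relies crucially on the orthogonality of the homogeneous components, which is precisely what Lemma \ref{lem:kernel_decomposition} supplies in this setting.
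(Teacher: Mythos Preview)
Your proof is correct and follows essentially the same approach as the paper: both bound the dimension of the reproducing kernel Hilbert space $\cH_L$ with kernel $L = 1 - 1/K$ from above by $m$ using the representation $L(z,w) = \langle j(z), j(w) \rangle_{\bC^m}$, and from below by exploiting the orthogonality of the homogeneous components to see that each $n$ with $c_n \neq 0$ contributes at least one dimension. The only cosmetic difference is that the paper uses a single monomial $z_1^n$ per degree rather than the full summand $\cH(c_n\langle\cdot,\cdot\rangle^n)$, and argues the upper bound via the rank of finite Gram matrices rather than invoking the sum-of-squares spanning fact directly.
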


\begin{proof}
  With notation as in the discussion preceding the proposition, observe that
  \begin{equation*}
    1 - f(t) = \sum_{n=1}^\infty c_n t^n
  \end{equation*}
  for all $t \in \bD$. Hence, $f$ is indeed an analytic function on $\bD$, and $f$ is a polynomial
  if and only if all but finitely many $c_n$ are zero.

  For the proof of the remaining implication, suppose that $\cH$ admits an embedding $j$ into $\bB_m$
  for some $m < \infty$. From
  \begin{equation*}
    1 - \frac{1}{K(z,w)} = \langle j(z), j(w) \rangle_{\bC^m}, 
  \end{equation*}
  we deduce that the rank of the kernel $L = 1 - 1/K$ is at most $m$ in the sense that
  for any finite collection of points $\{z_1,\ldots,z_n\}$, the matrix
  \begin{equation*}
    \Big( L(z_i,z_j) \Big)_{i,j=1}^n
  \end{equation*}
  has rank at most $m$. Let $\cK$ denote the reproducing kernel Hilbert space on $\bB_d$ with reproducing kernel $L$.
  Since
  \begin{equation*}
    \langle L(\cdot,w), L(\cdot,z) \rangle_{\cK} = L(z,w),
  \end{equation*}
  and since $\cK$ is spanned by the kernel functions $L(\cdot,z)$ for $z \in \bB_d$, it follows that
  the dimension of $\cK$ is at most $m$. However, $L$ also admits the representation
  \begin{equation*}
    L(z,w) = \sum_{n=1}^\infty c_n \langle z,w \rangle^n,
  \end{equation*}
  hence for every $n \in \bN$ with $c_n \neq 0$, the space $\cK$ contains the monomial $z_1^n$, and different
  monomials are orthogonal. Consequently, $c_n = 0$ for all but finitely many $n$, so $f$ is a polynomial.
\end{proof}

As a consequence, we see that all spaces in Example \ref{exa:unit_invariant} besides
the Drury-Arveson space
do not admit an embedding into a finite dimensional ball.

\begin{cor}
  Let $\cH$ be a unitarily invariant complete NP-space
  with reproducing kernel of the form
  \begin{equation*}
    K(z,w) = \sum_{n=0}^\infty a_n \langle z,w \rangle^n.
  \end{equation*}
  If $\cH$ admits an embedding into a finite dimensional ball, then
  the sequence $(a_n)$ converges to a positive real number, and hence $\cH = H^2_d$ as vector spaces.
  In particular, the space $\cH_s(\bB_d)$ for $-1 \le s < 0$,
  the space $\cH_s(\ol{\bB_d})$ for $s < -1$, and the space $\cK_\alpha$ for $0 < \alpha < 1$ do not
  admit an embedding into a finite dimensional ball.
\end{cor}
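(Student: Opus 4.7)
The plan is to translate the embedding assumption via Proposition \ref{prop:finite_embedding} into a structural statement about the power series $h(t) := \sum_{n=0}^\infty a_n t^n$, and then extract the asymptotics of $a_n$ from this structure. By Proposition \ref{prop:finite_embedding}, $\cH$ admits an embedding into a finite dimensional ball if and only if $f(t) := 1/h(t)$ is a polynomial on $\bD$. Combining this with Lemma \ref{lem:NP_a_b}, which gives $1 - f(t) = \sum_{n=1}^\infty b_n t^n$ with all $b_n \ge 0$, we see that $1 - f$ is a polynomial $\sum_{n=1}^N b_n t^n$ with non-negative coefficients. Computing the Taylor coefficient of $1 - 1/h(t)$ at $t = 0$ gives $b_1 = a_1$, and $\cH$ containing the coordinate functions forces $b_1 > 0$.

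First I would rule out the case where $\cH$ is a space on $\ol{\bB_d}$. In that case $h(1) = \sum a_n < \infty$, so $f(1) > 0$; but the radius of convergence of $h$ is still $1$, so the polynomial $f$ must have a zero on $\partial \bD$. Since $a_n \ge 0$, Vivanti--Pringsheim's theorem forces that zero to lie at $t = 1$, contradicting $f(1) > 0$. Hence $\cH$ is a space on $\bB_d$, $h(1) = \infty$, $f(1) = 0$, and consequently $\sum_{n=1}^N b_n = 1$.

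I would then analyze the zeros of $f$ on $\partial \bD$. Since $f'(1) = -\sum_{n=1}^N n b_n \le -b_1 < 0$, the point $t = 1$ is a simple zero. If $f(e^{i\theta}) = 0$ for some $\theta \in (0, 2\pi)$, then $\sum b_n e^{in\theta} = 1$; combined with $\sum b_n = 1$ and $b_n \ge 0$, the equality case of the triangle inequality forces $e^{in\theta} = 1$ for every $n$ with $b_n > 0$, and in particular for $n = 1$, giving $e^{i\theta} = 1$, a contradiction. So $f(t) = (1-t)\, g(t)$ for some polynomial $g$ with no zeros on $\ol{\bD}$. Then $1/g$ is analytic in a neighborhood of $\ol{\bD}$ and expands as $1/g(t) = \sum_{n=0}^\infty c_n t^n$ with $c_n$ decaying geometrically. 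The Cauchy product $h(t) = (1-t)^{-1} \cdot (1/g(t))$ yields $a_m = \sum_{n=0}^m c_n \to 1/g(1) > 0$. Part (d) of Remark \ref{rem:unitarily_invariant} then shows that $(a_n)$ being bounded above and below by positive constants gives equivalence of the norms of $\cH$ and $H^2_d$ on the monomial basis, so $\cH = H^2_d$ as vector spaces.

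The main obstacle is establishing that $t = 1$ is the unique zero of $f$ on the unit circle, which is the crucial input preventing oscillatory asymptotics of $a_n$; the strict positivity of $b_1$ combined with the equality case of the triangle inequality handles this cleanly. For the specific examples listed in the corollary, $\cH_s(\bB_d)$ with $-1 \le s < 0$ and $\cH_s(\ol{\bB_d})$ with $s < -1$ both have $a_n = (n+1)^{-s} \to \infty$, while $\cK_\alpha$ with $0 < \alpha < 1$ has $a_n \sim n^{\alpha - 1}/\Gamma(\alpha) \to 0$ by Stirling's formula, so in each case $(a_n)$ fails to converge to a positive real, and the first part of the corollary rules out an embedding into a finite-dimensional ball.
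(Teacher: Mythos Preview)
Your proof is correct. The setup matches the paper's: both invoke Proposition \ref{prop:finite_embedding} to obtain that $1/h(t)$ is a polynomial with non-negative coefficients in its $1-f$ expansion, and both use a Pringsheim-type argument (combined with the hypothesis that the radius of convergence is $1$) to rule out the case $\sum a_n < \infty$ and conclude $\sum_{n=1}^N b_n = 1$.

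The divergence is in extracting the limit $\lim_n a_n$. The paper simply invokes the Erd\H{o}s--Feller--Pollard renewal theorem, which takes as input a probability sequence $(b_n)$ with $\gcd\{n : b_n > 0\} = 1$ (guaranteed here by $b_1 > 0$) and outputs $a_n \to 1/\sum n b_n$. You instead give a self-contained complex-analytic argument: the key step, that $t=1$ is the \emph{only} zero of $f$ on $\partial \bD$ (via the equality case of the triangle inequality, again using $b_1 > 0$), together with simplicity of that zero, lets you factor $f(t) = (1-t)g(t)$ with $g$ zero-free on $\ol{\bD}$ and read off $a_m = \sum_{n \le m} c_n \to 1/g(1) = 1/\sum n b_n$. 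Your argument is essentially a proof of the special case of Erd\H{o}s--Feller--Pollard needed here; it is longer but avoids citing an external theorem, and it makes transparent exactly where the aperiodicity condition $b_1 > 0$ enters. One small point of phrasing: Vivanti--Pringsheim guarantees that $t=1$ \emph{is} a singular point, not that every boundary singularity lies there; what you actually need (and what you use) is the former, which already forces $f(1)=0$ and gives the contradiction in the $\ol{\bB_d}$ case.
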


\begin{proof}
  Assume that $\cH$ admits an embedding into a finite dimensional ball.
  Proposition \ref{prop:finite_embedding} implies that there exists $N \in \bN$ and non-negative
  real numbers $c_1,\ldots,c_N$ such that
  \begin{equation*}
    \sum_{n=0}^\infty a_n t^n = \frac{1}{1 - \sum_{n=1}^N c_n t^n}.
  \end{equation*}
  Observe that $c_1 > 0$ as $a_1 > 0$.
  Since the power series on the left-hand side has radius of convergence $1$, this rational
  function in $t$ has a pole on $\partial \bD$. Because $a_n \ge 0$ for all $n \in \bN$, this is only
  possible if $\sum_{n=0}^\infty a_n = \infty$, from which we deduce that $\sum_{n=1}^N c_n = 1$.
  Let $r = \sum_{n=1}^N n c_n$. In this setting,
  the Erd\H{o}s-Feller-Pollard theorem (see \cite[Chapter XIII, Section 11]{Feller68})
  implies that $\lim_{n \to \infty} a_n = 1/ r > 0$. The remaining assertions are now obvious.
\end{proof}

Suppose now that $\boldsymbol{a}$ is a sequence as in Corollary \ref{cor:log_convex_spaces},
and assume first
that we are in the case where $\sum_{n=0}^{\infty} a_n = \infty$.
Let $j_{\boldsymbol{a}}: \bB_d \to \bB_\infty$ denote the embedding
for $\cH(\boldsymbol{a})$ which was constructed above.
Note that $c_1 \neq 0$ as $a_1 \neq 0$, that the coordinates
$j_{\boldsymbol{a}}$ are polynomials (in fact monomials), and that
the first $d$ coordinates are given by $(\sqrt{c_1} z_1,\ldots, \sqrt{c_1} z_d)$.
In particular, $j_{\boldsymbol{a}}: \bB_d \to V_{\boldsymbol{a}}$ is invertible, where $V_{\boldsymbol{a}}$ denotes
the range of $j_{\boldsymbol{a}}$. An inverse of $j_{\boldsymbol{a}}$ is
given by
\begin{equation}
  \label{eqn:phi_a_inverse}
  j_{\boldsymbol{a}}^{-1} (z) = \frac{1}{\sqrt{c_1}} (z_1,\ldots,z_d)  
\end{equation}
for $z \in V_{\boldsymbol{a}}$.

Suppose now that $I \subset \bC[z_1,\ldots,z_d]$ is a relevant radical homogeneous ideal. Then
the restriction of $j_{\boldsymbol{a}}$ to $Z^0(I)$ is an embedding for $\cH(\boldsymbol{a})_I$.
Since $\cH(\boldsymbol{a})_I$ is algebraically consistent, the image
\begin{equation*}
  V_{\boldsymbol{a},I} = j_{\boldsymbol{a}}(Z^0(I)) \subset \bB_\infty
\end{equation*}
is a variety by Proposition \ref{prop:alg_consistent_equiv}. Moreover, $j_a$ maps $Z^0(I)$ biholomorphically
onto $V_{\boldsymbol{a},I}$.
This discussion also applies to the case where $\sum_{n=0}^{\infty} a_n < \infty$ by simply replacing $\bB_d$ with $\ol{\bB_d}$
and $Z^0(I)$ with $Z(I)$ above.
In this case, $\sum_{n=1}^\infty c_n < 1$ and $j_{\boldsymbol{a}}$ maps $Z(I)$ homeomorphically
onto $V_{\boldsymbol{a},I}$ and $Z^0(I)$ biholomorphically onto its image.

Let $m \in \bN \cup \{\infty\}$. For a variety $V \subset \bB_m$, let $\cM_V = \Mult(H^2_m \big|_{V})$.
Following \cite{DHS15}, we say that
two varieties $V,W \subset \bB_m$ are \emph{multiplier biholomorphic} if there
exists a homeomorphism $F: V \to W$ such that every coordinate of $F$ is in $\cM_V$ and
every coordinate of $F^{-1}$ is in $\cM_W$. If $m < \infty$, then such a map is automatically
a biholomorphism in the usual sense. This definition
is motivated by \cite[Theorem 5.6]{DRS15},
which states that if $\cM_V$ and $\cM_W$ are algebraically isomorphic, then
$V$ and $W$ are multiplier biholomorphic, provided that $m < \infty$ and $V$ and $W$ satisfy
some mild geometric assumptions.
Moreover, there are examples of two discs in $\bB_2$ which are biholomorphic,
but not multiplier biholomorphic (see \cite[Section 5]{DHS15}).

However,
already the results in Section 7 and 8 of \cite{DHS15} show that there are multiplier biholomorphic
discs in $\bB_\infty$ whose multiplier algebras are not isomorphic.
It turns out that for the varieties $V_{\boldsymbol{a},I}$ constructed above, of which
the discs from Section 7 and 8 of \cite{DHS15} are a special case,
the multiplier biholomorphism classes
only depend on the ideal $I$ and on summability of the sequence $\boldsymbol{a}$. They do not detect
any other properties of the sequence $\boldsymbol{a}$.
In light of Corollary \ref{cor:log_convex_spaces}, this means that the relation of multiplier biholomorphism fails rather dramatically
at distinguishing the isomorphism classes of the algebras $\cM_{V_{\boldsymbol{a},I}}$.

\begin{prop}
  \label{prop:bihol_vs_mult_bihol}
  Let $\boldsymbol{a} = (a_n)$ and $\boldsymbol{a'} = (a_n')$ be sequences as in Corollary \ref{cor:log_convex_spaces},
  and let $I,J \subset \bC[z_1,\ldots,z_d]$ be relevant radical homogeneous ideals. Let
  $V_{\boldsymbol{a},I}$ and $V_{\boldsymbol{a'},J}$ be the varieties defined above. Then the following are equivalent:
  \begin{enumerate}[label=\normalfont{(\roman*)}]
    \item $V_{\boldsymbol{a},I}$ and $V_{\boldsymbol{a'},J}$ are multiplier biholomorphic.
    \item The sequences $\boldsymbol{a}$ and $\boldsymbol{a'}$ are either both summable or both not summable, 
      and there exists an invertible linear map $A$ on $\bC^d$ which maps $V(J)$ isometrically onto $V(I)$.
  \end{enumerate}
\end{prop}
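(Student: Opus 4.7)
For (ii) $\Rightarrow$ (i), I would set $F := j_{\boldsymbol{a}} \circ A \circ j_{\boldsymbol{a'}}^{-1}$, where $A$ is as in (ii). Because $A$ is isometric on $V(J)$ and the summability types of $\boldsymbol{a}$ and $\boldsymbol{a'}$ match, the domains and images line up so that $F$ is a bijection from $V_{\boldsymbol{a'},J}$ onto $V_{\boldsymbol{a},I}$. By \eqref{eqn:phi_a_inverse}, the coordinates of $j_{\boldsymbol{a'}}^{-1}$ are linear in $w_1,\ldots,w_d$, and the coordinates of $j_{\boldsymbol{a}}$ are monomials in the coordinates of its argument; composing with the linear map $A$ therefore produces polynomial coordinates for $F$ in $w_1,\ldots,w_d$. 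Since the coordinate functions $w_1,\ldots,w_d$ are multipliers on $V_{\boldsymbol{a'},J}$ (as restrictions of the coordinate multipliers on $H^2_\infty$), every coordinate of $F$ lies in $\cM_{V_{\boldsymbol{a'},J}}$. The same reasoning applied to $A^{-1}$ handles $F^{-1}$.

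For (i) $\Rightarrow$ (ii), let $F: V_{\boldsymbol{a'},J} \to V_{\boldsymbol{a},I}$ be a multiplier biholomorphism. My first task is to verify that the summability types agree: in the summable case $V_{\boldsymbol{a},I}$ equals the continuous image of the compact set $Z(I)$, hence is compact, while in the non-summable case $V_{\boldsymbol{a},I}$ is homeomorphic via $j_{\boldsymbol{a}}$ (whose inverse is continuous by \eqref{eqn:phi_a_inverse}) to the non-compact set $Z^0(I)$, which contains a non-trivial disc because $I$ is relevant. Since $F$ is a homeomorphism, compactness is preserved, so the two regimes cannot be mixed. Next, I would define $\widetilde F := j_{\boldsymbol{a}}^{-1} \circ F \circ j_{\boldsymbol{a'}}$ on $Z^0(J)$. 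Using \eqref{eqn:phi_a_inverse}, its $i$-th coordinate for $1 \le i \le d$ equals $(1/\sqrt{c_1}) F_i \circ j_{\boldsymbol{a'}}$, and the unitary $f \mapsto f \circ j_{\boldsymbol{a'}}$ from Theorem \ref{thm:DA_univ} identifies each such coordinate with an element of $\Mult(\cH(\boldsymbol{a'})_J)$. By the Nevanlinna-Pick property this multiplier extends to a multiplier on $\cH(\boldsymbol{a'})$, and since $1 \in \cH(\boldsymbol{a'})$ it is therefore an analytic function on $\bB_d$. Hence $\widetilde F$ is holomorphic in the sense of Section \ref{sec:hol_maps_on_varieties}; Lemma \ref{lem:maximum_modulus} combined with non-constancy (which holds because $F$ is a bijection onto a non-degenerate variety) places its image in $Z^0(I)$. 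The symmetric argument for $F^{-1}$ shows $\widetilde F^{-1}$ is holomorphic, so $\widetilde F: Z^0(J) \to Z^0(I)$ is a biholomorphism.

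Finally, Lemma \ref{lem:bihol_turn} supplies $\lambda, \mu \in \bT$ such that $\widetilde F \circ U_\lambda \circ \widetilde F^{-1} \circ U_\mu \circ \widetilde F$ fixes the origin, and Proposition \ref{prop:bihol_0_linear} then promotes this composition to an invertible linear map $A$ on $\bC^d$ mapping $V(J)$ isometrically onto $V(I)$, which together with the summability matching yields (ii). The main obstacle is the middle paragraph: translating the multiplier condition on $F$ across the Agler-McCarthy unitary into the statement that the first $d$ coordinates of $\widetilde F$ are analytic functions on $Z^0(J)$, at which point the rigidity results of Section \ref{sec:hol_maps_on_varieties} apply directly.
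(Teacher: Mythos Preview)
Your proposal is correct and follows essentially the same route as the paper: for (ii) $\Rightarrow$ (i) you use the map $F = j_{\boldsymbol{a}} \circ A \circ j_{\boldsymbol{a'}}^{-1}$ with polynomial coordinates, and for (i) $\Rightarrow$ (ii) you match summability via compactness, pull $F$ back to a biholomorphism $Z^0(J) \to Z^0(I)$, and then invoke Lemma~\ref{lem:bihol_turn} and Proposition~\ref{prop:bihol_0_linear}. The only difference is that you spell out the holomorphicity of $\widetilde F$ more carefully via the Agler--McCarthy unitary and the Nevanlinna--Pick extension property, whereas the paper simply asserts that the transferred map is analytic on $Z^0(J)$ (respectively $Z^0(I)$) and applies Lemma~\ref{lem:maximum_modulus} in the summable case; this is a matter of detail rather than a different argument.
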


\begin{proof}

  (i) $\Rightarrow$ (ii) Observe that $V_{\boldsymbol{a},I}$ is homeomorphic
  to $Z^0(I)$ if $\boldsymbol{a}$ is not
  summable and homeomorphic to $Z(I)$ if $\boldsymbol{a}$ is summable. Since $Z(I)$ is compact and $Z^0(I)$ is not,
  it follows that if (i) holds, then $\boldsymbol{a}$ and $\boldsymbol{a'}$ are either both summable or both not summable. In the non-summable
  case, $Z^0(I)$ and $Z^0(J)$ are biholomorphic. In the summable case, there is a homeomorphism
  $F:Z(I) \to Z(J)$ which is analytic on $Z^0(I)$ and whose inverse is analytic on $Z^0(J)$.
  Then Lemma \ref{lem:maximum_modulus} implies that $Z^0(I)$ and $Z^0(J)$ are biholomorphic. Finally, an
  application of Lemma \ref{lem:bihol_turn}
  and Proposition \ref{prop:bihol_0_linear} shows that there exists an invertible linear map $A$
  on $\bC^d$ which maps $V(J)$ isometrically onto $V(I)$.

  (ii) $\Rightarrow$ (i) Let us first assume that $\boldsymbol{a}$ and $\boldsymbol{a'}$ are both not summable, and let
  $j_{\boldsymbol{a}}: \bB_d \to V_{\boldsymbol{a}}$ and $j_{\boldsymbol{a'}}:\bB_d \to V_{\boldsymbol{a'}}$ be the embeddings constructed earlier.
  Then $F = j_{\boldsymbol{a}} \circ A \circ j_{\boldsymbol{a'}}^{-1}$ maps $V_{\boldsymbol{a'},J}$ homeomorphically
  onto $V_{\boldsymbol{a},I}$.
  From Equation \eqref{eqn:phi_a_inverse} and the fact that the coordinates of $j_{\boldsymbol{a}}$ are polynomials,
  we deduce that the coordinates of $F$ are polynomials in $z_1,\ldots,z_d$. Similarly, the coordinates
  of $F^{-1}$ are polynomials in $z_1,\ldots,z_d$, hence $F$ is a multiplier biholomorphism.

  After replacing $\bB_d$ with $\ol{\bB_d}$, the same argument applies in the situation where $\boldsymbol{a}$ and $\boldsymbol{a'}$ are both summable. Hence, the proof is complete.
\end{proof}

\begin{ack}
  The author would like to thank his advisor, Ken Davidson, for his advice and support. Moreover, he is grateful
  to the anonymous referee for his or her careful reading and for his or her very valuable comments which improved this article.
\end{ack}

\bibliographystyle{amsplain}
\bibliography{literature.bib}

\providecommand{\bysame}{\leavevmode\hbox to3em{\hrulefill}\thinspace}
\providecommand{\MR}{\relax\ifhmode\unskip\space\fi MR }
% \MRhref is called by the amsart/book/proc definition of \MR.
\providecommand{\MRhref}[2]{%
  \href{http://www.ams.org/mathscinet-getitem?mr=#1}{#2}
}
\providecommand{\href}[2]{#2}
\begin{thebibliography}{10}

\bibitem{AM00}
Jim Agler and John~E. McCarthy, \emph{Complete {N}evanlinna-{P}ick kernels}, J.
  Funct. Anal. \textbf{175} (2000), no.~1, 111--124. \MR{1774853 (2001h:47019)}

\bibitem{Am00a}
\bysame, \emph{Nevanlinna-{P}ick kernels and localization}, Operator
  theoretical methods ({T}imi\c soara, 1998), Theta Found., Bucharest, 2000,
  pp.~1--20. \MR{1770310 (2001i:47019)}

\bibitem{AM02}
\bysame, \emph{Pick interpolation and {H}ilbert function spaces}, Graduate
  Studies in Mathematics, vol.~44, American Mathematical Society, Providence,
  RI, 2002. \MR{1882259 (2003b:47001)}

\bibitem{APV03}
Daniel Alpay, Mihai Putinar, and Victor Vinnikov, \emph{A {H}ilbert space
  approach to bounded analytic extension in the ball}, Commun. Pure Appl. Anal.
  \textbf{2} (2003), no.~2, 139--145. \MR{1975056 (2004b:47013)}

\bibitem{ARS08}
N.~Arcozzi, R.~Rochberg, and E.~Sawyer, \emph{Carleson measures for the
  {D}rury-{A}rveson {H}ardy space and other {B}esov-{S}obolev spaces on complex
  balls}, Adv. Math. \textbf{218} (2008), no.~4, 1107--1180. \MR{2419381
  (2009j:46062)}

\bibitem{Aronszajn50}
N.~Aronszajn, \emph{Theory of reproducing kernels}, Trans. Amer. Math. Soc.
  \textbf{68} (1950), 337--404. \MR{0051437 (14,479c)}

\bibitem{CM95}
Carl~C. Cowen and Barbara~D. MacCluer, \emph{Composition operators on spaces of
  analytic functions}, Studies in Advanced Mathematics, CRC Press, Boca Raton,
  FL, 1995. \MR{1397026 (97i:47056)}

\bibitem{CM98}
\bysame, \emph{Some problems on composition operators}, Studies on composition
  operators ({L}aramie, {WY}, 1996), Contemp. Math., vol. 213, Amer. Math.
  Soc., Providence, RI, 1998, pp.~17--25. \MR{1601056 (99d:47029)}

\bibitem{Curto81}
Raul~E. Curto, \emph{Fredholm and invertible {$n$}-tuples of operators. {T}he
  deformation problem}, Trans. Amer. Math. Soc. \textbf{266} (1981), no.~1,
  129--159. \MR{613789 (82g:47010)}

\bibitem{Dales78}
H.~G. Dales, \emph{Automatic continuity: a survey}, Bull. London Math. Soc.
  \textbf{10} (1978), no.~2, 129--183. \MR{500923 (80c:46053)}

\bibitem{DHS15a}
Kenneth~R. Davidson, Michael Hartz, and Orr~Moshe Shalit, \emph{Erratum to:
  Multipliers of embedded discs}, Complex Analysis and Operator Theory
  \textbf{9} (2015), no.~2, 323--327.

\bibitem{DHS15}
\bysame, \emph{Multipliers of embedded discs}, Complex Analysis and Operator
  Theory \textbf{9} (2015), no.~2, 287--321, Erratum, ibid., p 323--327.

\bibitem{DP98a}
Kenneth~R. Davidson and David~R. Pitts, \emph{The algebraic structure of
  non-commutative analytic {T}oeplitz algebras}, Math. Ann. \textbf{311}
  (1998), no.~2, 275--303, Erratum, Math. Ann. 361 (2015), no. 3--4,
  1123--1124. \MR{1625750 (2001c:47082)}

\bibitem{DP98}
\bysame, \emph{Nevanlinna-{P}ick interpolation for non-commutative analytic
  {T}oeplitz algebras}, Integral Equations Operator Theory \textbf{31} (1998),
  no.~3, 321--337. \MR{1627901 (2000g:47016)}

\bibitem{DP99}
\bysame, \emph{Invariant subspaces and hyper-reflexivity for free semigroup
  algebras}, Proc. London Math. Soc. (3) \textbf{78} (1999), no.~2, 401--430.
  \MR{1665248 (2000k:47005)}

\bibitem{DRS11}
Kenneth~R. Davidson, Christopher Ramsey, and Orr~Moshe Shalit, \emph{The
  isomorphism problem for some universal operator algebras}, Adv. Math.
  \textbf{228} (2011), no.~1, 167--218. \MR{2822231 (2012j:46083)}

\bibitem{DRS15}
\bysame, \emph{Operator algebras for analytic varieties}, Trans. Amer. Math.
  Soc. \textbf{367} (2015), no.~2, 1121--1150. \MR{3280039}

\bibitem{EP96}
J{\"o}rg Eschmeier and Mihai Putinar, \emph{Spectral decompositions and
  analytic sheaves}, London Mathematical Society Monographs. New Series,
  vol.~10, The Clarendon Press Oxford University Press, New York, 1996.
  \MR{1420618 (98h:47002)}

\bibitem{Feller68}
William Feller, \emph{An introduction to probability theory and its
  applications. {V}ol. {I}}, Third edition, John Wiley \& Sons Inc., New York,
  1968. \MR{0228020 (37 \#3604)}

\bibitem{Garnett07}
John~B. Garnett, \emph{Bounded analytic functions}, first ed., Graduate Texts
  in Mathematics, vol. 236, Springer, New York, 2007. \MR{2261424
  (2007e:30049)}

\bibitem{GRS05}
Jim Gleason, Stefan Richter, and Carl Sundberg, \emph{On the index of invariant
  subspaces in spaces of analytic functions of several complex variables}, J.
  Reine Angew. Math. \textbf{587} (2005), 49--76. \MR{2186975 (2006i:47013)}

\bibitem{GRS02}
Devin C.~V. Greene, Stefan Richter, and Carl Sundberg, \emph{The structure of
  inner multipliers on spaces with complete {N}evanlinna-{P}ick kernels}, J.
  Funct. Anal. \textbf{194} (2002), no.~2, 311--331. \MR{1934606 (2003h:46038)}

\bibitem{GHX04}
Kunyu Guo, Junyun Hu, and Xianmin Xu, \emph{{Toeplitz} algebras, subnormal
  tuples and rigidity on reproducing {$\bold C[z_1,\dots,z_d]$}-modules}, J.
  Funct. Anal. \textbf{210} (2004), no.~1, 214--247. \MR{2052120 (2005a:47007)}

\bibitem{Hartz12}
Michael Hartz, \emph{Topological isomorphisms for some universal operator
  algebras}, J. Funct. Anal. \textbf{263} (2012), no.~11, 3564--3587.

\bibitem{Hartz12a}
\bysame, \emph{Universal operator algebras for commuting row contractions},
  Master's thesis, Universit\"at des Saarlandes, 2012.

\bibitem{HS71}
T.~L. Hayden and T.~J. Suffridge, \emph{Biholomorphic maps in {H}ilbert space
  have a fixed point}, Pacific J. Math. \textbf{38} (1971), 419--422.
  \MR{0305158 (46 \#4288)}

\bibitem{KMS13}
Matt Kerr, John~E. McCarthy, and Orr Shalit, \emph{On the isomorphism question
  for complete pick multiplier algebras}, Integral Equations Operator Theory
  \textbf{76} (2013), no.~1, 39--53.

\bibitem{MS15}
John~E. McCarthy and Orr~Moshe Shalit, \emph{Spaces of dirichlet series with
  the complete pick property}, arXiv:1507.04162 (2015).

\bibitem{Muller07}
Vladimir M{\"u}ller, \emph{Spectral theory of linear operators and spectral
  systems in {B}anach algebras}, second ed., Operator Theory: Advances and
  Applications, vol. 139, Birkh\"auser Verlag, Basel, 2007. \MR{2355630
  (2008g:47013)}

\bibitem{Rudin08}
Walter Rudin, \emph{Function theory in the unit ball of {$\Bbb C\sp n$}},
  Classics in Mathematics, Springer-Verlag, Berlin, 2008, Reprint of the 1980
  edition. \MR{2446682 (2009g:32001)}

\bibitem{SS14}
Guy Salomon and Orr Shalit, \emph{The isomorphism problem for complete pick
  algebras: a survey}, arXiv:1412.7817 (2014).

\bibitem{SS09}
Orr~Moshe Shalit and Baruch Solel, \emph{Subproduct systems}, Doc. Math.
  \textbf{14} (2009), 801--868. \MR{2608451 (2011k:46095)}

\bibitem{Shields74}
Allen~L. Shields, \emph{Weighted shift operators and analytic function theory},
  Topics in operator theory, Amer. Math. Soc., Providence, R.I., 1974,
  pp.~49--128. Math. Surveys, No. 13. \MR{0361899 (50 \#14341)}

\bibitem{WW52}
E.~T. Whittaker and G.~N. Watson, \emph{A course of modern analysis. {A}n
  introduction to the general theory of infinite processes and of analytic
  functions; with an account of the principal transcendental functions}, Fourth
  edition. Reprinted, Cambridge University Press, New York, 1952. \MR{0178117
  (31 \#2375)}

\bibitem{ZS75}
Oscar Zariski and Pierre Samuel, \emph{Commutative algebra. {V}ol. {II}},
  Springer-Verlag, New York-Heidelberg, 1975, Reprint of the 1960 edition,
  Graduate Texts in Mathematics, Vol. 29. \MR{0389876 (52 \#10706)}

\end{thebibliography}

\end{document}